
\documentclass[reqno,10pt]{amsart}
 \oddsidemargin8mm
 \evensidemargin9mm 
 \textwidth15.5cm

\usepackage{amsmath}
\usepackage{amssymb}
\usepackage{amsfonts}
\usepackage{graphicx}
\usepackage[abs]{overpic}
\usepackage{caption}
\usepackage{wrapfig}
\usepackage{float}
\usepackage{graphicx}
\usepackage{mathrsfs}
\usepackage{accents}
\usepackage{amsthm}
\usepackage{hyperref}
\usepackage{mathtools}

\usepackage{tikz-cd}
\usepackage[colorinlistoftodos]{todonotes}
\usetikzlibrary{matrix,decorations.text}
\usepackage{adjustbox}

\usepackage{enumitem}

\usepackage{nccmath}

\def\Xint#1{\mathchoice
{\XXint\displaystyle\textstyle{#1}}%
{\XXint\textstyle\scriptstyle{#1}}%
{\XXint\scriptstyle\scriptscriptstyle{#1}}%
{\XXint\scriptscriptstyle\scriptscriptstyle{#1}}%
\!\int}
\def\XXint#1#2#3{{\setbox0=\hbox{$#1{#2#3}{\int}$ }
\vcenter{\hbox{$#2#3$ }}\kern-.6\wd0}}

\def\dashint{\Xint-}

\setlist[enumerate,1]{font=\normalfont}
\setlist[itemize,1]{font=\normalfont}


\newlist{thmlist}{enumerate}{1}
\setlist[thmlist]{label=(\roman{thmlisti}),
	ref=(\roman{thmlisti}),font=\normalfont,
	noitemsep}

%
%
%


\newtheorem{theorem}{Theorem}[section]
\newtheorem{lemma}[theorem]{Lemma}
\newtheorem{proposition}[theorem]{Proposition}

\newtheorem{definition}[theorem]{Definition}

\newtheorem{rem}[theorem]{Remark}

\newcommand{\N}{\Bbb N}

\newcommand{\R}{\Bbb R}
\newcommand{\C}{\Bbb C}

\DeclareMathOperator*{\argmin}{arg\,min}

\def\Id{\mathbf{Id}}
\def\id{\mathbf{id}}

\def\eps{\varepsilon}

\def\dist{\operatorname{dist}}
\def\Xint#1{\mathchoice
   {\XXint\displaystyle\textstyle{#1}}%
   {\XXint\textstyle\scriptstyle{#1}}%
   {\XXint\scriptstyle\scriptscriptstyle{#1}}%
   {\XXint\scriptscriptstyle\scriptscriptstyle{#1}}%
   \!\int}
\def\XXint#1#2#3{{\setbox0=\hbox{$#1{#2#3}{\int}$}
     \vcenter{\hbox{$#2#3$}}\kern-.5\wd0}}

\def\dashint{\Xint-}
\newcommand\thickbar[1]{\accentset{\rule{.6em}{.8pt}}{#1}}

\usepackage{color}

\newcommand{\sym}{{\rm sym}}

\newcommand{\LLL}{\color{black}} 
\newcommand{\FFF}{\color{black}} 
\newcommand{\ZZZ}{\color{black}} 
 
\newcommand{\BBB}{\color{black}} 
\newcommand{\EEE}{\color{black}}

 \newcommand{\PPP}{\color{black}}

\newcommand{\SSS}{\color{black}}

\newcommand{\MMM}{\color{black}} 
\newcommand{\QQQ}{\color{black}} 
\newcommand{\AAA}{\color{black}} 
\numberwithin{equation}{section}

\setcounter{tocdepth}{4}
\setcounter{secnumdepth}{4}

%
%

\title[One-dimensional viscoelastic von K\'{a}rm\'{a}n theories]{One-dimensional viscoelastic von K\'{a}rm\'{a}n theories derived from nonlinear  thin-walled beams}

\author{Manuel Friedrich}
\author{Lennart Machill}

\subjclass[2010]{74D05, 74D10, 35A15, 35Q74, 49J45}
 \keywords{Viscoelasticity, metric gradient flows, \BBB dimension reduction, \EEE $\Gamma$-convergence, dissipative distance, curves of maximal slope, minimizing movements.}

\address[Manuel Friedrich]{%
  Department of Mathematics,  
  Friedrich-Alexander Universit\"at Erlangen-N\"urnberg,  
  Cauerstr.~11, D-91058 Erlangen, Germany, 
  \& Mathematics M\"{u}nster, 
  University of M\"{u}nster, 
  Einsteinstr.~62, D-48149 M\"{u}nster, Germany
}
\email{manuel.friedrich@fau.de}

\address[Lennart Machill]{Applied Mathematics,  
Universit\"{a}t M\"{u}nster, Einsteinstr. 62, D-48149 M\"{u}nster, Germany}
\email{lennart.machill@uni-muenster.de}

\begin{document}

\maketitle

\begin{abstract}
We derive  an effective one-dimensional limit from a three-dimensional Kelvin-Voigt model for viscoelastic thin-walled beams, in which the elastic and the viscous stress tensor comply \FFF with \EEE a frame-indifference principle.  The limiting system of equations  comprises stretching, bending, and twisting both in the elastic and the viscous stress. It coincides with the model already identified  via \cite{MFMKDimension} and \cite{MFLMDimension2D1D} by a successive dimension reduction, first from 3D to a 2D theory for  von K\'arm\'an plates and then from  2D to a 1D theory for ribbons. In the present paper, we complement the previous analysis by showing that the limit can also be obtained by  sending the height and width of the beam to zero \FFF simultaneously. \EEE Our arguments rely on the static $\Gamma$-convergence in \cite{Freddi2013}, on the abstract theory of metric gradient flows \cite{AGS}, and on evolutionary $\Gamma$-convergence \MMM \cite{S1}. \EEE
\end{abstract}

\section{Introduction}
Many three-dimensional models in continuum mechanics are nonlinear and nonconvex, resulting in  difficult numerical approximations and high computational \FFF costs. \EEE The derivation of simplified effective  theories still preserving the main features of the original systems plays therefore a significant role in current research. Prominent examples in that direction are  variational problems in dimension reduction where  a \FFF rigorous \EEE relationship between the full three-dimensional model and its lower-dimensional counterpart is \AAA achieved by \EEE means of $\Gamma$-convergence \cite{DalMaso:93}. 
Despite the long history of the subject in elasticity  (see \cite{Antmann:04, ciarlet2} for surveys), the theory flourished in the last twenty years triggered by the availability of \FFF the rigidity estimate in \EEE \cite{FrieseckeJamesMueller:02}.
 In the present paper, we continue the study of \cite{MFMKDimension, MFLMDimension2D1D} and \QQQ perform a dimension reduction \AAA for \EEE an evolutionary problem \EEE in the setting of \FFF viscoelastic \EEE materials.

In the purely elastic framework, there exists an extensive literature, among which we only mention the most relevant for our analysis. To this end, suppose that the reference configuration of the material is represented by a thin set $\Omega_{h,\delta} = (-\frac{l}{2},\frac{l}{2}) \times (-\frac{h}{2},\frac{h}{2})\times (-\frac{\delta}{2},\frac{\delta}{2}) $ with \FFF length $l$, \AAA width $h$, and height  $\delta$.  \EEE
After the rigorous justification of bending theory \cite{FrieseckeJamesMueller:02}, a complete hierarchy of plate models has been derived in  \cite{hierarchy, lecumberry} in the limit of vanishing  \AAA height $\delta \to 0$, \EEE particularly including \EEE the von K\'arm\'an theory. 
\FFF Starting \EEE from the latter, in \cite{Freddi2018} a further dimension reduction has been performed by sending the  \AAA width $h$ to zero, identifying \EEE an effective one-dimensional model for elastic ribbons. 
\FFF More generally, this theory also appears as $\Gamma$-limit from three-dimensional nonlinear elasticity in \cite{Freddi2012, Freddi2013}, in which a hierarchy of one-dimensional models \AAA was \EEE derived by considering the simultaneous limit $\delta \ll h \to 0$. \EEE
These studies differ from effective rod models \cite{ABP, Mora, Mora2} which are identified under  the assumption $h \sim \delta \AAA \to 0\EEE$. We mention that the above $\Gamma$-convergence approach can be complemented by convergence of equilibria, i.e., \AAA can be obtain \EEE effective limits of the three-dimensional momentum balance 
\begin{align}\label{eq:elasticequation}
-{\rm div}\ \partial_F W(\nabla w) = f  \EEE \ \ \  \text{ in $  \Omega_{h,\delta} \EEE $},
\end{align}
 see \QQQ e.g.\ \EEE \cite{mora-scardia} \AAA for $h \sim 1, \delta\to 0$ \EEE and  \cite{davoli2} for $h \sim \delta \to 0$. \EEE  Here, $f\colon \Omega_{h,\delta} \to \R$ is a volume density of external forces,  $\nabla w$ denotes the deformation gradient, $\QQQ\partial_F \EEE W:=\partial W/\partial F$ is the first {{Piola-Kirchhoff stress tensor}}, where $W\colon \R^{3 \times 3} \AAA \to [0,\infty]\EEE$ is a suitable elastic energy density\QQQ, \EEE and $F \in \R^{3 \times 3}$ is the place holder of $\nabla w$. \AAA The density is supposed to satisfy \EEE  usual assumptions in nonlinear elasticity, in particular frame indifference in the sense $W(F)=W(QF)$ for $Q\in{\rm SO}(3)$ and $F\in\R^{3\times 3}$, which implies that $W$ depends on the right Cauchy-Green strain tensor  $C:=F^\top F$.

The goal of the present paper is to pass to an effective one-dimensional description in a system of PDEs for \FFF nonlinear \EEE \QQQ \emph{viscoelastic} \EEE thin-walled beams, corresponding to the limits \FFF $\delta \ll h \to 0$. \EEE This complements the $\Gamma$-convergence result by {\sc Freddi,  Mora,  and Paroni}  \cite{Freddi2013}, and can be considered as the completion of our previous results in  \cite{MFMKDimension} and \cite{MFLMDimension2D1D}, which were the evolutionary analogs of \cite{lecumberry} and \cite{Freddi2018}, respectively.

We now describe our setting in more detail. We consider a quasistatic nonlinear model  for nonsimple viscoelastic materials in the Kelvin's-Voigt's rheology \FFF without inertia\EEE, which obeys the system of equations 
\begin{align}\label{eq:viscoel-nonsimple}
-{\rm div}\Big( \partial_F W(\nabla w)  -   \zeta_{h,\delta}  {\rm div} (\partial_ZP(\nabla^2 w)) + \partial_{\dot{F}}R(\nabla w,\partial_t \nabla w)  \Big) =  f \BBB  \EEE \ \ \  \text{ in $ [0,T] \times \EEE \Omega_{h,\delta}$}
\end{align}
for  some $\zeta_{h,\delta} >0$. In contrast to \eqref{eq:elasticequation}, the deformation mapping $w\colon[0,T]\times \Omega_{h,\delta}\to\R^3$  additionally depends on the time $t\in[0,T]$ with $T>0$.   The viscous stress $\partial_{\dot F} R$ can be derived from  a (pseudo)potential $R\colon  \R^{3 \times 3} \times \R^{3 \times 3} \to [0,\infty) \EEE $, playing an analogous role to the density $W$,  where $\dot F \in \R^{3 \times 3}$ is the placeholder of $\partial_t \nabla w$. As observed by {\sc Antman} \cite{Antmann}, the viscous stress tensor must comply with a time-continuous frame-indifference principle  meaning that $R(F,\dot F)=\tilde R(C,\dot C)$ for a suitable function $\tilde R$, where  $\dot C$ denotes the  time derivative of the right Cauchy-Green strain tensor $C$. In the following, we assume that \AAA $\partial_{\dot F}R$ \EEE is linear in $\dot F$, implying $R$ to be quadratic in $\dot F$. \AAA Note, however, that still a nonlinearity arises in  \EEE the viscous stress, due to its frame-indifference principle. The system \FFF \eqref{eq:viscoel-nonsimple} \EEE is further complemented with appropriate initial and boundary conditions, see \eqref{assumption:clampedboundary} \AAA below. \EEE   

Eventually, the remaining term, the so-called hyperstress, is induced by an additional term in the mechanical energy given by a convex and frame indifferent density $P \colon \R^{3\times 3\times 3} \to [0,+\infty)$ depending on the second gradient of $w$. In this sense, we treat a model for     second-grade  \EEE materials, originally introduced by {\sc Toupin}  \cite{Toupin:62,Toupin:64} to enhance compactness and regularity properties of problems in  mathematical elasticity. In particular, this approach \QQQ currently \EEE seems to be unavoidable to overcome issues in connection with time-continuous frame indifference and to prove the existence of weak solutions in a finite strain setting, see   \cite{MFMK, MielkeRoubicek}. (We refer, e.g., to \cite{demoulini, Lewick}  for some existence results with other \FFF solution concepts \EEE not needing second gradients.)  In a similar spirit, this \FFF idea \EEE has been essential in extensions to thermoviscoelasticity \cite{RBMFMK, MielkeRoubicek},   and non-interpenetration constraints \cite{Kroemer}, as well as  in  the derivation of linearized models \cite{MFMK} and a viscoelastic plate model of von K\'{a}rm\'{a}n type \cite{MFMKDimension}. We also refer to \cite{capriz, dunn}\QQQ,  \EEE where thermodynamical consistency of such models has been shown.

In the present contribution, we consider the limiting passage \FFF $\delta \ll h \to 0$, \EEE extending the purely elastic result in \cite{Freddi2013} to the  viscoelastic setting. More precisely, in \AAA Theorem \ref{maintheorem3}(iii),  \EEE we show that weak solutions to  \eqref{eq:viscoel-nonsimple} converge in a suitable sense  to a solution of 
\begin{align} \label{eq: equation-simp-intro11}
\begin{cases}
\FFF 0 \EEE = -  \mfrac{\rm d}{{\rm d}x_1} \bigg(C_W^0\Big(\xi_1^\prime+\LLL\frac{r}{2}\vert \xi_3^\prime\vert^2\EEE\Big) + C_R^0(\partial_t \xi_1^\prime+ \LLL r \EEE \xi_3^\prime \partial_t \xi_3^\prime )   \bigg) , &\vspace{0.1cm}\\
\FFF 0 \EEE = \mfrac{1}{12}   \mfrac{{\rm d}^2}{{\rm d}x_1^2}  \Big(C_W^0 \xi_2^{\prime\prime} +C_R^0 \partial_t \xi_2^{\prime\prime}   \Big) ,  &\vspace{0.1cm}\\
\FFF f^{1D} \EEE = - \LLL r \MMM \mfrac{{\rm d}}{{\rm d}x_1} \bigg(\Big(C_W^0\Big(\xi_1^{\prime} +\LLL\frac{r}{2}\vert \xi_3^\prime\vert^2\EEE \Big) + C_R^0(\partial_t \xi_1^{\prime} + \LLL r \EEE \xi_3^\prime \partial_t \xi_3^\prime) \Big)  \xi_3^\prime \bigg)  &\vspace{0.1cm}\\
\ \ \ \ \ \ \ \ \ \ \ +
\frac{1}{24} \mfrac{{\rm d}^2}{{\rm d}x_1^2}  \Big(\partial_1 Q_W^1(\xi_3^{\prime\prime},\theta^\prime)+\partial_1Q_R^1(\partial_t\xi_3^{\prime\prime},\partial_t \theta^\prime )\Big), &\vspace{0.1cm}\\
0 =  \mfrac{{\rm d}}{{\rm d}x_1} \Big(\partial_2 Q_W^1(\xi_3^{\prime\prime},\theta^\prime)+\partial_2 Q_R^1(\partial_t \xi_3^{\prime\prime},\partial_t \theta^\prime)\Big)   &\ \ \ \    \text{in } [0,T] \EEE  \times (-\frac{l}{2},\frac{l}{2})\QQQ, \EEE
\end{cases}
\end{align}
where $l>0$ is the length of the beam, and  the constants $C_W^0>0$ and $C_R^0>0$ as well as the quadratic forms $Q_W^1$ and $Q_R^1$ are related to $W$ and $R$, respectively. Moreover, $r$ is a parameter, specifying the relation of $\delta$ and the energy scaling, see \eqref{assumption:energyscalingthickness1} for details. The \QQQ functions \EEE $\xi_1$ and $\xi_2$ denote  an axial and  an orthogonal in-plane displacement, respectively, whereas $\xi_3$ denotes the  out-of-plane displacement and $\theta$ is a twist function. Finally, we suppose that $f$ in \eqref{eq:viscoel-nonsimple} only acts in \QQQ the \EEE$x_3$-direction and  $f^{1D}$ is its effective limit.

Our result is directly related to  \cite{MFMKDimension, MFLMDimension2D1D}, in the sense that in \cite{MFMKDimension}, the limit \AAA $\delta \to 0$ \EEE is considered to derive a von K\'arm\'an theory for viscoelastic plates, and subsequently in  \cite{MFLMDimension2D1D}\QQQ, \EEE a further dimension reduction \AAA $h \to 0$ \EEE is performed  to \AAA derive \eqref{eq: equation-simp-intro11} \EEE for $r=1$, representing a  one-dimensional model for viscoelastic ribbons. The result in the present contribution shows that the same limit can be obtained by \QQQ a \EEE simultaneous instead of \QQQ a \EEE successive  limiting passage \QQQ as \EEE $h,\delta \to 0$. This is a nontrivial issue as it is well known that there are multiple effective theories for one-dimensional objects such as beams and  rods depending \FFF on the \EEE ratio of the thickness in different directions. Note that we extend the result \cite{Freddi2013} also in the sense that we prescribe clamped boundary conditions which \FFF render \EEE  the analysis of geometric rigidity properties more delicate.

Both in the \FFF three- and one-dimensional \EEE setting, weak solutions to \eqref{eq:viscoel-nonsimple} and \eqref{eq: equation-simp-intro11} can be approximated by time-discrete solutions for a fixed time step $\tau$, and the limiting passage $h,\delta \to 0$ can be combined with the time-discrete approximation $\tau \to 0$. We obtain a corresponding commutativity result (Theorem \ref{maintheorem3}(ii),(iii)) which is illustrated in  Figure \ref{diagram}. 

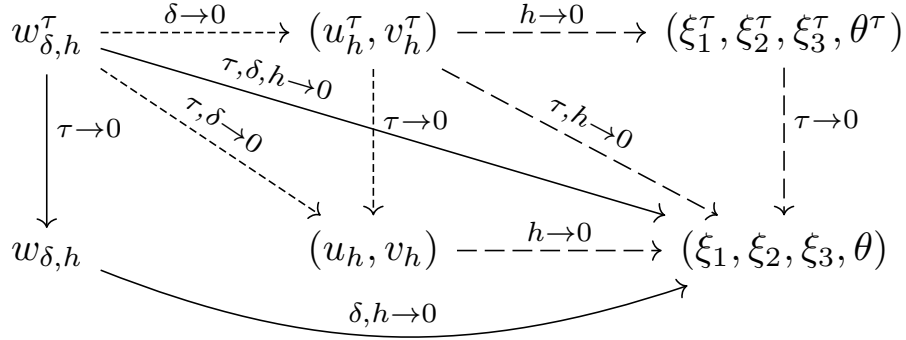
\begin{figure}[H]
	\centering
	\adjustbox{scale=1.5}{\begin{tikzcd}
		{w^\tau_{\delta,h}} \arrow[rr, "\delta \to 0",  dash pattern=on 2pt off 1pt] \arrow[rrdd, "{\tau,\delta \to 0}",sloped,  dash pattern=on 2pt off 1pt] \arrow[rrrrdd, "{\color{black}{\tau, \delta, h \to 0}}",black,sloped,pos= 0.3] \arrow[dd, "\tau \to 0",pos = 0.35,black] &  & {\QQQ(u^\tau_h,v^\tau_h)\EEE} \arrow[rrdd, "{\tau,h \to 0}",sloped,pos= 0.5 ,  dash pattern=on 5pt off 2pt] \arrow[rr, "h \to 0", dash pattern=on 5pt off 2pt] \arrow[dd, "\tau \to 0",pos = 0.35 ,   dash pattern=on 2pt off 1pt] &  & {\QQQ(\xi_1^\tau,\xi_2^\tau,\xi_3^\tau,\theta^\tau)\EEE} \arrow[dd, "\tau \to 0",pos = 0.35,  dash pattern=on 5pt off 2pt] \\
		&  &                                                                                   &  &                           \\
		{w_{\delta,h}}    \arrow[rrrr, "{ \delta , h \to 0 }", bend right=20,black]     &  & {(u_h,v_h)} \arrow[rr, "h \to 0", dash pattern=on 5pt off 2pt]                                                          &  &    {(\xi_1,\xi_2,\xi_3,\theta)}                          
		\end{tikzcd}
		}
	\caption{Illustration of the commutativity result, where the columns  correspond to \EEE the dimension,  and the rows indicate the (time-)discretized and continuous problems, respectively. Moreover, $\tau$ denotes the timestep, and $h$ and $\delta$ correspond to the  \AAA width and the height \EEE of the \AAA body, \EEE respectively. Whereas the dark arrows are considered in Proposition~\ref{maintheorem1} are Theorem~\ref{maintheorem3}, the dashed arrows \QQQ have \EEE already been addressed in \cite{MFMKDimension, MFMKJV, MFLMDimension2D1D}. \EEE} \label{diagram}

\end{figure}

\EEE

We briefly explain the scheme for the three-dimensional problem and highlight the choice of the dissipation. Given an initial value $w^0$, the first natural idea would consist in solving inductively the minimization problem 
\begin{align}\label{eq:time-discreteschemeintro}
w^n = \argmin\limits_w \left(\tau \mathcal{R}\left(w^{n-1}, \tfrac{w- w^{n-1}}{\tau}\right)+ E(w)\right)
\end{align}
for every $n\in \N$, where $E$ is the mechanical  energy defined by 
\begin{align*}
E(w) = \int_{\AAA\Omega_{h,\delta}\EEE} \Big( W(\nabla w(x)) + \zeta_{h,\delta} P(\nabla^2 w(x)) -   w(x) \cdot \EEE f (x)  \Big) \, {\rm d}x
\end{align*}
and $\mathcal{R}$ is the dissipation functional given by $\mathcal{R} (w,\partial_t w ) = \int_{\Omega_{h,\delta}} R (\nabla w(x), \partial_t\nabla  w(x) ) \, {\rm d}x$. Then, as $\tau \to 0$, limits of suitably defined interpolations of the time-discrete solutions converge to weak solutions of  \eqref{eq:viscoel-nonsimple}. Following the discussion in \cite[Section 2.2]{MOS}, we propose a slightly modified scheme by considering the minimization  problems
\begin{align}\label{eq: disc2}
w^n \in \argmin\limits_w \frac{1}{2\tau}\mathcal{D}^2(w,w^{n-1})+ E(w),
\end{align}
where $\mathcal{D}$ is a dissipation distance whose square is given by $\mathcal{D}^2(w_1,w_2) = \int_\Omega D(\nabla w_1, \nabla w_2)^2$\FFF. \EEE  Assuming that $\mathcal{D}$ is connected to $R$ via the relation $R(F,\dot{F}) := \lim_{\eps \to 0} \frac{1}{2\eps^2} D^2(F+\eps\dot{F},F)$  ensures that both minimization problems only differ from each other by lower order terms effectively leading to the same  \AAA system \eqref{eq:viscoel-nonsimple}. \EEE A main reason why we prefer to  consider  \eqref{eq: disc2} in place of \eqref{eq:time-discreteschemeintro} is the fact that, due to the separate frame indifference of  dissipation distances (see \eqref{eq: assumptions-D}(v) below for details), the \FFF functional \EEE minimized in \eqref{eq: disc2} is frame indifferent in contrast \eqref{eq:time-discreteschemeintro}, i.e., this important feature of the model is already satisfied on the time-discrete level.  

This approach also allows us to identify time-continuous limits as gradient flows in metric spaces \cite{AGS}, both in the 3D and the 1D setting.   Whereas for the one-dimensional system the existence of such solutions, so-called \emph{curves of maximal slope},    has  already been established in \cite{MFLMDimension2D1D}, the corresponding result for the three-dimensional thin material is new and a byproduct of our analysis, see Proposition~\ref{maintheorem1}. These curves can be related to weak solutions of the systems \eqref{eq:viscoel-nonsimple} and \eqref{eq: equation-simp-intro11}, see \cite[Theorem~2.1(iii)]{MFMK} and  \cite[Theorem 2.2(ii)]{MFLMDimension2D1D}, respectively, but provide additional information as a certain energy-dissipation-balance is satisfied, see \eqref{maximalslope} below for details. This balance is not just of independent interest but at the core of our approach to relate the 3D \QQQ to \EEE the 1D \AAA model \EEE by resorting to the theory of convergence of gradient flows introduced in \cite{Ortner, S1, S2}.  In using this theory, the challenge \AAA is \EEE 
that besides $\Gamma$-convergence additional conditions are  needed to ensure convergence of gradient flows. We refer to \cite[Introduction]{MFMKDimension} for a \FFF detailed \EEE account \FFF of \EEE the relevant issues and the main proof strategy in the context of \AAA dimension-reduction \EEE problems.

Let us highlight \QQQ again \EEE that the approach of  nonsimple materials is used due to the presence of viscous
effects. Indeed, \AAA without second gradients, \EEE already the existence of  time-discrete solutions \eqref{eq: disc2} cannot be guaranteed. This is due to the fact that the structure of $\mathcal{D}$ appears to be incompatible with quasiconvexity (see \cite{MOS} for a detailed discussion), and thus  weak lower semicontinuity cannot be expected. Moreover,  in a similar spirit to \cite{RBMFMK, MielkeRoubicek, Kroemer}, it is essential to show that the topology induced by $\mathcal{D}$ is equivalent to the weak $H^1$-topology in order to obtain a priori bounds on the strain rate, cf.\ Lemma \ref{th: metric space}(v). This deeply relies on a \emph{generalized Korn inequality} (see Theorem \ref{pompe}) which requires certain properties  of the deformation gradient guaranteed by higher regularity of the spatial gradients.


In the following, we suppose that the  material is homogeneous, i.e., neither the elastic stored energy density nor  the dissipation \FFF depends \EEE on  \AAA the \EEE  material point.   Moreover, for technical reasons, we will essentially  restrict our analysis to materials with zero Poisson's ratio, such as cork, see \eqref{quadraticforms} and \eqref{eq: new-orth}, and also Remark~\ref{rem: VDE} for a possible generalization.   Such an assumption, also present in other works (see e.g.~\cite{BK,MFMKDimension,MFLMDimension2D1D}), simplifies the analysis as it excludes nontrivial relaxation effects in the passage from the 3D to the 1D model.

Let us also mention the related issue of deriving effective theories for problems with inertia but without viscosity.  \AAA This \EEE has been the \EEE subject of \cite{Abels3, AMM} and  \cite{Abels1, Abels2} in the context of plate and rod models, respectively.  Combining inertial and viscosity effects for thin structures, also in connection with temperature \cite{RBMFMK, MielkeRoubicek}, will be \FFF the \EEE subject of future research.

The plan of the paper is as follows. In Section 2, we introduce the three- and one-dimensional models in more detail and state our main results.
Whereas Section 3 is devoted to the collection of results concerning the theory of gradient flows in metric spaces, the abstract theory is adapted to our model in Sections 4--7. First, in Section \ref{sec:rigidity} we adapt the rigidity \AAA estimates \EEE of \cite{Freddi2013} to our setting with clamped boundary conditions.  Section \ref{sec: 3d-1d} is devoted to \FFF the \EEE properties of the three- \FFF and \EEE one-dimensional system. In Section \FFF6, \EEE we discuss the main convergence results to apply the abstract theory  \cite{S1}. In particular, we show  the lower semicontinuity of the local slopes (see Theorem~\ref{theorem: lsc-slope}), which is the key difficulty of our paper. Eventually, the \FFF proofs \EEE of the main results are contained  in Section \ref{sec: mianmain}. Some elementary lemmata \AAA about \EEE the energies and the dissipations are postponed to Appendix \ref{sec:Appendix}. We close the introduction with some basic notation.

\subsection*{Notation} 

In what follows, we use standard notation for Lebesgue spaces, $L^p(\Omega)$, which are measurable maps on $\Omega\subset\R^d$, \MMM $d=1,2,3$, \EEE integrable with the $p$-th power (if $1\le p<+\infty$) or essentially bounded (if $p=+\infty$).    Sobolev spaces, \QQQ written $W^{k,p}(\Omega)$\EEE, denote the linear spaces of  maps  which, together with their weak derivatives up to the order $k\in\N$, belong to $L^p(\Omega)$. 
   \BBB Moreover, for a function $\hat v \in W^{k,p}(\Omega)$ the set $W^{k,p}_{\hat v}(\Omega)$ contains maps from $W^{k,p}(\Omega)$ having boundary conditions (in the sense of traces) up to the   $(k-1)$-th  order with respect to $\hat v$. \EEE
If the target space is a Banach space $E \neq \R$, we use the usual notion of Bochner-Sobolev spaces, written $W^{k,p}(\Omega;E)$.  For more details on Sobolev spaces and their \FFF duals, \EEE we refer to \cite{AdamsFournier:05}. Whereas $\nabla$ and $\nabla^2$ denote the spatial gradient and Hessian, respectively, the symbol $^\prime$ is used for derivatives of functions depending solely on one spatial \AAA variable. \EEE  Further, $\partial_t$ indicates a time derivative and $\delta_{ij}$ denotes the Kronecker delta function. Finally, $\vert A \vert$ \QQQ stands for \EEE\FFF the Frobenius norm \EEE of a matrix $A \in \R^{3\times 3}$, \QQQ and ${\rm sym}(A) = \frac{1}{2}(A^\top + A)$ and ${\rm skew}(A) = \frac{1}{2}(A - A^\top)$ indicate the symmetric and skew-symmetric part, respectively. \EEE

 \section{The model  and main results}\label{section:2}
 \subsection{The three-dimensional model}
 In this subsection, we describe the model and discuss the variational setting. \LLL Following the discussion in \cite[Section 2.2]{MOS} and \cite[Section 2]{MFMK}, we model \eqref{eq:viscoel-nonsimple} as a metric gradient flow. For this purpose, we need to specify three main \AAA ingredients: \EEE the state space that contains admissible deformations of the material, \FFF the \EEE elastic energy that drives the evolution, and \FFF the \EEE dissipation mechanism represented by a distance.
 \EEE We consider \MMM thin-walled beams with rectangular cross-section, i.e., the reference configuration of the material is   a cuboid of the form \EEE
 \begin{align*}
 	\Omega_h := \LLL I \EEE \times \omega_h := (-\tfrac{l}{2},\tfrac{l}{2}) \times\{ (z_2,z_3) : \vert z_2 \vert < h/2, \vert z_3 \vert < \delta_h/2 \}  \subset \R^3.
 \end{align*}
 Here, $(\delta_h)_h$ is a \MMM null sequence \EEE satisfying $\lim_{h\to 0} \tfrac{\delta_h}{h} = 0$, i.e., the width (corresponding to the $x_2$-coordinate) tends to \MMM zero \EEE much slower than the height (described by the $x_3$-coordinate). 
 
 \subsection*{Elastic energy}
 We define the \emph{elastic energy}  per unit cross-section associated with a deformation $w\colon \Omega_h \to \R^3$ by
 \begin{align*}
 	E(w) = \tfrac{1}{h\delta_h}\int_{\Omega_h} W(\nabla w(x)) \, \MMM  {\rm d}x \EEE + \tfrac{\zeta_h}{h\delta_h}\int_{\Omega_h} P(\nabla^2 w(x)) \, \MMM  {\rm d}x \EEE - \tfrac{1}{h\delta_h}\int_{\Omega_h} f^{3D}_h(x) \, \FFF w_3(x) \, \MMM  {\rm d}x. \EEE
 \end{align*}
  Here, $W\colon \R^{3 \times 3} \to [0,\infty]$ denotes  a single well, frame-indifferent stored energy density with the usual assumptions in nonlinear elasticity. \QQQ More precisely,  we \EEE  suppose  that there exists $c>0$ such that
 \begin{align}\label{assumptions-W}
 \begin{split}
 {\rm (i)}& \ \ W \text{ \LLL is continuous and \BBB $C^3$ \EEE in a neighborhood of $SO(3)$},\\
 {\rm (ii)}& \ \ W \text{ \LLL is frame indifferent, i.e., } W(QF) = W(F) \text{ for all } F \in \R^{3 \times 3}, Q \in SO(3),\\
 {\rm (iii)}& \ \ W(F) \ge c\dist^2(F,SO(3)) \QQQ \text{ for all }F\in \R^{3\times 3}, \quad \EEE  W(F) = 0 \text{ iff } F \in SO(3),
 \end{split}
 \end{align}
 where $SO(3) = \lbrace Q\in \R^{3 \times 3}: Q^\top Q = \Id, \, \det Q=1 \rbrace$. Besides the elastic energy density $W$ depending on the deformation gradient, we also  consider a \emph{strain gradient energy term} $P$ depending on the Hessian $\nabla^2 w$, adopting the concept of 2nd-grade nonsimple materials\LLL , see \cite{Toupin:62}. More specifically, for $p>3$, let $P\colon \R^{3\times 3 \times 3} \to [0,\SSS +\infty)\EEE$ satisfy
 \begin{align}\label{assumptions-P}
 \begin{split}
 {\rm (i)}& \ \ \text{frame indifference, i.e., } P(QZ) = P(Z) \text{ for all } Z \in \R^{3 \times 3 \times 3}, Q \in SO(3),\\
 {\rm (ii)}& \ \ \text{convexity and $C^1$-regularity},\\
 {\rm (iii)}& \ \ \text{a growth condition, namely} \ \    c_1 |Z|^p \le P(Z) \le c_2 |Z|^p, \\&   \ \ \ \  \ \ \ \ \ \   \ \ \ \  \ \ \ \ \ \  \ \ \ \  \ \ \ \ \ \  \ \ \ \  \ \ \ \ \ \ |\partial_{Z} P(Z)|  \le c_2 |Z|^{p-1}  \text{ for all $Z \in \R^{3 \times 3 \times 3}$ }
 \end{split}
 \end{align}
 for $0<c_1<c_2$. The contribution to the model of the latter is measured by $(\zeta_h)_h$ indicating a null sequence.  Finally, \LLL $f^{3D}_h \in L^2(\Omega_h)$ \MMM denotes \EEE a \LLL force acting in the $x_3$-direction. \AAA For simplicity, we assume \EEE $ f^{3D}_{h}$ to be independent of $x_2$ and $x_3$ and write $ f^{3D}_{h}\colon I \to \R$ with a slight abuse of notation. \FFF
  The theory also holds for more general forces which may
 depend on $x_2$, see \cite[Section 4]{Freddi2013}. \EEE  We postpone a more precise definition of $\zeta_h$ and $f^{3D}_h$ to \eqref{assumption:penalizationscale} and \eqref{eq: forces}.
%

 %
 %
 
 \subsection*{Dissipation mechanism:} 
 Consider now  time-dependent deformations $w\colon[0,T]\times \Omega_h \to \R^3$. In contrast to elasticity, viscosity is not only related to the strain $\nabla w$ but also to the strain rate $\partial_t \nabla  w$. \SSS It can be expressed in terms of $R(\nabla w, \partial_t \nabla w)$ for the \emph{dissipation potential} $R\colon \R^{3 \times 3} \times \R^{3 \times 3} \to [0,\infty)$ given in \eqref{eq:viscoel-nonsimple}. Due to our formulation as a metric gradient flow, we consider a corresponding distance as follows:   we introduce  \EEE $\mathcal{D}$, defined by $$\mathcal{D}(w_1,w_2) = \Big(\int_\Omega D(\nabla w_1(x), \nabla w_2(x))^2 {\rm d}x \Big)^{1/2}$$ for $w_1,w_2\colon \Omega_h \to \R^3$.
Here, for some $c>0$ we assume that the density  $D\colon GL_+(3) \times  GL_+(3) \to [0,\infty)$  satisfies \QQQ for all $F_1,F_2\in GL_+(3) := \lbrace F \in \R^{3 \times 3}: \det F>0 \rbrace$\EEE
 \begin{align}\label{eq: assumptions-D}
 {\rm (i)} & \ \ D(F_1,F_2)> 0 \text{ if } F_1^\top F_1 \neq F_2^\top F_2,\notag \\
 {\rm (ii)} & \ \ D(F_1,F_2) = D(F_2,F_1),\notag\\
 {\rm (iii)} & \ \ D(F_1,F_3) \le D(F_1,F_2) + D(F_2,F_3),\notag \\
 {\rm (iv)} & \ \ \text{$D(\cdot,\cdot)$ is $C^3$ in a neighborhood of $SO(3) \times SO(3)$},
 \\
 {\rm (v)}& \ \ \text{Separate frame indifference, i.e., } D(Q_1F_1,Q_2F_2) = D(F_1,F_2)  \text{ for all } Q_1,Q_2 \in SO(3), \notag\\ 
\SSS {\rm (vi)} & \ \ \partial^2_{F_1^2} D^2(\Id, \Id) [G,G] \ge c\,|\QQQ \sym(G) \EEE|^2 \ \ \forall\,  G \in \R^{3 \times 3} \notag. \EEE
 \end{align} 
\QQQ We \EEE point out that the   Hessian of  $D^2$ in direction of $F_1$ at $(F_1,F_2)$, denoted by $\partial^2_{F_1^2} D^2(F_1,F_2)$, is a \QQQ fourth-order \EEE tensor. Note that (i) implies that $D$ is a true distance when restricted to {{positive definite}} \AAA symmetric \EEE matrices.    \SSS Moreover, (vi) is a natural condition as $\R^{3 \times 3}_{\rm skew} = \lbrace A  \in \R^{3 \times 3}: A=-A^\top \rbrace$ is necessarily contained in the kernel of $\partial^2_{F_1^2} D^2(\Id, \Id)$ by (v), see Lemma \ref{lemma: ele}.   The relation of  $D$ and the dissipation potential $R$  is given by 
 \begin{align}\label{intro:R}
 	R(F,\dot{F}) := \lim_{\eps \to 0} \frac{1}{2\eps^2} D^2(F+\eps\dot{F},F) = \frac{1}{4} \partial^2_{F_1^2} D^2(F,F) [\dot{F},\dot{F}]
 \end{align}
 for $F \in GL_+(3)$ and  $\dot{F} \in \R^{3 \times 3}$, whenever $\partial^2_{F_1^2} D^2(F,F)$ exists.   The second equality  \SSS follows by  \EEE a Taylor expansion. In addition, we point out that (v) guarantees that $R$ satisfies frame indifference in the sense
 \begin{align*}
 R(F,\dot{F}) = R(QF,Q(\dot{F} + AF))  \ \ \  \forall  Q \in SO(3),\, A \in \R^{3 \times 3}_{\rm skew}
 \end{align*}
 for all $F \in GL_+(3)$ and $\dot{F} \in \R^{3 \times 3}$, see \cite[Lemma 2.1]{MOS}. This corresponds to a time-dependent version of frame indifference, and we refer to \cite{Antmann, MOS} for a \SSS thorough \EEE discussion.
\SSS A further consequence of (v),(vi) is that  $R$ depends only the right Cauchy-Green strain tensor $C:= F^{\top}F$ and its time derivative $\dot C= \dot{F}^{\top}F+F^{\top}\dot F$, that it is quadratic in $\dot C$, and that 
 \begin{align}\label{eq:quadraticlowerbound}
 	 R(F,\dot F) \geq c \vert \dot C \vert ^2,
 \end{align}
 see Lemma \ref{lemma: ele}. This condition corresponds to \cite[assumption (2.30e)]{MielkeRoubicek} and is essential there in order to derive good compactness properties for establishing solutions in the three-dimensional setting. \EEE  One possible example of $D$ satisfying 
 \eqref{eq: assumptions-D} might be $D(F_1,F_2)= |F_1^\top F_1-F_2^\top F_2|$.   This leads  to $R(F,\dot F)=|\BBB  F^\top \dot F + \dot F^\top F \EEE |^2/2$ which is a standard choice. \EEE  For further examples  we refer  to \cite[Section~2.3]{MOS}.

 \EEE

 \subsection*{Rescaling}\label{sec:rescaling}
 For the limiting passage, it is more convenient to work on \AAA the \EEE rescaled domain $\Omega := I \times (-1/2,1/2) \times (-1/2, 1/2)$ that does not depend on $h$. \MMM For \LLL notational \MMM convenience,  we denote by \EEE $S:=\LLL I\EEE\times (-1/2,1/2)$ the scaled $x_1$-$x_2$ cross-section  and \MMM by \EEE $\omega:=(-1/2,1/2) \times (-1/2,1/2)$ the scaled $x_2$-$x_3$ cross-section. \LLL For a deformation $w\colon \Omega_h \to \R^3$, we let $p_h\colon \Omega \to \Omega_h$, $p_h(x_1,x_2,x_3) = (x_1, hx_2,\delta_hx_3)$ be the projection of $\Omega$ onto $\Omega_h$ and introduce the scaled deformation mapping $y\colon \Omega \to \R^3$ \AAA by \EEE $y=w\circ p_h$. \EEE \LLL For \MMM a \EEE smooth \LLL function \EEE $y\colon \Omega \to \R^3$, we \AAA define the scaled gradient   as \EEE $\nabla_h y = ({y,}_1, \tfrac{{y,}_2}{h}, \tfrac{{y,}_3}{\delta_h})$, \FFF where the subscript indicates the directional derivative along the $i$-th unit vector. \EEE Moreover, $\nabla_h^2$ denotes the scaled Hessian and is defined by 
 $$(\nabla_h^2y)_{ijk} := h^{-\delta_{2j}-\delta_{2k}}\delta_h^{-\delta_{3j} - \delta_{3k}} (\nabla^2y)_{ijk} \quad \text{for $i,j,k \in \{1,2,3\} \quad$ and $ \quad(\nabla^2 y)_{ijk}:= (\nabla^2 y_i)_{jk}$}.
 $$
Let $(\eps_h)_h$ \MMM be \EEE a null sequence representing the energy scaling. Then, by a change of variables, we see that the rescaled energy, defined by $\phi_h(y) = \tfrac{1}{\eps_h^2}E(y\circ p_h^{-1})$, satisfies
 \begin{align}\label{assumption:energyscaling}
 	\phi_h(y) = \frac{1}{\eps_h^2} \int_\Omega W(\nabla_h y(x)) \, \MMM  {\rm d}x \EEE +\frac{\zeta_h}{\eps_h^2} \int_\Omega P(\nabla^2_h y(x)) \, \MMM  {\rm d}x \EEE - \frac{1}{\eps_h^2} \int_{\Omega}  f^{3D}_h(x_1)\, \FFF y_3(x) \, \MMM  {\rm d}x \EEE
 \end{align}
 for all $y \in W^{2,p}(\Omega;\R^3)$.  Concerning the viscosity part of the model, we \EEE define the scaled dissipation distance as
 \begin{align}\label{assumption:dissipationsclae}
 	\mathcal{D}_h(y_0,y_1) := \left( \frac{1}{\eps_h^2} \int_\Omega D^2(\nabla_h y_0, \nabla_h y_1) \, {\rm d}x \right)^{1/2}
 \end{align}
 \SSS    for all $y_0, y_1 \in W^{2,p}(\Omega;\R^3)$. \EEE
 
 \subsection*{Clamped boundary conditions} 
 
 In contrast to \AAA the \EEE model studied in \EEE \cite{Freddi2013}, we consider a problem  with Dirichlet boundary conditions on $ \Gamma :=\{-\tfrac{l}{2},\tfrac{l}{2}\} \times \MMM \omega\EEE$. Given functions $\hat \xi_1 \in W^{2,p}(\MMM I \EEE )$, $\hat \xi_2, \hat \xi_3 \in W^{3,p}(I)$, $p>3$,  we introduce the set of admissible configurations by
 \begin{align}\label{assumption:clampedboundary}
 	\mathscr{S}^{3D}_h := \Bigg\{ y^{h} \in W^{2,p}(\Omega;\R^3) : y^{h} = \begin{pmatrix}
 		x_1 \\ h x_2 \\ \delta_h x_3 \\
 	\end{pmatrix} + \eps_h \begin{pmatrix}
 		\hat \xi_1 - x_2 \hat \xi_2' - x_3 \hat \xi_3'   \\ \hat \xi_2/h  \\ \hat \xi_3/ \delta_h \\
 	\end{pmatrix} \quad {\rm on } \quad \Gamma \Bigg\}.
 \end{align}
\AAA Note that the different components of the boundary conditions are coupled.  Such a structure  \EEE is a typical  in \AAA dimension-reduction \EEE problems to ensure compatibility of the recovery sequence. We refer e.g.\ to \MMM \cite{MFMKDimension, MFLMDimension2D1D, lecumberry}.

 \subsection*{Rescaled equations of nonlinear viscoelasticity}

\SSS As a preparation for the  formulation of \EEE the rescaled equations of nonlinear viscoelasticity, we  \SSS introduce the    scaled (distributional) divergence. First, for  $g \in L^1(\Omega;\R^3)$  \EEE we define   ${\rm div}_h g$  by  ${\rm div}_h g = \partial_1 g_1+ \frac{1}{h}\partial_2 g_2 + \frac{1}{\delta_h}\partial_3 g_3$. Then, for $i,j \EEE \in \lbrace 1,2, 3\rbrace$, we denote by $(\partial_ZP(\nabla^2_h y))_{ij*}$ the vector-valued function  $((\partial_ZP(\nabla^2_h y))_{ijk})_{k=1,2,3}$, \SSS and let   
 \begin{align*}
\big({\rm div}_h (\partial_ZP(\nabla^2_h y))\big)_{ij} =   {\rm div}_h (\partial_ZP(\nabla^2_h y))_{ij*}, \ \ \ \  i,j \EEE \in \lbrace 1,2, 3\rbrace
 \end{align*}   
 for $y \in \mathscr{S}_h^{3D}$. \SSS Rescaling  of \eqref{eq:viscoel-nonsimple} leads to the system of   equations \EEE
 \begin{align}\label{nonlinear equation}
 	\begin{cases} -  {\rm div}_h \Big( \partial_FW(\nabla_h y) - \MMM \zeta_h \SSS {\rm div}_h (\partial_ZP(\nabla^2_h y)) \EEE + \partial_{\dot{F}}R(\nabla_h y,\partial_t \nabla_h y)  \Big) =   \FFF f^{3D}_h e_3 & \text{in } [0,\infty) \times \Omega, \\
 		y(0,\cdot) = y^h_0 & \text{in } \Omega ,\\
 		y(t,\cdot) \in \mathscr{S}_h^{3D} &\text{for } t\in [0,\infty)
 	\end{cases}
 \end{align}
 for some \MMM initial datum \EEE $y^h_0 \in \mathscr{S}_h^{3D}$, where $\partial_FW(\nabla_h y)-\zeta_h\SSS  {\rm div}_h (\partial_ZP(\nabla^2_h y)) \EEE$ denotes the    \emph{first Piola-Kirchhoff stress tensor} and $\partial_{\dot{F}}R(\nabla_h y,\partial_t \nabla_h y)$ the \emph{viscous stress} with $R$ as introduced in \eqref{intro:R}. \FFF Moreover, $e_3$ indicates the normal vector pointing in the \QQQ $x_3$-direction. 
  \AAA We also implicitly assume zero Neumann boundary conditions for the stress and the hyperstress on $\partial\Omega \setminus \Gamma$, and on the lateral boundary there arise additional Neumann conditions from the second deformation gradient. We do not include the conditions here but refer to \cite{Kroemer} for details. \EEE  

 \subsection*{Existence of solutions to the 3D-model} 
 \MMM To guarantee existence of weak solutions to \eqref{nonlinear equation}, \EEE we introduce \MMM an \EEE approximation scheme solving suitable time-incremental minimization problems. We consider a fixed time step \AAA $\tau >0$, and \EEE set $Y^0_{h,\tau} = y_0^h$. \EEE   Whenever $Y^0_{h,\tau}, \ldots, Y^{n-1}_{h,\tau}$ are known, $Y^n_{h,\tau}$ is defined as (if existent)
 \begin{align}\label{incremental}
 	Y^n_{h,\tau} = {\rm argmin}_{y \in\mathscr{S}_h^{3D}  } \Phi_h(\tau, Y^{n-1}_{h,\tau}; y), \ \ \ \Phi_h(\tau,y_0; y_1):= \BBB \phi_h(y_1)  \EEE +  \frac{1}{2\tau} \mathcal{D}_h^2(y_0,y_1), 
 \end{align}
 where $\phi_h$ and $\mathcal{D}_h$ are defined in \eqref{assumption:energyscaling} and \eqref{assumption:dissipationsclae}. Suppose that, for a choice of $\tau$, a sequence $(Y^n_{h,\tau})_{n \in \N}$ solving  \eqref{incremental} exists. We define the  piecewise constant interpolation by
 \begin{align}\label{ds}
 	\tilde{Y}_{h,\tau} \BBB(0,\cdot) \EEE = Y^0_{h,\tau}, \ \ \ \tilde{Y}_{h,\tau} \BBB (t,\cdot) \EEE = Y^n_{h,\tau}  \ \text{for} \ t \in ( (n-1)\tau,n\tau], \ n\ge 1. \EEE
 \end{align}
 In the following,  $\tilde{Y}_{h,\tau}$  will be called a \BBB \emph{time-discrete solution}. We often drop the
 $x$-dependence and write $ \tilde{Y}_{h,\tau}(t)$ for a time-discrete solution at time $t$.

 Our first  result addresses the existence of solutions to the 3D problem. We employ an abstract convergence result concerning metric gradient flows, more precisely for curves of maximal slope and their approximation via the minimizing movement scheme. The relevant \SSS notions  about curves of maximal slope \EEE are recalled in Section \ref{sec: auxi-proofs}. \QQQ In particular, the local slopes of $\phi_h$ with respect to $\mathcal{D}_h$ are denoted by \QQQ $|\partial {\phi}_h|_{{\mathcal{D}}_h}$, see Definition~\ref{main def2}. \EEE
  For notational convenience, we define the sublevel set $\mathscr{S}_{h,M}^{3D}:=\{y\in\mathscr{S}_h^{3D} : \phi_h(y)\leq M\}$ \QQQ for $M>0$\EEE, representing deformations with \SSS uniformly \EEE bounded energy.  
%
 \begin{proposition}[Solutions in the 3D setting]\label{maintheorem1}
	Let $M>0$, $y_0^h \in \mathscr{S}^{3D}_{h,M}$ \QQQ and $(\tau_k)_k$ be a null sequence.
		\begin{itemize}
		\item[(i)] {\rm (Time-discrete solutions)} Then, there exists a solution of the minimization problems in \eqref{incremental}.  
		\item[(ii)]  {\rm (Continuous solutions)} Let $(\tilde{Y}_{h,\tau_k})_k$ be a sequence of time-discrete solutions as given in \eqref{ds}. Then, for \EEE $h>0$ sufficiently small only depending on $M$, \QQQ there exists a subsequence of $(\tau_k)_k$ (not relabeled) and a function $y^h \EEE \in L^\infty([0,+\infty);    \mathscr{S}_{h,M}^{3D} \MMM) \cap W^{1,2}([0,+\infty);H^1(\Omega;\R^3))$ \QQQ satisfying $y^h(0) = y_0^h$ \EEE such that \QQQ  $\tilde{Y}_{h,\tau_k}(t) \rightharpoonup y^h(t)$  \EEE weakly in $W^{2,p}(\Omega;\R^3)$ \QQQ for every $t\geq 0$ as $k\to \infty$, and $y^h$  is a  \EEE curve of maximal \AAA slope  \EEE for ${\phi}_h$  with respect to  $|\partial {\phi}_h|_{{\mathcal{D}}_h}$. 
	\end{itemize}
\end{proposition}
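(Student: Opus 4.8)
\textbf{Proof strategy for Proposition \ref{maintheorem1}.}

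The plan is to invoke the abstract existence theory for curves of maximal slope in metric spaces (as developed in \cite{AGS} and recalled in Section \ref{sec: auxi-proofs}), applied to the metric space $(\mathscr{S}_h^{3D}, \mathcal{D}_h)$ with driving functional $\phi_h$, for each fixed $h$. The three hypotheses one has to verify are: coercivity and lower semicontinuity of $\phi_h$ along $\mathcal{D}_h$-convergent sequences with bounded energy; compactness of sublevel sets $\mathscr{S}_{h,M}^{3D}$ in a topology for which $\mathcal{D}_h$ is lower semicontinuous; and lower semicontinuity of the local slope $|\partial\phi_h|_{\mathcal{D}_h}$ along sequences of bounded energy and bounded slope. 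The key link between these abstract conditions and the concrete $W^{2,p}$-setting is Lemma \ref{th: metric space} (and the generalized Korn inequality, Theorem \ref{pompe}), which asserts that on sublevel sets of $\phi_h$ the $\mathcal{D}_h$-topology is equivalent to the weak $W^{1,2}$-topology (indeed the weak $W^{2,p}$-topology, using the $P$-term), and that $\mathcal{D}_h$ metrizes a complete topology there.

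For part (i), I would argue by the direct method: fix $h$, $\tau$, and $y_0 = Y_{h,\tau}^{n-1}$, and take a minimizing sequence $(y^{(m)})_m$ for $\Phi_h(\tau,y_0;\cdot)$ over $\mathscr{S}_h^{3D}$. Since $\Phi_h(\tau,y_0;y^{(m)})$ is bounded, the growth condition \eqref{assumptions-P}(iii) on $P$ together with the coercivity of $W$ (via \eqref{assumptions-W}(iii)) and a Poincar\'e-type estimate using the clamped boundary conditions on $\Gamma$ yields a uniform $W^{2,p}(\Omega;\R^3)$ bound; here $p>3$ is essential so that $W^{2,p}\hookrightarrow C^1$ and the deformation gradients stay in a neighborhood where $W$ and $D$ are well-behaved. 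Extract a weakly convergent subsequence $y^{(m)}\weakly y^*$ in $W^{2,p}$, with $y^*\in\mathscr{S}_h^{3D}$ since the affine boundary constraint is weakly closed. Weak lower semicontinuity of $\phi_h$ follows from convexity of $P$, continuity of $W$ combined with the uniform $C^1$-bound (so $W(\nabla_h y^{(m)})\to W(\nabla_h y^*)$ even strongly), and linearity of the force term; weak lower semicontinuity of $\mathcal{D}_h^2(y_0,\cdot)$ follows from the corresponding equivalence/semicontinuity in Lemma \ref{th: metric space}, since $\nabla_h y^{(m)}\to\nabla_h y^*$ uniformly and $D$ is continuous. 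Hence $y^*$ is a minimizer.

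For part (ii), the minimizing movement machinery of \cite{AGS} then produces, along a subsequence of $(\tau_k)_k$, a limit curve $y^h$ with $y^h(0)=y_0^h$ such that the piecewise-constant interpolants $\tilde Y_{h,\tau_k}(t)$ converge for every $t$ in the $\mathcal{D}_h$-topology — equivalently, by Lemma \ref{th: metric space}, weakly in $W^{2,p}(\Omega;\R^3)$ — and such that $y^h$ is a curve of maximal slope for $\phi_h$ with respect to $|\partial\phi_h|_{\mathcal{D}_h}$, satisfying the energy-dissipation balance \eqref{maximalslope}. The regularity $y^h\in L^\infty([0,\infty);\mathscr{S}_{h,M}^{3D})$ is immediate from the energy decrease along the scheme, and $y^h\in W^{1,2}([0,\infty);H^1(\Omega;\R^3))$ follows from the a priori bound on the metric derivative combined with the lower bound \eqref{eq:quadraticlowerbound} on $R$ (equivalently, the Korn-type lower bound on $\partial^2 D^2$ from \eqref{eq: assumptions-D}(vi)), which controls $\|\partial_t\nabla_h y^h\|_{L^2}$ — this is exactly where the smallness of $h$ depending on $M$ enters, through the geometric rigidity estimate of Section \ref{sec:rigidity} needed to apply the generalized Korn inequality. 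The main obstacle is the lower semicontinuity of the local slope $|\partial\phi_h|_{\mathcal{D}_h}$: one must show that if $y_n\to y$ in $\mathcal{D}_h$ with $\sup_n(\phi_h(y_n)+|\partial\phi_h|_{\mathcal{D}_h}(y_n))<\infty$ then $|\partial\phi_h|_{\mathcal{D}_h}(y)\le\liminf_n|\partial\phi_h|_{\mathcal{D}_h}(y_n)$. Following the approach in \cite[Introduction]{MFMKDimension}, this requires identifying the slope with (a norm of) the stationarity residual of \eqref{nonlinear equation}, passing to the limit in the associated variational inequality using the weak $W^{2,p}$-compactness and the strong convergence of the lower-order stress terms, and crucially controlling the nonlinear viscous stress $\partial_{\dot F}R(\nabla_h y,\partial_t\nabla_h y)$ via its frame-indifferent structure; the second-gradient (hyperstress) term is what supplies the compactness that makes this passage possible.
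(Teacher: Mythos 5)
Your overall strategy for both parts matches the paper's: part (i) is the direct method in the weak $W^{2,p}$-topology (coercivity from the $P$-growth, the rigidity estimate Lemma \ref{lem:rigidity}(v), the clamped boundary conditions and Poincar\'e; lower semicontinuity from the compact embedding $W^{2,p}\subset\subset W^{1,\infty}$, dominated convergence for the $W$-term, and convexity of $P$), and part (ii) is a direct application of the abstract minimizing-movement convergence result Theorem \ref{th:abstract convergence 2} on the \emph{fixed} space $(\mathscr{S}^{3D}_{h,M},\mathcal{D}_h)$, with the required hypotheses supplied by Lemma \ref{th: metric space} and Lemma \ref{lem:stronguppergradient3d}. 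Your identification of the role of the generalized Korn inequality and the $\eqref{eq: assumptions-D}$(vi) lower bound in passing from the metric derivative to the $W^{1,2}([0,\infty);H^1)$-regularity is also exactly what is implicitly used, and this is indeed where the smallness of $h$ (via Lemma \ref{lem:rigidity}) enters.

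The one place where your envisioned argument diverges from the paper is the lower semicontinuity of the local slope $|\partial\phi_h|_{\mathcal{D}_h}$ (Lemma \ref{lem:stronguppergradient3d}(ii)). You propose to identify the slope with a norm of the stationarity residual of \eqref{nonlinear equation} and pass to the limit in the corresponding variational inequality; the paper instead uses the $\lambda$-convexity of $\phi_h$ with respect to the equivalent metric $\overline{\mathcal{D}}_h$ (via \cite[Proposition 3.2]{MielkeRoubicek}, made applicable by the uniform $L^\infty$-bounds in Lemma \ref{lem:rigidity}(v)), derives a two-sided supremum representation \eqref{slopelowerbound}--\eqref{slopeupperbound} of the slope, and deduces lower semicontinuity from the term-by-term continuity of the quantities appearing in that representation under weak $W^{2,p}$-convergence. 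Both routes are legitimate; the paper's avoids any explicit Euler--Lagrange identification and stays entirely at the level of the metric functional and the equivalence $\mathcal{D}_h\sim\overline{\mathcal{D}}_h$, which also simultaneously furnishes the strong-upper-gradient property in Lemma \ref{lem:stronguppergradient3d}(i) via \cite[Corollary 2.4.10]{AGS}. If you wanted to carry your residual-based argument through, you would still need the generalized Korn inequality to control the skew part of the linearized test-function space, so the two approaches end up leaning on the same geometric inputs.
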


 In particular, in \cite[Theorem 2.1]{MFMK} it has been shown that curves of maximal \AAA slope \LLL are \MMM weak solutions of the system \eqref{nonlinear equation}. Let us stress that the existence of weak solutions has already been settled successfully  \SSS in \cite{Kroemer, MielkeRoubicek}. \EEE  Still, our approach sheds light on this issue from a slightly different perspective as our result is formulated in a metric setting, \AAA see below \eqref{eq: disc2} for the motivation of our scheme. The result is proved in Section~\ref{sec: mianmain}.  \EEE

 \subsection{Compactness and limiting variables\EEE}

\AAA We \QQQ suppose \EEE that the limit
 \begin{align}\label{assumption:energyscalingthickness1}
 r:= \lim\limits_{h \to 0} \frac{\eps_h}{\delta_h^2} \AAA \in [0,\infty) \EEE
 \end{align}
 exists. This assumption corresponds to the supercritical regime studied in  \cite{Freddi2013} \MMM in a purely elastic framework. \AAA  The case $r=\infty$ also studied in \cite{Freddi2013} is  not covered by our theory and indeed more delicate \EEE due to a nonlinear constraint in the limiting model.  \AAA To perform the  passage to the  limit,  \EEE we need to specify the \emph{penalization parameter} $\zeta_h$, the \FFF force \EEE $f^{3D}_h$ in \eqref{assumption:energyscaling}, and the topology of the convergence.
  Fixing $\alpha<\MMM 1\EEE$, we suppose that
 \begin{align}\label{assumption:penalizationscale}
\liminf_{h \to 0}\zeta_h \eps_h^{-2} (\eps_h/\delta_h)^{\alpha p} >0 , \ \ \ \   \limsup_{h \to 0} \zeta_h \eps_h^{-2} (\eps_h/\delta_h)^{p}  = 0   .
 \end{align}
 Whereas the liminf condition will help to derive suitable compactness properties, the limsup condition ensures compatibility with the recovery sequence,  see \eqref{eq:secondordervanishes}.  Further, we assume \MMM that \EEE
 \begin{align}\label{eq: forces}
 \FFF  \frac{1}{\eps_h\delta_h}{f}^{3D}_h \rightharpoonup f^{1D} \ \ \text{weakly in } L^2(I) \EEE
 \end{align}
 for \FFF a function $f^{1D} \in L^2(I)$.  Moreover, \EEE we define the sequence of displacements $ u^h \colon \Omega \to \R^3$ by
 \begin{align}
 	u_1^h := \frac{ y_1^h -x_1}{\eps_h}, \quad
 	u_2^h := \frac{ y_2^h - hx_2}{\eps_h / h}, \quad \text{and} \quad 
 	u_3^h := \frac{ y_3^h - \delta_h x_3}{\eps_h/\delta_h}\label{def:udisplacement}
 \end{align}
 and define the function $\theta^h\colon \MMM I \EEE \to \R$ by
 \begin{align}\label{def:theta}
 	\theta^h := \frac{1}{I_0}\frac{1}{\eps_h} \int_\omega \Big( \frac{\delta_h}{h} x_2 y_3^h - x_3  y_2^h \Big) {\rm d}x_2 {\rm d}x_3,
 \end{align}
 where $I_0:= \int_\omega (x_2^2 + x_3^2) {\rm d}x_2 {\rm d}x_3 = \tfrac{1}{6}$.  Whereas $ u_1^h$, $u_2^h$, and $u_3^h$ correspond to \MMM (scaled) deviations of the deformation from the identity, \EEE the function $ \theta^h$ can be interpreted as \MMM a \EEE twist. The following proposition \AAA identifies limits of $(u^h)_h$ and $(\theta^h)_h$ \EEE by a compactness argument, also defining the topology of the convergence. The limiting variables corresponding to displacements lie in the space of \emph{Bernoulli-Navier displacements}  \EEE
 \begin{align}\label{def:Bernoulli-Navier}
 	\mathcal{A}^{BN}_{\hat \xi_1, \hat \xi_2, \hat \xi_3}  = \big\{ u \in W^{1,2}(\Omega;\R^3) : & \text{ there exist } \xi_1 \in W^{1,2}_{\hat \xi_1}(I),\, \xi_2 \in W^{2,2}_{\hat \xi_2}(I) \, \text{and } \xi_3 \in W^{2,2}_{\hat \xi_3}(I) \nonumber\\
 	& \text{ such that } u_1 = \xi_1 - x_2 \xi_2' - x_3 \xi_3', \, u_2 = \xi_2, \, u_3 = \xi_3	\big\},
 \end{align} 
where $\hat \xi_i$, $i =1,2,3$, were defined in \eqref{assumption:clampedboundary}.   On the other hand, the limit of the twists $\theta^h$ is a $W^{1,2}_0(I)$-function, and we \AAA thus \EEE introduce the space \EEE
 \begin{align}\label{eq: s1D}
 	\mathscr{S}^{1D} := \mathcal{A}^{BN}_{\hat \xi_1, \hat \xi_2, \hat \xi_3}   \times W^{1,2}_{0}(I).
 \end{align}\LLL
 \QQQ Finally, we are in the position to state the compactness result. \EEE
 \begin{proposition}[Compactness]\label{lemma:compactness}
 	Consider a sequence $(y^h)_h$ with $y^h \in \mathscr{S}^{3D}_{h,M}$ for all $h$. Then, there exists $(u,\theta) \in \mathscr{S}^{1D}$ such that \QQQ up to subsequences (not relabeled) \EEE
 	\begin{itemize}
		\item[(i)] $  u^h \rightharpoonup u$ in $W^{1,2}(\Omega; \R^3)$ and $u^h_3 \to u_3$ \EEE in $W^{1,2}(\Omega)$.
 		\item[(ii)] $ \theta^h \rightharpoonup \theta$ in $W^{1,2}(I)$.
 	\end{itemize}
 \end{proposition}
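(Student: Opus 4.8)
The plan is to follow the compactness argument of \cite{Freddi2013}, incorporating the clamped boundary conditions via the rigidity estimates of Section~\ref{sec:rigidity}.

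\emph{Step 1 (energy bounds).} From $y^h\in\mathscr{S}^{3D}_{h,M}$ we have $\phi_h(y^h)\le M$, with $\phi_h$ as in \eqref{assumption:energyscaling}. The force term $-\tfrac{1}{\eps_h^2}\int_\Omega f^{3D}_h y^h_3\,{\rm d}x$ can be rewritten, using $y^h_3=\delta_h x_3+(\eps_h/\delta_h)u^h_3$ from \eqref{def:udisplacement}, the $(x_2,x_3)$-independence of $f^{3D}_h$ and $\int_{-1/2}^{1/2}x_3\,{\rm d}x_3=0$, as $-\tfrac{1}{\eps_h\delta_h}\int_\Omega f^{3D}_h u^h_3\,{\rm d}x$; by \eqref{eq: forces}, a Poincar\'e inequality using the clamped datum $u^h_3=\hat\xi_3$ on $\Gamma$, and the fact that $\|\nabla u^h_3\|_{L^2}$ is controlled by the elastic energy via Step~2, this term is absorbed by a standard bootstrap. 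Hence $\tfrac{1}{\eps_h^2}\int_\Omega W(\nabla_h y^h)\,{\rm d}x\le C$ and $\tfrac{\zeta_h}{\eps_h^2}\int_\Omega P(\nabla^2_h y^h)\,{\rm d}x\le C$; by \eqref{assumptions-W}(iii) and \eqref{assumptions-P}(iii) this gives $\int_\Omega\dist^2(\nabla_h y^h,SO(3))\,{\rm d}x\le C\eps_h^2$ and $\int_\Omega|\nabla^2_h y^h|^p\,{\rm d}x\le C\eps_h^2\zeta_h^{-1}$, the latter being small by \eqref{assumption:penalizationscale}.

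\emph{Step 2 (rigidity).} Apply the geometric rigidity estimate for thin-walled beams with clamped boundary conditions established in Section~\ref{sec:rigidity} (adapting \cite{FrieseckeJamesMueller:02, Freddi2013}). It yields a rotation field $R^h\in W^{1,p}(I;SO(3))$ such that $\|\nabla_h y^h-R^h\|_{L^2(\Omega)}$ and the weighted quantity $\|\partial_1 R^h\|_{L^p(I)}$ are controlled by the appropriate powers of $\eps_h$, $h$, $\delta_h$, and — crucially using the clamped data — $R^h\to\Id$. The strain-gradient bound of Step~1 enters to improve integrability in this estimate.

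\emph{Step 3 (bounds on $(u^h,\theta^h)$, weak limits, structure).} Writing the entries of $\nabla_h y^h$ in terms of the displacements \eqref{def:udisplacement} (e.g.\ $(\nabla_h y^h)_{11}=1+\eps_h\partial_1 u^h_1$, $(\nabla_h y^h)_{12}=(\eps_h/h)\partial_2 u^h_1$, $(\nabla_h y^h)_{22}=1+(\eps_h/h^2)\partial_2 u^h_2$, and analogously for the remaining components) and comparing with $R^h$, the rigidity bound translates into $\|u^h\|_{W^{1,2}(\Omega;\R^3)}\le C$; moreover, substituting \eqref{def:udisplacement} into \eqref{def:theta} and using $\int_\omega x_2 x_3\,{\rm d}x_2{\rm d}x_3=0$ one obtains $\theta^h=\tfrac{1}{I_0 h}\int_\omega(x_2 u^h_3-x_3 u^h_2)\,{\rm d}x_2{\rm d}x_3$, whose $W^{1,2}(I)$-bound follows from the control of the skew (twist) part of the scaled strain. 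Passing to subsequences, $u^h\rightharpoonup u$ in $W^{1,2}(\Omega;\R^3)$ and $\theta^h\rightharpoonup\theta$ in $W^{1,2}(I)$. The $L^2$-smallness of the transverse strain components forces, in the limit, $\partial_2 u_2=\partial_3 u_3=\partial_2 u_3=\partial_3 u_2=0$ and $\partial_2 u_1+\partial_1 u_2=\partial_3 u_1+\partial_1 u_3=0$, i.e.\ $u$ has the Bernoulli--Navier form $u_1=\xi_1-x_2\xi_2'-x_3\xi_3'$, $u_2=\xi_2$, $u_3=\xi_3$ with $\xi_1\in W^{1,2}(I)$ and $\xi_2,\xi_3\in W^{2,2}(I)$. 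Finally, the boundary conditions in \eqref{assumption:clampedboundary} read, in terms of the displacements, $u^h_1=\hat\xi_1-x_2\hat\xi_2'-x_3\hat\xi_3'$, $u^h_2=\hat\xi_2$, $u^h_3=\hat\xi_3$ and $\theta^h=0$ on $\Gamma$; passing to the limit in the sense of traces yields $\xi_1\in W^{1,2}_{\hat\xi_1}(I)$, $\xi_2\in W^{2,2}_{\hat\xi_2}(I)$, $\xi_3\in W^{2,2}_{\hat\xi_3}(I)$ and $\theta\in W^{1,2}_0(I)$, so $(u,\theta)\in\mathscr{S}^{1D}$ by \eqref{def:Bernoulli-Navier}, \eqref{eq: s1D}. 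This proves (ii) and the weak part of (i).

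\emph{Step 4 (strong convergence of $u^h_3$) and main obstacle.} From $(\nabla_h y^h)_{33}=1+(\eps_h/\delta_h^2)\partial_3 u^h_3$, $(\nabla_h y^h)_{32}=(\eps_h/(h\delta_h))\partial_2 u^h_3$ and $(\nabla_h y^h)_{31}=(\eps_h/\delta_h)\partial_1 u^h_3$ we get $\partial_3 u^h_3=(\delta_h^2/\eps_h)((\nabla_h y^h)_{33}-1)$, $\partial_2 u^h_3=(h\delta_h/\eps_h)(\nabla_h y^h)_{32}$ and $\partial_1 u^h_3=(\delta_h/\eps_h)R^h_{31}+O_{L^2}(\delta_h)$; by Step~2 the prefactors $\delta_h^2/\eps_h$ (bounded by \eqref{assumption:energyscalingthickness1}) and $h\delta_h/\eps_h$ multiply quantities that are $o(1)$ in $L^2$, so $\partial_2 u^h_3,\partial_3 u^h_3\to0$ strongly, while $(\delta_h/\eps_h)R^h_{31}$ is bounded in $W^{1,p}(I)$ with $p>3$, hence precompact in $C^0(\bar I)$ and strongly convergent in $L^2$ to $\xi_3'$ by uniqueness of limits. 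Together with $u^h_3\to u_3$ strongly in $L^2(\Omega)$ (Rellich), this gives $u^h_3\to u_3$ in $W^{1,2}(\Omega)$, completing (i). I expect the main difficulty to lie in Step~2, the rigidity analysis with clamped boundary conditions (which, however, is the object of Section~\ref{sec:rigidity}), and — within the present argument — in the careful bookkeeping of the anisotropic scalings $\eps_h$, $\eps_h/h$, $\eps_h/\delta_h$ across the individual strain components, in particular for the $1/h$-weighted bound on $\theta^h$ and for the strong convergence of $u^h_3$, both of which require the component-wise and derivative information contained in the rigidity estimate rather than the crude $\eps_h$-bound alone.
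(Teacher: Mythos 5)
Your high-level strategy (rigidity $\Rightarrow$ displacement bounds $\Rightarrow$ weak limits $\Rightarrow$ Bernoulli--Navier structure $\Rightarrow$ boundary conditions) matches the paper, but two concrete steps are not justified as written. First, in Step~3 you assert that ``the rigidity bound translates into $\|u^h\|_{W^{1,2}(\Omega;\R^3)}\le C$'' by mere entry-wise comparison with $R^h$. This is not true: Lemma~\ref{lem:rigidity} controls $\|\nabla_h y^h - R^h\|_{L^2}$ at scale $\eps_h$, but only the \emph{symmetric} part of $(\nabla_h y^h - \Id)/\eps_h$ ends up bounded in $L^2$ (via the algebraic identity $\sym(R^h-\Id)=-\tfrac12(R^h-\Id)^\top(R^h-\Id)$ plus the estimates of \cite[Lemma~3.4]{Freddi2013}); the skew part is only controlled at the much larger scale $\eps_h/\delta_h$. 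Passing from a bound on $\sym(\nabla u^h)$ to a bound on $\nabla u^h$ is exactly where the paper invokes a Korn inequality with boundary data (\cite[Proposition~1, eq.~(81)]{hierarchy}) applied to $u^h-\hat U$, which is enabled by the clamped conditions in \eqref{assumption:clampedboundary}. Without this (or an equivalent subtraction of rigid motions), the $W^{1,2}$-bound on $u_1^h,u_2^h$ does not follow.

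Second, in Steps~2 and~4 you take $R^h\in W^{1,p}(I;SO(3))$ with $\|\partial_1 R^h\|_{L^p(I)}$ controlled and then argue that $(\delta_h/\eps_h)R^h_{31}$ is precompact in $C^0(\bar I)$. This is not the rotation field the rigidity estimate produces: by Lemma~\ref{lem:rigidity}, $R^h\in W^{1,2}(S;SO(3))$ with $S=I\times(-1/2,1/2)$ a two-dimensional set, and only the $L^2$-norms $\|R^h_{,1}\|_{L^2(S)}\le C\eps_h/\delta_h$, $\|R^h_{,2}\|_{L^2(S)}\le Ch\eps_h/\delta_h$ are available, not $L^p$-bounds on a one-dimensional derivative. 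Hence the claimed $C^0(\bar I)$-precompactness is not supported. The paper instead uses that $\tfrac{1}{\eps_h/\delta_h}(\nabla_h y^h-\Id)$ converges \emph{strongly} in $L^2(\Omega;\R^{3\times3})$ (a consequence of Lemma~\ref{lem:rigidity}(i)--(iii), see \cite[Lemma~3.4(i)]{Freddi2013}), together with the rescaling $\|u^h_{3,j}\|_{L^2}\le C\,\tfrac{h}{\eps_h/\delta_h}\|(\nabla_h y^h-\Id)_{3j}\|_{L^2}$ for $j=2,3$; this yields the strong $W^{1,2}$-convergence of $u_3^h$ without any $C^0$ argument. The treatment of $\theta^h$ should likewise be traced back to \cite[Lemma~3.7]{Freddi2013} rather than the unproved ``control of the skew (twist) part.''
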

\FFF The proposition will be proved in Section~\ref{sec: gammastatic}. \EEE Later, we will see that the compactness also holds in an \AAA evolutionary \EEE    setting,  \SSS see Theorem~\ref{maintheorem3}(iii). \EEE


 \smallskip    
 
 \subsection{Quadratic forms and compatibility conditions}\label{sec:quadraticforms}

 As a preparation for the formulation of the one-dimensional model, we introduce effective quadratic forms related to $W$ and $D$. We define $Q^3_W\colon \R^{3\times3} \to \R$ and $Q^3_R\colon \R^{3\times3} \to \R$ by
 \begin{align}\label{eq:quadraticformsnotred}
 	Q^3_W(A):= \partial^2_{F^2} W(\Id)[A,A] \qquad\text{and}\qquad Q^3_D(A):=  \tfrac{1}{2} \partial^2_{F_1^2} D^2(\Id,\Id)[A,A] = 2 R(\Id,A)
 \end{align}
 for $A \in \R^{3 \times 3}$. \AAA By Taylor \EEE expansion and Polar decomposition in combination with frame indifference (see \eqref{assumptions-W} and \eqref{eq: assumptions-D}) one can observe that the quadratic forms \MMM $Q^3_W$, \LLL $Q^3_D$  only depend \EEE on the symmetric part ${\rm sym}(A)$ of  $A \in \R^{3\times 3}$ and \QQQ that \EEE they are positive definite on $\R^{3\times3}_\sym := \{A \in \R^{3\times 3}: A = A^{\top}\}$. \AAA We \EEE  define reduced quadratic forms by minimizing over stretches in the \QQQ $x_2$- and $x_3$-direction. \EEE More precisely, we let
 \begin{align}\label{def:quadraticforms}
 	Q^1_S(q_{11},q_{12}) := &\min \left\{Q^3_S(A): A\in \R^{3\times 3}_\sym, \, a_{1j}= q_{1j} \, \text{ for } j =1,2 \right\}  &&\text{and} \nonumber\\
 	Q^0_S(q_{11}) :=  &\min \left\{Q^1_S(q_{11},z):z\in \R \right\} \qquad \qquad\qquad \qquad &&\text{for }  S \EEE = W,D. 
 \end{align}\EEE
 \AAA The \EEE quadratic forms \MMM $Q_S^i$, $i=1,3$, \EEE induce \QQQ fourth-order \MMM and \FFF second-order \EEE tensors, respectively, \EEE  denoted by $\C_W^i$ and $\C_D^i$ for $i=1,3$.
 In a similar spirit to  \cite{MFMKDimension, MFLMDimension2D1D}, \EEE  we require some compatibility conditions of the quadratic forms to perform a rigorous evolutionary dimension reduction. \MMM This is crucial \EEE as we need to construct mutual recovery sequences, compatible \FFF with \EEE the elastic energy and the viscous dissipation at the same time, see Theorem \ref{theorem: lsc-slope} \MMM below. \FFF We assume that we can decompose $Q_S^3$ in the following way: \AAA there exist \EEE quadratic forms $Q_S^*$ and constants $C_S^*>0$ such that for all $A= (a_{ij})_{i,j=1,2,3} \in \R^{3\times 3}_\sym$ it holds that
 \begin{align}
 Q_S^3(A) 	= Q_S^1(a_{11},  a_{12} ) + Q_S^*(\tilde A),\qquad Q_S^1(a_{11},a_{12}) = Q_S^0(a_{11}) + C_S^* a_{12}^2, \qquad \text{for } S=W,D\EEE \tag{\textbf{H}} \label{quadraticforms}
 \end{align}
 where $\tilde A = (\tilde a_{ij})_{i,j=1,2,3}\in \R^{3\times 3}_\sym$ satisfies $\tilde a_{1j}=0$ for $j =1,2$, and $ \tilde a_{km}= a_{km} $ for $(k,m) \notin\{ (1,1),(1,2), \AAA (2,1) \EEE \}$.
\EEE This induces a restriction from a modeling point of view since it essentially corresponds to materials with Poisson ratio zero, such as cork. \FFF More precisely, the assumption also covers materials with a nonzero Poisson ratio \QQQ with respect to the \EEE $x_2$- and $x_3$-direction, e.g.\ certain orthotropic materials with density ($\lambda_2,\lambda_3,\mu \ge 0$)
\begin{align}\label{eq: new-orth} 
Q^3_W(A) =  (\lambda_2 a_{22} + \lambda_3 a_{33})^2  +  \mu|A|^2.
\end{align}
 \EEE A \MMM possible generalization (not included in the following for simplicity) \EEE is to assume that $Q_S^3$ are $h$-dependent, denoted by $Q_{S,h}^3$, such that $Q_{S,h}^3 = Q_S^3 + \MMM {\rm o}(1)\EEE \hat{Q}_{S}$ \EEE for $h \to 0$, \EEE where $Q_S^3$ satisfies \eqref{quadraticforms} and $\hat{Q}_S$ is any positive definite quadratic form. Another sound option for a compatibility condition consists in vanishing dissipation effects, briefly discussed in Remark \ref{rem: VDE} below.

 \subsection{The one-dimensional model}
To formulate the gradient flow  for the one-dimensional theory, we again need to assign a metric and an energy to a suitable space. As the limit variables  have already been identified with $\mathscr{S}^{1D}$ in Proposition~\ref{lemma:compactness}, \SSS it remains \EEE to find a metric $\mathcal{D}_0$ and an energy $\phi_0$. Following the abstract theory in \cite{S2}, the natural candidate for the energy is the $\Gamma$-limit of $\phi_h$ from the static theory\QQQ, see Theorem~\ref{th: Gamma}, \EEE and thus we define
 \begin{align}\label{def:enegeryphi-1D}
 	\phi_0(u,\theta):= \frac{1}{2} \int_I Q_W^0 \EEE \Big(\xi_1' + \frac{r}{2} \vert \xi_3^{\prime}\vert^2\Big) \,  {\rm d}x_1 + \frac{1}{24} \int_I \big(Q_W^0 \EEE (\xi_2'') + Q_W^1(\xi_3^{\prime\prime}, \theta^\prime) \big) \, {\rm d}x_1 - \FFF \int_I f^{1D} \xi_3 \,{\rm d}x_1  
 \end{align}
\FFF for $(u,\theta)\in \mathscr{S}^{1D}$, \EEE where \QQQ $u$ is identified with $(\xi_1,\xi_2,\xi_3)$ via \eqref{def:Bernoulli-Navier} and \EEE $f^{1D}$ \FFF is \EEE defined in \eqref{eq: forces}.   \QQQ Due \EEE to the similar structure of the  metric $\mathcal{D}_h$ and the energy $\phi_h$, see \eqref{assumption:energyscaling} and \eqref{assumption:dissipationsclae}, it turns out that \QQQ the \EEE appropriate choice for the metric is \EEE
 \begin{align}
 	\mathcal{D}_0((u,\theta),(\tilde u, \tilde \theta)) :=&\bigg(\int_I Q_R^0\Big(\xi_1' - \tilde \xi_1' + \frac{r}{2}(\vert \xi_3^\prime\vert^2 -\vert \tilde \xi_3^{\prime}\vert^2)\Big) {\rm d}x_1\nonumber\\
 	&+ \frac{1}{12}\int_I \big( Q_R^0(\xi_2''- \tilde \xi_2'') + Q_R^1(\xi_3^{\prime\prime}- \tilde \xi_3^{\prime\prime}, \theta^\prime - \tilde \theta^{\prime}) \big) {\rm d}x_1\bigg)^{1/2}\label{def:metric}
 \end{align}
 for $(u,\theta),(\tilde u, \tilde \theta)\in \mathscr{S}^{1D} $, \SSS where \QQQ $(\xi_1,\xi_2,\xi_3)$ and $(\tilde \xi_1,\tilde \xi_2,\tilde \xi_3)$ correspond to $u$ and $\tilde u$, respectively, see \eqref{def:Bernoulli-Navier}. \EEE
 
 The \EEE geometrical interpretation of the variables $(u,\theta) \in \mathscr{S}^{1D}$ \MMM is as follows: \EEE $u_2$ and $u_3$ correspond to orthogonal displacements in the \QQQ $x_2$- and $x_3$-direction. \EEE The axial displacement $u_1$ in the $x_1$-direction is additionally \MMM influenced \EEE by linear \FFF perturbations \EEE of $u_2'$ and $u_3'$, see \eqref{def:Bernoulli-Navier}. \MMM Eventually, \EEE $\theta$ corresponds to a twist. 
 Note that in \cite{MFLMDimension2D1D} the variable  $\xi_3$ is denoted by $w$. \MMM In \cite[Theorem 2.2]{MFLMDimension2D1D} it has been shown that curves of maximal slope for $\phi_0$ with respect to $\mathcal{D}_0$ give rise to weak solutions in the sense of \cite[(2.13)]{MFLMDimension2D1D} of the system \AAA given in \eqref{eq: equation-simp-intro11}. \EEE    Whereas the cases $r>0$ \SSS considered in \cite{MFLMDimension2D1D} \EEE lead to the one-dimensional von K\'arm\'an theory, $r=0$ corresponds to a linear system of equations. Note that in that case, $\phi_0$ and $\mathcal{D}_0$ become \AAA purely \EEE quadratic, and thus  convex.   Then, existence (and even uniqueness) of weak solutions follow from well-known theory, see e.g. \cite[Section 2.4]{AGS}.

 \subsection{Main  \MMM convergence result\EEE}\label{sec:mainconvergenceresult}

 %
 
 
 To relate the three-dimensional with the one-dimensional model, we will use the abstract theory of gradient flows \cite{AGS} and evolutionary $\Gamma$-convergence \cite{Mielke, S1, S2}.  \QQQ In particular, we recall that the \EEE local slopes are denoted by $|\partial {\phi}_h|_{{\mathcal{D}}_h}$ and $|\partial {\phi}_0|_{{\mathcal{D}}_0}$, respectively, see Definition \ref{main def2},  \SSS and we again refer \QQQ to \EEE Section \ref{sec: auxi-proofs} for more details. \EEE    We introduce \QQQ the \EEE topology of convergence as follows: given \EEE  a deformation $y^h\in \mathscr{S}^{3D}_{h}$, we define mappings $\pi_h\colon \mathscr{S}^{3D}_{h} \to \mathscr{S}$ by $\pi_h(y^h) = (u^h,\theta^h)$ for $\mathscr{S} = W^{1,2}(\Omega;\R^3) \times W^{1,2}(I)$, where $u^h$ and $\theta^h$ are defined in \eqref{def:udisplacement} and \eqref{def:theta}. \SSS We write   $y^h \stackrel{\pi\sigma}{\to} (u,\theta)$ for the convergence found in   Proposition~\ref{lemma:compactness}. \EEE (The symbol $\pi\sigma$ is  used because of  the abstract convergence result, see Section \ref{sec: auxi-proofs}.)  \MMM We also write  \EEE   $y^h \stackrel{\pi\rho}{\to} (u,\theta)$ if the convergence   of $u_1^h$ and $u_2^h$   holds with respect to the strong in place of the weak topology.  We remark that the limiting variables $(u,\theta)$ are contained in the space $\mathscr{S}^{1D} \subset \mathscr{S}$ defined in \eqref{eq: s1D}. \EEE
 Given $(u_0,\theta_0) \in \mathscr{S}^{1D}$, we introduce the family \MMM of sequences of  admissible initial data  \EEE as $
 \mathcal{B}(u_0,\theta_0) = \big\{ (y^h_0)_h: \ y^h_0 \in \mathscr{S}^{3D}_h, \ y^h_0 \stackrel{\pi\sigma}{\to} (u_0,\theta_0), \ \phi_h(y^h_0) \to \MMM {\phi}_0(u_0,\theta_0) \EEE \big\}$.

 %
 
 \begin{theorem}[Relation between three-dimensional and one-dimensional system]\label{maintheorem3}
 	Let $(u_0,\theta_0) \in \mathscr{S}^{1D}$ be an initial datum   and suppose that \eqref{quadraticforms} holds.\EEE \\
 	\begin{itemize}
 		\item[(i)] {\rm (Well-posedness of initial datum)} \SSS There holds $	\mathcal{B}(u_0,\theta_0) \neq \emptyset$ \MMM for all $(u_0,\theta_0)\in \mathscr{S}^{1D}$.  \EEE
 		\item[(ii)] {\rm (Convergence of discrete solutions)}  Consider a sequence $(y^h_0)_h \in \mathcal{B}(u_0,\theta_0)$,  a null sequence $(\tau_h)_h$, and a sequence of time-discrete solutions $\tilde{Y}_{h,\tau_h}$ as in \eqref{ds}  with  $\tilde{Y}_{h,\tau_h}(0)=y^h_0$. \\ 
 		\noindent Then, there exists a curve of maximal slope $(u,\theta)\colon [0,\infty) \to \mathscr{S}^{1D}$ for ${\phi}_0$  with respect to  $|\partial {\phi}_0|_{{\mathcal{D}}_0}$ satisfying $(u(0),\theta(0)) = (u_0,\theta_0)$ such that up to a subsequence (not relabeled) \QQQ it holds that \EEE 
 		\begin{align}\label{eq:ABC1}
 		\tilde{Y}_{h,\tau_h}(t) \stackrel{\pi\rho}{\to} (u(t),\theta(t)) \ \ \ \ \ \text{for all} \ t \in [0,\infty) \ \  \text{ as $h \to 0$}.
 		\end{align}
 		\item[(iii)] {\rm{(Convergence of continuous solutions)}} 
 		\SSS Consider a sequence $(y^h_0)_h \in \mathcal{B}(u_0,\theta_0)$. \EEE
 		Let $(y^h)_h$ be a sequence of curves of maximal \AAA slope \EEE for $\phi_{h}$ with respect to $\vert \partial \phi_{h} \vert_{\mathcal{D}_{h}}$ satisfying $y^h(0) = y^h_0$. Then, there exists a curve of maximal slope $(u,\theta)\colon [0,\infty) \to \mathscr{S}^{1D}$ for ${\phi}_0$  with respect to  $|\partial {\phi}_0|_{{\mathcal{D}}_0}$ satisfying $(u(0),\theta(0)) = (u_0,\theta_0)$ such that up to a subsequence (not relabeled) \QQQ it holds that \EEE 
 		\begin{align}\label{eq:ABC2}
 			y^h(t) \stackrel{\pi\rho}{\to} (u(t),\theta(t)) \ \ \ \ \ \text{for all} \ t \in [0,\infty) \ \  \text{ as $h \to 0$}.
 			\end{align}
 	\end{itemize}
 \end{theorem}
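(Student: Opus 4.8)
The plan is to regard both the three- and the one-dimensional systems as metric gradient flows and to invoke the abstract theory of evolutionary $\Gamma$-convergence of \cite{S1} in the metric formulation of \cite{AGS}. Concretely, the structural ingredients one has to supply are: (a) a $\Gamma$-$\liminf$ inequality $\phi_0(u,\theta)\le\liminf_h\phi_h(y^h)$ whenever $y^h\stackrel{\pi\sigma}{\to}(u,\theta)$, together with the existence of well-prepared recovery sequences, i.e.\ the static $\Gamma$-convergence $\phi_h\xrightarrow{\Gamma}\phi_0$ of Theorem~\ref{th: Gamma}; (b) a lower-semicontinuity estimate for the metric slopes $|\partial\phi_0|_{\mathcal{D}_0}(u,\theta)\le\liminf_h|\partial\phi_h|_{\mathcal{D}_h}(y^h)$ along $\pi\sigma$-convergent sequences of bounded energy, which is Theorem~\ref{theorem: lsc-slope}; and (c) compactness of families of curves with equibounded energy and $\mathcal{D}_h$-length, provided by Proposition~\ref{lemma:compactness} and the equivalence of the $\mathcal{D}_h$-topology with the weak $H^1$-topology (Lemma~\ref{th: metric space}(v)), together with a $\liminf$ inequality for the dissipated energy $\int_0^t|u'|_{\mathcal{D}_0}^2\,ds\le\liminf_h\int_0^t|(y^h)'|_{\mathcal{D}_h}^2\,ds$, which follows from $\mathcal{D}_0$ being a lower $\Gamma$-limit of the $\mathcal{D}_h$ (exploiting the parallel structure of \eqref{assumption:dissipationsclae} and \eqref{def:metric} and assumption \eqref{quadraticforms}).

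\noindent\textbf{Part (i).} Nonemptiness of $\mathcal{B}(u_0,\theta_0)$ is precisely the $\Gamma$-$\limsup$ (recovery sequence) for the energies: given $(u_0,\theta_0)\in\mathscr{S}^{1D}$ I would construct $y^h_0\in\mathscr{S}^{3D}_h$ with $y^h_0\stackrel{\pi\sigma}{\to}(u_0,\theta_0)$ and $\phi_h(y^h_0)\to\phi_0(u_0,\theta_0)$ by adapting the construction of \cite{Freddi2013} to the clamped boundary conditions on $\Gamma$, using the coupling built into \eqref{assumption:clampedboundary}; the higher-order correctors are harmless because the $\limsup$-condition in \eqref{assumption:penalizationscale} forces the strain-gradient contribution to vanish in the limit (cf.\ \eqref{eq:secondordervanishes}).

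\noindent\textbf{Part (iii).} Let $y^h$ be curves of maximal slope for $\phi_h$; by definition they satisfy the energy--dissipation balance
\[
\phi_h(y^h(t))+\tfrac12\int_0^t|(y^h)'|_{\mathcal{D}_h}^2\,ds+\tfrac12\int_0^t|\partial\phi_h|_{\mathcal{D}_h}^2(y^h(s))\,ds=\phi_h(y^h_0)
\]
(see \eqref{maximalslope}). Since $(y^h_0)_h\in\mathcal{B}(u_0,\theta_0)$, the right-hand side converges to $\phi_0(u_0,\theta_0)<\infty$; combining this with coercivity of $\phi_h$ (absorbing the force term via a Poincaré inequality on $\mathscr{S}^{3D}_h$, using the clamped data) yields uniform bounds on $\sup_t\phi_h(y^h(t))$, on the $\mathcal{D}_h$-length, and on the slope integral. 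The velocity bound and Lemma~\ref{th: metric space}(v) give $u^h$ bounded in $W^{1,2}([0,\infty);H^1(\Omega;\R^3))$, whence a refined Arzelà--Ascoli / Helly-type argument in the metric space produces a limit curve $(u,\theta)\colon[0,\infty)\to\mathscr{S}^{1D}$ with $y^h(t)\stackrel{\pi\sigma}{\to}(u(t),\theta(t))$ for every $t$, upgraded to $\stackrel{\pi\rho}{\to}$ via the strong convergence in Proposition~\ref{lemma:compactness}(i) and convergence of the relevant stretching and bending strains. Passing to the $\liminf$ in the balance, using (a) for the energy at time $t$, the metric $\liminf$ of (c) for the dissipated energy, and Theorem~\ref{theorem: lsc-slope} combined with Fatou's lemma for the slope term, gives $\phi_0(u(t),\theta(t))+\tfrac12\int_0^t|u'|_{\mathcal{D}_0}^2\,ds+\tfrac12\int_0^t|\partial\phi_0|_{\mathcal{D}_0}^2(u(s),\theta(s))\,ds\le\phi_0(u_0,\theta_0)$. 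The opposite inequality is the ``trivial'' direction, valid for any $\mathcal{D}_0$-absolutely continuous curve by Young's inequality and the chain-rule inequality for $\phi_0$ along such curves; hence equality holds, $(u,\theta)$ is a curve of maximal slope for $\phi_0$ w.r.t.\ $|\partial\phi_0|_{\mathcal{D}_0}$, and $(u(0),\theta(0))=(u_0,\theta_0)$ by well-preparedness and continuity.

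\noindent\textbf{Part (ii).} I would run the same scheme starting from the incremental minimizers \eqref{incremental}. At fixed $h$ the piecewise-constant and the De Giorgi variational interpolants of $(Y^n_{h,\tau_h})_n$ satisfy a discrete energy--dissipation inequality in which the discrete slope appears; taking $\tau_h\to0$ jointly with $h\to0$, the uniform (in both parameters) a priori bounds---which require the incremental estimates of Sections~\ref{sec:rigidity}--\ref{sec: 3d-1d} to be uniform in $h$---give the same compactness as in Part~(iii), and one passes to the limit using the static $\Gamma$-$\liminf$ for the energy, the metric $\liminf$ for the dissipation, and Theorem~\ref{theorem: lsc-slope} for the (discrete) slope term, landing on the continuous energy--dissipation balance; this is the ``combined'' passage of Figure~\ref{diagram}. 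The main obstacle throughout is ingredient (b)--(c): beyond Theorem~\ref{theorem: lsc-slope} (already established, and the delicate point), one must build mutual recovery sequences simultaneously compatible with the elastic energy and the viscous metric---this is exactly where \eqref{quadraticforms} is used decisively---and upgrade $\pi\sigma$ to $\pi\rho$, i.e.\ prove strong convergence of the in-plane displacements; both are needed to make the $\liminf$ in the dissipation term sharp and thus to identify the limit as a genuine curve of maximal slope.
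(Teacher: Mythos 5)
Your proposal takes essentially the same route as the paper: frame both the three- and one-dimensional problems as metric gradient flows and supply the ingredients for an evolutionary $\Gamma$-convergence argument of Sandier--Serfaty type — static $\Gamma$-convergence of the energies (Theorem~\ref{th: Gamma}), lower semicontinuity of the dissipation distances (Theorem~\ref{th: lscD}), lower semicontinuity of the slopes (Theorem~\ref{theorem: lsc-slope}), compactness (Proposition~\ref{lemma:compactness}, Lemma~\ref{th: metric space}), and the strong upper gradient property (Lemma~\ref{lem:stronguppergradient3d}, Lemma~\ref{Thm:representationlocalslope1}(iv)). The paper simply invokes the abstract black boxes, Theorem~\ref{th:abstract convergence 2} for Part~(ii) and Theorem~\ref{thm: sandierserfaty} for Part~(iii), whereas you re-derive their conclusions inline; in substance this is the same argument, and Part~(i) is indeed just the recovery sequence from Theorem~\ref{th: Gamma}(ii).

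There is, however, a genuine gap in the upgrade from $\stackrel{\pi\sigma}{\to}$ to $\stackrel{\pi\rho}{\to}$. You appeal to ``the strong convergence in Proposition~\ref{lemma:compactness}(i) and convergence of the relevant stretching and bending strains,'' but Proposition~\ref{lemma:compactness}(i) only yields \emph{strong} $W^{1,2}$-convergence of $u_3^h$; for $u_1^h$ and $u_2^h$ it gives weak convergence only. Moreover your causal order is reversed: you frame the strong convergence as something ``needed to make the $\liminf$ in the dissipation term sharp'' and hence to identify the limit curve, but the abstract theorem already identifies the limit as a curve of maximal slope and delivers the energy convergence $\phi_h(y^h(t))\to\phi_0(u(t),\theta(t))$ using only $\pi\sigma$-convergence; the strong in-plane convergence must then be \emph{deduced from} this. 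The missing argument, as in the paper: Lemma~\ref{lemma: metric space-properties}(iii) converts the energy convergence into $\int_\Omega\tfrac12 Q_W^3(G^h(y^h))\,{\rm d}x\to\phi_0(u(t),\theta(t))$; weak $L^2$-lower semicontinuity together with the reverse inequality from \eqref{integrationx_2}--\eqref{integrationx_3} gives $\int_\Omega\tfrac12 Q_W^3(G^h(y^h))\,{\rm d}x\to\int_\Omega\tfrac12 Q_W^3(G_y)\,{\rm d}x$; the expansion $Q_W^3(G^h(y^h))-Q_W^3(G_y)-2\C_W^3[G^h(y^h)-G_y,G_y]=Q_W^3(G^h(y^h)-G_y)$ and positive definiteness of $Q_W^3$ on $\R^{3\times3}_{\sym}$ then yield strong $L^2$-convergence of $\sym(G^h(y^h))$; finally Korn's inequality applied to $u^h-u$, together with \eqref{eq:ucharacterizationlimit}, the Bernoulli--Navier structure of $u$, and the identification $(G_y+\tfrac r2 A_{u,\theta}^2)_{11}=\partial_1 u_1$ from Lemma~\ref{lem:identificationofstrainlimit}, upgrades this to strong $W^{1,2}$-convergence of $u_1^h$ and $u_2^h$. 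Without this chain, $\pi\rho$-convergence is not established.
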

 \AAA The result is proved in Section \ref{sec: mianmain}. \EEE Note that Proposition~\ref{maintheorem1} provides the existence of discrete solutions, but also the existence of curves of maximal \AAA slope \EEE  in the three-dimensional setting (which are weak solutions of \eqref{nonlinear equation} due to \cite[Theorem 2.1]{MFMK}). \EEE \QQQ Further, we \EEE mention once again that in \cite{MFLMDimension2D1D} it has been shown that \LLL the curves from (ii) and (iii) \EEE can be identified as weak solutions to the one-dimensional \AAA system \eqref{eq: equation-simp-intro11}. \EEE Moreover, we stress that \SSS compared to  Proposition~\ref{lemma:compactness} \EEE  the convergence of the displacements $u_1^h$ and $u_2^h$ also holds in the strong $W^{1,2}$-sense.
 \QQQ In addition, \EEE we point out that the assumpion~\eqref{quadraticforms} is only needed in the limiting passage and not for a fixed $h$ in Proposition~\ref{maintheorem1}. \EEE 
 \EEE From now on we set $\QQQ f_h^{3D} \EEE \equiv 0 $ for convenience. The general case indeed follows \QQQ by \EEE minor modifications, which are standard. \FFF For details we refer the reader to Lemma~\ref{lem:compactnessfornonzeroforces}. \EEE

\section{Metric gradient flows}\label{sec: auxi-proofs}

\subsection{Definitions}

In this section, we  recall the relevant definitions about curves of maximal slope and present  abstract theorems concerning the convergence of time-discrete solutions and continuous solutions to curves of maximal slope.   
We consider a   complete metric space $(\mathscr{S},\mathcal{D})$. We say a curve $y\colon (a,b) \to \mathscr{S}$ is \emph{absolutely continuous} with respect to $\mathcal{D}$ if there exists $m \in L^1(a,b)$ such that
\begin{align}\label{def:absolutecontinuity}
\mathcal{D}(y(s),y(t)) \le \int_s^t m(r) \,{\rm d}r \EEE \ \ \   \text{for all} \ a \le s \le t \le b.
\end{align}
The smallest function $m$ with this property, denoted by $|y'|_{\mathcal{D}}$, is called \emph{metric derivative} of  $y$  and satisfies  for a.e.\ $t \in (a,b)$   (see \cite[Theorem 1.1.2]{AGS} for the existence proof)
\begin{align}\label{def:metricderivative}
|y'|_{\mathcal{D}}(t) := \lim_{s \to t} \frac{\mathcal{D}(y(s),y(t))}{|s-t|}.
\end{align}
We  define the notion of a \emph{curve of maximal slope}. We only give the basic definition here and refer to \cite[Section 1.2, 1.3]{AGS} for motivations and more details.  By  $h^+:=\max(h,0)$ we denote the positive part of a function  $h$.

\begin{definition}[Upper gradients, slopes, curves of maximal slope]\label{main def2} 
	We consider a   complete metric space $(\mathscr{S},\mathcal{D})$ with a functional $\phi\colon \mathscr{S} \to (-\infty,+\infty]$.

	{\rm(i)} A function $g\colon \mathscr{S} \to [0,\infty]$ is called a strong upper gradient for $\phi$ if for every absolutely continuous curve $ y\colon  (a,b) \to \mathscr{S}$ the function $g \circ y$ is Borel and 
	$$|\phi(y(t)) - \phi(y(s))| \le \int_s^t g( y(r)) |y'|_{\mathcal{D}}(r)\,\EEE {\rm d}r \EEE \  \ \  \text{for all} \ a< s \le t < b.$$
	
	{\rm(ii)} For each $y \in \mathscr{S}$ the local slope of $\phi$ at $y$ is defined by 
	$$|\partial \phi|_{\mathcal{D}}(y): = \limsup_{z \to y} \frac{(\phi(y) - \phi(z))^+}{\mathcal{D}(y,z)}.$$

	{\rm(iii)} An absolutely continuous curve $y\colon (a,b) \to \mathscr{S}$ is called a curve of maximal slope for $\phi$ with respect to the strong upper gradient $g$ if for a.e.\ $t \in (a,b)$
	$$\frac{\rm d}{ {\rm d} t} \phi(y(t)) \le - \frac{1}{2}|y'|^2_{\mathcal{D}}(t) - \frac{1}{2}g^2(y(t)).$$
\end{definition}

\subsection{Curves of maximal slope as limits of time-discrete solutions}

In the following, we consider a sequence of complete metric spaces $(\mathscr{S}_k, \mathcal{D}_k)_k$, as well as a limiting complete metric space $(\mathscr{S}_0,\mathcal{D}_0)$. Moreover, let $(\phi_k)_k$ be a sequence of functionals with $\phi_k\colon \mathscr{S}_k \to \FFF(-\infty,\infty] \EEE$ and $\phi_0\colon \mathscr{S}_0 \to \FFF (-\infty,\infty] \EEE$. 

\FFF Fixing $k\in \N$, we now \AAA describe \EEE the construction of time-discrete solutions for the energy $\phi_k$ and the metric $\mathcal{D}_k$, already mentioned in Section~\ref{section:2}, but in a \AAA general \EEE metric setting to give a precise description of the abstract theory.
Consider a fixed time step $\tau >0$ and suppose that an initial datum $Y^0_{k,\tau}$ is given. Whenever $Y_{k,\tau}^0, \ldots, Y^{n-1}_{k,\tau}$ are known, $Y^n_{k,\tau}$ is defined as (if existent)
\begin{align*}
Y_{k,\tau}^n = {\rm argmin}_{v \in \mathscr{S}_k} \mathbf{\Phi_k}(\tau,Y^{n-1}_{k,\tau}; v), \ \ \ \mathbf{\Phi_k}(\tau,u; v):=  \frac{1}{2\tau} \mathcal{D}_k(v,u)^2 + \phi_k(v). 
\end{align*}
 Then, \EEE we define the  piecewise constant interpolation by
\begin{align*}
\tilde{Y}_{k,\tau}(0) = Y^0_{k,\tau}, \ \ \ \tilde{Y}_{k,\tau}(t) = Y^n_{k,\tau}  \ \text{for} \ t \in ( (n-1)\tau,n\tau], \ n\ge 1.  
\end{align*}
We call  $\tilde{Y}_{k,\tau}$  a \emph{time-discrete solution}. 

\MMM Our \EEE goal is to study the limit of time-discrete solutions as $k \to \infty$. To this end, we need to introduce a suitable topology for the convergence.  We assume that for each $k \in \N$ there exists a map $\pi_k\colon \mathscr{S}_k \to \mathscr{S}$ for a suitable set $\mathscr{S} \supseteq \mathscr{S}_0$. Given a sequence $(z_k)_k$, $z_k \in \mathscr{S}_k$, and $z \in \mathscr{S}$,  we say
\begin{align*}
z_k \stackrel{\pi\sigma}{\to} z \  \ \ \ \  \text{if} \ \ \ \pi_k(z_k) \stackrel{\sigma}{\to} z,
\end{align*}
\FFF where $\sigma$ is a topology on $\mathscr{S}$ satisfying the following conditions: \EEE
we suppose \MMM  that \EEE   
\begin{align}\label{compatibility}
\begin{split}
z_k \stackrel{\pi\sigma}{\to} z, &\ \  \bar{z}_k \stackrel{\pi\sigma}{\to} \bar{z}  \ \ \  \Rightarrow \ \ \ \liminf_{k \to \infty} \mathcal{D}_k(z_k,\bar{z}_k) \ge  \mathcal{D}_0(z,\bar{z})
\end{split}
\end{align}
for all $z, \bar z \in \mathscr{S}_0$.
Moreover, we assume that for every sequence $(z_k)_k$, $z_k \in \mathscr{S}_k$, and $N \in \N$ we have
\begin{align}\label{basic assumptions2}
\begin{split}
\phi_k(z_k) \leq N \quad \Rightarrow \ \ \ z_k \stackrel{\pi\sigma}{\to} z \in \mathscr{S}_0 \quad \text{(up to a subsequence)}.
\end{split}
\end{align}
\FFF Further, \EEE we suppose lower semicontinuity of the energies and the slopes in the following sense: for all $z \in \mathscr{S}_0$ and $(z_k)_k$, $z_k \in \mathscr{S}_k$, we have
\begin{align}\label{eq: implication}
\begin{split}
z_k \stackrel{\pi\sigma}{\to}  z \ \ \ \ \  \Rightarrow \ \ \ \ \  \liminf_{k \to \infty} |\partial \phi_{k}|_{\mathcal{D}_{k}} (z_{k}) \ge |\partial \phi_0|_{\mathcal{D}_0} (z), \ \ \ \ \  \liminf_{k \to \infty} \phi_{k}(z_{k}) \ge \phi_0(z).
\end{split}
\end{align}
\MMM For the relation  of time-discrete solutions  and \EEE curves of maximal slope we \FFF will \EEE use the following result\FFF.\EEE
\begin{theorem}\label{th:abstract convergence 2}
	Suppose that   \eqref{compatibility}--\eqref{eq: implication} hold. Moreover, assume that    $|\partial \phi_0|_{\mathcal{D}_0}$ is a  strong upper gradient for $ \phi_0 $.  Consider a  null sequence $(\tau_k)_k$. Let   $(Y^0_{k,\tau_k})_k$ with $Y^0_{k,\tau_k} \in \mathscr{S}_k$  and $z_0 \in \mathscr{S}_0$ be initial data satisfying 
	
	\begin{align*}
	& \ \  Y^0_{k,\tau_k} \stackrel{\pi\sigma}{\to} z_0 , \ \ \ \ \  \phi_k(Y^0_{k,\tau_k}) \to \phi_0(z_0).
	\end{align*}
	Then, for each sequence of discrete solutions $(\tilde{Y}_{k,\tau_k})_k$  starting from $(Y^0_{k,\tau_k})_k$ there exists a limiting function $z\colon [0,+\infty) \to \mathscr{S}_0$ such that up to a  subsequence \QQQ (not relabeled) \EEE
	$$\tilde{Y}_{k,\tau_k}(t) \stackrel{\pi\sigma}{\to} z(t), \ \ \ \ \ \phi_k(\tilde{Y}_{k, \tau_k}(t)) \to \phi_0(z(t)) \ \ \  \ \ \ \ \ \forall t \ge 0$$
	as $k \to \infty$, and $z$ is a curve of maximal slope for $\phi_0$ with respect to $|\partial \phi_0|_{\mathcal{D}_0}$. In particular,  $z$ satisfies the \AAA energy-dissipation-balance \EEE   
	\begin{align}\label{maximalslope}
	\frac{1}{2} \int_0^T |z'|_{\mathcal{D}_0}^2(t) \, {\rm d} t + \frac{1}{2} \int_0^T |\partial \phi_0|_{\mathcal{D}_0}^2(z(t)) \, {\rm d} t + \phi_0(z(T)) = \phi_0(z_0) \ \  \ \ \ \forall \,  T>0. 
	\end{align} 
	
\end{theorem}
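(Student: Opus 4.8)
\emph{Plan of proof.} The statement is a metric version of De~Giorgi's minimizing--movement theorem coupled with the variational ($\pi\sigma$-)convergence scheme of \cite{S1,S2}, and I would obtain it by adapting \cite[Chapter~3]{AGS} along the lines of \cite{MFMKDimension}. Fixing $k$ and abbreviating $\tau=\tau_k$, $Y^n=Y^n_{k,\tau_k}$, minimality of $Y^n$ in the incremental problem first yields the elementary discrete energy--dissipation inequality $\tfrac{1}{2\tau}\mathcal{D}_k(Y^n,Y^{n-1})^2+\phi_k(Y^n)\le\phi_k(Y^{n-1})$; iterating gives $\phi_k(\tilde Y_{k,\tau_k}(t))\le\phi_k(Y^0)$ for all $t$ and $\sum_n\tfrac{1}{\tau_k}\mathcal{D}_k(Y^n,Y^{n-1})^2\le 2\big(\phi_k(Y^0)-\inf_{\mathscr{S}_k}\phi_k\big)\le C$, the energies being bounded below uniformly under the standing coercivity. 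Besides the piecewise--constant interpolant $\tilde Y_{k,\tau_k}$ I would use its backward version $\underline Y_{k,\tau_k}(t):=Y^{n-1}$ on $((n-1)\tau_k,n\tau_k]$, the piecewise--constant discrete speed $v_k(t):=\mathcal{D}_k(Y^n,Y^{n-1})/\tau_k$ on the same interval, and the De~Giorgi variational interpolant $\hat Y_{k,\tau_k}(t)\in\argmin_v\big(\tfrac{1}{2(t-(n-1)\tau_k)}\mathcal{D}_k(v,Y^{n-1})^2+\phi_k(v)\big)$, which exists by the direct method in the cases of interest (cf.~\cite[Ch.~3]{AGS}). The sharp De~Giorgi estimate that drives everything is
\begin{align*}
\tfrac{1}{2}\int_0^T v_k^2(t)\,\mathrm{d}t+\tfrac{1}{2}\int_0^T|\partial\phi_k|_{\mathcal{D}_k}^2\big(\hat Y_{k,\tau_k}(t)\big)\,\mathrm{d}t+\phi_k\big(\tilde Y_{k,\tau_k}(T)\big)\;\le\;\phi_k(Y^0_{k,\tau_k})\qquad\text{for all }T>0.
\end{align*}

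\emph{Compactness in time.} The bound on $\sum_n\tau_k v_k^2$ yields the uniform time-Hölder estimate $\mathcal{D}_k(\tilde Y_{k,\tau_k}(s),\tilde Y_{k,\tau_k}(t))\le C(|t-s|+\tau_k)^{1/2}$, and together with the energy bound and \eqref{basic assumptions2} it makes $\{\tilde Y_{k,\tau_k}(t)\}_k$ $\pi\sigma$-precompact in $\mathscr{S}_0$ for every $t$. I would then run a refined Arzel\`a--Ascoli argument in the $\pi\sigma$-topology (as in \cite[Prop.~3.3.1]{AGS}, \cite{S1}) to extract a subsequence (not relabeled) and a curve $z\colon[0,\infty)\to\mathscr{S}_0$ with $\tilde Y_{k,\tau_k}(t)\stackrel{\pi\sigma}{\to}z(t)$ for every $t\ge0$, and a further extraction so that $v_k\weakly A$ in $L^2_{\mathrm{loc}}(0,\infty)$. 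Since $\int_0^T\mathcal{D}_k(\underline Y_{k,\tau_k}(t),\tilde Y_{k,\tau_k}(t))^2\,\mathrm{d}t=\sum_n\tau_k\mathcal{D}_k(Y^n,Y^{n-1})^2\le C\tau_k^2\to0$ and $\mathcal{D}_k(\hat Y_{k,\tau_k}(t),\underline Y_{k,\tau_k}(t))^2\le 2\tau_k\big(\phi_k(Y^{n-1})-\inf\phi_k\big)\to0$ pointwise, the energy bounds, \eqref{basic assumptions2} and \eqref{compatibility} would force $\underline Y_{k,\tau_k}(t)\stackrel{\pi\sigma}{\to}z(t)$ and $\hat Y_{k,\tau_k}(t)\stackrel{\pi\sigma}{\to}z(t)$ for a.e.~$t$.

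\emph{Passage to the limit.} Inserting \eqref{compatibility} into $\mathcal{D}_k(\tilde Y_{k,\tau_k}(s),\tilde Y_{k,\tau_k}(t))\le\int_s^t v_k+o(1)$ gives $\mathcal{D}_0(z(s),z(t))\le\int_s^t A$, so $z$ is $\mathcal{D}_0$-absolutely continuous with $|z'|_{\mathcal{D}_0}\le A$ a.e., hence $\int_0^T|z'|_{\mathcal{D}_0}^2\le\int_0^T A^2\le\liminf_k\int_0^T v_k^2$. The energy lower semicontinuity in \eqref{eq: implication} gives $\liminf_k\phi_k(\tilde Y_{k,\tau_k}(T))\ge\phi_0(z(T))$, and the slope lower semicontinuity in \eqref{eq: implication}, used at a.e.~$t$ along $\hat Y_{k,\tau_k}(t)\stackrel{\pi\sigma}{\to}z(t)$ together with Fatou's lemma, gives $\liminf_k\int_0^T|\partial\phi_k|_{\mathcal{D}_k}^2(\hat Y_{k,\tau_k})\ge\int_0^T|\partial\phi_0|_{\mathcal{D}_0}^2(z)$. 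Taking $\liminf$ in the De~Giorgi estimate, and using $\phi_k(Y^0_{k,\tau_k})\to\phi_0(z_0)$ by hypothesis, I would arrive at $\tfrac12\int_0^T|z'|_{\mathcal{D}_0}^2+\tfrac12\int_0^T|\partial\phi_0|_{\mathcal{D}_0}^2(z)+\phi_0(z(T))\le\phi_0(z_0)$ for every $T>0$.

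\emph{Reverse inequality, conclusion, and main obstacle.} Since $|\partial\phi_0|_{\mathcal{D}_0}$ is a strong upper gradient and $z$ is absolutely continuous, $\phi_0(z_0)-\phi_0(z(T))\le\int_0^T|\partial\phi_0|_{\mathcal{D}_0}(z(t))\,|z'|_{\mathcal{D}_0}(t)\,\mathrm{d}t\le\tfrac12\int_0^T|\partial\phi_0|_{\mathcal{D}_0}^2(z)+\tfrac12\int_0^T|z'|_{\mathcal{D}_0}^2$, which is the opposite inequality, so \eqref{maximalslope} holds for every $T>0$. Then $T\mapsto\tfrac12\int_0^T|z'|_{\mathcal{D}_0}^2+\tfrac12\int_0^T|\partial\phi_0|_{\mathcal{D}_0}^2(z)+\phi_0(z(T))$ is constant, hence $\tfrac{\mathrm d}{\mathrm dt}\phi_0(z(t))=-\tfrac12|z'|_{\mathcal{D}_0}^2(t)-\tfrac12|\partial\phi_0|_{\mathcal{D}_0}^2(z(t))$ a.e., i.e.~$z$ is a curve of maximal slope; writing the balance on $[0,t]$ and combining it with the $\liminf$-inequalities valid at every time upgrades them to limits, giving $\phi_k(\tilde Y_{k,\tau_k}(t))\to\phi_0(z(t))$ for all $t\ge0$ along the chosen subsequence. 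The step I expect to be genuinely delicate is $\liminf_k\int_0^T|\partial\phi_k|_{\mathcal{D}_k}^2(\hat Y_{k,\tau_k})\ge\int_0^T|\partial\phi_0|_{\mathcal{D}_0}^2(z)$: because \eqref{eq: implication} is a pointwise statement it must be invoked along the \emph{variational} interpolants, so one has to carefully propagate the $\pi\sigma$-convergence from $\tilde Y_{k,\tau_k}$ to $\underline Y_{k,\tau_k}$ and then to $\hat Y_{k,\tau_k}$ through the metric estimates above and \eqref{compatibility}; in the concrete dimension-reduction setting this is exactly where the argument rests on the paper's hardest ingredient, the slope lower semicontinuity of Theorem~\ref{theorem: lsc-slope}.
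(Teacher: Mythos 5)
Your proposal is correct and follows exactly the route the paper delegates to: the paper gives no self-contained proof of this theorem but cites \cite[Section~2]{Ortner} and \cite[Section~3.3]{MFMK}, and your outline is a faithful reconstruction of that De~Giorgi minimizing-movement argument (discrete energy--dissipation estimate, variational interpolant, sharp De~Giorgi inequality, refined Arzel\`a--Ascoli in the $\pi\sigma$-topology, passage to the limit via \eqref{compatibility}--\eqref{eq: implication}, reverse inequality from the strong upper gradient) adapted to $k$-dependent spaces. You also correctly flag the only genuinely delicate point in the concrete application, namely that the slope lower semicontinuity of \eqref{eq: implication} must be invoked along the variational interpolants, which requires propagating the $\pi\sigma$-convergence through the interpolant comparison estimates using \eqref{compatibility} and \eqref{basic assumptions2}; this is precisely where the paper's Theorem~\ref{theorem: lsc-slope} is consumed.
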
 
\FFF
The statement is proved in \cite[Section 2]{Ortner} for a sequence of functionals and metrics defined on a {\emph{single}} space. The generalization for a sequence of spaces is straightforward and follows from standard adaptions. For a more
detailed discussion of similar statements, we refer for example to   \cite[Section~3.3]{MFMK}.  


\subsection{Curves of maximal slope as limits of continuous solutions}\label{sec: auxi-proofs2}


As before, $(\mathscr{S}_k, \mathcal{D}_k)_k$ and  $(\mathscr{S}_0,\mathcal{D}_0)$ denote complete metric spaces, with \ZZZ corresponding functionals \EEE $(\phi_k)_k$ and  $\phi_0$.  \EEE For the relation of the three- and one-dimensional systems, we will use the following result.

\begin{theorem}\label{thm: sandierserfaty}
	
	
	\EEE Suppose that   \eqref{compatibility}--\eqref{eq: implication} hold. \EEE Moreover, assume that $\vert \partial \phi_n \vert_{\mathcal{D}_n}$, $\vert \partial \phi_0 \vert_{\mathcal{D}_0}$ are strong upper gradients for $\phi_n$, $\phi_0$ with respect to $\mathcal{D}_n$, $\mathcal{D}_0$, respectively.
	Let \FFF $ z_0 \EEE \in \mathscr{S}_0$. For all $n \in \N$, let \FFF $z_n$ \EEE be a curve of maximal slope for $\phi_n$ with respect to $\vert \partial \phi_n \vert_{\mathcal{D}_n}$ such that
	\begin{align*}
	{\rm(i)} \quad & \sup\limits_{n \in \N} \sup \limits_{t \geq 0}  \, \phi_n(\FFF z_n(t) \EEE)  < \infty \\
	{\rm(ii)} \quad & \FFF z_n(0) \EEE \stackrel{\pi\sigma}{\to} \FFF z_0, \EEE\quad \phi_n(\FFF z_n(0) \EEE) \to \phi_0(\FFF z_0 \EEE).
	\end{align*}
	Then, there exists a limiting function $\FFF z \EEE \colon [0,\infty) \to \mathscr{S}_0$ such that up to a subsequence \QQQ  (not relabeled) \EEE
	\begin{align*}
	\FFF z_n(t)\EEE \stackrel{\pi\sigma}{\to} \FFF z(t)\EEE , \quad \phi_n(\FFF z_n(t)\EEE) \to \phi_0 (\FFF z(t)\EEE) \quad \forall t \geq 0
	\end{align*}
	as $n \to \infty$, \AAA and \EEE  $z$ \EEE is a curve of maximal slope for $\phi_0$ with respect to $\vert \partial \phi_0 \vert_{\mathcal{D}_0}$ and satisfies \eqref{maximalslope}.\EEE
\end{theorem}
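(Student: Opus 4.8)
The statement is a version of the Sandier--Serfaty theorem on convergence of metric gradient flows, here formulated for a sequence of metric spaces $(\mathscr{S}_n,\mathcal{D}_n)$ connected to a common ambient space through the maps $\pi_n$; the proof follows the by-now classical scheme and I only indicate the main steps. The starting point is that, since $|\partial\phi_n|_{\mathcal{D}_n}$ is a strong upper gradient for $\phi_n$, each curve of maximal slope $z_n$ in fact satisfies the \emph{energy-dissipation identity}
\begin{align*}
\phi_n(z_n(t)) + \frac12 \int_0^t |z_n'|_{\mathcal{D}_n}^2(r)\,{\rm d}r + \frac12 \int_0^t |\partial\phi_n|_{\mathcal{D}_n}^2(z_n(r))\,{\rm d}r = \phi_n(z_n(0))
\end{align*}
for every $t\ge 0$ (the differential inequality in Definition~\ref{main def2}(iii) self-improves to this equality by the strong-upper-gradient property and Young's inequality; cf.\ \cite[Section~2]{Ortner}, \cite{AGS}). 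Combining this with the uniform bound~(i) and the convergence $\phi_n(z_n(0)) \to \phi_0(z_0)$ from~(ii) yields, for every $T>0$, uniform bounds $\sup_n \int_0^T |z_n'|_{\mathcal{D}_n}^2 < \infty$ and $\sup_n \int_0^T |\partial\phi_n|_{\mathcal{D}_n}^2(z_n(\cdot)) < \infty$.

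The first bound makes the curves $z_n$ equi-H\"older-$\tfrac12$ with respect to $\mathcal{D}_n$, hence the pushforwards $\pi_n\circ z_n$ are $\sigma$-equicontinuous; moreover, by~(i) and the compactness assumption~\eqref{basic assumptions2}, for each fixed $t$ the sequence $(z_n(t))_n$ is $\pi\sigma$-precompact with limits in $\mathscr{S}_0$. A refined Arzel\`a--Ascoli / Helly selection argument (in the spirit of \cite[Prop.~3.3.1]{AGS}), applied on $[0,N]$ and diagonalised over $N\in\N$, produces a (not relabelled) subsequence and a curve $z\colon[0,\infty)\to\mathscr{S}_0$ with $z_n(t)\stackrel{\pi\sigma}{\to} z(t)$ for every $t\ge 0$. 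Passing to a further subsequence, $|z_n'|_{\mathcal{D}_n}\rightharpoonup m$ weakly in $L^2(0,T)$ for some $m$; from $\mathcal{D}_n(z_n(s),z_n(t)) \le \int_s^t |z_n'|_{\mathcal{D}_n}(r)\,{\rm d}r$, the metric lower semicontinuity~\eqref{compatibility}, and weak $L^2$-lower semicontinuity of the norm, one obtains $\mathcal{D}_0(z(s),z(t)) \le \int_s^t m(r)\,{\rm d}r$, so that $z$ is absolutely continuous for $\mathcal{D}_0$, $|z'|_{\mathcal{D}_0}\le m$ a.e., and $\int_0^T |z'|_{\mathcal{D}_0}^2 \le \liminf_n \int_0^T |z_n'|_{\mathcal{D}_n}^2$ for every $T$.

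It remains to pass to the limit in the energy-dissipation identity. By~\eqref{eq: implication}, for each $r$ we have $\liminf_n |\partial\phi_n|_{\mathcal{D}_n}(z_n(r)) \ge |\partial\phi_0|_{\mathcal{D}_0}(z(r))$ and $\liminf_n \phi_n(z_n(r)) \ge \phi_0(z(r))$, so Fatou's lemma gives $\liminf_n \int_0^T |\partial\phi_n|_{\mathcal{D}_n}^2(z_n(r))\,{\rm d}r \ge \int_0^T |\partial\phi_0|_{\mathcal{D}_0}^2(z(r))\,{\rm d}r$. Taking $\liminf$ in the identity at $t=T$ and using these bounds together with the one for the metric derivatives and $\phi_n(z_n(0))\to\phi_0(z_0)$ yields
\begin{align*}
\phi_0(z(T)) + \frac12 \int_0^T |z'|_{\mathcal{D}_0}^2(r)\,{\rm d}r + \frac12 \int_0^T |\partial\phi_0|_{\mathcal{D}_0}^2(z(r))\,{\rm d}r \le \phi_0(z_0).
\end{align*}
The reverse inequality follows since $|\partial\phi_0|_{\mathcal{D}_0}$ is a strong upper gradient for $\phi_0$ and $z$ is absolutely continuous: Definition~\ref{main def2}(i) and Young's inequality give $\phi_0(z_0)-\phi_0(z(T)) \le \int_0^T |\partial\phi_0|_{\mathcal{D}_0}(z(r))\,|z'|_{\mathcal{D}_0}(r)\,{\rm d}r \le \frac12\int_0^T(|\partial\phi_0|_{\mathcal{D}_0}^2(z(r)) + |z'|_{\mathcal{D}_0}^2(r))\,{\rm d}r$. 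Hence all inequalities are equalities: $z$ satisfies the energy-dissipation identity, equivalently (again by the strong-upper-gradient property) the differential inequality of Definition~\ref{main def2}(iii), so $z$ is a curve of maximal slope for $\phi_0$ with $z(0)=z_0$ and~\eqref{maximalslope} holds. Equality throughout the chain also forces, by a standard squeezing argument, $\int_0^T|z_n'|_{\mathcal{D}_n}^2 \to \int_0^T|z'|_{\mathcal{D}_0}^2$, $\int_0^T|\partial\phi_n|_{\mathcal{D}_n}^2(z_n) \to \int_0^T|\partial\phi_0|_{\mathcal{D}_0}^2(z)$, and then $\phi_n(z_n(t)) \to \phi_0(z(t))$ for every $t\ge 0$, as claimed. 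The main technical point is the compactness step of the second paragraph: one must combine the equicontinuity coming from the uniform $L^2$-bound on the \emph{varying} metric derivatives with the sublevel compactness~\eqref{basic assumptions2} to obtain $\pi\sigma$-convergence for \emph{every} $t\ge0$ and a limit curve that is absolutely continuous for $\mathcal{D}_0$, which is where~\eqref{compatibility} is essential; the remainder is the soft Sandier--Serfaty argument, and for the version on a sequence of spaces we refer to \cite[Section~2]{Ortner} and \cite{S1, S2}.
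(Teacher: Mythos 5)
Your proof follows the standard Sandier--Serfaty scheme that the paper itself delegates to \cite[Theorem 3.6]{MFMK} rather than proving directly, so at the level of strategy you are aligned with the source: establish the energy-dissipation identity for each $z_n$ via the strong-upper-gradient self-improvement, extract a pointwise $\pi\sigma$-limit curve, pass to the liminf using \eqref{compatibility}, \eqref{eq: implication}, weak $L^2$-lower semicontinuity, and Fatou, then close the chain from above with the strong upper gradient property of $|\partial\phi_0|_{\mathcal{D}_0}$ and Young's inequality, and finally squeeze to get equality everywhere (which in particular forces convergence of energies at every $t$). The overall structure is sound.

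One step is stated more confidently than it deserves and is in fact not quite right as written. You assert that equi-H\"older-$\tfrac12$ continuity of $z_n$ with respect to the \emph{varying} metrics $\mathcal{D}_n$ ``hence'' gives $\sigma$-equicontinuity of $\pi_n\circ z_n$. There is no hypothesis linking small $\mathcal{D}_n$-distance to small $\sigma$-distance: assumption \eqref{compatibility} provides a $\liminf$ \emph{lower} bound $\liminf_n\mathcal{D}_n\ge\mathcal{D}_0$, which goes in the opposite direction and does not control $\sigma$ from above. The compactness step in the varying-space setting is instead carried out as follows: (a) use \eqref{basic assumptions2} and the uniform energy bound to extract, by a diagonal argument, a subsequence with $z_n(t)\stackrel{\pi\sigma}{\to}z(t)$ for all $t$ in a countable dense set $Q\subset[0,\infty)$; (b) use $\mathcal{D}_n(z_n(s),z_n(t))\le\int_s^t|z_n'|_{\mathcal{D}_n}$, the uniform $L^2$-bound, and \eqref{compatibility} to show that $z|_Q$ extends to a curve that is absolutely continuous for $\mathcal{D}_0$; (c) for $t\notin Q$, take any $\pi\sigma$-cluster point $w$ of $(z_n(t))_n$ (which exists again by \eqref{basic assumptions2}), and use the same estimate plus \eqref{compatibility} to see $\mathcal{D}_0(w,z(s))\le C|t-s|^{1/2}$ for $s\in Q$, so that $w$ coincides with the continuous extension $z(t)$; since every subsequence has a further subsequence converging to the same limit, the full subsequence converges. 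This is what the reference to \cite[Prop.~3.3.1]{AGS} must be replaced by here, and it is precisely where \eqref{compatibility} enters — you correctly flagged this as the technical heart, but the intermediate ``equicontinuity'' claim should be dropped, as it is both unjustified and not the mechanism that actually produces the convergence. With that replacement the argument is complete and matches the approach of \cite[Theorem~3.6]{MFMK}.
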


The result is a variant of \cite{S2} and \EEE is given in \cite[Theorem 3.6]{MFMK}. \FFF Once again, we refer to \cite[Section~3.3]{MFMK} for a brief discussion.  
\section{Geometric rigidity}\label{sec:rigidity}

 As a preliminary step, we  provide the necessary rigidity estimates to adapt the $\Gamma$-convergence result  of \cite{Freddi2013} to our setting with clamped boundary conditions. Recall the definitions of \MMM Subsection~\ref{sec:rescaling}, \EEE especially the definition of \MMM $S = I \times (-1/2,1/2)$ and $\mathscr{S}^{3D}_{h,M}$. \EEE  \MMM Due \EEE to the frame indifference of $W$ and $P$ and \SSS their \EEE coercivity properties, see \eqref{assumptions-W} and \eqref{assumptions-P}, \MMM one \EEE  can only expect estimates up to a rigid motion of $(y^h)_h$, \LLL see \MMM \cite{Freddi2013}. \EEE  The crucial point of the following lemma is \SSS that \EEE the clamped boundary conditions in \eqref{assumption:clampedboundary} imply rigidity estimates for $(y^h)_h$ itself. To this end, we combine the arguments of \cite[Lemma 3.1]{Freddi2013} and \cite[Lemma 13]{lecumberry}. \SSS Additionally, \EEE we derive $L^\infty$-estimates for the scaled gradient $\nabla_h y^h$, \MMM as a consequence of the \EEE the \FFF second-order \EEE perturbation in \eqref{assumption:energyscaling}. (The $L^\infty$-bounds
will be crucial for validity of Lemma~\ref{lemma: metric space-properties} and Lemma~\ref{th: metric space} \MMM below  as any other $L^q$-bound would \EEE not be sufficient to prove those lemmata.)

\begin{lemma}[{Rigidity estimates on thin domains}] \label{lem:rigidity}
	Let $M>0$ and \AAA let \EEE $(y^h)_h$ be a sequence in $\mathscr{S}^{3D}_{h,M}$. Moreover, assume that \AAA \eqref{assumption:energyscalingthickness1}--\eqref{assumption:penalizationscale} hold. \EEE   Then, there exists a sequence $(R^h)_h \subset W^{1,2} (S; SO(3))$ such that we have for $h$ small enough 
	\begin{thmlist}
		\item $ \Vert \nabla_h  y^h - R^h \Vert_{L^2(\Omega)} \leq C \eps_h$,\label{lem:rigidity:energycontrol}
		\item $ \Vert {R^h,}_1 \Vert_{L^2(S)} \leq C \eps_h /\delta_h$, $ \Vert { R^h,}_2 \Vert_{L^2(S)} \leq Ch \eps_h /\delta_h$,\label{lem:rigidity:derivatives} 
		\item $ \Vert  R^h - \Id\Vert_{L^2(S)} \leq C \eps_h /\delta_h$,\label{lem:rigidity:dev:RotID}
		\item $\Vert \nabla_h  y^h - \Id\Vert_{L^2(\Omega)} \leq C \eps_h/\delta_h$,\label{lem:rigidity:dev:GradID}
		\item $\Vert \nabla_h  y^h - \Id \Vert_{L^\infty(\Omega)} \leq C \eps_h^\alpha/ \delta_h^{\alpha}$, \label{lem:rigidity:dev:GradIDInfty}
		\item $\Vert R^h - \Id \Vert_{L^\infty(S)} \leq C \eps_h^\alpha/ \delta_h^{\alpha}$, \label{lem:rigidity:dev:RotIDInfty}\hfill\refstepcounter{equation}\normalfont{(\theequation)}\label{eq:rigidity}
	\end{thmlist}
\LLL where $C$ only depends on $\Omega$, $\Gamma$, \QQQ $M$, and \EEE $p$.
\end{lemma}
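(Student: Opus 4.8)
The strategy is to combine the non-trivial quantitative rigidity estimate of Friesecke--James--M\"uller (as adapted to thin-walled beams in \cite[Lemma~3.1]{Freddi2013}) with a Poincar\'e-type argument exploiting the clamped boundary conditions on $\Gamma$, along the lines of \cite[Lemma~13]{lecumberry}. First I would apply the geometric rigidity theorem on the rescaled domain $\Omega$: from the energy bound $\phi_h(y^h)\le M$ together with the coercivity $W(F)\ge c\,\dist^2(F,SO(3))$ in \eqref{assumptions-W}(iii), one has $\|\dist(\nabla_h y^h,SO(3))\|_{L^2(\Omega)}^2\le C\eps_h^2$. Covering $S$ by cubes of side comparable to $\delta_h$ (so that in the unscaled picture they are roughly isotropic) and applying the rigidity estimate cube by cube, then patching the resulting rotations, yields a field $R^h\in W^{1,2}(S;SO(3))$ with \ref{lem:rigidity:energycontrol} and the gradient bounds \ref{lem:rigidity:derivatives} (the anisotropic scaling of the two derivatives reflecting the different roles of $x_1$ and $x_2$ versus the small parameter $\delta_h$). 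These two items are exactly what \cite{Freddi2013} provides; the point is what one can extract from the clamped boundary data.

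Next, to upgrade to the absolute estimates \ref{lem:rigidity:dev:RotID}--\ref{lem:rigidity:dev:GradID} I would use the boundary condition in \eqref{assumption:clampedboundary}: on $\Gamma=\{\pm l/2\}\times\omega$ the scaled deformation equals the identity plus an $\eps_h$-order perturbation, so $\nabla_h y^h-\Id$ is of order $\eps_h$ there (using that $\hat\xi_i$ are fixed $W^{2,p}$/$W^{3,p}$ functions). Hence, by \ref{lem:rigidity:energycontrol} and a trace estimate, $R^h-\Id$ is controlled in $L^2$ on the two ends of $S$; integrating the derivative bound \ref{lem:rigidity:derivatives} along $x_1$ (a one-dimensional Poincar\'e/fundamental-theorem-of-calculus argument, keeping track of the $\eps_h/\delta_h$ factor coming from $\|R^h,_1\|_{L^2}$) propagates this to all of $S$, giving \ref{lem:rigidity:dev:RotID}; then \ref{lem:rigidity:dev:GradID} follows by combining with \ref{lem:rigidity:energycontrol} and the fact that $\eps_h\le C\eps_h/\delta_h$.

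Finally, the $L^\infty$-bounds \ref{lem:rigidity:dev:GradIDInfty}--\ref{lem:rigidity:dev:RotIDInfty} are obtained from the second-gradient term in the energy: the penalization $\tfrac{\zeta_h}{\eps_h^2}\int_\Omega P(\nabla_h^2 y^h)$ together with \eqref{assumptions-P}(iii) controls $\|\nabla_h^2 y^h\|_{L^p(\Omega)}^p$ by $C\eps_h^2\zeta_h^{-1}$, and the $\liminf$ condition in \eqref{assumption:penalizationscale} makes this $\le C(\eps_h/\delta_h)^{\alpha p}$. Since $p>3$, the Sobolev embedding $W^{1,p}(\Omega)\hookrightarrow C^0(\bar\Omega)$ interpolated with the $L^2$-bound \ref{lem:rigidity:dev:GradID} (a Gagliardo--Nirenberg interpolation between $L^2$ and $W^{1,p}$) yields \ref{lem:rigidity:dev:GradIDInfty} with the stated exponent $\alpha$; mollifying $R^h$ or using that $R^h$ is the nearest rotation to $\nabla_h y^h$ transfers this to \ref{lem:rigidity:dev:RotIDInfty}.

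\textbf{Main obstacle.} The delicate point is the patching of the local rotations into a single $W^{1,2}(S;SO(3))$ field with the \emph{anisotropic} gradient bounds \ref{lem:rigidity:derivatives} and, more importantly, making the propagation-from-the-boundary argument quantitatively consistent: one must track carefully how the factor $\delta_h/h\to 0$ interacts with the covering scale and the Poincar\'e constants so that no logarithmic loss or extra power of $h$ creeps in, and one must verify that the clamped data is compatible with the rigidity scaling (this is precisely the issue flagged in the paper as making the analysis "more delicate" than in \cite{Freddi2013}). The interpolation step for the $L^\infty$-bounds is routine once the $L^p$-Hessian bound is in hand, but getting the exponent exactly $\alpha$ requires choosing the interpolation parameter in harmony with the $\liminf$ scaling in \eqref{assumption:penalizationscale}.
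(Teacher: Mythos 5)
Your plan for items \ref{lem:rigidity:energycontrol}--\ref{lem:rigidity:derivatives} (FJM cube-by-cube, then patching) and the basic idea for item~\ref{lem:rigidity:dev:GradIDInfty} (use the hyperstress bound $\|\nabla_h^2 y^h\|_{L^p}\le C(\eps_h^2/\zeta_h)^{1/p}\le C(\eps_h/\delta_h)^\alpha$ from \eqref{assumptions-P}(iii) and \eqref{assumption:penalizationscale} plus the Sobolev embedding $W^{1,p}\hookrightarrow L^\infty$ for $p>3$) are sound and aligned with the paper; the paper actually uses a Poincar\'e argument to compare $\nabla_h y^h$ with a constant matrix and then with $Q^h$, rather than Gagliardo--Nirenberg interpolation, but either works here. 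Also, for item~\ref{lem:rigidity:dev:RotIDInfty} you cannot simply ``mollify $R^h$ or use that $R^h$ is the nearest rotation to $\nabla_h y^h$'': the paper has to go back to the explicit construction in \eqref{repofapproxrot}, estimate the piecewise-constant FJM rotations cube by cube against $Q^h$ in $L^\infty$, then pass the bound through mollification and the nearest-point projection $\Pi_{SO(3)}$.

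The genuine gap is in your route to items~\ref{lem:rigidity:dev:RotID}--\ref{lem:rigidity:dev:GradID}. You propose: the boundary condition makes $\nabla_h y^h-\Id$ of order $\eps_h$ on $\Gamma$, hence (via~\ref{lem:rigidity:energycontrol} and a trace estimate) $R^h-\Id$ is small on the ends of $S$, and then you integrate~\ref{lem:rigidity:derivatives} in $x_1$. This does not work as stated, for two reasons. First, \eqref{assumption:clampedboundary} prescribes only $y^h$ on $\Gamma=\{\pm l/2\}\times\omega$, so only the \emph{tangential} (i.e.\ $x_2$- and $x_3$-) components of $\nabla_h y^h$ are constrained there; the normal derivative $\partial_1 y^h$, which is the first column of $\nabla_h y^h$, is free on $\Gamma$, so $\nabla_h y^h-\Id$ on $\Gamma$ is not of order $\eps_h$. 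Second, even if all three columns were controlled, one cannot conclude a bound on the trace of the auxiliary field $R^h$ on $\partial S$ from the $L^2(\Omega)$-estimate~\ref{lem:rigidity:energycontrol} alone; $R^h$ is a mollified, projected object built from local FJM rotations, and its boundary trace is not directly pinned by the boundary data on $y^h$.

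What the paper does instead (and what you would need) is a two-stage argument. First, by \cite[Lemma~3.1]{Freddi2013} the estimates~\ref{lem:rigidity:dev:RotID}--\ref{lem:rigidity:dev:GradID} hold with $\Id$ replaced by some \emph{constant} rotation $Q^h\in SO(3)$ (see \eqref{eq:rigidityuptorigidmotion}); the $L^\infty$-versions are likewise proved with $Q^h$ in place of $\Id$. Second, one shows $Q^h$ is close to $\Id$ at the required rates \eqref{eq:deviationfromidentity}. This is \emph{not} an integration-from-the-boundary argument. It passes to the displacement $\bar u^h$ associated with $\bar y^h=(Q^h)^\top y^h-c^h$, uses \cite[Lemma~3.7]{Freddi2013} to get $\|\bar u^h\|_{W^{1,2}(\Omega)}\le C$, takes traces in the identity \eqref{eq:closetoidentity} relating $Q^h\bar u^h$ to $u^h$, exploits the orthogonality $\int_\Gamma(x_1,hx_2,\delta_h x_3)\,{\rm d}\mathcal{H}^2=0$ to decouple the two unknowns, and invokes the Poincar\'e-type estimate \cite[Lemma~3.3]{MaNePe} to bound the columns of $\Id-Q^h$. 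Even this only gives the rates $\eps_h,\ \eps_h/h^2,\ \eps_h/\delta_h^2$, which are insufficient: a separate refinement via the first moment $\bar\Psi^h$ in the $x_3$-direction (together with a Poincar\'e inequality in $x_3$ and the trace estimate for $\bar\Psi^h$) is needed to improve the $(Q^h-\Id)e_3$ column to $\mathcal{O}(\eps_h/\delta_h)$, and the remaining entry is then fixed by the orthonormality of the columns of $Q^h$. This refinement step is the key technical point your proposal is missing and is precisely why the paper describes the boundary-condition analysis as more delicate than in \cite{Freddi2013}.
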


\begin{proof}
The proof consists of two steps: \AAA First, \EEE we derive the statement up to a rotation of $\nabla_h y^h$. This has been essentially proven in \cite[Lemma 3.3]{Freddi2012} \MMM and   \cite[Lemma 3.1]{Freddi2013}. \EEE  In our present setting, we derive additional $L^\infty$-estimates, \AAA see \eqref{eq:rigidity}\ref{lem:rigidity:dev:GradIDInfty},\ref{lem:rigidity:dev:RotIDInfty}. \MMM  In \EEE the second step, we employ the boundary conditions in \eqref{assumption:clampedboundary} to show that the estimates also hold for the sequence \LLL $(y^h)_h$ \EEE itself.

\textit{Step 1:}   Due to the coercivity of the energy densitiy $W$ in \eqref{assumptions-W}(iii), by \cite[Lemma 3.1]{Freddi2013}  we can find  \EEE a sequence $(R^h)_h \in W^{1,2}(S; SO(3))$ such that
\ref{lem:rigidity:energycontrol} and \ref{lem:rigidity:derivatives} hold. \SSS Note that the sequence is indeed $SO(3)$-valued by \cite[Lemma 3.3]{Freddi2012}  since \eqref{assumption:energyscalingthickness1} implies $h^{1/2}\eps_h/\delta_h \to 0$. \EEE
The existence of $R^h$ deeply relies on the rigidity estimate by {\sc Friesecke, James, and M\"uller} \cite[Theorem 3.1]{FrieseckeJamesMueller:02}. We briefly describe the heuristics \AAA of \EEE the construction, see \cite[Lemma~3.3]{Freddi2012} for the complete proof: working on the \MMM original \EEE domain $\Omega_h$, one divides $\Omega_h$ into cubes with \FFF side length \EEE $\delta_h$. On each cube, one uses the rigidity estimate to control the $L^2$-norm of the distance \AAA between \EEE $\nabla (y^h \circ \LLL p_h^{-1}) \EEE$ and $SO(3)$ by a positive constant times the $L^2$-deviation of gradient \MMM from  a \EEE \textit{fixed} rotation. Here, $p_h$ is the projection defined in \MMM Subsection~\ref{sec:rescaling}. \EEE As the height of $\Omega_h$ coincides with the \FFF side length \EEE of a cube, the rotations can be chosen independently of $x_3$. Then, after defining piecewise-constant interpolations of the rotations, one obtains $R^h$ by a mollification and a projection onto $SO(3)$. For this argument, the scaling \SSS $h^{1/2}\eps_h/\delta_h \to 0$ \EEE and the smoothness of the manifold $SO(3)$ ensure that the mollification lies in \MMM  in a tubular neighborhood of  $SO(3)$  such that  \LLL the nearest-point   projection onto $SO(3)$, denoted by $\Pi_{SO(3)}$, is smooth.
For later purposes, we recall the \SSS precise \EEE representation of $R^h$ from the proof of \cite[Lemma 3.3]{Freddi2012}  which is given by
\begin{align}\label{repofapproxrot}
R^h(x_1,x_2) := \Pi_{SO(3)}\left[\bar R^h \right], \quad \SSS \text{where} \quad \bar R^h := \EEE \int_S \eta_h(z_1,z_2) \tilde R^h(x_1 - z_1, x_2 - z_2) \,{\rm d}z_1 \, {\rm d}z_2.
\end{align}
\AAA Here, \EEE  $\tilde R^h$ is a piecewise-constant function \AAA (constant on each cube described above), \EEE $SO(3)$-valued, and successively extended to $\R^2$ by reflection, \SSS and \EEE  $(\eta_h)_h$ is a family of standard mollifiers. 

Next, we see by \cite[Lemma 3.1]{Freddi2013} that \ref{lem:rigidity:dev:RotID} and \ref{lem:rigidity:dev:GradID} hold for a sequence \AAA of \EEE rotations $(Q^h)_h$, $Q^h \in SO(3)$, instead of $\Id$, \LLL i.e.,
\begin{align}\label{eq:rigidityuptorigidmotion}
	\Vert R^h - Q^h \Vert_{L^2(S)} \leq C \eps_h/\delta_h, \qquad \text{and} \qquad \Vert\nabla_h y^h - Q^h \Vert_{L^2(\Omega)} \leq C \eps_h/\delta_h,
\end{align}
where the second inequality is an immediate consequence of the first one and \ref{lem:rigidity:energycontrol}. \EEE
 We now prove $Q^h$-versions of \ref{lem:rigidity:dev:GradIDInfty} and \ref{lem:rigidity:dev:RotIDInfty}, inspired by \cite[Lemma 4.2]{MFMK}: by the definition of $\phi_h$ \MMM in \eqref{assumption:energyscaling} \EEE and \eqref{assumptions-P}(iii) we get $\Vert \nabla_h^2 y^h \Vert_{L^p(\Omega)} \leq \MMM  C (\eps_h^2/\zeta_h)^{1/p} \LLL \le  \EEE  C \eps_h^\alpha /\delta_h^{\alpha}$, \MMM where we used \eqref{assumption:penalizationscale}. \EEE Moreover, as $p>3$, Poincaré's inequality yields some $F \in \R^{3 \times 3}$ such that $\Vert\nabla_h y^h - F \Vert_{L^\infty(\Omega)} \leq  C\eps_h^\alpha /\delta_h^{\alpha}$ for a constant $C$ depending only on $\Omega$, $p$, \MMM and $M$. \EEE This   together the triangle inequality and \eqref{eq:rigidityuptorigidmotion} yields
	\begin{align*}
	\vert F-Q^h\vert & \SSS \leq C \EEE \left( \Vert \nabla_h y^h - Q^h \Vert_{L^2(\Omega)} + \Vert \nabla_h  y^h - F \Vert_{L^2(\Omega)} \right)
	\leq  C \left(\eps_h/\delta_h + \eps_h^\alpha /\delta_h^{\alpha}\right)
	\leq  C \eps_h^\alpha /\delta_h^{\alpha}.
	\end{align*}
	\AAA Then, \EEE we have  
	\begin{align}\label{QversionforGradIDInfty}
	\Vert \nabla_h y^h - Q^h \Vert_{L^\infty(\Omega)} \leq C \eps_h^\alpha /\delta_h^{\alpha}
	\end{align} by the triangle inequality. \SSS Finally, to derive the $Q^h$-version of (vi), \EEE recall $\tilde R^h$ from \eqref{repofapproxrot}, let \SSS $q$ be a cube \QQQ with sidelength $\delta_h$ \EEE on which $\hat R^h := \tilde R^h\circ p_h^{-1}$ is constant, and define $z^h:= y^h \circ p_h^{-1}$. 
	Since by  \cite[Theorem~3.1]{FrieseckeJamesMueller:02}   we can assume that $\hat R^h = \Pi_{SO(3)}[\dashint_q \nabla z^h]$, \AAA    \eqref{QversionforGradIDInfty} implies \EEE that 
	\begin{align*}
		|  {\hat R^h}|_{\QQQ q \EEE} - Q^h | &\leq C  \delta_h^{-3} \int_q \vert \nabla (y^h \circ p_h^{-1}(x) ) - Q^h\vert \, {\rm d}x = C  h \delta_h^{-2} \int_{p_h^{-1}(q)} \vert \nabla_h y^h (x) - Q^h\vert \, {\rm d}x 
		\leq C \eps_h^\alpha /\delta_h^{\alpha},
	\end{align*}
	where the constant $C$ does not depend on $q$.
 \EEE Then recalling the definition of $\bar R^h$ in \eqref{repofapproxrot} we also get
	$$\Vert \bar R^h - Q^h \Vert_{\QQQ L^\infty(S) \EEE} \le C \eps_h^\alpha /\delta_h^{\alpha}, $$ \EEE 
	and eventually  we conclude 
		\begin{align}\label{QversionforRotationIDInfty}
		\Vert R^h - Q^h \Vert_{L^\infty(S)} \leq  C \eps_h^\alpha/\delta_h^\alpha.
	\end{align}	\EEE

\textit{Step 2:} We now prove that the rotations $Q^h$ are sufficiently close to the identity $\Id$, \MMM namely \EEE
\begin{align}\label{eq:deviationfromidentity}
Q^h = \Id + \begin{pmatrix}
\mathcal{O}(\eps_h)& \mathcal{O}(\eps_h) & \mathcal{O}(\eps_h) \\
\mathcal{O}(\eps_h)& \MMM \mathcal{O}(\eps_h) \EEE & \mathcal{O}(\eps_h/\delta_h) \\
\mathcal{O}(\eps_h)& \mathcal{O}(\eps_h/\delta_h) & \mathcal{O} (\eps_h/\delta_h)
\end{pmatrix}.
\end{align}
\MMM Then, together with the triangle inequality, \eqref{eq:rigidityuptorigidmotion}, \eqref{QversionforGradIDInfty}, and \eqref{QversionforRotationIDInfty} we get that \SSS all properties \EEE of Lemma~\ref{lem:rigidity} are satisfied for $R^h$ and $y^h$. To show \eqref{eq:deviationfromidentity}, we \EEE follow the strategy of \cite[Lemma 13]{lecumberry}. We define the function $\bar u^h$ similar to \eqref{def:udisplacement} by replacing $y^h$ with $\bar y^h = ({Q^h})^{\top} y^h - c^h$ for some constant vectors $c^h \in \R^3$. By \MMM \cite[Lemma 3.7\,\LLL \emph{(ii)}\EEE]{Freddi2013} \EEE we can choose $c^h$ in such a way \MMM that \EEE
\begin{align}\label{barubounded}
	\Vert \bar u^h \Vert_{W^{1,2}(\Omega)} \leq C
\end{align}
\LLL for a constant $C>0$ depending only on $\Omega$ and \QQQ $M$. \EEE \QQQ Moreove, we can calculate \EEE
	\begin{align}
	Q^h \bar u^h &= Q^h \Big(\frac{\bar y_1^h - x_1}{\eps_h}     , \frac{\bar y_2^h - hx_2}{\eps_h /h}    , \frac{\bar y_3^h - \delta_h x_3}{\eps_h /\delta_h} \Big)^{\top} \nonumber \\
	&= Q^h \Big(\frac{({Q^h}^{\top} y^h)_1 - c^h_1 - x_1}{\eps_h}     , \frac{({Q^h}^{\top} y^h)_2 - c^h_2 - hx_2}{\eps_h /h}    , \frac{({Q^h}^{\top} y^h)_3 - c^h_3- \delta_h x_3}{\eps_h /\delta_h} \Big)^{\top}\nonumber \\
	&= ({u_1^h},{u_2^h}, {u_3^h})^{\top} + (\MMM \Id \EEE - Q^h)(\tfrac{x_1}{\eps_h}, \tfrac{hx_2}{\eps_h/h},\tfrac{\delta_hx_3}{\eps_h/\delta_h})^{\top} - Q^h \big(\tfrac{c^h_1 }{\eps_h}     , \tfrac{ c^h_2}{\eps_h /h}    , \tfrac{  c^h_3 }{\eps_h /\delta_h} \big)^{\top}. \label{eq:closetoidentity}
	\end{align}
	Thus, by passing to the trace in \eqref{eq:closetoidentity} we get with \eqref{assumption:clampedboundary} 
	\begin{align*}
	&\Vert  (\Id - Q^h)(\tfrac{x_1}{\eps_h}, \tfrac{hx_2}{\eps_h/h},\tfrac{\delta_hx_3}{\eps_h/\delta_h})^{\top} - Q^h \big(\tfrac{c^h_1 }{\eps_h}     , \tfrac{ c^h_2}{\eps_h /h}    , \tfrac{  c^h_3 }{\eps_h /\delta_h} \big)^{\top} \Vert_{L^2(\Gamma)}\\
	\leq & \Vert Q^h \bar u^h \Vert_{L^2(\Gamma)} + \Vert ({u_1^h},{u_2^h}, {u_3^h})^{\top} \Vert_{L^2(\Gamma)} \\ \le & \LLL \, C \EEE \Vert Q^h \bar u^h \Vert_{W^{1,2}(\Omega)} + \big\Vert ( \hat \xi_1 - x_2 \hat \xi_2' - x_3 \hat \xi_3' ,\hat \xi_2, \hat \xi_3)^{\top} \big\Vert_{L^2(\Gamma)},
	\end{align*}
	which is uniformly bounded \MMM by \QQQ \eqref{barubounded}.  	\EEE
\MMM In view of $\Gamma = \lbrace -l/2,l/2\rbrace \times \omega$, we find \EEE $	\int_{\Gamma} (x_1, hx_2, \delta_h x_3)\, {\rm d}\mathcal{H}^{2}= 0$. \MMM  By this  and the fact that \EEE  $Q^h$ and $c^h$ are constant, we have by a binomial expansion
	\begin{align*}
	\Vert  (\Id - Q^h)(\tfrac{x_1}{\eps_h}, \tfrac{hx_2}{\eps_h/h},\tfrac{\delta_hx_3}{\eps_h/\delta_h})^{\top}\Vert^2_{L^2(\Gamma)} + \Vert Q^h \big(\tfrac{c^h_1 }{\eps_h}     , \tfrac{ c^h_2}{\eps_h /h}    , \tfrac{  c^h_3 }{\eps_h /\delta_h} \big)^{\top} \Vert^2_{L^2(\Gamma)} \leq C.
	\end{align*}
	Thus, by \cite[Lemma 3.3]{MaNePe} (note that ${\rm aff}(S_0) = \R^3$),  we have 
	\begin{align*}
	\left\vert\left(\frac{(\Id-Q^h)_1}{\eps_h},\frac{(\Id-Q^h)_2}{\eps_h/h^2}, \frac{(\Id-Q^h)_3}{\eps_h/\delta_h^2}\right)\right\vert \leq  C\Big(\int_{\Gamma} \vert (\Id - Q^h)(\tfrac{x_1}{\eps_h}, \tfrac{x_2}{\eps_h/h^2},\tfrac{x_3}{\eps_h/\delta_h^2}) \vert ^2 {\rm d}\mathcal{H}^2 \Big)^{1/2} \leq C,
	\end{align*}
\LLL	where the constant $C>0$ additionally depends on $\Gamma$. \EEE
	As $Q^h$ are rotations, the scaling of each column of $\Id-Q^h$ also holds for the corresponding row. More precisely, \MMM as $\delta_h \le h$, \EEE we have
	\begin{align*}
	Q^h = \Id + \begin{pmatrix}
	\mathcal{O}(\eps_h)& \mathcal{O}(\eps_h) & \mathcal{O}(\eps_h) \\
	\mathcal{O}(\eps_h)& \mathcal{O}(\eps_h/h^2) & \mathcal{O}(\eps_h/h^2) \\
	\mathcal{O}(\eps_h)& \mathcal{O}(\eps_h/h^2) & \mathcal{O} (\eps_h/\delta_h^2)
	\end{pmatrix}.
	\end{align*}	
\MMM This control is not sufficient yet to get \eqref{eq:deviationfromidentity}, \SSS and the estimate needs to be refined.  \QQQ For this purpose, \EEE it will be convenient to analyze the first momentum \MMM in the \QQQ $x_3$-direction, \EEE defined by
\begin{align}\label{eq: zeta def}
\bar \Psi^h(x_1,x_2)  = \int_{-\tfrac{1}{2}}^{\tfrac{1}{2}} x_3 \left[\bar y^h(x_1,x_2,x_3) - \begin{pmatrix}
x_1\\hx_2\\\delta_hx_3
\end{pmatrix}\right] {\rm d}x_3.
\end{align} 
\MMM In a similar fashion, we \EEE  define the first momentum $\Psi^h$ for the \MMM non-rotated \EEE deformation, simply by replacing $\bar y^h$ with $y^h$.
 Analogously to \eqref{eq:closetoidentity}, we derive
\begin{align}\label{eq:closetoidentityzeta}
Q^h \bar \Psi^h =  \MMM \Psi^h + \tfrac{1}{12}( \Id - Q^h ) \delta_h e_3\QQQ, 
\end{align}
\QQQ where $e_3$ denotes the normal vector pointing in the $x_3$-direction. \EEE
\EEE Let us define $\bar R^h:= (Q^h)^{\top} R^h$. The relation
	\begin{align*}
		\tfrac{1}{\delta_h} \bar y^h_{,3} - e_3 = (\nabla_h \bar y^h - \bar R^h) e_3 + (\bar R^h - \Id) e_3
	\end{align*}
	implies together with \ref{lem:rigidity:energycontrol} \MMM that \EEE
	\begin{align*}
		\Vert \tfrac{1}{\delta_h} \bar y^h_{,3} - e_3 -(  \bar R^h - \Id) e_3 \Vert_{L^2(\Omega)} \leq \MMM C \EEE \eps_h.
	\end{align*}
	Then, a Poincar\'e inequality with respect to $x_3$ yields \MMM  $\Vert f \Vert_{L^2(\Omega)}\le C  \eps_h $, where \EEE
	\begin{align*}
f(x) := \frac{1}{\delta_h} \left[\bar y^h -  \begin{pmatrix}
		x_1\\hx_2\\\delta_hx_3
		\end{pmatrix}\right] - \frac{1}{\delta_h} \int_{-\tfrac{1}{2}}^{\tfrac{1}{2}} \left[\bar y^h(x_1,x_2,\MMM s \EEE ) - \begin{pmatrix}
		x_1\\hx_2\\\delta_h \MMM s \EEE
		\end{pmatrix}\right] {\rm d} \MMM s \EEE - x_3 (  \bar R^h - \Id) e_3
	\end{align*} 
 \QQQ for $x\in\Omega$. \EEE By \MMM Jensen's  inequality and the definition in \eqref{eq: zeta def} we thus get \EEE  
	\begin{align*}
		\left\Vert \tfrac{1}{\eps_h} \bar \Psi^h - \tfrac{1}{12} \tfrac{\delta_h}{\eps_h}(\bar R^h - \Id) e_3 \right\Vert^2_{L^2(S)} \MMM = \frac{\delta_h^2}{\eps_h^2} \int_S \Big|  \int_{-1/2}^{1/2}f(x)x_3 \, {\rm d}x_3 \Big|^2 \, {\rm d}x_1 {\rm d}x_2 \EEE \leq \MMM C \EEE \delta_h^2.
	\end{align*}
Then, we find by \QQQ \eqref{eq:rigidityuptorigidmotion} \EEE that
\begin{align}\label{eq: zeta-con1}
\Vert \tfrac{1}{\eps_h} \bar \Psi^h \Vert_{L^2(S)} \le \SSS C \AAA \delta_h^2+ \EEE \EEE C \Vert \tfrac{\delta_h}{\eps_h}(\bar R^h - \Id) e_3 \Vert_{L^2(S)} \le C. 
\end{align}
As $\bar R^h - \Id$ is independent of $x_3$, \MMM for $i=1,2$ it holds that \EEE
\begin{align*}
\frac{\bar \Psi^h_{,i}}{\eps_h h^{i-1}} = \int_{-\tfrac{1}{2}}^{\tfrac{1}{2}} x_3 \tfrac{1}{\eps_h}\left(\nabla_h\bar y^h(x_1,x_2,x_3) - \Id \right)e_i \, {\rm d}x_3 = \int_{-\tfrac{1}{2}}^{\tfrac{1}{2}} x_3 \tfrac{1}{\eps_h}\left(\nabla_h\bar y^h(x_1,x_2,x_3) - \bar R^h \right)e_i  \, {\rm d}x_3.
\end{align*}
\MMM Then, due to \ref{lem:rigidity:energycontrol} and \eqref{eq: zeta-con1}, \LLL we have
\begin{align}\label{eq: zeta-con2}
\Vert \bar \Psi^h \Vert_{W^{1,2}(S)} \le C\eps_h.
\end{align}
Hereby, we also control the $L^2$-trace of $ \bar \Psi^h$ on $\Gamma$. \EEE Next, note that also the $L^2$-trace of $\Psi^h$ scales like $\eps_h$ as the functions $\hat \xi_2 /h$ and $\hat \xi_3 /\delta_h$ vanish, due to the fact that $\int_{-1/2}^{1/2} x_3 {\rm d}x = 0$, see \eqref{assumption:clampedboundary}.  Then, by \MMM using  \eqref{eq:closetoidentityzeta},  \eqref{eq: zeta-con2}, and the trace estimate we get \EEE
\begin{align*}
	\vert (Q^h- \Id)e_3 \vert\leq C\Vert(Q^h- \Id)e_3\Vert_{L^2(\Gamma)} \le \AAA C \EEE \delta_h^{-1}  \big(	\Vert \bar \Psi^h \Vert_{L^2(\Gamma)}  + 	\Vert \Psi^h \Vert_{L^2(\Gamma)} \big)  \leq C\eps_h/\delta_h.
\end{align*}
Using \AAA that \EEE the norm of each column is equal to $1$, we see that the remaining entry \MMM $e_2^{\top}(Q^h- \Id)e_2$ \EEE scales \MMM  like \EEE $\eps_h^2/\delta_h^2$. \MMM This along with \eqref{assumption:energyscalingthickness1} shows  \EEE \eqref{eq:deviationfromidentity} and  concludes the proof. \EEE
\end{proof}
We close the section by noting that all consequences of the rigidity estimate derived in \cite[Lemmata~3.3, 3.4]{Freddi2013}  also hold in our setting for $Q^h$ replaced by $\Id$. 


\section{Properties of the three- and the one-dimensional model}\label{sec: 3d-1d}

 Having presented the abstract theory in Section \ref{sec: auxi-proofs}, let us now apply this approach to our model. More precisely, we need to verify the assumptions of Theorem~\ref{th:abstract convergence 2} and Theorem~\ref{thm: sandierserfaty} in our  application. In this section, we collect the relevant properties of the three- and the one-dimensional \FFF model \EEE and postpone the convergence results to the next section.

\subsection{Properties of the three-dimensional model} \label{sec:3dproperties}

By Lemma~\ref{lem:rigidity}\ref{lem:rigidity:energycontrol} we see that the $SO(3)$-valued maps $R^h$ approximate the scaled gradient \AAA at \EEE the scaling of the energy, see \eqref{assumption:energyscaling}. Due to frame indifference, \MMM the energy is  essentially controlled by the distance of $ (R^h)^{\top} \nabla_h  y^h$ from $\Id$. Therefore, we introduce the quantity \EEE  
\begin{align}\label{def:G^h}
 G^h(y^h) := \frac{(R^h)^{\top} \nabla_h  y^h - \Id	}{\eps_h}.
\end{align}
Recall the definitions of the energies $\phi_h$ and the distances $\mathcal{D}_h$ in \eqref{assumption:energyscaling} and \eqref{assumption:dissipationsclae}, \SSS as well as the quadratic forms in \eqref{eq:quadraticformsnotred}. \EEE  As a consequence of Lemma~\ref{lem:rigidity}, we obtain estimates concerning $\phi_h$ and $\mathcal{D}_h$ and their linearized versions. \EEE
%
%

\begin{lemma}[Dissipation and energy]\label{lemma: metric space-properties}
	Let $h$ \AAA be \EEE sufficiently small \AAA and $M>0$.   Then, there exists $C=C(M)>0$ such that \EEE for all $y$, $y_0$, $y_1 \in \mathscr{S}^{3D}_{h,M}$ and all open subsets $U \subset \Omega$ we have
	\begin{itemize}
		\item[(i)] $\left\vert \int_U D^2(\nabla_h y_0, \nabla_h y_1) \LLL \,{\rm d}x \EEE- \int_U Q_D^3 (\nabla_h(y_1-y_0))\LLL \,{\rm d}x \EEE\right\vert \leq C \eps_h^\alpha / \delta_h^{\alpha}$,
		\item[(ii)] $ \left\vert \mathcal{D}_h(y_0,y_1)^2 - \int_\Omega Q_D^3 (G^h(y_0) - G^h(y_1))\LLL \,{\rm d}x \EEE \right\vert \leq C  \eps_h^\alpha / \delta_h^{\alpha} \Vert G^h(y_0)-G^h(y_1)\Vert^2_{L^2(\Omega)} \leq C \eps_h^\alpha / \delta_h^{\alpha}$,
		\item[(iii)] $\left\vert \triangle(y) \right\vert \leq C  \eps_h^\alpha / \delta_h^{\alpha}$, where $\triangle(y):= \frac{1}{\eps_h^2} \int_\Omega W(\nabla_hy ) \LLL \,{\rm d}x \EEE - \int_\Omega \tfrac{1}{2} Q_W^3(G^h(y)) \LLL \,{\rm d}x \EEE$,
		\item[(iv)]  $\left\vert \triangle(y_0) - \triangle(y_1) \right\vert \leq C  \eps_h^\alpha / \delta_h^{\alpha} \Vert G^h(y_0) - G^h(y_1) \Vert_{L^2(\Omega)} \leq C  \eps_h^\alpha / \delta_h^{\alpha}$.
	\end{itemize}
\end{lemma}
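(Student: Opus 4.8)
The plan is to reduce all four bounds to Taylor expansions of $W$ and $D^2$ near $SO(3)$, the remainders being made small by the $L^\infty$-estimates from Lemma~\ref{lem:rigidity}. First I would record two a priori bounds on $G^h(y)=\eps_h^{-1}((R^h)^\top\nabla_h y-\Id)$, uniform over $y\in\mathscr{S}^{3D}_{h,M}$: from Lemma~\ref{lem:rigidity}\ref{lem:rigidity:energycontrol} and $G^h(y)=(R^h)^\top(\nabla_h y-R^h)/\eps_h$ one gets $\Vert G^h(y)\Vert_{L^2(\Omega)}\le C$, while from $(R^h)^\top\nabla_h y-\Id=(R^h)^\top(\nabla_h y-\Id)+((R^h)^\top-\Id)$ together with Lemma~\ref{lem:rigidity}\ref{lem:rigidity:dev:GradIDInfty},\ref{lem:rigidity:dev:RotIDInfty} one gets $\Vert G^h(y)\Vert_{L^\infty(\Omega)}\le C\eps_h^{\alpha-1}\delta_h^{-\alpha}$; in particular $\Vert\eps_h G^h(y)\Vert_{L^\infty(\Omega)}\le C\eps_h^{\alpha}\delta_h^{-\alpha}\to 0$ by \eqref{assumption:energyscalingthickness1}. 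By the (separate) frame indifference in \eqref{assumptions-W}(ii) and \eqref{eq: assumptions-D}(v), this means that a.e.\ in $\Omega$ one may write $W(\nabla_h y)=W(\Id+\eps_h G^h(y))$ and $D^2(\nabla_h y_0,\nabla_h y_1)=D^2(\Id+\eps_h G^h(y_0),\Id+\eps_h G^h(y_1))$, with all arguments in a fixed neighbourhood of $\Id$ on which $W$ and $D^2$ are $C^3$.

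The algebraic core is the following two expansions, valid for $A_0,A_1\in\R^{3\times 3}$ with $\eps_h|A_i|$ uniformly small. Since $D^2(F,F)=0$ and $\partial_{F_1}D^2(F,F)=0$ for $F$ near $SO(3)$ (as $F$ minimises $G\mapsto D^2(G,F)$), expanding $G\mapsto D^2(G,\Id+\eps_h A_1)$ around $G=\Id+\eps_h A_1$, using the Lipschitz dependence of $\partial^2_{F_1^2}D^2$ on its base point and the identification $\partial^2_{F_1^2}D^2(\Id,\Id)[\cdot,\cdot]=2Q_D^3$ from \eqref{eq:quadraticformsnotred}, yields
\begin{align}\label{eq:planD}
\tfrac{1}{\eps_h^2}\,D^2(\Id+\eps_h A_0,\Id+\eps_h A_1)=Q_D^3(A_0-A_1)+\mathcal{O}\bigl(\eps_h(|A_0|+|A_1|)\,|A_0-A_1|^2\bigr).
\end{align}
Likewise, using $W(\Id)=0$, $\partial_F W(\Id)=0$ and $\partial^2_{F^2}W(\Id)[\cdot,\cdot]=Q_W^3$, the same shifted expansion gives
\begin{align}\label{eq:planW}
\tfrac{1}{\eps_h^2}\bigl(W(\Id+\eps_h A_0)-W(\Id+\eps_h A_1)\bigr)=\tfrac12\bigl(Q_W^3(A_0)-Q_W^3(A_1)\bigr)+\mathcal{O}\bigl(\eps_h(|A_0|+|A_1|)^2|A_0-A_1|\bigr),
\end{align}
and the one-variable version $\tfrac{1}{\eps_h^2}W(\Id+\eps_h A_0)=\tfrac12 Q_W^3(A_0)+\mathcal{O}(\eps_h|A_0|^3)$ follows by expanding around $\Id$.

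The four claims then follow by integration. For (iii) I apply the one-variable version with $A_0=G^h(y)$ and bound $\int_\Omega|G^h(y)|^3\le\Vert G^h(y)\Vert_{L^\infty(\Omega)}\Vert G^h(y)\Vert_{L^2(\Omega)}^2$. For (ii) I apply \eqref{eq:planD} with $A_i=G^h(y_i)$ and integrate over $\Omega$; the error is at most $C\eps_h(\Vert G^h(y_0)\Vert_{L^\infty}+\Vert G^h(y_1)\Vert_{L^\infty})\Vert G^h(y_0)-G^h(y_1)\Vert_{L^2}^2$, and since $\eps_h\Vert G^h(\cdot)\Vert_{L^\infty}\le C\eps_h^\alpha\delta_h^{-\alpha}$ this is precisely the middle bound of (ii), the last bound then using $\Vert G^h(y_0)-G^h(y_1)\Vert_{L^2}\le C$. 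For (iv) I apply \eqref{eq:planW} and split the error using Cauchy--Schwarz, e.g.\ $\int_\Omega|A_i|^2|A_0-A_1|\le\Vert A_i\Vert_{L^\infty}\Vert A_i\Vert_{L^2}\Vert A_0-A_1\Vert_{L^2}$, which again produces the factor $\eps_h\Vert G^h(\cdot)\Vert_{L^\infty}\le C\eps_h^\alpha\delta_h^{-\alpha}$ times $\Vert G^h(y_0)-G^h(y_1)\Vert_{L^2}$. For (i), the only additional point is that the target features $Q_D^3(\nabla_h(y_1-y_0))$ rather than $\eps_h^2Q_D^3(G^h(y_0)-G^h(y_1))=Q_D^3((R^h)^\top\nabla_h(y_1-y_0))$; since $Q_D^3$ is a quadratic form, $|Q_D^3((R^h)^\top B)-Q_D^3(B)|\le C\Vert R^h-\Id\Vert_{L^\infty}|B|^2$ pointwise, and with $B=\nabla_h(y_1-y_0)=\eps_h R^h(G^h(y_0)-G^h(y_1))$ one has $\Vert B\Vert_{L^2}\le C\eps_h$ and $\Vert R^h-\Id\Vert_{L^\infty}\le C\eps_h^\alpha\delta_h^{-\alpha}$, so this replacement costs only $C\eps_h^{2+\alpha}\delta_h^{-\alpha}\le C\eps_h^\alpha\delta_h^{-\alpha}$; since every estimate passes through $L^\infty(\Omega)$- and $L^2(\Omega)$-norms, the resulting constant is independent of the open set $U\subseteq\Omega$.

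I expect the only genuine difficulty to be the exponent bookkeeping: one must ensure that in every remainder exactly one factor of $\eps_h$ (coming from the cubic Taylor term) is available to absorb the single ``dangerous'' factor $\Vert G^h\Vert_{L^\infty}\le C\eps_h^{\alpha-1}\delta_h^{-\alpha}$, and that no second such factor appears. This is precisely why \eqref{eq:planD} and \eqref{eq:planW} have to be obtained by expanding around the shifted point $\Id+\eps_h A_1$ — so that the remainder is truly of order $|A_0-A_1|^2$, resp.\ $|A_0-A_1|$ — rather than around $\Id$; everything else, namely the existence of $R^h$, the $L^2$- and $L^\infty$-controls, and the $C^3$-regularity of $W$ and $D^2$, is already supplied by Lemma~\ref{lem:rigidity} and the standing assumptions \eqref{assumptions-W} and \eqref{eq: assumptions-D}.
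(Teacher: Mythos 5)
Parts (ii)--(iv) of your argument are, in substance, the paper's own: you Taylor-expand $W$ and $D^2$ around the rotated strain $\Id+\eps_h G^h$ (your expansions \eqref{eq:planD}, \eqref{eq:planW} are exactly what the paper obtains via the Hessian field $H_Y=\tfrac12\partial^2_{F_1^2}D^2(Y,Y)$ together with the Lipschitz dependence of $H_Y$ on the base point), and you kill the cubic remainders with the a priori bounds $\Vert G^h(y)\Vert_{L^2(\Omega)}\le C$ and $\Vert\eps_h G^h(y)\Vert_{L^\infty(\Omega)}\le C(\eps_h/\delta_h)^\alpha$ from Lemma~\ref{lem:rigidity}. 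This part is correct, and re-deriving the $W$-expansion instead of citing an external lemma (as the paper does) is a valid, slightly more self-contained route.

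Part (i), however, has a genuine gap. You write
\begin{align*}
\eps_h^2 Q_D^3\big(G^h(y_0)-G^h(y_1)\big) = Q_D^3\big((R^h)^\top\nabla_h(y_1-y_0)\big),
\qquad B=\nabla_h(y_1-y_0)=\eps_h R^h\big(G^h(y_0)-G^h(y_1)\big),
\end{align*}
which implicitly assumes a \emph{single} rotation field $R^h$ associated with both $y_0$ and $y_1$. This is not the case: as the paper's Appendix makes explicit, $G^h(y_i)=\eps_h^{-1}\big(R(y_i)^\top\nabla_h y_i-\Id\big)$, with the rotation $R(y_i)$ of Lemma~\ref{lem:rigidity} constructed from $y_i$ itself, so that $R(y_0)\neq R(y_1)$ in general. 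Consequently $\eps_h\big(G^h(y_0)-G^h(y_1)\big)=R(y_0)^\top\nabla_h y_0-R(y_1)^\top\nabla_h y_1$, which differs from any single rotation applied to $\nabla_h(y_0-y_1)$ by the term $\big(R(y_0)^\top-R(y_1)^\top\big)\nabla_h y_1$; this term has $L^2$-magnitude $\eps_h/\delta_h$, i.e.\ the \emph{same} order as $\nabla_h(y_0-y_1)$ itself, so it cannot be absorbed into the error as you do.

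The fix is simply not to introduce $G^h$ for part (i) at all: expand $D^2(\nabla_h y_0,\nabla_h y_1)$ directly around the diagonal point $(\nabla_h y_0,\nabla_h y_0)$ (no frame indifference needed), giving $\tfrac12\partial^2_{F_1^2}D^2(\nabla_h y_0,\nabla_h y_0)[\nabla_h(y_1-y_0),\nabla_h(y_1-y_0)]$ plus a cubic remainder $C\int_U|\nabla_h(y_1-y_0)|^3\,{\rm d}x$, and then replace the Hessian by $\C_D^3=\tfrac12\partial^2_{F_1^2}D^2(\Id,\Id)$ using its Lipschitz dependence on the base point together with $\Vert\nabla_h y_0-\Id\Vert_{L^\infty(\Omega)}\le C(\eps_h/\delta_h)^\alpha$; this is the paper's argument. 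Alternatively, you may rotate \emph{both} arguments by $R(y_0)^\top$ (ordinary, not separate, frame indifference), so the rotated difference is exactly $R(y_0)^\top\nabla_h(y_0-y_1)$ and your step of removing the rotation does go through --- but the point is that the two rotations must be the same, and your proposal as written does not guarantee this.
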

\begin{proof}
\LLL	The lemma is a \AAA variant \EEE of \cite[Lemma 4.3]{MFMKDimension} with the only difference being that suitable scalings are replaced. For the reader's convenience we included a proof in   Appendix~\hyperlink{proof:dissipationandenergy}{A}.
	\end{proof}
 \SSS The \EEE following lemma provides \AAA properties \EEE about the topology and lower semicontinuity. In addition, the lemma \AAA will help \EEE to prove lower \FFF semicontinuity \EEE of the slopes in \LLL Lemma~\ref{lem:stronguppergradient3d} and \EEE Theorem~\ref{theorem: lsc-slope}.

\begin{lemma}[Properties of ($\mathscr{S}^{3D}_{h,M}, \mathcal{D}_h$) and $\phi_h$]\label{th: metric space}
Let $M>0$. \EEE For $h>0$ sufficiently small we have
	\begin{itemize}
		\item[(i)] {\rm Completeness:} $(\mathscr{S}^{3D}_{h,M}, \mathcal{D}_h)$ is a complete metric space.
		\item[(ii)] {\rm Compactness:} If $(y_n)_n \subset \mathscr{S}^{3D}_{h,M}$, then $(y_n)_n$ admits a subsequence converging weakly in $W^{2,p}(\Omega;\R^3)$ and  strongly in  $W^{1,\infty}(\Omega;\R^3)$.
		\item[(iii)] {\rm Topologies:} The \BBB topology \LLL on $\mathscr{S}^{3D}_{h,M}$ \EEE  induced by $\mathcal{D}_h$ coincides with the weak $W^{2,p}(\Omega;\R^3)$ topology.  
		\item[(iv)] {\rm Lower semicontinuity:} \LLL Let $(y_n)_n \subset \mathscr{S}^{3D}_{h,M}$ be a sequence such that $\mathcal{D}_h(y_n,y) \to 0$ for some $y\in \mathscr{S}^{3D}_{h,M}$. Then,   $\liminf_{n \to \infty} \phi_h(y_n) \ge \phi_h(y)$. \EEE
		\item[(v)]  {\rm Local equivalence of metrics:} Define $\overline{\mathcal{D}}_h(y_0,y_1) = \Vert \LLL \nabla_h y_0 - \nabla_h y_1 \Vert_{L^2(\Omega)}$ for $y_0,y_1\in \mathscr{S}^{3D}_{h,M}$. Then, there exists \QQQ $\QQQ \kappa \EEE _h>0$ \EEE such that  for all $y_0,y_1\in \mathscr{S}^{3D}_{h,M}$ with  \SSS $\Vert \nabla_h y_0 - \nabla_h y_1\Vert_{L^{\infty}(\Omega)} \leq \QQQ \kappa \EEE _h$ \EEE  it holds that \EEE
		\begin{align*}
		c_h \LLL \overline{\mathcal{D}}_h(y_0,y_1) \EEE \leq  \mathcal{D}_h(y_0,y_1) \leq C_h \LLL \overline{\mathcal{D}}_h(y_0,y_1) \EEE ,
		\end{align*}
		\MMM where $C_h,c_h >0$ and $\QQQ \kappa \EEE _h$ depend on $h$ \AAA and $M$. \EEE
	\end{itemize}
\end{lemma}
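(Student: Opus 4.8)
\emph{Overall plan.} I would prove the statements in the order (ii), then (iii) and (iv), then (i), and finally (v): the compactness and weak lower semicontinuity obtained in (ii) feed into everything else, and the metric property in (i) turns out to be a formal consequence of (ii)--(iii). The two external inputs are the rigidity estimates of Lemma~\ref{lem:rigidity} (valid on $\mathscr{S}^{3D}_{h,M}$ for $h$ small) and, for the delicate part (v), the generalized Korn inequality of Theorem~\ref{pompe}. Throughout $h$ is fixed and small, and all constants may depend on $h$ and $M$.

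\emph{Compactness (ii).} For $(y_n)_n\subset\mathscr{S}^{3D}_{h,M}$ the bound $\phi_h(y_n)\le M$, the non-negativity of $W$, and the $p$-growth of $P$ in \eqref{assumptions-P}(iii) give $\|\nabla_h^2 y_n\|_{L^p(\Omega)}\le C$, hence $\|\nabla^2 y_n\|_{L^p(\Omega)}\le C$ since the scale factors in the definition of $\nabla_h^2$ are $\ge 1$ for small $h$; combined with the clamped datum on $\Gamma$ and a Poincar\'e-type inequality this yields $\|y_n\|_{W^{2,p}(\Omega)}\le C$. Extracting $y_n\rightharpoonup y$ in $W^{2,p}$ and using that $W^{2,p}(\Omega)$ embeds compactly into $C^1(\overline\Omega)$ (valid since $p>3$) one gets $y_n\to y$ strongly in $W^{1,\infty}(\Omega;\R^3)$, which is (ii). The limit stays in $\mathscr{S}^{3D}_{h,M}$: the boundary condition passes to the limit by trace continuity, and $\phi_h(y)\le\liminf_n\phi_h(y_n)$ because $\int_\Omega W(\nabla_h y_n)\to\int_\Omega W(\nabla_h y)$ by the uniform convergence of the scaled gradients and continuity of $W$, while $\int_\Omega P(\nabla_h^2 y_n)$ is weakly lower semicontinuous by convexity of $P$. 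In particular $\phi_h$ is lower semicontinuous along weakly-$W^{2,p}$-convergent sequences in $\mathscr{S}^{3D}_{h,M}$.

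\emph{Topologies, metric property, lower semicontinuity (iii), (i), (iv).} Taylor expanding $D^2$ at the diagonal as in \eqref{intro:R} and using Lemma~\ref{lem:rigidity}\ref{lem:rigidity:dev:GradIDInfty} gives $D^2(\nabla_h y_0,\nabla_h y_1)\le C|\nabla_h y_0-\nabla_h y_1|^2$ on $\mathscr{S}^{3D}_{h,M}$; hence $y_n\rightharpoonup y$ in $W^{2,p}$ (so $\nabla_h y_n\to\nabla_h y$ uniformly by (ii)) implies $\mathcal{D}_h(y_n,y)\to 0$. Conversely, if $\mathcal{D}_h(y_n,y)\to 0$, then $(y_n)$ is weakly-$W^{2,p}$ precompact in $\mathscr{S}^{3D}_{h,M}$ by (ii); any weak cluster point $\tilde y$ satisfies $\mathcal{D}_h(\tilde y,y)=0$ by the triangle inequality and the direction just shown, and by \eqref{eq: assumptions-D}(i) this forces $(\nabla_h\tilde y)^\top\nabla_h\tilde y=(\nabla_h y)^\top\nabla_h y$ a.e., whence, using the common boundary datum on $\Gamma$ and $W^{2,p}$-regularity, a Liouville-type rigidity argument forces $\nabla_h\tilde y=\nabla_h y$ and thus $\tilde y=y$. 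This proves (iii). The same a.e.\ identity applied to an arbitrary pair shows $\mathcal{D}_h$ separates points, so it is a genuine metric; and since $\mathscr{S}^{3D}_{h,M}$ is compact for the weak-$W^{2,p}$ topology by (ii) and this is the $\mathcal{D}_h$-topology by (iii), $(\mathscr{S}^{3D}_{h,M},\mathcal{D}_h)$ is complete, which is (i). Finally (iv) is immediate: $\mathcal{D}_h(y_n,y)\to 0$ gives $y_n\rightharpoonup y$ in $W^{2,p}$ by (iii), and then $\liminf_n\phi_h(y_n)\ge\phi_h(y)$ by the lower semicontinuity from (ii).

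\emph{Local equivalence (v) --- the main obstacle.} The upper bound $\mathcal{D}_h(y_0,y_1)\le C\eps_h^{-1}\overline{\mathcal{D}}_h(y_0,y_1)$ follows from the pointwise estimate $D^2(\nabla_h y_0,\nabla_h y_1)\le C|\nabla_h y_0-\nabla_h y_1|^2$ noted above. For the lower bound I would set $v:=y_0-y_1$ (which vanishes on $\Gamma$, both configurations carrying the same clamped datum) and $H:=\nabla_h v$. A second-order expansion of $D^2$ at the diagonal, using the positive-definiteness of $\partial^2_{F_1^2}D^2(\Id,\Id)$ on symmetric matrices \eqref{eq: assumptions-D}(vi) together with the $L^\infty$-bounds of Lemma~\ref{lem:rigidity}, yields, when $\|H\|_{L^\infty(\Omega)}\le\kappa_h$, the pointwise bound $D^2(\nabla_h y_0,\nabla_h y_1)\ge c|\sym H|^2-C\big((\eps_h/\delta_h)^\alpha+\kappa_h\big)|H|^2$. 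Replacing $\sym H$ by $\sym((R^h)^\top H)$ at the cost of $C(\eps_h/\delta_h)^\alpha\|H\|_{L^2}$, where $R^h$ is the rotation field associated with $y_1$ via Lemma~\ref{lem:rigidity}, and then invoking the generalized Korn inequality of Theorem~\ref{pompe} --- applicable to $v$ precisely because $R^h$ is close to $\Id$ both in $L^\infty(S)$ and in $W^{1,2}(S)$, uniformly on $\mathscr{S}^{3D}_{h,M}$ by Lemma~\ref{lem:rigidity} --- gives $\|H\|_{L^2(\Omega)}\le C\|\sym((R^h)^\top H)\|_{L^2(\Omega)}$. Combining these, for $h$ small (so that $(\eps_h/\delta_h)^\alpha\to 0$ by \eqref{assumption:energyscalingthickness1} is negligible) and then $\kappa_h$ small, one obtains $\mathcal{D}_h(y_0,y_1)^2=\eps_h^{-2}\int_\Omega D^2(\nabla_h y_0,\nabla_h y_1)\ge c_h^2\|H\|_{L^2(\Omega)}^2=c_h^2\,\overline{\mathcal{D}}_h(y_0,y_1)^2$. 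I expect the hard part to be exactly this passage from control of the symmetrized, rotated strain $\sym((R^h)^\top\nabla_h v)$ to control of the full scaled gradient $\nabla_h v$: the classical Korn inequality is inadequate because no single constant rotation works over the whole long, thin cross-section, so Theorem~\ref{pompe} is unavoidable, and it applies only because the approximating rotation field is uniformly close to the identity --- an $L^\infty$-type rigidity which is available only thanks to the second-gradient term in \eqref{assumption:energyscaling}.
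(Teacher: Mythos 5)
Your plan for (i)--(iv) is sound and takes a genuinely different (and arguably lighter) route than the paper's: you deduce positivity of $\mathcal{D}_h$ from a Liouville/Cauchy--Green rigidity argument combined with the boundary data, whereas the paper gives a self-contained cube-decomposition-plus-Korn argument (Lemma~\ref{lemma: positivity}). Your route is fine in principle, although the passage from $(\nabla_h\tilde y)^\top\nabla_h\tilde y=(\nabla_h y)^\top\nabla_h y$ to $\tilde y=y$ deserves the explicit invocation of the classical rigidity theorem for $C^1$ immersions together with the observation that $y(\Gamma)$ affinely spans $\R^3$, rather than the phrase ``a Liouville-type rigidity argument.''

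For (v), however, there is a genuine gap, and it is worth seeing why. First, Theorem~\ref{pompe} requires the matrix field $F$ to be $\gamma$-H\"older continuous with a uniform $C^\gamma$ bound. Your choice $F=(R^h)^\top$ does not satisfy this: $R^h\in W^{1,2}(S;SO(3))$ with an $L^\infty$ bound, and $W^{1,2}\cap L^\infty$ on a two-dimensional base does \emph{not} embed in $C^\gamma$. The paper's choice $F=(\nabla z_0)^\top$ is essential here precisely because $\nabla z_0\in W^{1,p}$ with $p>3$ embeds in $C^{0,\gamma}(\overline\Omega_h)$; that is the only reason Theorem~\ref{pompe} is available. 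Second, and more structurally, your route of expanding $D^2$ using the Hessian at $(\Id,\Id)$ and then transferring between $\sym H$ and $\sym((R^h)^\top H)$ introduces the error $C(\eps_h/\delta_h)^\alpha\|H\|_{L^2}^2$. Unlike the cubic remainder $C\kappa_h\|H\|_{L^2}^2$, this error is a \emph{fixed} quantity for fixed $h$ and cannot be made small by shrinking $\kappa_h$; it has to be compared with the reciprocal of the Korn constant on $\Omega_h$, which blows up as $h\to 0$ for the thin domain (roughly like $1/\delta_h$). With $\eps_h\sim\delta_h^2$ one has $(\eps_h/\delta_h)^\alpha\sim\delta_h^\alpha$ and hence $(\eps_h/\delta_h)^\alpha/\delta_h\to\infty$ since $\alpha<1$, so the error term dominates and the argument collapses for small $h$. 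The paper avoids this completely by expanding $D^2$ at $(\nabla z_0,\nabla z_0)$ and using Lemma~\ref{lemma: ele}: the positive-definiteness then directly controls $\sym((\nabla z_0)^\top Z)$ with only a $C\kappa_h|Z|^2$ remainder, and this is exactly the quadratic form that Theorem~\ref{pompe} applied with $F=(\nabla z_0)^\top$ handles without any conversion error. Incidentally, your remark that ``the classical Korn inequality is inadequate because no single constant rotation works'' misdiagnoses the difficulty: since $y_0-y_1$ vanishes on $\Gamma$, the boundary-condition version of Korn applies without subtracting any rotation; the true obstruction is that the Hessian of $D^2$ at $(\nabla z_0,\nabla z_0)$ annihilates $(\nabla z_0)^{-\top}\R^{3\times 3}_{\rm skew}$ rather than $\R^{3\times 3}_{\rm skew}$ itself, which is what forces the generalized (variable-coefficient) Korn inequality.
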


  Here, we remark that the positivity of the metric is quite delicate  as $D$ is only a true distance when restricted to {\emph{symmetric}} and {\emph{positive definite}} matrices.  A similar problem arises in the proof of (v) \SSS due to the nontrivial kernel of the Hessian of $D^2$, see Lemma \ref{lemma: ele}. We follow the strategy devised in \cite{MielkeRoubicek} to circumvent this problem, by using \EEE the following version of Korn's inequality  \AAA (see \cite[Corollary~4.1]{Pompe} and also \cite[Theorem~3.3]{MielkeRoubicek}), \EEE  which itself  is based on \EEE   \cite{Neff}.  
  
\begin{theorem}[Generalized Korn's inequality]\label{pompe}
	Consider an open, bounded, \MMM and connected \EEE set $U \SSS \subset \R^3\EEE$ with Lipschitz boundary. Let $S \subset  \partial U$ be a \MMM \AAA nonempty \EEE open subset. \AAA Moreover, let $\mu >0$ \AAA be \EEE small and $\gamma\in (0,1]$. Then \EEE  there exists a constant $c = c(U,S,\mu,\gamma) >0$ such that  for all $\gamma$-Hölder continuous $F\QQQ \colon \EEE  \overline{U}   \to \R^{3 \times 3}$  with $\inf_{x\in \bar U}\det F(x) \geq \mu$  \AAA and $\Vert F\Vert_{C^\gamma(U)} \le 1/\mu$ we have \EEE
	\begin{align*}
	\int_U \vert   \nabla u(x)^\top F(x)^\top  +  F(x) \nabla u(x) \vert^2 \,{\rm d}x \geq c \int_U \vert \nabla u(x) \vert^2 \,{\rm d}x \ \ \text{for all }  u \in W^{1,2}(U;\R^3) \SSS \text{ with $u=0$ on  \EEE $S$}.
	\end{align*}
\end{theorem}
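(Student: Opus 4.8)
The statement is a known result due to Pompe and Neff, so the plan is essentially to invoke it; but let me sketch the structure of a self-contained argument. The plan is to argue by contradiction and compactness. Suppose the inequality fails for some fixed $U$, $S$, $\mu$, $\gamma$. Then there exist sequences $(F_n)_n$ of $\gamma$-H\"older continuous matrix fields with $\inf_{\bar U}\det F_n \ge \mu$ and $\Vert F_n\Vert_{C^\gamma(U)}\le 1/\mu$, and functions $u_n\in W^{1,2}(U;\R^3)$ with $u_n=0$ on $S$, normalized by $\Vert \nabla u_n\Vert_{L^2(U)}=1$, such that
\begin{align}\label{eq:pompe-contradiction}
\int_U \big\vert \nabla u_n^\top F_n^\top + F_n \nabla u_n\big\vert^2\,{\rm d}x \to 0.
\end{align}
By the Arzel\`a--Ascoli theorem, the bound $\Vert F_n\Vert_{C^\gamma}\le 1/\mu$ gives (up to a subsequence) uniform convergence $F_n\to F$ on $\bar U$, where $F$ is again $\gamma$-H\"older with $\Vert F\Vert_{C^\gamma}\le 1/\mu$ and $\inf_{\bar U}\det F\ge \mu$; in particular $F(x)\in GL_+(3)$ for every $x$ and $F^{-1}$ is bounded and continuous. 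Since $u_n=0$ on $S$, a Poincar\'e--type inequality bounds $\Vert u_n\Vert_{W^{1,2}(U)}$, so up to a further subsequence $u_n\weakly u$ in $W^{1,2}(U;\R^3)$ with $u=0$ on $S$ in the sense of traces.

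The next step is to upgrade weak convergence to strong convergence of the gradients. Write $E_n := \nabla u_n^\top F_n^\top + F_n\nabla u_n$, which tends to $0$ strongly in $L^2$ by \eqref{eq:pompe-contradiction}. The idea is to solve for $\nabla u_n$ algebraically: for each fixed $x$, the linear map $G\mapsto G^\top F(x)^\top + F(x)G$ on $\R^{3\times 3}$ has kernel exactly $\R^{3\times 3}_{\rm skew}\cdot F(x)^{-\top}$-type skew perturbations, so it does not determine $\nabla u_n$ pointwise; this is precisely why a naive pointwise inversion fails and why one needs the Neff--Pompe machinery rather than classical Korn. The correct route is to note that $\sym(F\,\nabla u\,F^\top\cdot(\text{something}))$ — more precisely one applies the known result: the quantity $\int_U \vert \sym(\nabla u^\top F^\top)\vert^2$ controls $\int_U\vert\nabla u\vert^2$ for continuous $F$ with positive determinant, because $F$ being close to a \emph{constant} invertible matrix on small balls reduces the problem to classical Korn (after the linear change of variables $v = F(x_0)u$ locally), and the H\"older continuity with the uniform determinant bound lets one cover $U$ by finitely many such balls with a partition-of-unity gluing. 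Carrying this out with uniform constants over the compact family $\{F: \Vert F\Vert_{C^\gamma}\le 1/\mu,\ \det F\ge\mu\}$ is exactly the content of \cite[Corollary~4.1]{Pompe}, \cite{Neff}, and I would simply cite it.

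Granting the inequality for the fixed limit field $F$, one finishes the contradiction argument as follows. From $E_n\to 0$ in $L^2$ and $F_n\to F$ uniformly together with $\Vert\nabla u_n\Vert_{L^2}=1$, one gets $\nabla u_n^\top F^\top + F\nabla u_n \to 0$ in $L^2$ as well (the error $\Vert(F_n-F)\nabla u_n\Vert_{L^2}\le \Vert F_n-F\Vert_{L^\infty}\to 0$). By weak lower semicontinuity, $\nabla u^\top F^\top + F\nabla u = 0$ a.e., and applying the (already-established) inequality to $u$ forces $\nabla u = 0$, i.e. $u$ is constant, hence $u\equiv 0$ since $u=0$ on $S$. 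On the other hand, the inequality applied to $u_n$ itself gives $1 = \Vert\nabla u_n\Vert_{L^2}^2 \le c^{-1}\Vert \nabla u_n^\top F_n^\top + F_n\nabla u_n\Vert_{L^2}^2\to 0$, a contradiction. \emph{The main obstacle} is the strong-convergence / pointwise-inversion step: the symmetrized-gradient-with-weight operator has a nontrivial kernel, so one genuinely needs the local reduction to constant-coefficient classical Korn plus a covering argument with uniform constants over the H\"older class — this is the part that is not elementary and for which I would rely on the cited literature rather than reprove it here.
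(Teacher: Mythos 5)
Your proposal is correct and ultimately takes the same route as the paper: the paper does not prove Theorem~\ref{pompe} but simply cites \cite[Corollary~4.1]{Pompe}, \cite[Theorem~3.3]{MielkeRoubicek}, and \cite{Neff}, and you likewise defer the essential step (the local reduction to constant-coefficient Korn plus a covering argument) to those references. Two small remarks: the cited corollary already delivers the constant uniformly over $\{F:\Vert F\Vert_{C^\gamma}\le 1/\mu,\ \det F\ge\mu\}$, so your contradiction/compactness wrapper is a harmless but superfluous layer; and in your final display you should read $F$ (the fixed limit, with constant $c(F)$) in place of $F_n$ — with $F_n$ the step is circular, whereas with $F$ it follows from the perturbation bound $\Vert(F_n-F)\nabla u_n\Vert_{L^2}\le\Vert F_n-F\Vert_{L^\infty}\to 0$ that you already noted.
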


\begin{proof}[{Proof of Lemma~\ref{th: metric space}}]
We first show (v). 
		Consider $y_0,y_1\in \mathscr{S}^{3D}_{h,M}$ and recall the projection $p_h\colon \Omega \to \Omega_h$, $p_h(x_1,x_2,x_3) = (x_1, hx_2, \delta_h x_3)$. We introduce the variables $z_i = y_i \circ p_h^{-1}$, $i =0,1$, as it is convenient to formulate the problem for the original (not rescaled) functions $z_0$ and $z_1$ defined  \SSS on $\Omega_h$.   Then, by Lemma~\ref{lem:rigidity}\ref{lem:rigidity:dev:GradIDInfty} we can fix $h>0$ such that $\nabla z_0$ is contained in a  small  neighborhood of $\Id$, and thus  Lemma \ref{lemma: ele} is applicable. Further, consider $\QQQ \kappa \EEE >0$ and let $Z := \nabla z_1 - \nabla z_0$ be such that $\Vert Z \Vert_{L^\infty(\Omega_h)} \leq \QQQ \kappa \EEE $. \SSS By a Taylor expansion and 		Lemma \ref{lemma: ele} we find \AAA constants $C,c >0$ \EEE such that
\begin{align*}
D^2( \nabla z_0 + Z , \nabla z_0) \geq  \frac{1}{2}\partial^2_{F_1^2} D^2(\nabla z_0, \nabla z_0)[Z,Z] -C|Z|^3  \ge c|{\rm sym}((\nabla z_0)^\top \QQQ  Z \EEE)|^2	 -C\QQQ \kappa \EEE |Z|^2 \end{align*} pointwise in \QQQ$\Omega_h$. 		 \EEE
%
%
%
%
%
%
%
%
%
%
%
We now use Theorem~\ref{pompe} applied to $F := (\nabla z_0)^{\top}$ and $u:=z_0 - z_1 \in W^{1,2}(\Omega_h;\R^3)$, where the $\gamma$-Hölder continuity of $F$ with $\Vert F \Vert_{C^\gamma(\Omega_h)} \leq 1/\mu$ for some $\gamma \in (0,1]$ and $\mu>0$ follows by a Sobolev embedding as $p>3$. Moreover, for $h>0$ sufficiently small, $\mu>0$ can be chosen such that $\inf_{x\in \bar U}\det F(x) \geq \mu$ due to Lemma~\ref{lem:rigidity}\ref{lem:rigidity:dev:GradIDInfty}. As the result is applied on $\Omega_h$ the corresponding constant is $h$-dependent, denoted by $C_h$. Recalling \eqref{assumption:dissipationsclae}, we get \EEE
	\begin{align*}
	\mathcal{D}_h^2(y_0,y_1) = & \frac{1}{ \eps_h^2 h \delta_h}\int_{\Omega_h} D^2( \nabla z_0 +Z, \nabla z_0) \, {\rm d}x \\ 
	\geq &  \frac{C}{ \eps_h^2h \delta_h}\int_{\Omega_h} \vert \sym ( \SSS (\nabla z_0)^{\top} \EEE Z) \vert^2\, {\rm d}x -   \frac{C\QQQ \kappa \EEE }{ \eps_h^2 h \delta_h}\int_{\Omega_h} |Z|^2 \, {\rm d}x\\
	\geq &  \frac{C_h-C\QQQ \kappa \EEE }{ \eps_h^2h \delta_h}\int_{\Omega_h} \vert Z\vert^2 \,{\rm d}x \geq  \SSS \frac{C_h-C\QQQ \kappa \EEE }{ \eps_h^2}  \int_{\Omega} \vert \nabla_h (y_1-y_0) \vert^2 \,{\rm d}x. \EEE
	\end{align*}
Thus, taking $\QQQ \kappa \EEE >0$ suitably small \MMM depending on $h$, \EEE we obtain the lower bound. \SSS An easier \EEE  calculation  \MMM using that $\nabla z_0$ is uniformly bounded yields   the upper bound.

\AAA Concerning (i), \EEE  the positivity follows by an argument similar to \cite[Lemma 4.4]{MFMKDimension}. \AAA We \EEE
 include the proof in Lemma \ref{lemma: positivity} for convenience of the reader.	The other properties of a metric space follow directly from \eqref{eq: assumptions-D}. \SSS The properties (ii)--(iv) and the completeness of  $(\mathscr{S}^{3D}_{h,M}, \mathcal{D}_h)$ now follow as in \cite[Lemma 4.4]{MFMKDimension}. We omit the proof.     \EEE 
\end{proof}

\begin{lemma}[Properties of $\vert \partial \phi_h \vert_{\mathcal{D}_h}$]\label{lem:stronguppergradient3d} \LLL Let $M>0$ and consider the complete metric space $(\mathscr{S}^{3D}_{h,M}, \mathcal{D}_h)$.
\MMM For  $h>0$ sufficiently small\EEE\LLL , \EEE the local slope $\vert \partial \phi_h \vert_{\mathcal{D}_h}$ is
	\begin{itemize}
		\item[(i)] a strong upper gradient for $\phi_h$,
		\item[(ii)] lower semicontinuous with respect to the weak topology in $W^{2,p}(\Omega;\R^3)$.
	\end{itemize}
	
\end{lemma}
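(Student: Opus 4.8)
The plan is to reduce both assertions to an explicit representation of $|\partial\phi_h|_{\mathcal{D}_h}$ on $\mathscr{S}^{3D}_{h,M}$ as a dual dissipation norm of the first Piola--Kirchhoff stress, in the spirit of \cite{MFMK,MFMKDimension} but with the scalings of Section~\ref{section:2}. Write $V:=\{\psi\in W^{2,p}(\Omega;\R^3):\psi=0\text{ on }\Gamma\}$, so that $\mathscr{S}^{3D}_h=y+V$ for any $y\in\mathscr{S}^{3D}_h$. By Lemma~\ref{lem:rigidity}\ref{lem:rigidity:dev:GradIDInfty}, for $h$ small the scaled gradient $\nabla_h y$ of any $y\in\mathscr{S}^{3D}_{h,M}$ takes values in a fixed neighbourhood of $SO(3)$ on which $W$ and $D^2$ are $C^3$; hence $\phi_h$ is G\^ateaux differentiable at $y$ along every $\psi\in V$, with derivative $\langle D\phi_h(y),\psi\rangle$, and we set $\|\psi\|_{h,y}:=\big(\tfrac{2}{\eps_h^2}\int_\Omega R(\nabla_h y,\nabla_h\psi)\,{\rm d}x\big)^{1/2}$. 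Using \eqref{eq:quadraticlowerbound}, Theorem~\ref{pompe} (applied on $\Omega_h$ to $F=(\nabla(y\circ p_h^{-1}))^{\top}$, which is H\"older continuous with $\det F\geq\mu>0$ uniformly on $\mathscr{S}^{3D}_{h,M}$) and the fact that $R(F,\cdot)$ is a quadratic form with bounded coefficients (see \eqref{intro:R}), the map $\psi\mapsto\|\psi\|_{h,y}$ is, for fixed small $h$, a norm on $V$ equivalent to $\|\cdot\|_{H^1(\Omega)}$ uniformly in $y\in\mathscr{S}^{3D}_{h,M}$, and hence, up to the factor $\eps_h^{-1}$, to the flat distance $\overline{\mathcal{D}}_h(y,y+\cdot)$ of Lemma~\ref{th: metric space}(v).

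The first step is to prove that
$$|\partial\phi_h|_{\mathcal{D}_h}(y)=\sup_{\psi\in V\setminus\{0\}}\frac{\big(-\langle D\phi_h(y),\psi\rangle\big)^{+}}{\|\psi\|_{h,y}}\qquad\text{for all }y\in\mathscr{S}^{3D}_{h,M}.$$
For ``$\geq$'' one tests with the admissible competitor $z=y+s\psi$ and lets $s\downarrow0$: since $D(F,F)=0$ and $\partial_{F_1}D^2(F,F)=0$, a third-order Taylor expansion of $D^2$ together with $R(F,\dot F)=\tfrac14\partial^2_{F_1^2}D^2(F,F)[\dot F,\dot F]$ gives $\mathcal{D}_h(y,y+s\psi)=s\,\|\psi\|_{h,y}(1+O(s))$, while $\phi_h(y)-\phi_h(y+s\psi)=-s\langle D\phi_h(y),\psi\rangle+o(s)$. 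For ``$\leq$'' one picks $z_n\to y$ (equivalently, by Lemma~\ref{th: metric space}(iii), $z_n\rightharpoonup y$ in $W^{2,p}$) realising the $\limsup$ and puts $\psi_n:=z_n-y\in V$, so $\nabla_h\psi_n\to0$ in $L^\infty$; a second-order Taylor expansion of $W$ for the elastic part and the convexity of $P$, which places the strain-gradient part of $\phi_h$ below its linearisation, for the hyperstress part give $\phi_h(y)-\phi_h(z_n)\leq-\langle D\phi_h(y),\psi_n\rangle+O\big(\|\nabla_h\psi_n\|_{L^\infty}\|\nabla_h\psi_n\|_{L^2}^2\big)$, while the same Taylor expansion of $D^2$ and the lower bound $\|\psi_n\|_{h,y}\gtrsim\eps_h^{-1}\|\nabla_h\psi_n\|_{L^2}$ from Theorem~\ref{pompe} give $\mathcal{D}_h(y,z_n)=\|\psi_n\|_{h,y}(1+o(1))$; dividing and sending $n\to\infty$ yields the claim.

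Granting this representation, assertion (ii) amounts to lower semicontinuity of its right-hand side along $y_n\rightharpoonup y$ in $W^{2,p}$ with $(y_n)_n\subset\mathscr{S}^{3D}_{h,M}$. There is nothing to prove if $\liminf_n|\partial\phi_h|_{\mathcal{D}_h}(y_n)=+\infty$, so pass to a subsequence (not relabelled) with $|\partial\phi_h|_{\mathcal{D}_h}(y_n)\leq C$. As $W^{2,p}\hookrightarrow\hookrightarrow C^1(\overline\Omega)$ for $p>3$, $\nabla_h y_n\to\nabla_h y$ uniformly, so $\|\psi\|_{h,y_n}\to\|\psi\|_{h,y}$ for every fixed $\psi\in V$ by continuity of $R$; hence it suffices to show $\langle D\phi_h(y_n),\psi\rangle\to\langle D\phi_h(y),\psi\rangle$ for all $\psi\in V$. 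The elastic contribution converges by the uniform convergence of $\nabla_h y_n$ and continuity of $\partial_F W$. The remaining hyperstress contribution pairs $\partial_Z P(\nabla_h^2 y_n)$ with $\nabla_h^2\psi$, and there $\nabla_h^2 y_n$ converges only weakly in $L^p$; here one uses that the bound $|\partial\phi_h|_{\mathcal{D}_h}(y_n)\leq C$ makes $D\phi_h(y_n)$ bounded in the dual of $H^1$, and combines the monotonicity of $Z\mapsto\partial_Z P(Z)$ with the regularity/compactness afforded by the strain-gradient term and a Minty-type argument to identify the weak $L^{p'}$-limit of $\partial_Z P(\nabla_h^2 y_n)$ with $\partial_Z P(\nabla_h^2 y)$, using also the lower semicontinuity of $\phi_h$ from Lemma~\ref{th: metric space}(iv); this is carried out essentially as in \cite{MFMK,MFMKDimension}. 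Taking the supremum over $\psi$ then gives $\liminf_n|\partial\phi_h|_{\mathcal{D}_h}(y_n)\geq|\partial\phi_h|_{\mathcal{D}_h}(y)$.

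Finally, assertion (i) follows from the representation and (ii): along a $\mathcal{D}_h$-absolutely continuous curve $t\mapsto y(t)\in\mathscr{S}^{3D}_{h,M}$, the local equivalence of $\mathcal{D}_h$ and $\overline{\mathcal{D}}_h$ (Lemma~\ref{th: metric space}(v)) makes $t\mapsto\nabla_h y(t)$ absolutely continuous in $L^2$, with $|y'|_{\mathcal{D}_h}(t)=\|\dot y(t)\|_{h,y(t)}$ for a.e.\ $t$; the chain rule $\tfrac{{\rm d}}{{\rm d}t}\phi_h(y(t))=\langle D\phi_h(y(t)),\dot y(t)\rangle$ together with the Cauchy--Schwarz inequality and the representation yields $|\tfrac{{\rm d}}{{\rm d}t}\phi_h(y(t))|\leq|\partial\phi_h|_{\mathcal{D}_h}(y(t))\,|y'|_{\mathcal{D}_h}(t)$, and integrating gives the strong-upper-gradient inequality (cf.\ \cite[Section~3.3]{MFMK}). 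The step I expect to be the main obstacle is the identification of the limit of $\partial_Z P(\nabla_h^2 y_n)$ in the proof of (ii): since $\mathcal{D}_h$ controls only $\nabla_h$, the generalized Korn inequality (Theorem~\ref{pompe}) --- and therefore the entire representation --- relies on the $C^\gamma$-regularity of $\nabla_h y$ provided by the nonsimple term, while passing to the limit in the hyperstress additionally hinges on the convexity of $P$, so that without the strain-gradient regularisation neither step goes through.
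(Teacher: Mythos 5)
Your proposal replaces the paper's route through $\lambda$-convexity with a dual--norm representation of the local slope, and in the end this costs you: assertion (i) has a genuine gap, and assertion (ii), while salvageable, is left vague precisely where the paper's approach does the work for free.

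The paper derives, from the $\lambda$-convexity of $\phi_h$ with respect to the \emph{linearized} metric $\overline{\mathcal{D}}_h$ (\cite[Prop.~3.2]{MielkeRoubicek}, cf.\ \eqref{convexityphih}) and a generalized-convexity bound \eqref{convexityd} on $\mathcal{D}_h$, a \emph{finite-distance} representation of $|\partial\phi_h|_{\mathcal{D}_h}(y)$ in terms of quotients $\big(\phi_h(y)-\phi_h(w)+\tfrac12\lambda\overline{\mathcal{D}}_h(y,w)^2\big)^+ / \mathcal{D}_h(y,w)(\cdots)$, see \eqref{slopelowerbound}--\eqref{slopeupperbound}. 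Lower semicontinuity (ii) is then immediate from Lemma~\ref{th: metric space}(ii)--(iv), and the strong-upper-gradient property (i) follows directly from $\lambda$-convexity via \cite[Cor.~2.4.10]{AGS}. Your representation $|\partial\phi_h|_{\mathcal{D}_h}(y)=\sup_\psi (-\langle D\phi_h(y),\psi\rangle)^+/\|\psi\|_{h,y}$ is (I believe) also correct, but it keeps the G\^ateaux derivative explicit, which is exactly what creates problems later.

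For (i), the chain rule $\tfrac{{\rm d}}{{\rm d}t}\phi_h(y(t))=\langle D\phi_h(y(t)),\dot y(t)\rangle$ a.e.\ is asserted but not justified, and it cannot be taken for granted: an absolutely continuous curve for $\mathcal{D}_h$ (equivalently for $\overline{\mathcal{D}}_h$ by Lemma~\ref{th: metric space}(v)) has $t\mapsto\nabla_h y(t)$ absolutely continuous in $L^2$, but $\nabla_h^2 y(t)$ is only \emph{bounded} in $L^p$ and need not be $L^p$-continuous in $t$; hence the hyperstress part $\zeta_h\eps_h^{-2}\int P(\nabla_h^2 y(t))\,{\rm d}x$ of $\phi_h(y(t))$ need not be absolutely continuous in $t$. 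This is precisely the issue that $\lambda$-convexity resolves (via the two-sided slope estimates that $\lambda$-convexity encodes and that underlie \cite[Cor.~2.4.10]{AGS}); without it your argument does not establish that $\phi_h\circ y$ is absolutely continuous, and "integrating" the chain rule is circular. In addition, a minor point in your "$\leq$" step: the second-order Taylor remainder for $W$ is $O(\|\nabla_h\psi_n\|_{L^2}^2)$, not $O(\|\nabla_h\psi_n\|_{L^\infty}\|\nabla_h\psi_n\|_{L^2}^2)$, because $\partial^2_F W$ near $SO(3)$ is not sign-definite on $\R^{3\times3}$; the term still vanishes after dividing by $\|\psi_n\|_{h,y}\gtrsim\eps_h^{-1}\|\nabla_h\psi_n\|_{L^2}$, but the estimate as written is wrong.

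For (ii), your Minty argument can be made to work, but you must spell out the key step: from the slope bound $|\partial\phi_h|_{\mathcal{D}_h}(y_n)\le C$ and your representation one gets $|\langle D\phi_h(y_n),\psi\rangle|\le C\|\psi\|_{h,y_n}\lesssim C\eps_h^{-1}\|\nabla_h\psi\|_{L^2}$ uniformly; testing with $\psi_n:=y_n-y\in V$, which $\to 0$ strongly in $H^1$ by the compact embedding $W^{2,p}\hookrightarrow\hookrightarrow W^{1,\infty}$, and peeling off the elastic part, yields $\int_\Omega\partial_Z P(\nabla_h^2 y_n)\colon\nabla_h^2(y_n-y)\,{\rm d}x\to 0$, hence $\int_\Omega\partial_Z P(\nabla_h^2 y_n)\colon\nabla_h^2 y_n\,{\rm d}x\to\int_\Omega\xi\colon\nabla_h^2 y\,{\rm d}x$, which is exactly the missing "hard" inequality that unlocks Minty's trick and identifies $\xi=\partial_Z P(\nabla_h^2 y)$. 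Note that Lemma~\ref{th: metric space}(iv) and the references you invoke are not what does the work here; the paper's own proof of (ii) never passes to the limit in $\partial_Z P(\nabla_h^2 y_n)$ at all, which is a cleaner design.
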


\begin{proof}

 	(i) As the local slope is a weak upper gradient in the sense of Definition \cite[Definition 1.2.2]{AGS} by \cite[Theorem  1.2.5]{AGS}, we only need to show that for an absolutely continuous curve $z\colon(a,b) \to \mathscr{S}^{3D}_{h,M}$  \SSS (with respect to $\mathcal{D}_h$) \EEE satisfying $\vert \partial \phi_h\vert_{\mathcal{D}_h}(z) \vert z '\vert_{\mathcal{D}_h} \in L^1(a,b)$ the curve $\phi_h \circ z$ is absolutely continuous.   It is not restrictive to assume that $(a,b)$ is a bounded interval and \AAA that $z$ \EEE can be extended by continuity to $[a,b]$.
 	We first show that $z$ is absolutely continuous with respect to $\overline{\mathcal{D}}_h$. Let $\QQQ \kappa \EEE _h>0$ be the constant from Lemma~\ref{th: metric space}(v). 
 	  As $z([a,b])$ is compact, there exists $m\in \N$ and $s_i\in [a,b]$ for $i=1,...,m$ such that
 	\begin{align*}
 		 		z\big([a,b]\big) \subset \bigcup_{i = 1}^m \{ y \in \mathscr{S}^{3D}_{h,M}: \Vert y - z(s_i) \Vert_{W^{1,\infty}(\Omega)} <h\QQQ \kappa \EEE _h \}.
 	\end{align*}
 	 This is a cover of open sets with respect to $\mathcal{D}_h$ as the topologies induced by $W^{1,\infty}(\Omega)$ and $\mathcal{D}_h$ coincide,  see Lemma~\ref{th: metric space}(iii).   	Thus, by Lemma~\ref{th: metric space}(v), for all $s,t\in [a,b]$ with $s<t$ there exists a partition $s= \tilde s_0 < \tilde s_1 < ... < \tilde s_m = t$ satisfying  $ \overline{\mathcal{D}}_h(z(\tilde s_i),z(\tilde s_{i-1}))  \leq C_h \mathcal{D}_h(z(\tilde s_i),z(\tilde s_{i-1}))$ which yields
 	\begin{align*}
 		\overline{\mathcal{D}}_h(z(s),z(t)) \leq \sum_{i = 1}^m \overline{\mathcal{D}}_h(z(\tilde s_i),z(\tilde s_{i-1})) \leq C_h \sum_{i = 1}^m \mathcal{D}_h(z(\tilde s_i),z(\tilde s_{i-1})) \leq \int_{s}^{t} C_h \vert z'\vert_{\mathcal{D}_h}(r)\, {\rm d}r < +\infty.
 	\end{align*}
 	In particular, $z$ is absolutely continuous with respect to $\overline{\mathcal{D}}_h$, see \eqref{def:absolutecontinuity}.
	 \QQQ In view of Definition \ref{main def2}(ii) and \eqref{def:metricderivative}, Lemma~\ref{th: metric space}(v) also implies that  $\vert \partial \phi_h\vert_{\overline{\mathcal{D}}_h}(z) \vert z '\vert_{\overline{\mathcal{D}}_h} \in L^1(a,b)$. \SSS It now remains to show that  the local slope $\vert \partial \phi_h \vert_{\overline{\mathcal{D}}_h}$ is a strong upper gradient and that \QQQ $\vert \partial \phi_h \vert_{\overline{\mathcal{D}}_h}\circ z$ is Borel, \EEE as then $\AAA \phi_h \EEE \circ z$ is indeed absolutely continuous by Definition \ref{main def2}(i), as desired.  To this end, \EEE for small $h>0$, we observe by
\cite[Proposition 3.2]{MielkeRoubicek} that $\phi_h$ is $\lambda$-convex with respect to $\overline{\mathcal{D}}_h$ for \MMM some  $\lambda < 0$ \EEE depending only on $M$ and $h$. More precisely, given $y,w \in \mathscr{S}^{3D}_{h,M}$, \MMM for \EEE the convex combination $y_s:= (1-s) y + sw$, $ s \in [0,1]$, \MMM it holds that \EEE 
\begin{align}\label{convexityphih}
	\phi_h(y_s) \leq (1-s) \phi_h(y) + s \phi_h(w) - \tfrac{1}{2} \lambda s(1-s) \overline{\mathcal{D}}_h(y,w)^2.
\end{align}
  Here, we note that this proposition is applicable as Lemma~\ref{lem:rigidity}\ref{lem:rigidity:dev:GradIDInfty} implies that
  \begin{align*}
  	  \Vert \nabla_h y - \nabla_h w \Vert_{L^\infty(\Omega)} \leq C \big(\Vert \nabla_h y - \Id \Vert_{L^\infty(\Omega)} + \Vert \nabla_h w - \Id \Vert_{L^\infty(\Omega)} \big) \leq C \eps_h^\alpha / \delta_h^\alpha.
  \end{align*}
\SSS Now, \eqref{convexityphih}, \AAA Lemma \ref{th: metric space}(iii), \EEE and  \EEE \cite[Corollary 2.4.10]{AGS} ensure that the local slope $\vert \partial \phi_h \vert_{\overline{\mathcal{D}}_h}$ is a strong upper gradient \QQQ and the measurability of $\vert \partial \phi_h \vert_{\overline{\mathcal{D}}_h}\circ z$. \EEE

(ii) We first show that there \MMM exist  constants \EEE $C_h >0$ and $\MMM \QQQ \kappa \EEE _h>0\EEE$ \MMM depending on $h$ \EEE such that for all $y,w \in \mathscr{S}^{3D}_{h,M}$ satisfying $ \Vert  y - w\Vert_{W^{1,\infty}(\Omega)} \leq \MMM \QQQ \kappa \EEE _h\EEE$ \MMM the functions  $y_s:= (1-s) y + sw$, $ s \in [0,1]$, satisfy \EEE
 \begin{align}\label{convexityd}
 	\mathcal{D}_h(y_s,y)^2 \leq s^2 \mathcal{D}_h(w,y)^2 (1 + C_h \Vert \nabla_h w - \nabla_h y \Vert_{L^\infty(\Omega)}).
 \end{align}
Indeed, due to the uniform bounds on $\nabla_h w$ and $\nabla_h y$ in \eqref{eq:rigidity}\ref{lem:rigidity:dev:GradIDInfty}, we obtain by a Taylor expansion
\begin{align*}
\int_\Omega \tfrac{1}{2}\partial^2_{F_1^2}D^2({\nabla_h y},{\nabla_h y})[\nabla_h (w -  y),\nabla_h (w -   y) ] \LLL \,{\rm d}x \EEE  \leq 	\int_\Omega D^2(\nabla_h y, \nabla_h w) \LLL \,{\rm d}x \EEE  + C \Vert \nabla_h (w-   y) \Vert^3_{L^3(\Omega)}.
\end{align*}
Similarly, we \AAA get \EEE
\begin{align*}
\int_\Omega D^2(\nabla_h y, \nabla_h y_s) \LLL \,{\rm d}x \EEE  \leq s^2 \int_\Omega \tfrac{1}{2}\partial^2_{F_1^2}D^2&({\nabla_h y},{\nabla_h y})[\nabla_h (w -  y),\nabla_h (w -   y) ] \LLL \,{\rm d}x \EEE  + Cs^3 \Vert \nabla_h (w-   y) \Vert^3_{L^3(\Omega)}  .
\end{align*}
\SSS Combination of  \EEE the two estimates \MMM with \LLL the lower bound in \EEE Lemma~\ref{th: metric space}(v) yields \eqref{convexityd}, \SSS provided that $\QQQ \kappa \EEE _h$ is chosen sufficiently small. \EEE     Whenever \EEE  $y \neq w$ and $  \phi_h(y) - \phi_h(w)  + \tfrac{1}{2} \MMM \lambda \SSS  \overline{\mathcal{D}}_h(y,w)^2 \EEE >0$ we obtain by \eqref{convexityphih} 
\begin{align*}
	\frac{\phi_h(y) - \phi_h(y_s)}{\mathcal{D}_h(y,y_s)} \geq \frac{ \phi_h(y) - \phi_h(w)  + \tfrac{1}{2} \lambda (1-s)  \overline{\mathcal{D}}_h(y,w)^2 \EEE}{\mathcal{D}_h(y,w)}\frac{s\mathcal{D}_h(y,w)}{\mathcal{D}_h(y,y_s)}.
\end{align*}
With \eqref{convexityd}, \QQQ for $ \Vert  y - w\Vert_{W^{1,\infty}(\Omega)} \leq \MMM \QQQ \kappa \EEE _h\EEE$, \EEE this implies
\begin{align*}
\frac{\phi_h(y) - \phi_h(y_s)}{\mathcal{D}_h(y,y_s)} \geq \frac{ \phi_h(y) - \phi_h(w)  + \tfrac{1}{2} \lambda (1-s)\LLL \overline{\mathcal{D}}_h(y,w)^2 \EEE} {\mathcal{D}_h(y,w) (1 + C_h \Vert \nabla_h w - \nabla_h y \Vert_{L^\infty(\Omega)})^{1/2} }   .
\end{align*}
Thus, by taking the limit \MMM $s \to 0 $ \EEE and the supremum, we obtain for any $\QQQ \kappa \EEE ' \leq  \MMM \QQQ \kappa \EEE _h\EEE$ 
\begin{align}\label{slopelowerbound}
\vert \partial \phi_h \vert_{\mathcal{D}_0}(y) \geq \sup\limits_{\substack{y\neq w\\\Vert y - w\Vert_{W^{\MMM 1,\infty}(\Omega)} \MMM \le \EEE \QQQ \kappa \EEE '}} \frac{ \left(\phi_h(y) - \phi_h(w)  + \tfrac{1}{2} \MMM \lambda \EEE \LLL \overline{\mathcal{D}}_h(y,w)^2 \EEE \right)^+}{\mathcal{D}_h(y,w) (1 + C_h \Vert \nabla_h w - \nabla_h y \Vert_{L^\infty(\Omega)})^{1/2} }.
\end{align}
We also \AAA get \EEE the reverse inequality
\begin{align}\label{slopeupperbound}
\vert \partial \phi_h \vert_{\AAA \mathcal{D}_h}(y) =& \limsup\limits_{w \to y}  \frac{ \left(\phi_h(y) - \phi_h(w)  + \tfrac{1}{2} \MMM \lambda \EEE \LLL \overline{\mathcal{D}}_h(y,w)^2 \EEE \right)^+}{\mathcal{D}_h(y,w) (1 + C_h \Vert \nabla_h w - \nabla_h y \Vert_{L^\infty(\Omega)})^{1/2} }  \nonumber \\
\leq& \sup\limits_{\substack{y\neq w\\\Vert y - w\Vert_{W^{\MMM 1,\infty}(\Omega)} \MMM \le \EEE \QQQ \kappa \EEE '}} \frac{ \left( \phi_h(y) - \phi_h(w)  + \tfrac{1}{2} \MMM \lambda \EEE \LLL \overline{\mathcal{D}}_h(y,w)^2 \EEE \right)^+}{\mathcal{D}_h(y,w) (1 + C_h \Vert \nabla_h w - \nabla_h y \Vert_{L^\infty(\Omega)})^{1/2} }  ,
\end{align}
where we used \AAA Lemma \ref{th: metric space}(v) and the fact \EEE that $w \to y$ with respect to $\mathcal{D}_h$ implies $\Vert y - w \Vert_{W^{1,\infty}(\Omega)} \to 0$, see \MMM Lemma~\ref{th: metric space}(ii),(iii). \EEE  We are now ready to confirm
the lower semicontinuity. Consider a sequence $(y_n)_n$, $y_n \in \mathscr{S}^{3D}_{h,M}$, converging weakly to $y \in \mathscr{S}^{3D}_{h,M}$ in $W^{2,p}(\Omega)$ (or equivalently with respect to $\mathcal{D}_h$, see \MMM  Lemma~\ref{th: metric space}(iii)). \EEE
For $w \neq y$ such that $\Vert w- y \Vert_{W^{\MMM 1,\infty}(\Omega)} \leq \MMM\QQQ \kappa \EEE _h/2 \EEE $, \MMM we have \EEE $w\neq y_n$  and  $\Vert w- y_n \Vert_{W^{\MMM 1,\infty}(\Omega)} \leq \MMM \QQQ \kappa \EEE _h \EEE $ \LLL for $n$ large enough, \EEE and thus
\begin{align*}
	\liminf\limits_{n\to \infty} \vert \partial \phi_h\vert_{\mathcal{D}_h}(y_n) \MMM & \ge \EEE  \liminf\limits_{n\to \infty} \frac{ \left( \phi_h(y_n) - \phi_h(w)  + \tfrac{1}{2} \MMM \lambda\LLL \overline{\mathcal{D}}_h(y_n,w)^2 \EEE \right)^+}{\mathcal{D}_h(y_n,w) (1 + C_h \Vert \nabla_h w - \nabla_h y_n \Vert_{L^\infty(\Omega)})^{1/2} } \\  
	&\geq \frac{ \left( \phi_h(y) - \phi_h(w)  + \tfrac{1}{2} \MMM \lambda \EEE \LLL \overline{\mathcal{D}}_h(y,w)^2 \EEE \right)^+}{\mathcal{D}_h(y,w) (1 + C_h \Vert \nabla_h w - \nabla_h y \Vert_{L^\infty(\Omega)})^{1/2} },
\end{align*}
where we used \AAA Lemma~\ref{th: metric space}(ii)--(iv), \EEE and \LLL \eqref{slopelowerbound} for $\QQQ \kappa \EEE ' = \QQQ \kappa \EEE _h$.  \EEE By taking the supremum with respect to $w$, \MMM and using \LLL \eqref{slopeupperbound} for $\QQQ \kappa \EEE ' = \QQQ \kappa \EEE _h/2$, \EEE the
lower semicontinuity follows.

\end{proof}

\subsection{Properties of the one-dimensional model}\label{sec:1dproperties}

We now \SSS briefly \EEE collect some facts of the one-dimensional \SSS model.  Recall the definitions of $\mathscr{S}^{1D}$, $\phi_0$, and $\mathcal{D}_0$ in \eqref{eq: s1D}, \eqref{def:enegeryphi-1D}, and \eqref{def:metric}.

\begin{lemma}[Properties of $(\mathscr{S}^{1D},\mathcal{D}_0)$, $\phi_0$, \MMM and \EEE $\vert \partial{\phi}_0\vert_{\mathcal{D}_0}$]\label{lem:complete1d}\label{Thm:representationlocalslope1}
	
	Let $M>0$, $\Phi^1(t) := \sqrt{t^2 + C  t^3 + C  t^4}$ and $\Phi_{M}^2(t):= C\sqrt{ M} t^2 + C t^3 +  Ct^4$ for any $C>0$ large enough. Then \LLL it holds that \EEE
	\begin{itemize}
		\item[(i)] {\rm Completeness:} $(\mathscr{S}^{1D},\mathcal{D}_0)$ is a complete metric space.
		\item[(ii)] {\rm Generalized convexity:}
		\MMM For \EEE all $(u_0, \theta_0) \in \mathscr{S}^{1D}$ satisfying $\phi_0(u_0,\theta_0) \leq M$ and all $(u_1, \theta_1) \in \mathscr{S}^{1D}$ we have
		\begin{itemize}[leftmargin = 2cm]
			\item[{\rm (ii')}] $\mathcal{D}_0((u_0,\theta_0),(u_s,\theta_s)) \le s \Phi^1\big(\mathcal{D}_0((u_0,\theta_0),(u_1,\theta_1))\big)$
			\item[{\rm (ii'')}] $\phi_0(u_s,\theta_s) \leq (1-s) \phi_0(u_0,\theta_0) + s \phi_0(u_1,\theta_1) + s \Phi_M^2\big(\mathcal{D}_0((u_0,\theta_0),(u_1,\theta_1))\big)$
		\end{itemize}
		for $(u_s,\theta_s):= (1-s) (u_0,\theta_0) + s (u_1,\theta_1)$ and $s \in [0,1]$. 
		\item[(iii)] {\rm Characterization of the slope:}  The local slope for the energy $\phi_0$ admits the representation
		\begin{align*}
		\vert \partial \phi_0 \vert_{\mathcal{D}_0}(u,\theta) := \sup\limits_{(u,\theta)\neq(\tilde u,\tilde \theta)\in \mathscr{S}^{1D}} \frac{\big(\phi_0(u,\theta) - \phi_0 (\tilde u,\tilde \theta) - \Phi_M^2(\mathcal{D}_0((u,\theta),(\tilde u,\tilde \theta))\EEE ) \EEE\big)^+}{\Phi^1\big(\mathcal{D}_0((u,\theta),(\tilde u,\tilde \theta) )\big)\EEE}
		\end{align*}
		for all $(u,\theta)\in \mathscr{S}^{1D}$  satisfying $\phi_0(u,\theta) \leq M$.
		\item[(iv)] {\rm Strong upper gradient:} 		The local slope $\vert \partial \phi_0 \vert_{\mathcal{D}_0}$ is a strong upper gradient for $\phi_0$. 
		\item[(v)] {\rm Topology:} Representing $(u,\theta)$ as $(\xi_1,\xi_2,\xi_3,\theta)$ by \eqref{def:Bernoulli-Navier}, the topology \LLL on $\mathscr{S}^{1D}$  induced by $\mathcal{D}_0$ \EEE  coincides with the topology induced by $W^{1,2}(I) \times W^{2,2}(I)  \times W^{2,2}(I) \times W^{1,2}(I)$. 
%
%
				
	\end{itemize}
	
\end{lemma}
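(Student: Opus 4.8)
The plan is to establish (v) first, deduce (i) from it, then prove the generalized convexity (ii) by a direct expansion, and finally read off (iii) and (iv) from (ii) by the standard arguments for (almost) $\lambda$-convex functionals. Throughout, the two structural facts that do all the work are that the quadratic forms $Q_W^0,Q_W^1,Q_R^0,Q_R^1$ are positive definite, see \eqref{def:quadraticforms}, hence comparable to $|\cdot|^2$, and that in one space dimension $W^{2,2}(I)\hookrightarrow C^1(\bar I)$, which upgrades $L^2$-control of second derivatives to $L^\infty$-control of first derivatives. The lemma closely parallels the corresponding statements in \cite{MFLMDimension2D1D}, the only new feature being the parameter $r\in[0,\infty)$ in \eqref{def:enegeryphi-1D}--\eqref{def:metric} (for $r=0$ the functionals $\phi_0,\mathcal{D}_0$ are genuinely quadratic and everything reduces to ordinary convexity).

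\emph{Topology (v) and completeness (i).} If $(\xi_{1,n},\xi_{2,n},\xi_{3,n},\theta_n)\to(\xi_1,\xi_2,\xi_3,\theta)$ in $W^{1,2}(I)\times W^{2,2}(I)\times W^{2,2}(I)\times W^{1,2}(I)$, then $\xi_{3,n}'\to\xi_3'$ uniformly, so $|\xi_{3,n}'|^2\to|\xi_3'|^2$ in $L^2(I)$, and every integrand in \eqref{def:metric} converges to $0$; hence $\mathcal{D}_0\to 0$. Conversely, since the three contributions to $\mathcal{D}_0^2$ are nonnegative and the forms are positive definite, $\mathcal{D}_0((u_n,\theta_n),(u,\theta))\to0$ forces $\xi_{2,n}''\to\xi_2''$, $\xi_{3,n}''\to\xi_3''$, $\theta_n'\to\theta'$ in $L^2(I)$; together with the boundary conditions encoded in \eqref{def:Bernoulli-Navier}, \eqref{eq: s1D} and Poincaré's inequality this gives convergence of $\xi_2,\xi_3$ in $W^{2,2}(I)$ and of $\theta$ in $W^{1,2}(I)$, whence $|\xi_{3,n}'|^2\to|\xi_3'|^2$ in $L^2(I)$ and the first term of $\mathcal{D}_0^2$ forces $\xi_{1,n}'\to\xi_1'$ in $L^2(I)$, hence $\xi_{1,n}\to\xi_1$ in $W^{1,2}(I)$ by Poincaré. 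This proves (v). Running the same bootstrap for a $\mathcal{D}_0$-Cauchy sequence (reading ``Cauchy in $L^2$'' in place of ``convergent in $L^2$'' at each step) shows that it is Cauchy, hence convergent, in the product Sobolev space; the limit inherits the affine structure of \eqref{def:Bernoulli-Navier} and the boundary values, so it lies in $\mathscr{S}^{1D}$, and by the first part of (v) the sequence converges to it in $\mathcal{D}_0$. This is (i).

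\emph{Generalized convexity (ii).} Write $\xi_{i,s}=(1-s)\xi_{i,0}+s\xi_{i,1}$ and likewise for $\theta$. All arguments of the quadratic forms in $\phi_0$ and $\mathcal{D}_0$ are affine in $(u,\theta)$ except $\xi_3'\mapsto\frac r2|\xi_3'|^2$, for which one uses the identity $|\xi_{3,s}'|^2=(1-s)|\xi_{3,0}'|^2+s|\xi_{3,1}'|^2-s(1-s)|\xi_{3,1}'-\xi_{3,0}'|^2$. Substituting this, one checks that the argument of $Q_R^0$ in the first term of $\mathcal{D}_0^2((u_0,\theta_0),(u_s,\theta_s))$ equals $s$ times the argument of $Q_R^0$ in $\mathcal{D}_0^2((u_0,\theta_0),(u_1,\theta_1))$ plus $s\frac r2(1-s)|\xi_{3,1}'-\xi_{3,0}'|^2$, while the $\xi_2''$- and $(\xi_3'',\theta')$-contributions merely acquire a factor $s^2$; expanding the quadratic form and estimating the cross term and the square of the correction by $\|\xi_{3,1}'-\xi_{3,0}'\|_{L^\infty}\le C\,\mathcal{D}_0((u_0,\theta_0),(u_1,\theta_1))$ (from (v)) yields (ii') with $\Phi^1(t)=\sqrt{t^2+Ct^3+Ct^4}$. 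For (ii'') one proceeds analogously on $\phi_0$: the forcing term is affine in $s$, the $\xi_2''$- and $(\xi_3'',\theta')$-terms lie below their chords by convexity, and for the first term the same substitution gives $\xi_{1,s}'+\tfrac r2|\xi_{3,s}'|^2=(1-s)(\xi_{1,0}'+\tfrac r2|\xi_{3,0}'|^2)+s(\xi_{1,1}'+\tfrac r2|\xi_{3,1}'|^2)-s(1-s)\tfrac r2|\xi_{3,1}'-\xi_{3,0}'|^2$; the error produced by the last term is controlled by $\|\xi_{3,1}'-\xi_{3,0}'\|_{L^\infty}$ as above and by $\|\xi_{3,0}'\|_{L^\infty}\le C\sqrt M$, which follows from $\phi_0(u_0,\theta_0)\le M$ via positive definiteness of $Q_W^1$, Poincaré, and $f^{1D}\in L^2(I)$, giving (ii'') with $\Phi_M^2(t)=C\sqrt M\,t^2+Ct^3+Ct^4$. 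This computation is the main technical point of the lemma; the remaining assertions are formal consequences.

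\emph{Slope (iii) and strong upper gradient (iv).} For (iii), inequality ``$\ge$'' is obtained by testing the difference quotient defining $|\partial\phi_0|_{\mathcal{D}_0}(u,\theta)$ along the segment $(u_s,\theta_s)=(1-s)(u,\theta)+s(\tilde u,\tilde\theta)$: by (ii''), $\phi_0(u,\theta)-\phi_0(u_s,\theta_s)\ge s\big(\phi_0(u,\theta)-\phi_0(\tilde u,\tilde\theta)-\Phi_M^2(\mathcal{D}_0)\big)$, and by (ii'), $\mathcal{D}_0((u,\theta),(u_s,\theta_s))\le s\,\Phi^1(\mathcal{D}_0)$, so letting $s\to0^+$ and taking the supremum over $(\tilde u,\tilde\theta)$ gives the lower bound; inequality ``$\le$'' is immediate from $\Phi^1(t)/t\to1$ and $\Phi_M^2(t)/t\to0$ as $t\to0^+$ together with $a^+\le(a-b)^+ +b$ for $b\ge0$. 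Finally, (iv) follows from (ii) and (iii) by the argument of \cite{MFLMDimension2D1D} (compare Lemma~\ref{lem:stronguppergradient3d}(i)): the estimates (ii'), (ii'') are exactly what is needed to run the reasoning behind \cite[Corollary~2.4.10]{AGS} in its version for $\lambda$-convexity along the curves $s\mapsto(u_s,\theta_s)$ up to the higher-order corrections recorded in $\Phi^1,\Phi_M^2$, which yields that $\phi_0$ is absolutely continuous along every $\mathcal{D}_0$-absolutely continuous curve $z$ with $|\partial\phi_0|_{\mathcal{D}_0}(z(\cdot))\,|z'|_{\mathcal{D}_0}(\cdot)\in L^1$, i.e.\ that $|\partial\phi_0|_{\mathcal{D}_0}$ is a strong upper gradient for $\phi_0$.
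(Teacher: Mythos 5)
Your proof is correct and follows the same route as the paper, which establishes this lemma simply by citing \cite{MFLMDimension2D1D} (Lemmata~4.2, 4.3 and A.1 there, stated for $r=1$, with the remark that general $r>0$ is identical and $r=0$ reduces to standard $\lambda$-convexity). Your write-up just spells out the computations behind that citation: Sobolev embedding and positive definiteness of $Q_R^i$ for the topology and completeness, the parallelogram-type expansion of $|\xi_{3,s}'|^2$ for the generalized convexity, and the standard segment-testing argument for the slope characterization and strong upper gradient.
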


\begin{proof}
	The proof for the case $r=1$ is \MMM given \EEE in \cite{MFLMDimension2D1D}, \SSS and the arguments for general $r>0$ remain unchanged. \EEE For items (i),(v), \EEE see \cite[Lemma 4.2]{MFLMDimension2D1D}, where $w$ coincides with $\xi_3$ and $y = (\xi_1 - x_2\xi_2',\xi_2)$,   and for  (ii) we refer to \cite[Lemma A.1]{MFLMDimension2D1D}. \MMM Finally, \EEE (iii) and (iv) are addressed in \cite[Lemma 4.3]{MFLMDimension2D1D}.
	The case $r=0$ is even simpler. In fact, $\mathcal{D}_0$ \AAA becomes a norm and $\phi_0$ is \EEE $\lambda$-convex for some $\lambda \geq0 $, and the theorem can be replaced by the prototypical theory \cite[Section~2.4]{AGS}.
\end{proof}

\section{Passage to the one-dimensional limit}\label{sec: gammastatic}

In this \FFF section, \EEE we relate the three-dimensional model \AAA to \EEE its limit. We first give the \AAA proof \EEE of the compactness result in Proposition~\ref{lemma:compactness}. Then, we identify the limiting strain in Lemma~\ref{lem:identificationofstrainlimit} \AAA in order \EEE to state the $\Gamma$-convergence in Theorem~\ref{th: Gamma}. Eventually, \EEE we provide the   properties needed in the present evolutionary setting (see the assumptions of Theorem~\ref{th:abstract convergence 2}). Similarly to the \AAA $\Gamma$-convergence \EEE analysis, we derive the lower semicontinuity of the dissipation distances in Theorem~\ref{th: lscD}. \AAA Afterwards, \EEE we show the major part of our contribution: the lower semicontinuity of the local slopes (Theorem~\ref{theorem: lsc-slope}).

Recall the definitions of $u^h$ and $\theta^h$ in \eqref{def:udisplacement} and \eqref{def:theta}. We now prove the compactness result stated in Proposition~\ref{lemma:compactness}. The key argument is to prove a $W^{1,2}(\Omega;\R^3)$-bound on $u^h$ with a Korn-Poincaré inequality. \SSS Whereas in  \cite{Freddi2013} \EEE such an inequality is used for functions satisfying $\int_\Omega {\rm skew} (\nabla_h y^h) \,  {\rm d}x = 0$ and $\int_\Omega y^h \, {\rm d}x = 0$, \SSS we use a suitable version for \EEE boundary conditions, see \SSS \cite[Proposition 1]{hierarchy}. \EEE
\begin{proof}[Proof of Proposition \ref{lemma:compactness}]
	By definition we have
	\begin{align*}
	y_1^h = x_1 + \eps_h u_1^h, \quad y_2^h = hx_2 + \eps_h \tfrac{u_2^h}{h}, \quad y_3^h = \delta_h x_3 + \eps_h \tfrac{u_3^h}{\delta_h}.
	\end{align*}
	Thus, we can \AAA write \EEE
	\begin{align}\label{eq:ucharacterizationlimit}
	\frac{\nabla_h y^h -\Id}{\eps_h} = \begin{pmatrix}
	u_{1,1}^h & u_{1,2}^h/h & u_{1,3}^h/\delta_h\\
	u_{2,1}^h/h & u_{2,2}^h/h^2& u_{2,3}^h / (h\delta_h)\\
	u_{3,1}^h/\delta_h & u_{3,2}^h /(h\delta_h)& u_{3,3}^h / \delta_h^2
	\end{pmatrix}.
	\end{align}
	Recall the boundary conditions \eqref{assumption:clampedboundary} 
	\begin{align*}
\AAA u^h = 	\hat U:=   \EEE 	(
 \hat \xi_1 - x_2 \hat \xi_2' - x_3 \hat \xi_3', \hat \xi_2 , \hat \xi_3 )^\top \quad \text{on $\Gamma$.}
	\end{align*} 
\AAA Then, \EEE by Korn's inequality \SSS  (see \cite[Proposition 1, equation (81)]{hierarchy}) we find \EEE
	\begin{align*}
	\Vert u^h - \hat U \Vert_{W^{1,2}(\Omega)} \leq C\Vert \sym (\nabla u^h - \nabla  \hat U) \Vert_{L^2(\Omega)}.
	\end{align*}
	\MMM Therefore, we deduce \EEE
	\begin{align*}
	\Vert u^h \Vert_{W^{1,2}(\Omega)} \leq C \left(\Vert \sym (\nabla u^h) \Vert_{L^2(\Omega)} + \MMM \Vert \hat U\Vert_{W^{1,2}(\Omega)}\right). \EEE
	\end{align*}
	As part (iii) of \cite[Lemma 3.4]{Freddi2013}  ensures that $\sym ( \tfrac{\nabla_h y^h - \Id}{\eps_h})$ is bounded in $L^2(\Omega;\R^{3 \times 3})$, we \MMM get \LLL by \eqref{eq:ucharacterizationlimit} \EEE that $\sym (\nabla u^h)$  is bounded in $L^2(\Omega;\R^{3 \times 3})$. Thus, we \EEE can extract a subsequence such that $u^h \rightharpoonup u$ in $W^{1,2}(\Omega;\R^3)$ for some $u \in W^{1,2}(\Omega;\R^3)$. 	To observe that the convergence of $u_3^h$ is strong, we multiply both sides of \eqref{eq:ucharacterizationlimit} by $\delta_h$ and obtain
	\begin{align*}
		\Vert u^h_{3,1} - u_{3,1} \Vert_{L^2(\Omega)} =  \Vert \SSS \tfrac{1}{\eps_h/\delta_h} \EEE (\nabla_h y^h- \Id)_{31} - u_{3,1}\Vert_{L^2(\Omega)}.
	\end{align*}
	An analogous observation and the fact that $\delta_h/h \to 0$ yield for $j=2,3$
		\begin{align}\label{eq: NNNNN}
	\Vert u^h_{3,j} \Vert_{L^2(\Omega)} \leq  C \tfrac{h}{\eps_h/\delta_h} \Vert (\nabla_h y^h- \Id)_{3j}\Vert_{L^2(\Omega)}.
	\end{align}
	Then, as   Lemma~\ref{eq:rigidity}(i)--(iii) imply that $\frac{1}{\eps_h/\delta_h} (\nabla_h y^h - \Id) $ converges strongly in $L^2(\Omega;\R^{3\times 3})$ (see \cite[Lemma 3.4(i)]{Freddi2013} for a detailed calculation),   \AAA the \EEE previous  estimates  imply the strong convergence of $u^h_3$.

\AAA To conclude the proof of (i), we need to  \EEE characterize the limit $u$. As $\sym (\nabla u^h)$ converges weakly in $L^2(\Omega;\R^{3\times 3})$, we can infer from \eqref{eq:ucharacterizationlimit} that $u$ satisfies the identities $u_{i,j} + u_{j,i} = 0$ for all $i = 1,2,3$ and $j = 2,3$. \AAA Moreover, from \eqref{eq: NNNNN} we deduce $u_{3,2} = 0$ and thus also $u_{2,3} = 0$. \EEE   By arguing analogously to \cite[Theorem 4.1]{ABJMV}, we see that $u$ is a Bernoulli-Navier function, i.e., $u \in \mathcal{A}^{BN}_{\hat \xi_1, \hat \xi_2, \hat \xi_3}$, see \eqref{def:Bernoulli-Navier} for the definition.  Here, our choice of
	the boundary values in \eqref{assumption:clampedboundary} becomes apparent since it guarantees that the limit of the
	displacements \AAA lies in \EEE $\mathcal{A}^{BN}_{\hat \xi_1, \hat \xi_2, \hat \xi_3}$. Indeed, we have $u^h = \hat U$ on $\Gamma$, \MMM and therefore also the limit satisfies \EEE $u_1 =\hat \xi_1 - x_2 \hat \xi_2' - x_3 \hat \xi_3'$, $u_2 = \hat \xi_2$, and $u_3 = \hat \xi_3$ on \MMM $\Gamma$. 	
		
	This \SSS concludes the proof of \EEE (i). \EEE The compactness for $\theta_h$ is a consequence of \cite[Lemma 3.7]{Freddi2013}. One only needs to observe that $\theta_h$ satisfies zero boundary conditions on $\Gamma$ due to \eqref{assumption:clampedboundary}. Therefore, $\theta \in W^{1,2}_0(\MMM I \EEE )$.
\end{proof}

%
 
As observed in the discussion above equation \eqref{def:G^h}, the energy $\phi_h(y^h)$ is essentially controlled by $G^h(y^h)$, which converges weakly to  some $G_y \in L^2(\Omega;\R^{3\times 3})$, \QQQ due to Lemma~\ref{lem:rigidity}(i). \EEE As we want to represent the $\Gamma$-limit by the variables $u$ and $\theta$, we therefore need to characterize the limiting strain \AAA $G_y$ \EEE in terms of $u$ and $\theta$. This is addressed in the next lemma, \SSS along with an analysis \FFF of \EEE the limit behavior of the rotations $R^h$.\EEE

\begin{lemma}[Identification of the scaled limiting strain]\label{lem:identificationofstrainlimit}
	Consider a sequence $(y^h)_h$ with $y^h\in \mathscr{S}^{3D}_{h,M}$ for $M>0$. \EEE Let $(u,\theta) \in \mathscr{S}^{1D}$ be the limit \SSS given by \EEE Proposition~\ref{lemma:compactness}. Then, there exists $G_y \in L^2(\Omega;\R^{3 \times 3})$ and $A_{u,\theta} \in W^{1,2}(\Omega;\R^{3 \times 3})$ such that, up to subsequences,
	\begin{itemize}
		\item[(i)] $\SSS G^h(y^h) \EEE \rightharpoonup G_y$ in $L^2(\Omega;\R^{3 \times 3})$ and we have 
		$${(G_y)}_{11} (x) = \xi_1'(x_1) - x_2 \xi_2''(x_1) - x_3 \xi_3''(x_1) + \tfrac{r}{2} (\xi_3'(x_1))^2 \quad \text{\QQQ for a.e.\ } x\in \Omega,$$ \MMM as well as \EEE $$ \frac{1}{2} \big({(G_y)}_{12} + {(G_y)}_{21}\big)(x) = -x_3 \theta'(x_1) + \tilde g(x_1,x_2) \quad \text{\QQQ for a.e.\ } x\in \Omega$$
		 for a suitable function $\tilde g \in L^2(\MMM S \EEE )$, where $(\xi_1,\xi_2,\xi_3)$ \SSS are related to $u$ by  \EEE \eqref{def:Bernoulli-Navier}. \EEE
		\item[(ii)] $\frac{R^h - \Id}{\eps_h/\delta_h} \rightharpoonup A_{u,\theta}$  in $W^{1,2}(S;\R^{3\times3}) \quad$  and $ \quad \sym \left( \frac{R^h - \Id}{\eps_h} \right) \to \tfrac{r}{2} A_{u,\theta}^2$ in $L^2(S;\R^{3\times3}),$\linebreak where $A_{u,\theta} = e_3 \otimes p - p \otimes e_3$ for $p = (\xi_3',\theta,0)$. 
			\end{itemize}
\end{lemma}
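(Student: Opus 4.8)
The plan is to read off both convergences from the uniform bounds of Lemma~\ref{lem:rigidity}, and then to identify the limits entrywise, exploiting the factorization $\nabla_h y^h = R^h(\Id + \eps_h G^h(y^h))$, the explicit form \eqref{eq:ucharacterizationlimit} of the rescaled displacement gradient, and the compactness statements of Proposition~\ref{lemma:compactness}. The identification of the limiting strain is, up to the boundary data, essentially the content of \cite[Lemmata~3.4 and 3.7]{Freddi2013}; the only genuinely new input is that the clamped boundary conditions \eqref{assumption:clampedboundary} allow one to replace the rotations $Q^h$ appearing there by $\Id$, which was carried out in Section~\ref{sec:rigidity} (cf.\ the closing remark of that section).

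For the compactness, Lemma~\ref{lem:rigidity}\ref{lem:rigidity:energycontrol} gives $\Vert G^h(y^h)\Vert_{L^2(\Omega)}\le C$, hence $G^h(y^h)\rightharpoonup G_y$ in $L^2(\Omega;\R^{3\times3})$ along a subsequence. Lemma~\ref{lem:rigidity}\ref{lem:rigidity:derivatives},\ref{lem:rigidity:dev:RotID} gives that $A^h := \tfrac{\delta_h}{\eps_h}(R^h-\Id)$ is bounded in $W^{1,2}(S;\R^{3\times3})$ with $\Vert\partial_2 A^h\Vert_{L^2(S)}\le Ch$, so $A^h\rightharpoonup A_{u,\theta}$ in $W^{1,2}(S;\R^{3\times3})$ with $A_{u,\theta}$ independent of $x_2$, i.e.\ $A_{u,\theta}\in W^{1,2}(I;\R^{3\times3})$; since $S\subset\R^2$, Rellich's theorem upgrades this to strong convergence $A^h\to A_{u,\theta}$ in every $L^q(S;\R^{3\times3})$, $q<\infty$. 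Expanding $(R^h)^\top R^h=\Id$ yields $\sym(A^h)=-\tfrac{\eps_h}{2\delta_h}(A^h)^\top A^h$, and since $\eps_h/\delta_h=(\eps_h/\delta_h^2)\,\delta_h\to 0$ by \eqref{assumption:energyscalingthickness1} while $(A^h)^\top A^h$ stays bounded in $L^1(S)$, we obtain $\sym(A_{u,\theta})=0$, i.e.\ the limiting rotation is infinitesimally skew.

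Then I would identify the limits. From $\tfrac{\delta_h}{\eps_h}(\nabla_h y^h-\Id)=A^h+\delta_h R^h G^h(y^h)\rightharpoonup A_{u,\theta}$ in $L^2(\Omega;\R^{3\times3})$, comparison with $\delta_h$ times \eqref{eq:ucharacterizationlimit} together with Proposition~\ref{lemma:compactness} (in particular $u\in\mathcal{A}^{BN}_{\hat\xi_1,\hat\xi_2,\hat\xi_3}$, so $u_{1,3}=-\xi_3'$, $u_{3,1}=\xi_3'$) forces the remaining entries of $A_{u,\theta}$ to vanish except that $(A_{u,\theta})_{13}=-\xi_3'=-(A_{u,\theta})_{31}$; for the $(2,3)$ and $(3,2)$ entries one passes to the limit in the definition \eqref{def:theta} of $\theta^h$, exactly as in \cite[Lemma~3.7]{Freddi2013}, to get $(A_{u,\theta})_{32}=\theta=-(A_{u,\theta})_{23}$, whence $A_{u,\theta}=e_3\otimes p-p\otimes e_3$ with $p=(\xi_3',\theta,0)$. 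For $(G_y)_{11}$ I would combine $1+\eps_h u_{1,1}^h=(\nabla_h y^h)_{11}=(R^h)_{11}+\eps_h(R^h G^h)_{11}$ with the unit-row identity $(R^h)_{11}=\sqrt{1-(R^h)_{12}^2-(R^h)_{13}^2}$, giving $\tfrac1{\eps_h}\big((R^h)_{11}-1\big)=-\tfrac1{2\eps_h}\big((R^h)_{12}^2+(R^h)_{13}^2\big)+o(1)\to-\tfrac r2(\xi_3')^2$ in $L^2(S)$, since $(R^h)_{1j}=\tfrac{\eps_h}{\delta_h}A^h_{1j}$, $\eps_h/\delta_h^2\to r$, $A^h_{12}\to0$ and $A^h_{13}\to-\xi_3'$ strongly in $L^4(S)$; as $R^h\to\Id$ uniformly on $S$ by Lemma~\ref{lem:rigidity}\ref{lem:rigidity:dev:RotIDInfty} so that $R^h G^h\rightharpoonup G_y$ in $L^2(\Omega)$, letting $h\to0$ and recalling $u_{1,1}=\xi_1'-x_2\xi_2''-x_3\xi_3''$ yields the stated formula. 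The symmetrized $(1,2)$ entry is treated as in \cite[Lemma~3.4]{Freddi2013}: $\sym(G^h)=\tfrac1{2\eps_h}\big((\nabla_h y^h)^\top\nabla_h y^h-\Id\big)+o(1)$ in $L^2(\Omega)$, and expanding with \eqref{eq:ucharacterizationlimit} shows the $(1,2)$ entry of the limit is affine in $x_3$ with slope $-\theta'$, i.e.\ of the form $-x_3\theta'+\tilde g$ with $\tilde g\in L^2(S)$ independent of $x_3$. Finally the second convergence in (ii) is immediate from skewness of $A_{u,\theta}$: $\sym\!\big(\tfrac1{\eps_h}(R^h-\Id)\big)=\tfrac1{\delta_h}\sym(A^h)=-\tfrac{\eps_h}{2\delta_h^2}(A^h)^\top A^h\to-\tfrac r2(A_{u,\theta})^\top A_{u,\theta}=\tfrac r2 A_{u,\theta}^2$ in $L^2(S;\R^{3\times3})$, using $\eps_h/\delta_h^2\to r$ and the strong $L^4(S)$-convergence of $A^h$.

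The main obstacle is the identification step rather than the compactness: one must simultaneously track the three distinct scales $\eps_h\ll\delta_h^2$ and $\delta_h\ll h$ in order to extract both the quadratic correction $\tfrac r2(\xi_3')^2$ (which survives in the limit precisely because $\eps_h/\delta_h^2\to r$) and the affine-in-$x_3$ twisting term, and to relate $\theta$ to the skew part of $R^h$. This is also where the preparatory reduction to $Q^h=\Id$ under clamped boundary conditions performed in Section~\ref{sec:rigidity} is genuinely used, since otherwise the estimates would only be available up to the unknown rotations $Q^h$.
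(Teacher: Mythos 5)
Your proposal is correct and follows the same route as the paper, which itself delegates essentially the entire identification to \cite[Lemmata~3.3, 3.4, 3.7]{Freddi2013} once the clamped-boundary rigidity of Section~\ref{sec:rigidity} replaces the unknown rotations $Q^h$ by $\Id$. You have simply made explicit the details that the paper outsources to that reference — the $W^{1,2}(S)$ bound on $A^h:=\tfrac{\delta_h}{\eps_h}(R^h-\Id)$ with $\partial_2 A^h\to 0$, the skew-symmetry of the limit from $(R^h)^\top R^h=\Id$ and $\eps_h/\delta_h\to 0$, the unit-row expansion of $(R^h)_{11}$ combined with $\eps_h/\delta_h^2\to r$ and $A^h\to A_{u,\theta}$ in $L^4(S)$ for $(G_y)_{11}$, and the symmetrized-strain computation for the $(1,2)$ entry — all of which are consistent with the argument the paper points to (including the parenthetical $A_{13}=u_{1,3}=-\xi_3'$ and $A_{12}=0$).
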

For later purposes, we note that $A_{u,\theta}^2$ is given by
\begin{align}\label{rep:Asquared}
	A_{u,\theta}^2 = - \begin{pmatrix}
	(\xi_3')^2& \xi_3'\theta&0\\
	\xi_3'\theta&\theta^2& 0 \\
	0&0& (\xi_3')^2 + \theta^2
	\end{pmatrix}.
\end{align}

\begin{proof}
\MMM We omit \QQQ a detailed \EEE proof as the statement is essentially proven in \cite[Lemma 3.3 and 3.7]{Freddi2013}. The adaptions are straightforward: \EEE  the existence of \AAA $G_y$ \EEE follows immediately by Lemma~\ref{lem:rigidity}\ref{lem:rigidity:energycontrol}. By using the rigidity estimates from Lemma~\ref{lem:rigidity}, one can show part (i) analogously to the proof of \cite[Lemma~3.7]{Freddi2013}. Whereas the convergence of $\frac{R^h - \Id}{\eps_h/\delta_h}$ and $ \sym \left( \frac{R^h - \Id}{\eps_h} \right)$  is addressed in \cite[Lemma~3.3]{Freddi2013}, \SSS for a skew-symmetric valued limiting tensor field $A_{u,\theta}$ \EEE \MMM (use also \eqref{assumption:energyscalingthickness1}), \EEE the characterization of  $A_{u,\theta}$ is provided \MMM in the statement and part \textit{ii} of \EEE the proof of \cite[Lemma 3.7]{Freddi2013}. \SSS (Below \cite[\QQQ equation \EEE (37)]{Freddi2013} we find $A_{12} = 0$ and $A_{13} = u_{1,3} = - \xi_3'$, where we also use \eqref{def:Bernoulli-Navier}.) \EEE
\end{proof}
We now state \MMM  the \EEE  $\Gamma$-convergence result, based on \cite[Theorems 3.11 and 3.14]{Freddi2013}. \EEE To this end, recall the definition of the energies and the metrics in \eqref{assumption:energyscaling}, \eqref{assumption:dissipationsclae} and \eqref{def:enegeryphi-1D}, \eqref{def:metric},   and \AAA the \EEE topology of convergence in Proposition~\ref{lemma:compactness}, which is denoted with the symbols $\pi\sigma$ and $\pi\rho$, see Subsection~\ref{sec:mainconvergenceresult}.\EEE

\begin{theorem}[$\Gamma$-convergence of energies]\label{th: Gamma}
	$\phi_\eps$ converges to $\phi_0$ in the sense of   $\Gamma$-convergence. More precisely:	
	\noindent {\rm (i) (Lower bound)} For all $(u,\theta) \in {\mathscr{S}^{1D}}$ and all sequences $(y^h)_h$ such that  $y^h \stackrel{\pi\sigma}{\to} (u,\theta)$ we find
	$$\liminf_{h \to 0}\phi_h(y^h) \ge {\phi}_0(u,\theta). $$
	
	\noindent {\rm (ii) (Optimality of lower bound)} For all $(u,\theta) \in {\mathscr{S}^{1D}}$ there exists a sequence $(y^h)_h$, $y^h \in \mathscr{S}_h^{3D}$ \BBB for all $h > 0$, \EEE   such that  $y^h \stackrel{\pi\rho}{\to} (u,\theta)$ and  
	$$\lim_{h \to 0}\phi_h(y^h) = {\phi}_0(u,\theta).$$
\end{theorem}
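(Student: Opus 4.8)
The plan is to follow the static $\Gamma$-convergence analysis of {\sc Freddi, Mora, and Paroni} \cite{Freddi2013}, with three adaptations: the clamped boundary conditions encoded in $\mathscr{S}_h^{3D}$ (cf.\ \eqref{assumption:clampedboundary}), the presence of the second-gradient regularization $P$, and the force term (which we have set to zero here, the general case being a continuous linear perturbation, see \cite[Section~4]{Freddi2013}). The essential tools are the rigidity estimates of Lemma~\ref{lem:rigidity}, the linearization of $W$ in Lemma~\ref{lemma: metric space-properties}(iii), the strain identification of Lemma~\ref{lem:identificationofstrainlimit}, and the compactness in Proposition~\ref{lemma:compactness}.

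\emph{Lower bound.} We may assume $\liminf_h \phi_h(y^h) < \infty$ and, passing to a subsequence attaining the liminf, that $y^h \in \mathscr{S}^{3D}_{h,M}$ for some $M > 0$. Lemma~\ref{lem:rigidity} then provides rotations $R^h$ and scaled strains $G^h(y^h)$ as in \eqref{def:G^h}, bounded in $L^2(\Omega;\R^{3\times3})$, and by Lemma~\ref{lem:identificationofstrainlimit}(i), up to a further subsequence $G^h(y^h) \rightharpoonup G_y$ with $(G_y)_{11}$ and $\sym((G_y)_{12})$ identified in terms of $(\xi_1,\xi_2,\xi_3,\theta)$. Since the $P$-term is nonnegative and $\eps_h/\delta_h = (\eps_h/\delta_h^2)\,\delta_h \to 0$ by \eqref{assumption:energyscalingthickness1} (so that the error $\eps_h^\alpha/\delta_h^\alpha$ in Lemma~\ref{lemma: metric space-properties}(iii) vanishes, $\alpha\in(0,1)$), we obtain $\liminf_h \phi_h(y^h) \ge \liminf_h \int_\Omega \tfrac12 Q_W^3(G^h(y^h))\,{\rm d}x$. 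The functional $G \mapsto \int_\Omega \tfrac12 Q_W^3(G)\,{\rm d}x$ is convex and depends only on $\sym G$, hence weakly $L^2$-lower semicontinuous, so this is $\ge \int_\Omega \tfrac12 Q_W^3(G_y)\,{\rm d}x$. Minimizing pointwise over the components of $G_y$ not fixed by the limit, using the definitions \eqref{def:quadraticforms} and the decomposition \eqref{quadraticforms}, and integrating over the cross-section $\omega$ (with $\int_\omega x_2\,{\rm d}x_2{\rm d}x_3 = \int_\omega x_3\,{\rm d}x_2{\rm d}x_3 = \int_\omega x_2 x_3\,{\rm d}x_2{\rm d}x_3 = 0$ and $\int_\omega x_j^2\,{\rm d}x_2{\rm d}x_3 = \tfrac1{12}$) then yields exactly $\phi_0(u,\theta)$, up to a nonnegative remainder stemming from the in-plane shear field $\tilde g$ of Lemma~\ref{lem:identificationofstrainlimit}(i), which we discard.

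\emph{Recovery sequence.} By continuity of $\phi_0$ in the topology of Lemma~\ref{lem:complete1d}(v) and a diagonal argument, it suffices to treat $(\xi_1,\xi_2,\xi_3,\theta)$ smooth on $\bar I$ and compatible with the boundary data $\hat\xi_i$. For such data I would use the explicit ansatz of \cite[Theorem~3.14]{Freddi2013}: the leading affine-in-cross-section map of \eqref{assumption:clampedboundary} with $\hat\xi_i$ replaced by $\xi_i$, corrected by an infinitesimal in-plane rotation carrying the twist $\theta$, by the quadratic membrane term $\tfrac r2 |\xi_3'|^2$, and by transverse stretches realizing the minimizers in \eqref{def:quadraticforms} — which, under \eqref{quadraticforms}, decouple and can be taken to vanish on the $x_2$–$x_3$ block. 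All corrections beyond the leading one are multiplied by a cut-off $\varphi^h$ equal to $1$ away from $\Gamma$ and vanishing on a boundary layer of width $\ell_h \to 0$, which ensures $y^h \in \mathscr{S}_h^{3D}$. One then verifies $y^h \stackrel{\pi\rho}{\to} (u,\theta)$ by computing $u^h,\theta^h$ from \eqref{def:udisplacement}--\eqref{def:theta}, and that $G^h(y^h) \to G_{\rm rec}$ strongly in $L^2(\Omega;\R^{3\times3})$ with $\int_\Omega \tfrac12 Q_W^3(G_{\rm rec})\,{\rm d}x = \phi_0(u,\theta)$; Lemma~\ref{lemma: metric space-properties}(iii) then gives $\tfrac1{\eps_h^2}\int_\Omega W(\nabla_h y^h)\,{\rm d}x \to \phi_0(u,\theta)$. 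Finally $\tfrac{\zeta_h}{\eps_h^2}\int_\Omega P(\nabla_h^2 y^h)\,{\rm d}x \le c_2\,\tfrac{\zeta_h}{\eps_h^2}\,\Vert \nabla_h^2 y^h\Vert_{L^p(\Omega)}^p \to 0$ by the $\limsup$ condition in \eqref{assumption:penalizationscale}, cf.\ \eqref{eq:secondordervanishes}.

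\emph{Main obstacle.} I expect the delicate point to be the boundary-layer modification: the clamped condition forces the higher-order corrections to be switched off near $\Gamma$, and one must check simultaneously that the cut-off region contributes negligibly to $\tfrac1{\eps_h^2}\int_\Omega W(\nabla_h y^h)$ and that the large derivatives it introduces remain compatible with the vanishing of the second-gradient term under \eqref{assumption:penalizationscale} — this is what fixes how slowly $\ell_h \to 0$ may be chosen. Everything else is, modulo bookkeeping of the scalings $\eps_h \ll \delta_h$, the argument of \cite[Theorems~3.11 and 3.14]{Freddi2013}.
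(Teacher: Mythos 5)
Your lower-bound argument is essentially the paper's proof of (i): pass to a subsequence in $\mathscr{S}^{3D}_{h,M}$, linearize via Lemma~\ref{lemma: metric space-properties}(iii) (the error $(\eps_h/\delta_h)^\alpha$ vanishes since \eqref{assumption:energyscalingthickness1} together with $\delta_h\to 0$ forces $\eps_h/\delta_h\to 0$), use convexity and weak $L^2$-lower semicontinuity of $G\mapsto\int_\Omega\tfrac12 Q_W^3(G)\,{\rm d}x$, identify the constrained entries of $G_y$ via Lemma~\ref{lem:identificationofstrainlimit}(i), and contract over the cross-section using \eqref{def:quadraticforms} and \eqref{quadraticforms}.

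For the recovery sequence your route diverges from the paper's, and the boundary-layer tension you flag as the main obstacle does not in fact arise there, because no cut-off is used. In the ansatz \eqref{Ansatz} the leading affine-in-$(x_2,x_3)$ displacement is $(\xi_1-x_2\xi_2'-x_3\xi_3',\xi_2,\xi_3)$ with $\xi_i$ approximated so as to keep the exact Dirichlet traces $\hat\xi_i,\hat\xi_i'$ (Lemma~\ref{lemma: density}), so the boundary condition \eqref{assumption:clampedboundary} is already met by the leading term. Every additional term — the twist block carrying $\theta,\theta'$ and the corrector $\beta^h$ — is a multiple of $\theta$ and/or of the smooth approximant $\xi_{3,h}'$, and both are taken in $C_c^\infty(I)$. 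This is possible because $\theta\in W^{1,2}_0(I)$ (so compactly supported smooth approximants are dense), and because $\xi_{3,h}'$ enters only in $\beta^h$, not as the profile of $\xi_3'$ in the leading displacement, so nothing forces it to match $\hat\xi_3'$ at $\partial I$. Hence all corrections vanish identically on $\Gamma$, the ansatz lies in $\mathscr{S}^{3D}_h$ with no boundary layer, and the only remaining scaling check is the controlled growth $\Vert h^{1/2}\xi_{3,h}'\Vert_{W^{2,\infty}(I)}$ from \eqref{def:approxrecovery}, after which \eqref{eq:secondordervanishes} kills the $P$-term by the limsup half of \eqref{assumption:penalizationscale}. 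Your cut-off variant could in principle be pushed through, but the choice of $\ell_h$ must reconcile the extra energy of the transition layer against the penalty on $(\varphi^h)''\sim\ell_h^{-2}$, and the compact-support device makes that balancing unnecessary. (Note also that the paper reads off the $W$-part of the limsup from Step~6 of the proof of Theorem~\ref{theorem: lsc-slope} at $s=1$, $(u,\theta)=0$, as explained in Remark~\ref{rem:recoverysequence}, rather than computing it separately.)
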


\SSS By minor adaptions to our setting,  the proof follows from the one  in \cite{Freddi2013}. \EEE The main differences \MMM are \EEE that the \FFF second-order \MMM penalization \EEE should vanish in the limit, \MMM and that \EEE the recovery sequence should be compatible with the boundary conditions in \eqref{assumption:clampedboundary}. In our contribution, the proof of (i) is even simpler as the \FFF second-order \EEE \FFF perturbation \EEE provides \FFF a \EEE quantitative estimate \QQQ concerning \EEE the deviation \QQQ between \SSS the nonlinear and linearized energy,  \EEE see Lemma~\ref{lemma: metric space-properties}(iii). \EEE We include the proof of (i) \SSS for convenience. \EEE For (ii), we refer to its ansatz in \eqref{Ansatz} \FFF and Remark~\ref{rem:recoverysequence} \EEE as it can be viewed as a corollary of the proof of Theorem~\ref{theorem: lsc-slope}.
\begin{proof}[Proof of Theorem \ref{th: Gamma}\,{\rm (i)}]
	We may suppose that $\liminf_{h \to 0 } \phi_h(y^h) < +\infty$ as otherwise there is nothing to prove. Thus, we can assume that $y^h \in \mathscr{S}^{3D}_{h,M}$ for $M>0$ large enough.
	By Lemma~\ref{lemma: metric space-properties}(iii) we have
	\begin{align*}
		\liminf_{h \to 0} \tfrac{1}{\eps_h^2} \int_\Omega W(\nabla_h y^h)\LLL \,{\rm d}x \EEE  = 		\liminf_{h \to 0}  \int_\Omega \tfrac{1}{2} Q_W^3(G^h(y^h))\LLL \,{\rm d}x \EEE  .
	\end{align*}
	Then, the convexity of the quadratic form $Q_W^3$, \eqref{def:quadraticforms}, and Lemma~\ref{lem:identificationofstrainlimit}(i) imply
	\begin{align}\label{integrationx_2}
		\liminf_{h \to 0}  \int_\Omega \tfrac{1}{2} Q_W^3(G^h(y^h)) \LLL \,{\rm d}x \EEE \geq \int_\Omega \tfrac{1}{2} Q_W^1( \xi_1' - x_2 \xi_2'' - x_3 \xi_3'' + \tfrac{r}{2} \xi_3'^2   ,   -x_3 \theta'+ \tilde g)\LLL \,{\rm d}x \EEE 
	\end{align}
for a suitable function $\tilde g \in L^2(\MMM S\EEE )$. By \QQQ a \EEE binomial expansion, Fubini's theorem, \SSS and again by  \eqref{def:quadraticforms}, \EEE we derive
\begin{align}\label{integrationx_3}
\int_\Omega \tfrac{1}{2} Q_W^1\big( \xi_1' - x_2 \xi_2'' - x_3 \xi_3'' + \tfrac{r}{2} \xi_3'^2   ,   -x_3 \theta'+ \tilde g\big) \LLL \,{\rm d}x \EEE \geq \phi_0(u,\theta).
\end{align}
The lower bound follows as the \FFF second-order \EEE perturbation is a nonnegative term, see \eqref{assumptions-P}. 
\end{proof}

\FFF
The force terms in \eqref{assumption:energyscaling} and \eqref{def:enegeryphi-1D} represent continuous perturbations of the energies. More precisely, \eqref{def:udisplacement}, strong $L^2(\Omega)$-convergence of $u_3^h$, and weak $L^2(I)$-convergence of $\tfrac{1}{\eps_h\delta_h}  f^{3D}_h$, see \eqref{eq: forces}, yield
\begin{align*}
\lim\limits_{h\to 0}\tfrac{1}{\eps_h^2} \int_\Omega f^{3D}_h(x_1) \, y_3(x) \, {\rm d}x = \lim\limits_{h\to 0}\int_\Omega \tfrac{1}{\eps_h\delta_h}  f^{3D}_h(x_1) \, u_3^h(x) \, {\rm d}x = \int_I f^{1D} \, u_3 \, {\rm d}x_1.
\end{align*}
\EEE Before commenting on the ansatz of (ii), we also collect the corresponding result for the dissipation distances.
\begin{theorem}[Lower semicontinuity of dissipation distances]\label{th: lscD}
	Let $M>0$. Then, for sequences $(y^h)_h$ and  $(\tilde y^h)_h$, $y^h,\tilde y^h \in \mathscr{S}^{3D}_{h,M}$, with  $y^h \stackrel{\pi\sigma}{\to} (u,\theta)$ and $\tilde y^h \stackrel{\pi\sigma}{\to} (\tilde u,\tilde \theta)$  we have
	$$\liminf_{h \to 0} \mathcal{D}_h(y^h,\tilde y^h ) \ge {\mathcal{D}_0} \big ((u,\theta),(\tilde u, \tilde \theta) \big).$$
\end{theorem}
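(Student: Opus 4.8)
The plan is to follow the lower-bound part of the static $\Gamma$-convergence result, Theorem~\ref{th: Gamma}(i), replacing the elastic density by the dissipation density and the linearized strain $G^h(y^h)$ by the difference $G^h(y^h)-G^h(\tilde y^h)$ of the two linearized strains. One may assume $\liminf_{h\to 0}\mathcal{D}_h(y^h,\tilde y^h)<+\infty$, pass to a (not relabeled) subsequence realizing this liminf, and suppose $y^h,\tilde y^h\in\mathscr{S}^{3D}_{h,M}$. Since \eqref{assumption:energyscalingthickness1} forces $\eps_h/\delta_h=(\eps_h/\delta_h^2)\,\delta_h\to 0$, and hence $\eps_h^\alpha/\delta_h^\alpha\to 0$, Lemma~\ref{lemma: metric space-properties}(ii) reduces the problem to the linearized dissipation:
\begin{align*}
\mathcal{D}_h(y^h,\tilde y^h)^2 \;=\; \int_\Omega Q_D^3\big(G^h(y^h)-G^h(\tilde y^h)\big)\,{\rm d}x \;+\; {\rm o}(1)\qquad\text{as }h\to 0.
\end{align*}

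Next I would pass to the limit. By Lemma~\ref{lem:identificationofstrainlimit}(i), applied separately to $(y^h)_h$ and $(\tilde y^h)_h$, one may extract a further subsequence with $G^h(y^h)\rightharpoonup G_y$ and $G^h(\tilde y^h)\rightharpoonup G_{\tilde y}$ weakly in $L^2(\Omega;\R^{3\times3})$, where, writing $(\xi_1,\xi_2,\xi_3)$ and $(\tilde\xi_1,\tilde\xi_2,\tilde\xi_3)$ for the components of $u$ and $\tilde u$ via \eqref{def:Bernoulli-Navier},
\begin{align*}
(G_y)_{11}=\xi_1'-x_2\xi_2''-x_3\xi_3''+\tfrac{r}{2}(\xi_3')^2,\qquad \tfrac{1}{2}\big((G_y)_{12}+(G_y)_{21}\big)=-x_3\theta'+\tilde g,
\end{align*}
for some $\tilde g\in L^2(S)$, and analogously for $G_{\tilde y}$ with a function $\hat g\in L^2(S)$. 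Since $Q_D^3\ge 0$ is a quadratic form, $A\mapsto\int_\Omega Q_D^3(A)\,{\rm d}x$ is convex and strongly continuous on $L^2(\Omega;\R^{3\times3})$, hence weakly lower semicontinuous, and therefore
\begin{align*}
\liminf_{h\to 0}\int_\Omega Q_D^3\big(G^h(y^h)-G^h(\tilde y^h)\big)\,{\rm d}x \;\ge\; \int_\Omega Q_D^3\big(G_y-G_{\tilde y}\big)\,{\rm d}x.
\end{align*}

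It remains to integrate over the cross section. Using that $Q_D^3$ depends only on the symmetric part and that $Q_D^3(A)\ge Q_D^1(a_{11},a_{12})$ for $A\in\R^{3\times3}_{\rm sym}$ by \eqref{def:quadraticforms}, and abbreviating
\begin{align*}
q_{11}:=(\xi_1'-\tilde\xi_1')-x_2(\xi_2''-\tilde\xi_2'')-x_3(\xi_3''-\tilde\xi_3'')+\tfrac{r}{2}\big(|\xi_3'|^2-|\tilde\xi_3'|^2\big),\qquad q_{12}:=-x_3(\theta'-\tilde\theta')+(\tilde g-\hat g),
\end{align*}
one gets $\int_\Omega Q_D^3(G_y-G_{\tilde y})\,{\rm d}x\ge\int_\Omega Q_D^1(q_{11},q_{12})\,{\rm d}x$. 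Now I would invoke \eqref{quadraticforms} for $S=D$ to split $Q_D^1(q_{11},q_{12})=Q_D^0(q_{11})+C_D^*q_{12}^2$ and carry out the $x_2,x_3$-integration using $\int_\omega x_2=\int_\omega x_3=\int_\omega x_2x_3=0$, $\int_\omega x_2^2=\int_\omega x_3^2=\tfrac{1}{12}$, $\int_\omega 1=1$ and $\int_{-1/2}^{1/2}x_3^2\,{\rm d}x_3=\tfrac{1}{12}$; discarding the nonnegative remainder $\tfrac{1}{12}\int_S|\tilde g-\hat g|^2$ coming from the $q_{12}$-part and recombining $Q_D^0(\xi_3''-\tilde\xi_3'')+C_D^*(\theta'-\tilde\theta')^2=Q_D^1(\xi_3''-\tilde\xi_3'',\theta'-\tilde\theta')$ via \eqref{quadraticforms} once more, this yields
\begin{align*}
\int_\Omega Q_D^1(q_{11},q_{12})\,{\rm d}x \;\ge\; \int_I Q_D^0\Big(\xi_1'-\tilde\xi_1'+\tfrac{r}{2}\big(|\xi_3'|^2-|\tilde\xi_3'|^2\big)\Big){\rm d}x_1 + \tfrac{1}{12}\int_I\Big(Q_D^0(\xi_2''-\tilde\xi_2'')+Q_D^1(\xi_3''-\tilde\xi_3'',\theta'-\tilde\theta')\Big){\rm d}x_1,
\end{align*}
which is exactly $\mathcal{D}_0\big((u,\theta),(\tilde u,\tilde\theta)\big)^2$ (recall that the reduced forms $Q_R^i$ appearing in \eqref{def:metric} are the $Q_D^i$ of \eqref{def:quadraticforms}). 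Chaining the three displays along the chosen subsequence gives the claim.

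The crux is the last step. Lemma~\ref{lem:identificationofstrainlimit} controls only the $(1,1)$ entry and the symmetrized $(1,2)$ entry of the limiting strains, so one must be sure no other entry is needed to bound $\mathcal{D}_0$ from below -- which is exactly the role of the compatibility hypothesis \eqref{quadraticforms}. The absence of a cross term in $Q_D^1(q_{11},q_{12})=Q_D^0(q_{11})+C_D^*q_{12}^2$ is what allows the $-x_3(\xi_3''-\tilde\xi_3'')$ piece of $q_{11}$ and the $-x_3(\theta'-\tilde\theta')$ piece of $q_{12}$ to each survive the $x_3$-integration over $\omega$ and then recombine into the full bending--twisting term $Q_R^1(\xi_3''-\tilde\xi_3'',\theta'-\tilde\theta')$; without \eqref{quadraticforms} the cross-sectional averaging would mix the strain components and annihilate the twist contribution. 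The only other delicate point is the vanishing of the error $\eps_h^\alpha/\delta_h^\alpha$ in the first display, which relies on the scaling \eqref{assumption:energyscalingthickness1}.
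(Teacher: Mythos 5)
Your proof is correct and follows essentially the same route as the paper's: the paper proves Theorem~\ref{th: lscD} by mirroring the proof of Theorem~\ref{th: Gamma}(i), replacing $Q_W^3$ by $Q_D^3$ and Lemma~\ref{lemma: metric space-properties}(iii) by part (ii), which is precisely what you do (linearization of $\mathcal{D}_h$ via the Taylor expansion from Lemma~\ref{lemma: metric space-properties}(ii), weak $L^2$-lower semicontinuity of the quadratic form applied to $G^h(y^h)-G^h(\tilde y^h)$, identification of the limiting strain via Lemma~\ref{lem:identificationofstrainlimit}(i), and cross-sectional integration).

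One small correction to your closing discussion: the compatibility hypothesis \eqref{quadraticforms} is not actually indispensable for this particular liminf inequality. The $x_3$-integration is exact and decouples $(q_{11},q_{12})$ into the pair $\big(\xi_1'-\tilde\xi_1'-x_2(\xi_2''-\tilde\xi_2'')+\tfrac{r}{2}(|\xi_3'|^2-|\tilde\xi_3'|^2),\,\tilde g-\hat g\big)$ and the pair $\big(\xi_3''-\tilde\xi_3'',\,\theta'-\tilde\theta'\big)$ simply because the cross terms carry a single factor of $x_3$ and hence integrate to zero; no structure of $Q_D^1$ is needed for that. For the remaining $x_2$-integral one then applies $Q_D^1(p,q)\ge Q_D^0(p)$ pointwise from \eqref{def:quadraticforms}, which already kills the $\tilde g-\hat g$ contribution -- this is why the paper cites \eqref{def:quadraticforms} rather than \eqref{quadraticforms} in the analogous step of Theorem~\ref{th: Gamma}(i). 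The absence of a cross term guaranteed by \eqref{quadraticforms} becomes decisive only in the recovery-sequence and slope-LSC arguments (Theorem~\ref{theorem: lsc-slope}), not for the lower bound here; since \eqref{quadraticforms} is assumed throughout Theorem~\ref{maintheorem3} anyway, your use of it is harmless, but the underlying reasoning about why it is needed is slightly off.
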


\begin{proof}
	One can follow the lines of the proof of Theorem \ref{th: Gamma}(i), with the only difference being that we use the quadratic forms $Q_D^3$ instead of $Q_W^3$ and Lemma~\ref{lemma: metric space-properties}(ii) instead of Lemma~\ref{lemma: metric space-properties}(iii).
\end{proof}

We now discuss the ansatz for the recovery sequence, based on \cite{Freddi2013}. By a general approximation argument for $\Gamma$-convergence, it suffices to prove the upper bound on a dense subset. To this end, we use the following lemma.

\begin{lemma}[Density of smooth functions with same boundary conditions]\label{lemma: density}
	For each $\MMM u \EEE \in \mathcal{A}^{BN}_{\hat \xi_1, \hat \xi_2, \hat \xi_3} $ \MMM represented by $(\xi_1,\xi_2,\xi_3)$ \EEE we find sequences $(\xi_{1,\Lambda})_\Lambda \subset W^{2,p}(I)$ and $(\xi_{i,\Lambda})_\Lambda \subset W^{3,p}(I)$, $i=2,3$, such that 
\begin{thmlist}
	\item $\xi_{1,\Lambda} = \BBB \hat{\xi}_1,\quad$ $ \xi_{i,\Lambda} = \BBB \hat{\xi}_i,\quad$   $\xi_{i,\Lambda}' = \BBB \hat{\xi}_i'\quad$ \, on $ \partial I\quad\quad \,$ $\quad$ for \MMM $ i = 2,3$, \EEE
	\item $\xi_{1,\Lambda} \to \xi_1 $ in  $W^{1,2}(I),\quad$  $\xi_{i,\Lambda} \to \xi_i \ \ \ \ $  in  $W^{2,2}(I)$ $\quad$ for \MMM $i = 2,3$ \LLL$\quad$ as $\Lambda \to 0$. \EEE
\end{thmlist}
\end{lemma}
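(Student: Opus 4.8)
The plan is to reduce the lemma to the standard fact that compactly supported smooth functions are dense in the Sobolev spaces with homogeneous (Dirichlet, resp.\ clamped) boundary values on the bounded interval $I$. First I would subtract the prescribed boundary data, writing $\xi_1 = \hat\xi_1 + \eta_1$ and $\xi_i = \hat\xi_i + \eta_i$ for $i = 2,3$. By the definition of $\mathcal{A}^{BN}_{\hat\xi_1,\hat\xi_2,\hat\xi_3}$ in \eqref{def:Bernoulli-Navier}, the corrector $\eta_1 \in W^{1,2}(I)$ has vanishing trace on $\partial I$, and $\eta_i \in W^{2,2}(I)$ satisfies $\eta_i = \eta_i' = 0$ on $\partial I$. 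Since $I \subset \R$ is a bounded interval, this is precisely the statement that $\eta_1 \in W^{1,2}_0(I)$ and $\eta_i \in W^{2,2}_0(I)$, and these spaces coincide with the closures of $C_c^\infty(I)$ in $W^{1,2}(I)$, respectively $W^{2,2}(I)$.

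Next I would use this density to choose $\eta_{1,\Lambda} \in C_c^\infty(I)$ with $\eta_{1,\Lambda} \to \eta_1$ in $W^{1,2}(I)$ and $\eta_{i,\Lambda} \in C_c^\infty(I)$ with $\eta_{i,\Lambda} \to \eta_i$ in $W^{2,2}(I)$ as $\Lambda \to 0$, and define $\xi_{1,\Lambda} := \hat\xi_1 + \eta_{1,\Lambda}$ and $\xi_{i,\Lambda} := \hat\xi_i + \eta_{i,\Lambda}$. Because $\hat\xi_1 \in W^{2,p}(I)$ and $\hat\xi_2,\hat\xi_3 \in W^{3,p}(I)$ by \eqref{assumption:clampedboundary}, while the smooth correctors belong to every Sobolev space, one gets $\xi_{1,\Lambda} \in W^{2,p}(I)$ and $\xi_{i,\Lambda} \in W^{3,p}(I)$. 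Item (i) is then immediate: all derivatives of $\eta_{1,\Lambda}$ and $\eta_{i,\Lambda}$ vanish near $\partial I$, so $\xi_{1,\Lambda} = \hat\xi_1$, $\xi_{i,\Lambda} = \hat\xi_i$ and $\xi_{i,\Lambda}' = \hat\xi_i'$ on $\partial I$. Item (ii) is also immediate, since $\xi_{1,\Lambda} - \xi_1 = \eta_{1,\Lambda} - \eta_1 \to 0$ in $W^{1,2}(I)$ and $\xi_{i,\Lambda} - \xi_i = \eta_{i,\Lambda} - \eta_i \to 0$ in $W^{2,2}(I)$.

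I do not expect a real obstacle here: the statement is a routine density argument, and the only point deserving a word of care is the identification of $W^{1,2}_0(I)$ and $W^{2,2}_0(I)$ with the closures of $C_c^\infty(I)$ and with the sets of $W^{1,2}$, resp.\ $W^{2,2}$, functions whose traces up to order $0$, resp.\ order $1$, vanish on $\partial I$. This is classical on bounded intervals (and more generally on Lipschitz domains); should one wish to avoid quoting it, the approximants can be built explicitly by first dilating $\eta_1$ (resp.\ $\eta_i$) via $x \mapsto \eta_1(x/(1-\Lambda))$ extended by zero, which pushes the support strictly inside $I$ and perturbs the $W^{1,2}$ (resp.\ $W^{2,2}$) norm by $o(1)$, and then mollifying at a scale much smaller than $\Lambda$.
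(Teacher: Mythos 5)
Your argument is correct and coincides with the paper's proof: both subtract the boundary data $\hat\xi_i$, approximate the resulting correctors (which lie in $W^{1,2}_0(I)$, resp.\ $W^{2,2}_0(I)$) by $C_c^\infty(I)$ functions, and add $\hat\xi_i$ back. The extra detail you give on identifying $W^{k,2}_0(I)$ as the closure of $C_c^\infty(I)$, and the optional explicit dilate-and-mollify construction, are fine elaborations of what the paper summarizes as "the proof is standard."
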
 
\begin{proof}
	The proof is standard. We approximate $\xi_i-\hat{\xi_i}$, $i =1,2,3$, by smooth functions with compact support in $I$ and add $\hat \xi_1 \in W^{2,p}(I)$, $\hat \xi_i \in W^{3,p}(I)$, $i =2,3$, respectively. 
\end{proof}


	Consider $(u,\theta) \in \mathscr{S}^{1D}$ with $\theta \in C_c^\infty(I)$ and $u$ having  the regularity in Lemma~\ref{lemma: density}, \MMM i.e., $u$ is represented by $(\xi_1,\xi_2,\xi_3)$  with $\xi_1 \in W^{2,p}(I)$ and $\xi_i \in W^{3,p}(I)$, $i=2,3$. As $\phi_0$ is continuous with respect to the convergence in Lemma \ref{lemma: density} and \FFF$W^{1,2}(I)$-convergence \EEE for $\theta$, \AAA by \EEE a diagonal argument, it suffices to define recovery sequences for such functions.
	 We proceed \EEE as follows. First, we find approximations    $ \xi_{3,h}' \in C_c^\infty(I)$ such that   $ \xi_{3,h}' \to \xi_3'$ in \SSS $L^4(I)$ \QQQ as $h \to 0$. \EEE  Moreover, by passing to a subsequence we can control the speed of convergence of the derivatives, and thus we find
	\begin{align}\label{def:approxrecovery}
\sup_h \big(\Vert  \xi_{3,h}' \Vert_{L^{\infty}(I)}   + \Vert h^{1/2} \xi_{3,h}' \Vert_{W^{2,\infty}(I)} \big)< +\infty.
	\end{align}
	The ansatz is given by 
	\begin{align}\tag{\textbf{A}} \label{Ansatz}
	y^{h} = \begin{pmatrix}
	x_1 \\ h x_2 \\ \delta_h x_3 \\
	\end{pmatrix} + \eps_h \begin{pmatrix}
	u_1 - hx_2x_3  \theta' \\  u_2/h - x_3  \theta \\  u_3/ \delta_h + (h/\delta_h) x_2  \theta\\
	\end{pmatrix} + \eps_h \beta^h(x),
	\end{align}
	where
	\begin{align*}
	\begin{matrix*}[l]
	\beta_1^h &:=  - h  \MMM  x_2r\EEE \xi_{3,h}' \theta , \\
	\beta_2^h&:= -h x_2 \frac{r}{2} \theta^2 , \\
	\beta_3^h&:= - \delta_h  x_3 \frac{r}{2} ((\xi_{3,h}')^2 + \theta^2) ,
	\end{matrix*}
	\end{align*}
\QQQ	and $r$ is given by \eqref{assumption:energyscalingthickness1}. \EEE
	By construction the function $\beta^h$ attains zero boundary values. \AAA Therefore, \EEE the choice of the sequence is in compliance with the boundary conditions in \eqref{assumption:clampedboundary} as \SSS $\theta \in C_c^\infty(I)$. \EEE   Thus we have $y^h \in \mathscr{S}_h^{3D}$.   Then, for later purposes, we note that the scaled gradient is given by
	\begin{align}\label{eq:scaledgradientrecovery}
	\nabla_h y^h = \MMM \Id \EEE + \eps_h (M^h + \tfrac{1}{\delta_h} \LLL A_{u,\theta} \EEE +\nabla_h \beta^h),
	\end{align}
	where
	\begin{align*}
	M^h:= \begin{pmatrix}
	\partial_1 u_1 - hx_2x_3  \theta'' &  - \xi_2' /h  - x_3 \theta'& - hx_2 \theta'/\delta_h \\
	\xi_2'/h - x_3 \theta' & 0 & 0 \\
	h x_2 \theta' /\delta_h &0 &0 \\
	\end{pmatrix}, \quad   \quad \LLL A_{u,\theta} = \EEE \begin{pmatrix}
	0 & 0& - \xi_3' \\
	0 & 0 & - \theta \\
	\xi_3' & \theta & 0 \\
	\end{pmatrix},
	\end{align*}
	and 
	\begin{align*}
	\nabla_h \beta^h =\begin{pmatrix}
	-h2x_2 \frac{r}{2} (\xi_{3,h}''\theta +  \xi_{3,h}' \theta')   &   -r \xi_{3,h}'  \theta  & 0\\
	-hx_2 \MMM r \EEE  \theta  \theta'	&  - \frac{r}{2}  \theta^2 & 0\\
	-\delta_h x_3 \MMM r \EEE ( \xi_{3,h}' \xi_{3,h}''+ \theta\theta') &0 &   - \frac{r}{2} ((\xi_{3,h}')^2 + \theta^2)\\
	\end{pmatrix} .
	\end{align*}
At this point, we remark that the perturbation $\beta^h$ is essential to handle the involved relaxation of the quadratic forms, see \eqref{def:quadraticforms}.
	We will not prove Theorem~\ref{th: Gamma}(ii) as it can be seen as corollary of the proof of Theorem~\ref{theorem: lsc-slope}, see Remark~\ref{rem:recoverysequence}.
	We only show that the \FFF second-order \EEE \FFF perturbation \EEE vanishes in the limit. For this purpose, we recall that the scaled Hessian $\nabla_h^2 y^h$ is defined by $(\nabla_h^2y)_{ijk} := h^{-\delta_{2j}-\delta_{2k}}\delta_h^{-\delta_{3j} - \delta_{3k}} (\nabla^2 y_i)_{jk}$ for $i,j,k \in \{1,2,3\}$. Now, a computation yields that $ \Vert \nabla_h^2 \beta^h \Vert_{L^p(\Omega)} \SSS \le C (\Vert {\xi}_{3,h} \Vert_{W^{3,\infty}(\Omega)} + \Vert \theta\Vert_{W^{2,\infty}(\Omega)})^{2} \EEE $. \MMM Thus, by   \eqref{def:approxrecovery}, by the fact that $\delta_h/h \to 0$, \MMM and a similar computation for the matrix $M^h$ \EEE we see
	\begin{align}\label{eq:boundedsecondgradient}
		\Vert \nabla_h^2 y^h \Vert_{L^p(\Omega)} \leq C \eps_h/\delta_h.
	\end{align}
	With \eqref{assumptions-P}  this implies 
	\begin{align}\label{eq:secondordervanishes}
		\tfrac{\zeta_h}{\eps_h^2} \int_\Omega P(\nabla^2_h y^h) \LLL \,{\rm d}x \EEE \leq C \SSS \tfrac{\zeta_h}{\eps_h^2} \EEE \int_\Omega \vert \nabla^2_h y^h \vert^p \LLL \,{\rm d}x \EEE  \leq C \SSS \tfrac{\zeta_h}{\eps_h^2} \EEE (\eps_h/\delta_h)^{p},
	\end{align}
and the \FFF second-order \EEE \FFF perturbation \EEE converges to zero \SSS by \eqref{assumption:penalizationscale}. \EEE
The remaining \SSS part of the  \EEE section is dedicated to the proof of the \MMM weak \EEE lower semicontinuity of the local slopes.

\begin{theorem}[Lower semicontinuity of slopes]\label{theorem: lsc-slope}
	For each sequence $(y^h)_h$ with $y^h \in  \mathscr{S}^{3D}_{h,M}$ such that $y^h \stackrel{\pi\sigma}{\to}  (u,\theta)$ we have 
	$$\liminf_{n \to \infty}|\partial {\phi}_{h}|_{{\mathcal{D}}_{h}}(y^h) \ge |\partial  {\phi}_0|_{ {\mathcal{D}}_0}(u,\theta).$$ 
\end{theorem}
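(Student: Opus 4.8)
The plan is to bound $|\partial\phi_h|_{\mathcal{D}_h}(y^h)$ from below by testing against a carefully constructed competitor, pass to the limit, and reduce to the characterization of the one-dimensional slope in Lemma~\ref{Thm:representationlocalslope1}(iii). First I would assume $\liminf_h|\partial\phi_h|_{\mathcal{D}_h}(y^h)<\infty$ (otherwise there is nothing to show) and pass to a subsequence realizing the liminf; note $\phi_0(u,\theta)\le\liminf_h\phi_h(y^h)\le M$ by Theorem~\ref{th: Gamma}(i). Fix any competitor $(\tilde u,\tilde\theta)\in\mathscr{S}^{1D}$ with $(\tilde u,\tilde\theta)\ne(u,\theta)$; since a competitor with $\phi_0(\tilde u,\tilde\theta)\ge\phi_0(u,\theta)$ contributes nothing to the supremum in Lemma~\ref{Thm:representationlocalslope1}(iii), I may also assume $\phi_0(\tilde u,\tilde\theta)\le\phi_0(u,\theta)\le M$. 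Recalling that $\mathcal{D}_0$ is a genuine metric, so $\mathcal{D}_0((u,\theta),(\tilde u,\tilde\theta))>0$, it suffices to prove
$$\liminf_{h\to0}|\partial\phi_h|_{\mathcal{D}_h}(y^h)\ \ge\ \frac{\big(\phi_0(u,\theta)-\phi_0(\tilde u,\tilde\theta)\big)^+}{\mathcal{D}_0\big((u,\theta),(\tilde u,\tilde\theta)\big)}$$
for every such competitor: taking the supremum over $(\tilde u,\tilde\theta)$, and using $\Phi^1(t)\ge t$ and $\Phi^2_M\ge0$ together with Lemma~\ref{Thm:representationlocalslope1}(iii), then gives $\liminf_h|\partial\phi_h|_{\mathcal{D}_h}(y^h)\ge|\partial\phi_0|_{\mathcal{D}_0}(u,\theta)$.

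The heart of the argument is the construction of a \emph{mutual recovery sequence}: a sequence $(\tilde y^h)_h$ with $\tilde y^h\in\mathscr{S}^{3D}_{h,M+1}$ and $\tilde y^h\stackrel{\pi\rho}{\to}(\tilde u,\tilde\theta)$, obtained as a perturbation of the \emph{given} $y^h$, namely $\tilde y^h=y^h+\eps_h\psi^h$ with $\psi^h$ modelled on the ansatz~\eqref{Ansatz}. Here $\psi^h$ encodes the difference of the limiting profiles $(\tilde u,\tilde\theta)-(u,\theta)$ — approximated by smooth functions with the prescribed boundary values via Lemma~\ref{lemma: density} and cut off near $\Gamma$ so that $\tilde y^h\in\mathscr{S}^{3D}_h$ — together with the $\beta^h$-type quadratic corrector in~\eqref{Ansatz} that realizes the relaxation over the $x_2$-, $x_3$-stretches built into~\eqref{def:quadraticforms}. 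Using the rigidity estimates of Lemma~\ref{lem:rigidity} and the strain identification Lemma~\ref{lem:identificationofstrainlimit}, one checks that the symmetric part of $G^h(\tilde y^h)-G^h(y^h)$ converges \emph{strongly} in $L^2(\Omega;\R^{3\times3})$ to an explicit matrix field $c$ whose only nonzero entries are $c_{11}=(\tilde\xi_1'-\xi_1')-x_2(\tilde\xi_2''-\xi_2'')-x_3(\tilde\xi_3''-\xi_3'')+\tfrac r2(|\tilde\xi_3'|^2-|\xi_3'|^2)$ and $c_{12}=-x_3(\tilde\theta'-\theta')$. The decisive point is that this single $c$ is simultaneously compatible with the elastic energy and the dissipation: invoking the decomposition~\eqref{quadraticforms} \emph{both} for $S=W$ \emph{and} for $S=D$, the contributions of the stretch components and of the $x_1$-$x_2$ part of the shear separate additively and cancel in the relevant differences. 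Combining this with a Taylor expansion of $\phi_h$ and of $\mathcal{D}_h^2$ around $G^h(y^h)$ (Lemma~\ref{lemma: metric space-properties}(ii)--(iv)), the weak convergence $G^h(y^h)\rightharpoonup G_y$, the strong convergence of the correctors, and the explicit $x_2$-, $x_3$-integrations, one obtains
$$\phi_h(y^h)-\phi_h(\tilde y^h)\ \longrightarrow\ \phi_0(u,\theta)-\phi_0(\tilde u,\tilde\theta),\qquad \mathcal{D}_h(y^h,\tilde y^h)\ \longrightarrow\ \mathcal{D}_0\big((u,\theta),(\tilde u,\tilde\theta)\big),$$
where the second convergence also uses the lower bound from Theorem~\ref{th: lscD}. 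Along the way one records $\|\nabla_h\tilde y^h-\nabla_h y^h\|_{L^\infty(\Omega)}\le C\eps_h^\alpha/\delta_h^\alpha\to0$.

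It then remains to feed this into the lower bound for the three-dimensional slope derived inside the proof of Lemma~\ref{lem:stronguppergradient3d}(ii). Since $\|y^h-\tilde y^h\|_{W^{1,\infty}(\Omega)}\le C\eps_h^\alpha/\delta_h^\alpha\le\kappa_h$ for $h$ small, the competitor $w=\tilde y^h$ is admissible there, giving
$$|\partial\phi_h|_{\mathcal{D}_h}(y^h)\ \ge\ \frac{\big(\phi_h(y^h)-\phi_h(\tilde y^h)+\tfrac12\lambda_h\,\overline{\mathcal{D}}_h(y^h,\tilde y^h)^2\big)^+}{\mathcal{D}_h(y^h,\tilde y^h)\,\big(1+C_h\|\nabla_h\tilde y^h-\nabla_h y^h\|_{L^\infty(\Omega)}\big)^{1/2}}.$$
Passing to the $\liminf$, the two convergences above handle the leading terms of numerator and denominator, while the corrections $C_h\|\nabla_h\tilde y^h-\nabla_h y^h\|_{L^\infty(\Omega)}$ and $\lambda_h\,\overline{\mathcal{D}}_h(y^h,\tilde y^h)^2$ are negligible in the limit: the first by the above $L^\infty$-bound together with~\eqref{assumption:penalizationscale}, the second because Lemma~\ref{th: metric space}(v) gives $\overline{\mathcal{D}}_h(y^h,\tilde y^h)\le\mathcal{D}_h(y^h,\tilde y^h)/c_h$, so that $\overline{\mathcal{D}}_h(y^h,\tilde y^h)^2=O(\eps_h^2)$ is controlled against the growth of $\lambda_h$. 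This yields the displayed estimate, which, by the reduction of the first paragraph, completes the proof.

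The step I expect to be genuinely hard is the construction of the mutual recovery sequence. Unlike the recovery sequence of Theorem~\ref{th: Gamma}(ii) it must be built \emph{relative to the given} $y^h$, of which one knows only a priori bounds and the weak convergence of the scaled strain $G^h(y^h)$, never strong convergence; this is circumvented by perturbing $y^h$ itself rather than building a competitor from scratch, so that the relevant \emph{differences} of strains are strongly convergent and the common weakly-convergent part drops out of both the energy and the distance. Moreover the same perturbation must be asymptotically optimal for the elastic energy \emph{and} for the viscous distance \emph{at once}, which is exactly what the compatibility hypothesis~\eqref{quadraticforms}, imposed for both $S=W$ and $S=D$, guarantees: it lets the corrector $\beta^h$ handle the relaxation over the $x_2$-, $x_3$-stretches consistently for $W$ and for $D$. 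Respecting the clamped boundary conditions while preserving all of these properties (via Lemma~\ref{lemma: density} and cut-offs) adds a further technical layer.
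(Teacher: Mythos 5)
Your strategic picture is largely right: constructing a mutual recovery sequence by perturbing the given $y^h$ so that the weakly convergent part of the scaled strain cancels, and using the compatibility hypothesis~\eqref{quadraticforms} simultaneously for $S=W$ and $S=D$ to make that single perturbation optimal for both the energy and the distance, is exactly what the paper does. The key identities for ${\rm sym}(G^h(\tilde y^h)-G^h(y^h))$ and the separation of the $x_2$-, $x_3$-stretches from the $(1,1)$ and $(1,2)$ entries are also in the right spirit.

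However, there is a genuine gap in the way you close the argument. You feed the competitor into the representation of $|\partial\phi_h|_{\mathcal{D}_h}(y^h)$ from Lemma~\ref{lem:stronguppergradient3d}(ii), which involves the $h$-dependent $\lambda$-convexity constant $\lambda_h$ of $\phi_h$ with respect to $\overline{\mathcal{D}}_h$ and the local-equivalence constants $c_h,C_h$ from Lemma~\ref{th: metric space}(v). None of these is uniform in $h$. Tracking the scalings: $\phi_h$ carries a $1/\eps_h^2$ prefactor, so $\lambda_h\sim -C/\eps_h^2$; and a fixed competitor $\tilde y^h$ built from the ansatz~\eqref{Ansatz} satisfies $\|\nabla_h(\tilde y^h-y^h)\|_{L^2}=O(\eps_h/\delta_h)$ (see \eqref{eq: w1}), not $O(\eps_h)$ as you assert, because of the $\delta_h^{-1}A$ term in \eqref{eq:scaledgradientrecovery}. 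Hence $\lambda_h\,\overline{\mathcal{D}}_h(y^h,\tilde y^h)^2\sim -C/\delta_h^2\to-\infty$; the ``correction'' you claim is negligible actually diverges. Your appeal to Lemma~\ref{th: metric space}(v) cannot rescue this, since the constant $c_h$ involves the Korn constant of the thin domain $\Omega_h$ and is likewise not bounded uniformly. A parallel problem affects the factor $(1+C_h\|\nabla_h\tilde y^h-\nabla_h y^h\|_{L^\infty})^{1/2}$: vanishing of $\|\cdot\|_{L^\infty}$ is not enough without a bound on $C_h$, which the paper does not (and cannot uniformly) provide.

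The paper's proof avoids this entirely by never invoking the 3D $\lambda$-convexity in Theorem~\ref{theorem: lsc-slope}. Instead it builds a \emph{one-parameter family} $w^h_s=y^h - y^h_\Lambda + \tilde y^h_s$ from the ansatz for the convex interpolations $(\tilde u_s,\tilde\theta_s)=(1-s)(u_\Lambda,\theta_\Lambda)+s(\tilde u,\tilde\theta)$, proves the three estimates \eqref{eq: three properties} in which \emph{every} error term carries an explicit factor $s$ (with an $s$-independent but $h$- and $\Lambda$-dependent coefficient), then uses the 1D generalized convexity (Lemma~\ref{lem:complete1d}(ii), expressed through $\Phi^1,\Phi^2$ rather than a linear $\lambda$) to factor $s$ out of both numerator and denominator, and finally sends $s\to 0$ \emph{first}, which realizes the $\limsup$ in Definition~\ref{main def2}(ii) directly, and only then $h\to 0$ and $\Lambda\to 0$. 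All $h$-dependent quantities are thereby eliminated before any limit in $h$ is taken. Related to this, your intermediate target $\liminf_h|\partial\phi_h|_{\mathcal{D}_h}(y^h)\ge (\phi_0(u,\theta)-\phi_0(\tilde u,\tilde\theta))^+/\mathcal{D}_0((u,\theta),(\tilde u,\tilde\theta))$ is strictly stronger than the characterization of $|\partial\phi_0|_{\mathcal{D}_0}$ in Lemma~\ref{Thm:representationlocalslope1}(iii) and is not available: because $\phi_0$ is only generalized convex in the sense of Lemma~\ref{lem:complete1d}(ii), not convex, the global quotient without the $\Phi^1,\Phi^2$ corrections can exceed the local slope, so it is precisely the $\Phi^1,\Phi^2$-corrected quotient that has to be targeted, and that is exactly what the $s\to 0$ step together with Lemma~\ref{lem:complete1d}(ii) produces. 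To repair your proof you would essentially have to introduce the parameter $s$, prove the $s$-linear estimates \eqref{eq: three properties}, and take $s\to0$ before $h\to 0$---i.e., re-derive the paper's Step~4.

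Two smaller remarks. First, the limiting strain difference ${\rm sym}(G_y-G_w^s)$ is not supported only in the $(1,1)$ and $(1,2)$ entries: the twist produces nonzero $(2,2)$ and $(3,3)$ components (see \eqref{tediouscomputation2}); they are only small ($O(s\Lambda)$ in $L^2$, cf.~\eqref{estimaterelaxation}) thanks to the $\beta^h$-corrector and the $\Lambda$-approximation, and vanish only in the final $\Lambda\to 0$ limit. Second, you need the explicit layering $s\to0$, then $h\to0$, then $\Lambda\to0$; a single cut-off per $h$ as you suggest leaves the $\Lambda$-errors in the inequality with no limit to remove them.
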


The proof is  technical and the main difficulty of this paper. As a first intuition, we observe that it does \textit{not} suffice to use the $\Gamma$-convergence results as in Lemma \ref{lem:complete1d}(iii) metrics appear in the denominator and energies are subtracted in the enumerator. Therefore, we construct a suitable competitor sequence $(y^h+z_s^h)_h$, \FFF perturbing \EEE the deformations $(y^h)_h$ with sufficiently small strains $(z_s^h)_h$, such that the difference of the quantities $G^h(y^h)$ and $G^h(y^h + z^h_{\MMM s})$ converges \textit{strongly} in $L^2(\Omega)$ with linear control in $s$ (weak convergence is already guaranteed). This property is \LLL ensured \EEE by the following lemma.

\begin{lemma}[Strong convergence of \BBB strain differences\EEE]\label{lemma: strong convergence}
	Let $M>0$. Let $(y^h)_h$ be a sequence with $y^h \in \mathscr{S}^{3D}_{h,M}$ and let  $(z^h_s)_{s,h} \subset W^{2,p}(\Omega;\R^3)$, $h>0$, $s \in (0,1)$,  be functions with \SSS $z^h_s = 0 $ on $\Gamma$ \EEE  such that 
	\begin{align}\label{eq: strong convergence assumptions}
	{\rm (i)} & \ \  \Vert \nabla_h z^h_s \Vert_{L^{\infty}(\Omega)} + \Vert \nabla^2_h z^h_s \Vert_{\LLL L^p(\Omega)} \le   M   s \eps_h/\delta_h,\\
	{\rm (ii)} & \ \ \Vert {\rm sym}(\nabla_h z^h_s) \Vert_{L^{2}(\Omega)} \le   M   s \eps_h,\notag\\
	{\rm (iii)} & \ \ \BBB \big|  {\rm skew} (\nabla_h z^h_s)(x_1,x_2,x_3)  -  \int_{-1/2}^{1/2} {\rm skew}(\nabla_h z^h_s)(x_1,x_2,t) \, \MMM {\rm d}t \EEE \big|   \le Ms\eps_h h^{1/2} \EEE \ \ \ \text{for a.e.\ $x \in \Omega$},\notag\\
	{\rm (iv)} & \ \ \PPP \text{there exist $E^s, F^s \in L^2(\Omega;\R^{3 \times 3}) $ for $s\in(0,1)$, and $\eta(h) \to 0$ as $h \to 0$  such that  } \notag \\
	& \ \  \Vert \eps_h^{-1}{\rm sym}(\nabla_h z^h_s) - E^s \Vert_{L^2(\Omega)}  +  \Vert \delta_h\eps_h^{-1}{\rm skew}(\nabla_h z^h_s) - F^s \Vert_{L^2(\Omega)}  \le s\eta(h).\notag
	\end{align}
Then the following holds for a subsequence of $(y^h)_h$ (not relabeled):
	
\noindent	{\rm (a)} For all $h$ sufficiently small,  $w^h_s := y^h + z^h_s$ lies in $\mathscr{S}^{3D}_{h,M'}$ for some $M' = M'(M)>0$.  
	
\noindent	{\rm 	(b)} Let $(G^h(y^h))_h$, $(G^h(w^h_s))_h$ be the sequences in Lemma \ref{lem:identificationofstrainlimit}   and let $G_y$, $G^s_w$ be their limits.   \BBB Then, there  \MMM exist \EEE $C=C(M)>0$ and $\rho(h)$ with $\rho(h)\to 0$ as $h \to 0$ such that   
	\begin{align*}
	{\rm (i)} & \ \ \big\|  \big(G^h(y^h) - G^h(w^h_s)\big) -   \big(G_y - G^s_w  \big)\big\| _{L^2(\Omega)} \le s\rho(h),\notag \\
	{\rm (ii)} & \ \  \Vert G^h(y^h) - G^h(w^h_s)\Vert_{L^2(\Omega)} \le Cs.
	\end{align*}
	
\noindent	{\rm 	(c)}   Let $A_{\FFF \hat{u}_s,\hat{\theta}_s \EEE} $ and $ A_{u,\theta}$ be the limits as given in Lemma~\ref{lem:identificationofstrainlimit}{\rm (ii)}, where
$(u,\theta)$ and $(\FFF \hat{u}_s,\hat{\theta}_s \EEE)$ are the limits \LLL in the sense of Proposition~\ref{lemma:compactness} corresponding to $y^h$ and $w^h_s$, respectively.  \EEE Then, \MMM it holds that \EEE
	\begin{align}\label{eq:one-dimensionalstraindifference}
	{\rm sym}(  G_y - G^s_w ) 
	=  \tfrac{r}{2} (A_{\FFF \hat{u}_s,\hat{\theta}_s \EEE})^2 - \tfrac{r}{2} (A_{u,\theta})^2 -    E^s  \quad \text{	a.e.\ in $\Omega$.}
	\end{align}
\end{lemma}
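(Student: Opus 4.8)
The plan is to establish each of the three claims (a), (b), (c) in turn, relying throughout on the rigidity estimates of Lemma~\ref{lem:rigidity} applied both to $(y^h)_h$ and to the perturbed sequence $(w^h_s)_h$, and on the structure of the strain decomposition in Lemma~\ref{lem:identificationofstrainlimit}.

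For (a), I would first bound $\phi_h(w^h_s)$ in terms of $\phi_h(y^h)$ and the smallness of $z^h_s$. Using the decomposition of the energy into the $W$-part, the second-gradient $P$-part, and (when present) the force term, the $W$-part is controlled via the quadratic expansion of Lemma~\ref{lemma: metric space-properties}(iii)--(iv) together with the Lipschitz/$L^\infty$ bounds on $\nabla_h z^h_s$ from hypothesis (i) and the $L^2$ bound on $G^h(y^h)$ coming from Lemma~\ref{lem:rigidity}\ref{lem:rigidity:energycontrol}; the $P$-part is controlled by $\|\nabla_h^2 w^h_s\|_{L^p}^p \le C(\|\nabla_h^2 y^h\|_{L^p}^p + (s\eps_h/\delta_h)^p)$ multiplied by $\zeta_h/\eps_h^2$, which stays bounded by \eqref{assumption:penalizationscale}. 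Since $z^h_s=0$ on $\Gamma$, the boundary conditions \eqref{assumption:clampedboundary} are preserved, so $w^h_s\in\mathscr{S}^{3D}_h$, and combining the bounds gives $\phi_h(w^h_s)\le M'$ for $h$ small, hence $w^h_s\in\mathscr{S}^{3D}_{h,M'}$.

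For (b), the key is to pass from $\nabla_h z^h_s$ to the renormalized strain difference $G^h(y^h)-G^h(w^h_s)$. Writing $R^h_y$, $R^h_w$ for the approximating rotations of $y^h$, $w^h_s$, one has $\eps_h\big(G^h(y^h)-G^h(w^h_s)\big) = (R^h_y)^\top\nabla_h y^h - (R^h_w)^\top\nabla_h w^h_s = (R^h_y)^\top\nabla_h y^h - (R^h_w)^\top\nabla_h y^h - (R^h_w)^\top\nabla_h z^h_s$. Since $w^h_s$ is a small perturbation of $y^h$, one can in fact take $R^h_w=R^h_y$ up to an error that is $O(s\eps_h)$ in $L^2$ and $O(s\eps_h^\alpha/\delta_h^\alpha)$ in $L^\infty$ (this uses uniqueness-type arguments for the rotation in the rigidity construction, or directly redoes the construction of Lemma~\ref{lem:rigidity} for $w^h_s$ and compares), so modulo such errors $G^h(y^h)-G^h(w^h_s)\approx -(R^h_y)^\top\nabla_h z^h_s/\eps_h$. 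Then $\sym$ of this is $\approx -\sym(\nabla_h z^h_s)/\eps_h + (\text{terms from } R^h_y-\Id)$, and using hypothesis (iv) together with the convergence $R^h_y\to\Id$ and $\frac{R^h_y-\Id}{\eps_h/\delta_h}\rightharpoonup A_{u,\theta}$ from Lemma~\ref{lem:identificationofstrainlimit}(ii) one gets that $\sym(G^h(y^h)-G^h(w^h_s))$ converges strongly in $L^2$; for the skew part one uses hypothesis (iii) (the oscillation control through the thickness) together with (iv) to control the skew part and its limit. The $L^2$-bound $\le Cs$ in (b)(ii) then follows from (i), (ii), (iv) and the strong convergence. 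The main obstacle here is the rotation-comparison step: one must argue that $R^h_w$ and $R^h_y$ can be chosen so close, which requires either a stability statement for the mollified nearest-point projection in \eqref{repofapproxrot} under $O(s\eps_h)$ perturbations of the deformation, or a careful bookkeeping of the constants in the rigidity proof — this is where the quantitative $L^\infty$ control from the second-gradient term (Lemma~\ref{lem:rigidity}\ref{lem:rigidity:dev:GradIDInfty},\ref{lem:rigidity:dev:RotIDInfty}) is indispensable.

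For (c), I would identify the limit of $\sym(G^h(y^h)-G^h(w^h_s))$ in two ways. On the one hand, by (b)(i) this limit is $\sym(G_y-G^s_w)$. On the other hand, writing $\nabla_h w^h_s=\Id+\eps_h(\cdots)$ and using $\nabla_h w^h_s = (R^h_w)(\Id+\eps_h G^h(w^h_s))$ together with the expansion $(R^h_w)^\top = \Id - \tfrac{\eps_h}{\delta_h}A_{\hat u_s,\hat\theta_s} + \tfrac{r}{2}\eps_h (A_{\hat u_s,\hat\theta_s})^2 + o(\eps_h)$ (this is exactly the content of Lemma~\ref{lem:identificationofstrainlimit}(ii), since $\sym\big(\frac{R^h-\Id}{\eps_h}\big)\to\frac r2 A^2$ and the skew part is $\frac{\eps_h}{\delta_h}A+o(1)$, from which $\sym(R^\top) = \Id + \frac r2\eps_h A^2 + o(\eps_h)$ modulo the lower-order piece), one can expand $\eps_h G^h(w^h_s)=(R^h_w)^\top\nabla_h w^h_s-\Id$ and extract that $\sym G^s_w$ involves $\frac r2 (A_{\hat u_s,\hat\theta_s})^2$ plus the symmetrized scaled displacement gradient of $w^h_s$; subtracting the analogous identity for $y^h$ and using that the displacement-gradient difference $\eps_h^{-1}\sym(\nabla_h z^h_s)\to E^s$ by hypothesis (iv) yields precisely \eqref{eq:one-dimensionalstraindifference}. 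The algebra here is the same as in the $\Gamma$-convergence computation of \cite{Freddi2013} adapted to strain differences, so I would present it compactly, pointing to Lemma~\ref{lem:identificationofstrainlimit} for the rotation expansions and to hypothesis (iv) for the passage to the limit of the symmetric part.
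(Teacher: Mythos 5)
Your proposal correctly identifies the overall strategy (find a rotation for $w^h_s$, compare the strains, use hypothesis (iv) for the limits), and parts (a) and (c) are roughly in line with the paper's Step~2. But the rotation-comparison step that you yourself flag as ``the main obstacle'' in part (b) is where a genuine gap appears, and it is not fixable by a stability argument for the mollified projection.

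You claim $R^h_w = R^h_y$ up to an error $O(s\eps_h)$ in $L^2$, and then drop the resulting $(R^h_y - R^h_w)^\top\nabla_h y^h$ term to get $G^h(y^h)-G^h(w^h_s)\approx -(R^h_y)^\top\nabla_h z^h_s/\eps_h$. This cannot be correct at the claimed scale. By hypothesis (iv), $\operatorname{skew}(\nabla_h z^h_s)\sim (\eps_h/\delta_h)F^s$ in $L^2$, whereas only $\operatorname{sym}(\nabla_h z^h_s)\sim \eps_h E^s$. So $\nabla_h z^h_s$ is genuinely of size $s\eps_h/\delta_h$ in $L^2$, not $s\eps_h$. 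In particular, keeping $R^h_w$ equal to $R^h_y$ (or $O(s\eps_h)$-close) would give $\|(R^h_y)^\top\nabla_h w^h_s - \Id\|_{L^2}\sim s\eps_h/\delta_h\not\leq C\eps_h$, so $R^h_y$ simply does not satisfy the rigidity estimate for $w^h_s$; and your approximate formula has a skew part of order $s/\delta_h\to\infty$, contradicting (b)(ii), which bounds the \emph{full} tensor $G^h(y^h)-G^h(w^h_s)$ by $Cs$. The rotations must differ at the larger scale $O(s\eps_h/\delta_h)$, and the point is precisely that this $O(s\eps_h/\delta_h)$ rotation change cancels the $O(s\eps_h/\delta_h)$ skew part of $\nabla_h z^h_s$ when forming $R^h_w{}^\top\nabla_h w^h_s$.

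This cancellation is exactly what the paper's Step~1 engineers by defining $R(w^h_s)$ as the nearest-point projection of $R(y^h)\bigl(\Id + \thickbar{F}(z^h_s) - \tfrac12\thickbar{F}(z^h_s)^\top\thickbar{F}(z^h_s)\bigr)$, where $\thickbar{F}(z^h_s)$ is the $x_3$-average of $\operatorname{skew}(\nabla_h z^h_s)$ (this is where hypothesis (iii) enters, ensuring the replacement of $F(z^h_s)$ by $\thickbar{F}(z^h_s)$ costs only $O(s\eps_h h^{1/2})$). Expanding $R(w^h_s)^\top\nabla_h w^h_s$ then produces the exact identity
$R(w^h_s)^\top\nabla_h w^h_s = R(y^h)^\top\nabla_h y^h + (R(y^h)-\Id)^\top F(z^h_s) + E(z^h_s) + \tfrac12 F(z^h_s)^\top F(z^h_s) + s\,o(\eps_h)$,
from which both (b) (after dividing by $\eps_h$ and using (iv)) and (c) (via $A_{\hat u_s,\hat\theta_s}=A_{u,\theta}+F^s$) follow. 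Without this explicit construction, your argument does not close. A secondary, more minor issue: in (a) you invoke Lemma~\ref{lemma: metric space-properties}(iii)--(iv) for $w^h_s$, but those estimates are stated only for functions already in $\mathscr{S}^{3D}_{h,M}$, so you would need to replace that by a direct Taylor expansion of $W$ (which is possible), or else first establish the rigidity estimate for $w^h_s$ as the paper does.
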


\begin{proof}
	\BBB Let $R(y^h)$ be the $SO(3)$-valued mappings given by   Lemma \ref{lem:rigidity}. For brevity, we introduce notations for the symmetric and skew-symmetric part of $\nabla_h z^h_s$ by
	$${E(z^h_s) = {\rm sym}(\nabla_h z^h_s), \ \ \ \ \  F(z^h_s) =  {\rm skew}(\nabla_h z^h_s), \ \ \ \ \  {\thickbar F}(z^h_s) = \MMM \int_{-1/2}^{1/2} \EEE F(z^h_s) \, {\rm d}x_3.}$$
	The crucial point is to find a suitable $SO(3)$-valued mapping $R(w^h_s)$ associated to  $w^h_s = y^h + z^h_s$ depending only on $x_1$ and $x_2$ and satisfying the properties stated in Lemma \ref{lem:rigidity} (Step 1). Once $R(w^h_s)$ has been defined, we can prove properties (a)--(c) (Step 2).

	\emph{Step 1: Definition of $R(w^h_s)$.} We first define 
	$$\AAA \tilde{R}^h \EEE  =   R(y^h) \big(\Id + {\thickbar F}(z^h_s) - \tfrac{1}{2} {\thickbar F}(z^h_s)^\top {\thickbar F}(z^h_s)\big) $$ 
	on $S$. By Lemma \ref{lem:rigidity}(ii), \LLL $p>2$, and \EEE \eqref{eq: strong convergence assumptions}(i), we can check that $\AAA \tilde{R}^h \EEE$ is in a small tubular neighborhood of $SO(3)$  and satisfies \SSS $\Vert  \tilde{R}^h_{,1} \Vert_{L^2(S)} \le C\eps_h/\delta_h$ and $\Vert  \tilde{R}^h_{,2} \Vert_{L^2(S)} \le C\eps_hh/\delta_h$.    We let $R(w^h_s)$ be the map obtained from $\tilde{R}^h$ by  nearest-point projection \LLL onto \EEE $SO(3)$, see the proof of \cite[Theorem 3.2 and Lemma 3.3]{Freddi2012} \LLL and \eqref{repofapproxrot} \EEE for a similar argument.  We now check that $R(w^h_s)$ satisfies the properties stated in Lemma  \ref{lem:rigidity}. \LLL By the regularity of the projection, \EEE it holds that $R(w^h_s) \in W^{1,2}(S;SO(3))$. By \eqref{eq: strong convergence assumptions}(i) and  ${\thickbar F}(z^h_s)(x_1,x_2) \in \R^{3\times 3}_{\rm skew}$ for all $(x_1,x_2) \in S$, it follows \MMM that \EEE  
	$$
	\big\| R(w^h_s) -  R(y^h) \big(\Id + {\thickbar F}(z^h_s) - \tfrac{1}{2} {\thickbar F}(z^h_s)^\top {\thickbar F}(z^h_s)\big)   \big\|_{L^\infty(S)}  \le C\Vert {\thickbar F}(z^h_s)  \Vert^3_{L^\infty(S)} \le  Cs^3(\eps_h /\delta_h)^3.
	$$
	Indeed, this follows from  the fact that $|((\Id + A -\frac{1}{2}A^\top A)^\top (\Id + A -\frac{1}{2}A^\top A))^{\PPP 1/2 \EEE } - \Id| \le C|A|^3$ for all $A \in \R^{3 \times 3}_{\rm skew}$ \MMM and a Polar decomposition. Along with \eqref{eq: strong convergence assumptions}(iii) we thus get
	\begin{align}\label{eq: w-rotat}
	\big\| R(w^h_s) -   R(y^h) \big(\Id + F(z^h_s) - \tfrac{1}{2} F(z^h_s)^\top F(z^h_s)\big)   \big\|_{L^\infty(\Omega)}  \le Cs\eps_h \gamma_h,
	\end{align}
	for $\gamma_h:= \max\{ h^{1/2} , \MMM \eps_h^2/\delta_h^3 \EEE  \} $, which is a null sequence  due to \eqref{assumption:energyscalingthickness1}.
	First, $\Vert  {R(w_s^h)}_{,1} \Vert^2_{L^2(S)} \le C\eps_h/\delta_h$ and $\Vert  {R(w_s^h)}_{,2} \Vert^2_{L^2(S)} \le C\eps_hh/\delta_h$ by the chain rule as \LLL the gradient of the projection is uniformly bounded on a compact neighbourhood of $SO(3)$. \EEE Moreover, \eqref{eq: strong convergence assumptions}(i), \eqref{eq: w-rotat},    and the fact that $R(y^h)$ satisfies \eqref{eq:rigidity}\ref{lem:rigidity:dev:RotID} shows that $\Vert R(w^h_s) -\Id  \Vert_{L^2(S)}\le C \eps_h/\delta_h$. \BBB In a similar fashion, \eqref{eq:rigidity}\ref{lem:rigidity:dev:RotIDInfty} \PPP and $\alpha <1$ yield \BBB  $\Vert R(w^h_s) -\Id  \Vert_{L^\infty(S)}\le C(\eps_h /\delta_h)^{\alpha}$. It thus remains to check \PPP \eqref{eq:rigidity}(i), i.e., that \BBB  
	\begin{align}\label{eq: last thing to check}
	\Vert R(w^h_s)^\top \nabla_h w^h_s - \Id \Vert^2_{L^2(\Omega)}\le C\eps_h^2
	\end{align}
	holds. For notational convenience, we denote by  $\omega^h_i \in L^2(\Omega;\R^{3 \times 3})$, $i \in \N$,  (generic) matrix valued functions whose $L^2$-norm is controlled in terms of a constant independent of $h$ and $s$.  By  \eqref{eq:rigidity}\ref{lem:rigidity:dev:GradIDInfty} \PPP (applied for $y^h$), \BBB \eqref{eq: strong convergence assumptions}(i), and \eqref{eq: w-rotat} we find 
	\begin{align}\label{eq: new-strain}
	R(w^h_s)^\top \nabla_h w^h_s  = \big(  \Id + F(z^h_s) - \tfrac{1}{2} F(z^h_s)^\top F(z^h_s) \big)^\top R(y^h)^\top (\nabla_h y^h + \nabla_h z^h_s) + s\eps_h \gamma_h\omega_1^h. 
	\end{align}
	We now consider the asymptotic expansion of $R(y^h)^\top (\nabla_h y^h + \nabla_h z^h_s)$  in terms of $h$: in view of \eqref{eq: strong convergence assumptions}(ii), $\nabla_h z^h_s  = E(z^h_s) +  F(z^h_s)$,    and $\Vert R(y^h) - \Id \Vert_{L^\infty(\Omega)} \le C(\eps_h/\delta_h)^\alpha$ (see \eqref{eq:rigidity}\ref{lem:rigidity:dev:RotIDInfty}) we find  
	$$R(y^h)^\top (\nabla_h y^h + \nabla_h z^h_s) = R(y^h)^\top (\nabla_h y^h +  F(z^h_s)) + E(z^h_s) + s \eps_h (\eps_h/\delta_h)^\alpha \omega^h_2.$$	
	In a similar fashion, by  \PPP using \eqref{eq:rigidity}(i),\ref{lem:rigidity:dev:RotID} (for $y^h$)  and \eqref{eq: strong convergence assumptions}(i),(ii), \BBB  we compute 
	$$R(y^h)^\top (\nabla_h y^h + \nabla_h z^h_s) = \Id + R(y^h)^\top  F(z^h_s) + \eps_h \omega^h_3 =  \Id +  F(z^h_s) + (\eps_h/\delta_h)^2 \omega^h_4\QQQ,\EEE$$
	as well as 
	$$ R(y^h)^\top (\nabla_h y^h + \nabla_h z^h_s)   =  \Id  +  ( \eps_h/\delta_h) \omega^h_5.$$ 
	By inserting these three estimates into \eqref{eq: new-strain} and using \eqref{eq: strong convergence assumptions}(i) we find
	\begin{align}\label{eq: new-strain2}
	R(w^h_s)^\top \nabla_h w^h_s &  = R(y^h)^\top (\nabla_h y^h + F(z^h_s)) + E(z^h_s) +  F(z^h_s)^\top     + F(z^h_s)^\top F(z^h_s)\\
	&   \ \ \ - \tfrac{1}{2} F(z^h_s)^\top F(z^h_s) +  s(\eps_h \gamma_h \MMM +  \eps_h (\eps_h/\delta_h)^\alpha +  \eps_h^3/\delta_h^3 \EEE ) \,\omega^h_6\notag \\
	& = R(y^h)^\top \nabla_h y^h + (R(y^h) - \Id)^\top F(z^h_s)  + E(z^h_s) + \tfrac{1}{2} F(z^h_s)^\top F(z^h_s) + s\notag \omega^h_6 o(\eps_h), 
	\end{align}
	where in the last step we used $ F(z^h_s)^\top +  F(z^h_s) = 0$ and \eqref{assumption:energyscalingthickness1}. We now check that \eqref{eq: last thing to check} holds. Indeed, it suffices to use \MMM  \eqref{assumption:energyscalingthickness1}, \EEE \eqref{eq: new-strain2}, \eqref{eq: strong convergence assumptions}(i),(ii),  $\omega_6^h \in L^2(\Omega;\R^{3\times 3})$,    and the fact that \eqref{eq:rigidity}(i),\ref{lem:rigidity:dev:RotID}  \AAA hold \EEE for $y^h$. In conclusion, this implies that the mapping $R(w^h_s)$ satisfies the properties stated in Lemma \ref{lem:rigidity}.

	\emph{Step 2: Proof of the statement.} We are now in a position to prove the statement. \EEE

	(a) \PPP By   \eqref{eq:rigidity}\ref{lem:rigidity:dev:GradIDInfty}  and \eqref{eq: strong convergence assumptions}(i), $\nabla_h w^h_s$ is in a neighborhood of $\Id$. 	Thus, by \EEE \eqref{assumptions-W}(iii) it holds that  $\int_\Omega W(\nabla_h w^h_s) \LLL \, {\rm d}x \EEE \le C\int_\Omega \dist^2(\nabla_h w^h_s,SO(3)) \LLL \, {\rm d}x \EEE $, 	and then by \eqref{eq: last thing to check} we get    $\int_\Omega W(\nabla_h w^h_s)\LLL \, {\rm d}x \EEE \le  M' \eps_h^2$ for some $M'$ sufficiently large   (depending on $M$). Moreover, from \eqref{assumptions-P}(iii) and the triangle inequality we get $P(\nabla^2_h w^h_s) \le CP(\nabla^2_h y^h) + CP(\nabla^2_h z^h_s)$. Therefore, by possibly passing to a larger $M'$, we obtain  $\LLL \zeta_h/\eps_h^{-2} \EEE \int_\Omega P(\nabla^2_h w^h_s) \LLL \, {\rm d}x \EEE \le M'$ by \LLL \eqref{assumption:penalizationscale}, \EEE \eqref{assumptions-P}(iii), \eqref{eq: strong convergence assumptions}(i),   and the fact that  $y^h \in \mathscr{S}_{h,M}^{3D}$. Summarizing, since \SSS $z^h_s  \in W^{2,p}(\Omega;\R^3)$ with $z^h_s = 0$ on $\Gamma$ \EEE and thus $w^h_s$ also satisfies the boundary conditions \eqref{assumption:clampedboundary}, we have shown that $w^h_s \in \MMM \mathscr{S}_{h,M'}^{3D}\EEE$  for some $M'=M'(M)>0$    independent of $h$. In particular, this implies that the statement of Lemma \ref{lem:rigidity} holds for $w^h_s$ with $R(w^h_s)$ as defined in Step~1.

	(b) \BBB   By  $(u,\theta)$ we denote the limit corresponding to $y^h$ as given in \MMM Proposition~\ref{lemma:compactness}. \EEE  Recalling the definition $G^h(y^h) = \eps_h^{-1}(R(y^h)^\top \nabla_h y^h -\Id)$ we find  by Lemma \ref{lem:identificationofstrainlimit}(ii), \MMM \eqref{eq: strong convergence assumptions}(i),(iv), \EEE \eqref{assumption:energyscalingthickness1}, \EEE and \eqref{eq: new-strain2} that 
	\begin{align}\label{eq: c2}
	\big\|\big(G^h(w^h_s) - G^h(y^h)\big) - \big( r A_{u,\theta}^\top F^s + E^s +\tfrac{r}{2} (F^s)^\top F^s \big)\big\|_{L^2(\Omega)} \le   s{\rho}(h),
	\end{align}
	where ${\rho}(h) \to 0$ as $\rho \to 0$. 
	By $G^s_w$ and $G_y$ we denote the weak $L^2$-limits of $G^h(w^h_s)$ and $G^h(y^h)$, respectively, which exist by  Lemma \ref{lem:identificationofstrainlimit}. Then  \eqref{eq: c2}  implies 
	\begin{align}\label{eq: c3}
	G^s_w - G_y = r A_{u,\theta}^\top F^s + E^s +\tfrac{r}{2} (F^s)^\top F^s,
	\end{align}
	and the first part of (b) holds. The second part of (b) is a consequence of \eqref{eq:rigidity}\FFF (iii) \EEE \PPP (for $y^h$), \EEE \eqref{eq: strong convergence assumptions}(i),(ii),  \eqref{eq: new-strain2}, \FFF \eqref{assumption:energyscalingthickness1},  \EEE and the fact that $\omega^h_6 \in L^2(\Omega;\R^{3 \times 3})$.

	(c)  By  $(\FFF \hat{u}_s,\hat{\theta}_s \EEE)$ we denote the limit corresponding to $w^h_s$, \LLL resulting from Proposition~\ref{lemma:compactness}. \EEE By Lemma~\ref{lem:identificationofstrainlimit}(ii) (for $w^h_s$ and $y^h$, respectively), \eqref{eq: strong convergence assumptions}(i),(iv), and  \eqref{eq: w-rotat}  we observe that pointwise a.e.\ in $\Omega$ \MMM it holds that \EEE 
	$$A_{\FFF \hat{u}_s,\hat{\theta}_s \EEE} = \lim_{h \to 0} \frac{1}{(\eps_h/\delta_h)}(R(w^h_s) - \Id) =  \lim_{h \to 0} \Big(  \frac{\delta_h}{\eps_h}(R(y^h) - \Id)(\Id + F(z^h_s)) + \frac{\delta_h}{\eps_h} F(z^h_s) \Big) = A_{u,\theta} + F^s.   $$
	Then by  \eqref{eq: c3} and an expansion we get 
	\begin{align*}
	{\rm sym}(G^s_w - G_y) & = E^s + r\,{\rm sym} (A_{u,\theta}^\top F^s)  +\tfrac{r}{2} (F^s)^\top F^s  \\
	&= E^s + \tfrac{r}{2} (A_{\FFF \hat{u}_s,\hat{\theta}_s \EEE})^\top A_{\FFF \hat{u}_s,\hat{\theta}_s \EEE} -  \tfrac{r}{2} (A_{u,\theta})^\top A_{u,\theta} = E^s - \tfrac{r}{2} (A_{\FFF \hat{u}_s,\hat{\theta}_s \EEE})^2 +  \tfrac{r}{2} (A_{u,\theta})^2,
	\end{align*}
	where in the last step we used that $A_{u,\theta} \in \R^{3 \times 3}_{\rm skew}$ pointwise a.e.\ in $\Omega$ and thus $A_{u,\theta}^\top A_{u,\theta} = -(A_{u,\theta})^2$. \EEE
%
	%
	%
\end{proof}

Finally, we are ready to prove \QQQ Theorem~\ref{theorem: lsc-slope}. \EEE

\begin{proof}[Proof of Theorem~\ref{theorem: lsc-slope}]  The proof is divided into several steps. We first define approximations of $(\xi_1,\xi_2,\xi_3,\theta)$ which allow us to work with more regular functions (Step 1). We then construct  \emph{competitor sequences} $(w^h_s)_{h,s}$ \EEE for the local slope in the 3D setting  satisfying $w^h_s \to y^h$ as $s \to 0$    (Step 2). Afterwards, \PPP we identify the limiting strain of the sequences $(w^h_s)_h$ (Step 3), \EEE and  we \EEE prove the lower semicontinuity (Step 4). Some technical estimates are contained in Steps 5--7.
	
	\emph{Step 1: Approximation.} By Lemma \ref{lemma: density}, for $\Lambda>0$ we can fix $\xi_{1,\Lambda} \in  W_{\hat \xi_1}^{1,2}(I) \cap W^{2,p}(I)$, $\xi_{2,\Lambda} \in  W_{\hat \xi_2}^{2,2}(I)\cap W^{3,p}(I)$, $\xi_{3,\Lambda} \in  W_{\hat \xi_3}^{2,2}(I) \cap W^{3,p}(I)$ and $\theta_\Lambda \in  C_c^\infty(I)$  with 
	\begin{align}\label{eq: eps approx}
	\Vert \xi_{1,\Lambda}  - \xi_1 \Vert_{W^{1,2}(I)} + \Vert\xi_{2,\Lambda}  - \xi_2 \Vert_{W^{2,2}(I)}+ \Vert\xi_{3,\Lambda}  - \xi_3 \Vert_{W^{2,2}(I)} + \Vert \theta_\Lambda  - \theta \Vert_{W^{1,2}(I)} \le \Lambda .
	\end{align}
	This approximation will be necessary to construct sufficiently regular  competitor sequences \EEE for the local slope of the 3D setting. We denote the approximation by $(u_\Lambda,\theta_\Lambda)\in \mathscr{S}^{1D}$. We further fix $ (\tilde u, \tilde \theta) \in \mathscr{S}^{1D}$, $(\tilde{u},\tilde \theta )\neq (u,\theta),(u_\Lambda,\theta_\Lambda)$, satisfying the same regularity as $(u_\Lambda,\theta_\Lambda)$.  The tuple $(\tilde{u},\tilde \theta )$ will represent the  competitor in the local slope of the 1D setting, see Lemma \ref{Thm:representationlocalslope1}(iii). Below in \eqref{eq: reg is enough}, we will see that by approximation  it is enough to work with functions of this regularity.   The convex combinations 
	\begin{align}\label{eq: convexi}
	(\tilde u_s,\tilde \theta_s) := (1-s)(u_\Lambda,\theta_\Lambda) + s  (\tilde u, \tilde \theta), \ \  s \in [0,1],
	\end{align}
	will be the starting point for the construction of competitor sequences \PPP $(w^h_s)_{h,s}$ \EEE for the 3D setting.  In the following, $\tilde{C}, C_\Lambda$ denote generic constants which may vary from line to line, where $\tilde{C}$ may depend on $\tilde{u},u,\MMM\tilde{\theta},\theta\EEE$, and $C_\Lambda$ additionally on $\Lambda$. \EEE

	\emph{Step 2: Construction of competitor  sequences $(w^h_s)_{h,s}$.\EEE} Define \MMM $y^h_\Lambda$ and  $\tilde{y}^h_s$   as in \eqref{Ansatz} by using the functions $(u_\Lambda,\theta_\Lambda)$ and $(\tilde u_s,\tilde \theta_s)$, respectively. \PPP In view of \eqref{def:approxrecovery} and \eqref{eq: convexi}, \EEE this can be done in such a way that it holds
	\begin{align}\label{eq: uniform}
\MMM  \Vert \tilde{\xi}'_{3,s,h} -  {\xi}'_{3,\Lambda,h} \Vert_{L^{\infty}(I)} +   h^{1/2} \Vert \tilde{\xi}'_{3,s,h} -  {\xi}'_{3,\Lambda,h} \Vert_{W^{2,\infty}(I)} \le C_\Lambda s, \EEE
	\end{align}
  where \LLL  $\tilde{\xi}'_{3,s,h}$ and ${\xi}'_{3,\Lambda,h}$ \EEE denote the approximations in \eqref{def:approxrecovery}. \EEE   	We observe that   $y^h_\Lambda, \tilde{y}^h_s$ satisfy the boundary conditions, i.e., ${z}^h_s := \tilde{y}_s^h -y_\Lambda^h \in W^{2,p}(\Omega;\R^3)$ with ${z}^h_s  = 0$ on $\Gamma$. \PPP For $h>0$ small and $s \in [0,1]$, \EEE we define  
	\begin{align}\label{eq: ulitmate w-def}
	w^h_s := y^h + z^h_s= y^h - y_\Lambda^h + \tilde{y}_s^h.
	\end{align}
 By the fact that $(\tilde{u}_s-u_\Lambda, \tilde{\theta}_s-\theta_\Lambda )=  s(\tilde{u}-u_\Lambda, \tilde{\theta}-\theta_\Lambda )$, \MMM \eqref{eq:scaledgradientrecovery}--\eqref{eq:boundedsecondgradient}, \EEE and that $\delta_h/h \to 0$, we see
	\begin{align}\label{eq: w1}
	{\rm (i)} & \ \  \Vert \nabla_h z^h_s\Vert_{L^{\infty}(\Omega)}  +  \Vert \nabla^2_h z^h_s\Vert_{\LLL L^p(\Omega)}  \le C_\Lambda s \eps_h/\delta_h, \quad \quad \quad \Vert {\rm sym}(\nabla_hz^h_s) \Vert_{L^2(\Omega)} \le C_\Lambda s\eps_h ,\notag\\
	{\rm (ii)} & \ \ \big|  {\rm skew} (\nabla_h z^h_s)(x)  -  \int_{-1/2}^{1/2} {\rm skew}(\nabla_h z^h_s)(x_1,x_2,t) \, \MMM  {\rm d}t \EEE \big|   \le C_\Lambda s\eps_h \MMM \sqrt{h} \EEE \ \ \ \text{for a.e.\ $x \in \Omega$}.  
	\end{align}
 This shows that the assumptions   \eqref{eq: strong convergence assumptions}(i)-(iii) are satisfied for $(z^h_s)_{s,h}$ (for a constant $M=M(C_\Lambda)$). From \eqref{eq:scaledgradientrecovery} and \eqref{eq: uniform} we also get that \eqref{eq: strong convergence assumptions}(iv) holds for suitable $E^s$ and $F^s$. 
	In particular, by definition we observe $	 E^s \,  = \EEE \lim_{h\to 0} \frac{1}{\eps_h}{\rm sym}(\nabla_hz^h_s) \,$

	\begin{align}\label{eq:representationEs}
\small=  \begin{pmatrix}
	 \partial_1 (  {\tilde u_{s,1}} - {u_{\Lambda,1}})& \tfrac{r}{2} ( {\xi_{3,\Lambda}'}\theta_\Lambda - {\tilde \xi_{3,s}'}\tilde\theta_s   ) - x_3( \tilde \theta_s' - \theta_\Lambda') & 0 \\
\tfrac{r}{2} ( {\xi_{3,\Lambda}'}\theta_\Lambda - {\tilde \xi_{3,s}'}\tilde\theta_s   ) - x_3( \tilde \theta_s' - \theta_\Lambda')& \tfrac{r}{2} (   \theta_\Lambda^2 -  \tilde \theta_s^2 ) &0\\
	 0&0& \tfrac{r}{2} \big(  ({\xi_{3,\Lambda}'})^2 + \theta_\Lambda^2    -    ({\tilde \xi_{3,s}'})^2 -  \tilde \theta_s^2\big)
	 \end{pmatrix}
	\end{align}
	a.e.\ in $\Omega$.
Then  Lemma \ref{lemma: strong convergence}(a) implies $w^h_s \in \mathscr{S}^{3D}_{h,M'}$ for a constant $M'>0$  sufficiently large depending on $\Lambda$,  but independent of $s,h$. By Lemma \ref{th: metric space}(ii), there exists a subsequence of $(w^h_s)_s$ converging weakly in $W^{2,p}(\Omega)$. But by \eqref{eq: convexi}    and \eqref{Ansatz} \EEE   we see that $z_s^h \to 0$ strongly in $L^2(\Omega)$ as $s \to 0$ and thus by \eqref{eq: ulitmate w-def} we get that $w_s^h\to y^h$ strongly in $L^2(\Omega)$ as $s \to 0$. As the limits are unique, we obtain

	\begin{align}\label{eq: sconv}
	w_s^h \rightharpoonup y^h \ \  \ \text{in} \ \ \ W^{2,p}(\Omega;\R^3) \  \text{ as $s \to 0$}.
	\end{align}

	\PPP \emph{Step 3: Identification of limiting strains.} \EEE 
	Since the ansatz for $(y_\Lambda^h)_h$ and $(\tilde{y}_s^h)_h$  is compatible with  the convergence results in Proposition~\ref{lemma:compactness} and Lemma \ref{lem:identificationofstrainlimit}, the scaled  displacement fields corresponding to $(y_\Lambda^h)_h$ and $(\tilde{y}_s^h)_h$ converge to $(u_\Lambda,\theta_\Lambda)$ and $(\tilde{u}_{s}, \tilde{\theta}_{s})$, respectively. Thus, \PPP in view of \eqref{eq: ulitmate w-def}, \EEE the scaled  displacement fields corresponding to $w^h_s$ converge to $ (\hat{u}_{s}^\Lambda, \hat{\theta}_{s}^\Lambda):= (u - u_\Lambda + \tilde{u}_{s}, \theta- \theta_\Lambda+ \tilde{\theta}_{s})$. By \eqref{eq: convexi}, this can be rewritten as
	\begin{align}\label{eq: convexi2}
	(\hat{u}_{s}^\Lambda, \hat{\theta}_{s}^\Lambda) \MMM = \EEE  (u,\theta) + s(\tilde{u}-u_\Lambda,\tilde{\theta}-\theta_\Lambda),  \ \ \ s \in [0,1].
	\end{align} 
The limits of the mappings $G^h(y^h)$ and $G^h(w^h_s)$ given by Lemma \ref{lem:identificationofstrainlimit} are denoted by $G_y$ and  $G^s_w$,   i.e, we have (up to a subsequence)
	\begin{align}\label{eq: weakcovi}
	G^h(y^h)  \rightharpoonup G_y, \ \ \ \ \ \ \ G^h(w^h_s) \rightharpoonup G^s_w  \ \ \ \ \ \text{weakly in $L^2(\Omega;\R^{3 \times 3})$}.
	\end{align}
%
%
	Above we have checked that the assumptions  \eqref{eq: strong convergence assumptions} hold for $(z^h_s)_{s,h}$. We can therefore \QQQ use \EEE Lemma~\ref{lemma: strong convergence}(b) and obtain 
	\begin{align}\label{eq: slope-lsc-1}
	{\rm (i)}& \ \ \Vert  (G^h(y^h) - G^h(w^h_s)) -   \big(G_y - G^s_w  \big)\Vert_{L^2(\Omega)} \le s\rho_\Lambda(h), \notag\\
	{\rm (ii)} & \  \  \Vert  G^h(y^h) - G^h(w^h_s) \Vert_{L^2(\Omega)} \le C_\Lambda s,
	\end{align}
	where $\rho_\Lambda(h)$ depends on $\Lambda$ and satisfies $\rho_\Lambda(h) \to 0$ as $h \to 0$. 
	The \QQQ representations in \EEE \eqref{eq:representationEs} and \eqref{eq:one-dimensionalstraindifference} \EEE fully characterize $\sym(G_w^s- G_y)$. Especially, in view of \FFF \eqref{rep:Asquared}\EEE,  \eqref{eq: convexi}, \eqref{eq:representationEs}, and \eqref{eq: convexi2},  elementary computations lead to
	\begin{align}\label{eq:symdifference11}
	\sym(G_y - G_w^s)_{11} 
	= \tfrac{r}{2} \big( (\xi_3')^2 - {(({\hat\xi^\Lambda_{3,s}})')}^2 \big) + \partial_1 (u_1 - \hat u^\Lambda_{1,s}  )
	\end{align}
	and, \SSS after a long and tedious, but elementary computation, \EEE
	\begin{align}\label{eq:symdifference12}
	\sym(G_y - G_w^s)_{12} =& - s \SSS \tfrac{r}{2} \EEE \left( (\tilde \xi_3' - \MMM \xi_{3,\Lambda}' \EEE ) (\theta-\theta_\Lambda) + (\tilde \theta - \theta_\Lambda) (\xi_3'-\MMM \xi_{3,\Lambda}' \EEE )\right) - x_3( \theta' - (\hat\theta^\Lambda_s)'),
	\end{align}
where the variables were identified via \eqref{def:Bernoulli-Navier}. \FFF In similar fashion, we find by \eqref{rep:Asquared}, \eqref{eq:one-dimensionalstraindifference}, and \eqref{eq:representationEs}
\begin{align}
	\sym(G_y - G_w^s)_{22} &= \tfrac{r}{2} \big(  \theta^2 -(\hat\theta^\Lambda_s)^2  -\theta_\Lambda^2 + \tilde \theta_s^2   \big),\qquad
	\sym(G_y - G_w^s)_{13} = 0,\qquad \sym(G_y - G_w^s)_{23} = 0, \notag \\
	\sym(G_y - G_w^s)_{33} &=  \tfrac{r}{2} \big( (\xi_3')^2 + \theta^2 - ((\hat\xi_{3,s}^\Lambda)' )^2- (\hat \theta_s^\Lambda)^2 -  ({\xi_{3,\Lambda}'})^2 - \theta_\Lambda^2    +    ({\tilde \xi_{3,s}'})^2 +  \tilde \theta_s^2\big) . \label{tediouscomputation2}
\end{align}
\EEE Moreover,  by recalling \eqref{eq: convexi2}  \EEE we compute
	\begin{align*}
	\vert(\xi_{3,\Lambda}',\theta_\Lambda)|^2 - |(\tilde \xi_{3,s}',\tilde \theta_s)|^2 -|(\xi_3',\theta)|^2 + |(({\hat \xi^\Lambda_{3,s}})',\hat \theta_s^\Lambda )|^2 = 2s\langle (\xi_{3,\Lambda}',\theta_\Lambda) -(\xi_3',\theta), (\xi_{3,\Lambda}',\theta_\Lambda) - (\tilde \xi_3',\tilde \theta)  \rangle,
	\end{align*}
	\LLL where the  brackets denote the Euclidean scalar product. \EEE
Thus, we find by Hölder's inequality, \SSS \eqref{eq: eps approx}, \EEE and \eqref{tediouscomputation2}  
	\begin{align}\label{estimaterelaxation}
	\Vert (\sym (G_y - G_w^s) )_{ij} \Vert_{L^2(\Omega)} \leq   \tilde C s \Lambda  \EEE
	\end{align}
	for	$i,j = 2$ and $i = 3$, \LLL $j=1,2,3$.\EEE    

%
%
	
	\emph{Step 4: Lower semicontinuity of slopes.}  We will show that there exist \PPP a \EEE continuous function $\eta_\Lambda\colon [0,\infty) \to  [0,\infty)$ with $\eta_\Lambda(0) = 0$ and a constant $\tilde{C}$ depending on $u,\theta,\tilde{u},\tilde{\theta}$ such that for all $s \in [0,1]$ it holds that
	\begin{align}\label{eq: three properties}
	{\rm (i)} & \ \  {\mathcal{D}}_{h}(y^h,w^h_s)  \le{\mathcal{D}}_0 \big((u,\theta),(\hat{u}_{s}^\Lambda,\hat{\theta}_{s}^\Lambda) \big) + s \eta_\Lambda(h) + \tilde{C}s\Lambda, \notag \\
	{\rm (ii)} & \ \  \eps_h^{-2}\int_\Omega  \big( W(\nabla_h y^h) - W(\nabla_h w^h_s) \big) \LLL \, {\rm d}x \EEE \ge {\phi}_0(u, \theta) - {\phi}_0(\hat{u}_{s}^\Lambda, \hat{\theta}_{s}^\Lambda)   - s \eta_\Lambda(h) - \tilde{C}s\Lambda, \notag\\
	{\rm  (iii)} & \ \  \LLL \zeta_h\eps_h^{-2} \EEE \int_\Omega \big( P(\nabla^2_h y^h) - P(\nabla^2_h w^h_s) \big)  \LLL \, {\rm d}x \EEE\ge - s \eta_\Lambda(h).  
	\end{align}
We defer the proof of \eqref{eq: three properties} to Steps 5--7 below and now prove the lower semicontinuity. \BBB Recall the definition of $\phi_h$ in \eqref{assumption:energyscaling}. \EEE By combining the three estimates in \eqref{eq: three properties}  we obtain for all $s \in [0,1]$ 
	\begin{align*}
	\frac{({\phi}_{h}(y^h) - {\phi}_{h}(w^h_s))^+}{{\mathcal{D}}_{h}(y^h,w^h_s)}&  \ge  \frac{\big( {\phi}_0(u,\theta) - {\phi}_0(\hat{u}_{s}^\Lambda,\hat{\theta}_{s}^\Lambda) - 2s \eta_\Lambda(h) - s\tilde{C}\Lambda\big)^+ }{{\mathcal{D}}_{0}((u,\theta),(\hat{u}_{s}^\Lambda,\hat{\theta}_{s}^\Lambda)) + s\eta_\Lambda(h) + s\tilde{C}\Lambda}.
	\end{align*}
	\BBB Recall that $y^h \in \mathscr{S}^{3D}_{h,M}$ and Theorem \ref{th: Gamma}(i) imply \QQQ that \EEE $\phi_0(u,\theta) \le M$. \EEE By applying Lemma~\ref{lem:complete1d}(ii) with $(u_0,\theta_0) = (u,\theta)$ and $(u_1,\theta_1) = (\hat{u}_{1}^\Lambda,\hat{\theta}_{1}^\Lambda)$  we get
	\begin{align*}
	\frac{({\phi}_{h}(y^h) - {\phi}_{h}(w^h_s))^+}{{\mathcal{D}}_{h}(y^h,w^h_s)}  &\ge \frac{s\big({\phi}_0(u,\theta) - {\phi}_0(\hat{u}_{1}^\Lambda,\hat{\theta}_{1}^\Lambda) - \Phi^2_M\big({\mathcal{D}}_0((u,\theta),(\hat{u}_{1}^\Lambda,\hat{\theta}_{1}^\Lambda))\big)    - 2 \eta_\Lambda(h) - \tilde{C}\Lambda \big)^+ }{s\Phi^1\big({\mathcal{D}}_0((u,\theta),(\hat{u}_{1}^\Lambda,\hat{\theta}_{1}^\Lambda))\big) +  \PPP s \EEE \eta_\Lambda(h) + \PPP s \EEE \tilde{C}\Lambda}.
	\end{align*}
    Thus, in view of \eqref{eq: sconv}, Lemma \ref{th: metric space}(iii), and  Definition \ref{main def2}, we find by letting $s \to 0$  
	$$|\partial {\phi}_{h}|_{{\mathcal{D}}_{h}}(y^h)  \ge \frac{\big( {\phi}_0(u,\theta) - {\phi}_0(\hat{u}_{1}^\Lambda,\hat{\theta}_{1}^\Lambda) - \Phi^2_M\big({\mathcal{D}}_0((u,\theta),(\hat{u}_{1}^\Lambda,\hat{\theta}_{1}^\Lambda))\big)  - 2 \eta_\Lambda(h) - \tilde{C}\Lambda \big)^+ }{\Phi^1\big({\mathcal{D}}_0((u,\theta),(\hat{u}_{1}^\Lambda,\hat{\theta}_{1}^\Lambda))\big) +  \eta_\Lambda(h) + \tilde{C}\Lambda}.$$
	\PPP Letting \EEE $h \to 0$ we then derive
	$$ \liminf_{h \to 0} |\partial {\phi}_{h}|_{{\mathcal{D}}_{h}}(y^h) \ge \frac{\big( {\phi}_0(u,\theta) - {\phi}_0(\hat{u}_{1}^\Lambda, \hat{\theta}_{1}^\Lambda) - \Phi^2_M\big({\mathcal{D}}_0((u,\theta),(\hat{u}_{1}^\Lambda, \hat{\theta}_{1}^\Lambda))\big) - \tilde{C}\Lambda \big)^+ }{\Phi^1\big({\mathcal{D}}_0((u,\theta),(\hat{u}_{1}^\Lambda, \hat{\theta}_{1}^\Lambda))\big) + \tilde{C}\Lambda }. 
	$$
	\QQQ We observe that the limits of $\hat{u}_{1}^\Lambda$ and  $\hat{\theta}_{1}^\Lambda $ are given by $\tilde{u}\EEE$ and $\MMM \tilde{\theta}\EEE$, respectively, as $\Lambda \to 0$, see \eqref{eq: eps approx} and \eqref{eq: convexi2}. Thus, letting $\Lambda \to 0$ and using the \MMM  continuity of  $\phi_0$ and $\mathcal{D}_0$ with respect to the convergence in \eqref{eq: eps approx}, \EEE and then taking   the supremum with respect to $(\tilde{u},\tilde{\theta})$ we get
	\begin{align}\label{eq: reg is enough}
	\liminf_{h \to 0} |\partial {\phi}_{h}|_{{\mathcal{D}}_{h}}(y^h)   &\ge \sup\limits_{\substack{(\tilde{u},\tilde{\theta})\neq (u,\theta)\\ (\tilde{u},\tilde{\theta}) \in \QQQ{\mathscr{S}^{1D}_{\rm reg}} } }   \frac{\Big({\phi}_0(u,\theta) - {\phi}_0(\tilde{u},\tilde{\theta}) - \Phi^2_M\big({\mathcal{D}}_0((u,\theta),(\tilde{u},\tilde{\theta})) \big) \Big)^+}{\Phi^1\big({\mathcal{D}}_0((u,\theta),(\tilde{u},\tilde{\theta}))\big)},
	\end{align}
	where $ \QQQ{\mathscr{S}^{1D}_{\rm reg}}\EEE \subset  \mathscr{S}^{1D}$ denotes the subset consisting of functions $(u,\theta)$ identified \QQQ with $(\xi_1,\xi_2,\xi_3)$ via \eqref{def:Bernoulli-Navier} \EEE with regularity \QQQ $W^{2,p}(I) \times W^{3,p}(I;\R^2) \times C_c^\infty(I)$.  \EEE Since each  $(\tilde{u},\tilde{\theta}) \in \mathscr{S}^{1D}$ can be approximated  \FFF in $W^{1,2}(\Omega) \times W^{2,2}(I;\R^2) \times W^{1,2}(I)$ \EEE  by a sequence of functions in \QQQ ${\mathscr{S}^{1D}_{\rm reg}}$ \EEE (see Lemma \ref{lemma: density} \FFF and \eqref{def:Bernoulli-Navier}\EEE) and   the right-hand side is continuous with respect to that convergence, the previous inequality also holds for  $\mathscr{S}^{1D}$ instead of ${\mathscr{S}^{1D}_{\rm reg}}$.   The representation given in  Lemma~\ref{lem:complete1d}(iii) then implies
	$$\liminf_{h \to 0} |\partial {\phi}_{h}|_{{\mathcal{D}}_{h}}(y^h)  \ge |\partial  {\phi}_0|_{ {\mathcal{D}}_0}(u,\theta).$$
	To conclude the proof, it therefore remains to show  \eqref{eq: three properties}.  
	
\emph{Step 5: Proof of \eqref{eq: three properties}{\rm (i)}.} By using Lemma \ref{lemma: metric space-properties}(ii)   and \eqref{eq: slope-lsc-1} we get 
	\begin{align*} 
	{\mathcal{D}}_{h}(y^h,w^h_s)^2 &\le \int_\Omega  Q_D^3\big(G^h(y^h) - G^h(w^h_s)\big)  \LLL \, {\rm d}x \EEE + \SSS C_\Lambda \EEE (\eps_h/\delta_h)^{\alpha}\Vert G^h(y^h) - G^h(w^h_s) \Vert^2_{L^2(\Omega)} \notag\\
	& \le \int_\Omega  Q_D^3(G_y - G^s_w)  \LLL \, {\rm d}x \EEE + s^2\big(C_\Lambda (\eps_h/\delta_h)^{\alpha} + C(\rho_\Lambda(h))^2\big)\notag \\
	& = \int_\Omega  Q_D^3({\rm sym}(G_y - G^s_w))  \LLL \, {\rm d}x \EEE+ s^2\big(C_\Lambda (\eps_h/\delta_h)^{\alpha} + C(\rho_\Lambda(h))^2\big).  
	\end{align*}
	Here, the last step follows from the fact that $Q_D^3(F) = Q_D^3({\rm sym}(F))$ for $F \in \R^{3 \times 3}$, see \SSS Lemma \ref{lemma: ele}. \FFF Now, the compatibility condition \eqref{quadraticforms} comes into play. It implies together with \eqref{estimaterelaxation} that \EEE
	\begin{align} \label{eq: firstD}
	{\mathcal{D}}_{h}(y^h,w^h_s)^2  \le& \hspace{-0.1cm}  \int_\Omega  Q_D^1\big(\sym(G_y-G_w^s)_{11}, \sym(G_y-G_w^s)_{12}\big)  \LLL \, {\rm d}x     +s^2\big(\tilde C^2 \Lambda^2 \EEE + C_\Lambda (\eps_h/\delta_h)^{\alpha} + C(\rho_\Lambda(h))^2\big).
	\end{align}
	\SSS In view of \eqref{eq:symdifference11},  the representation in \eqref{def:Bernoulli-Navier}, and  quadratic expansions we get 
(omitting ${\rm d}x $ for convenience)
\begin{align*}
\int_\Omega |\sym(G_y-G_w^s)_{11}|^2   & =  \int_\Omega   \Big| \tfrac{r}{2} \big( (\xi_3')^2 - {((\hat\xi^\Lambda_{3,s})')}^2 \big) + \xi_1' - ({\hat \xi_{1,s}^\Lambda})'-x_2(\xi_2''-({\hat\xi_{2,s}^\Lambda})'')-x_3(\xi_3''-({\hat\xi_{3,s}^\Lambda})'') \Big|^2  \\
& = \int_{\QQQ I \EEE}  \Big| \tfrac{r}{2} \big( (\xi_3')^2 - {((\hat\xi^\Lambda_{3,s})')}^2 \big) + \xi_1' - ({\hat \xi_{1,s}^\Lambda})' \Big|^2 +   \tfrac{1}{12}  \QQQ \big( \EEE  | \xi_2''-({\hat\xi_{2,s}^\Lambda})'' |^2 + |\xi_3''-({\hat\xi_{3,s}^\Lambda})''|^2     \QQQ \big) \EEE
\end{align*}
In a similar fashion,  \eqref{eq:symdifference12},  quadratic expansions, Hölder's inequality, and \eqref{eq: eps approx} yield
\begin{align*}
\int_\Omega |\sym(G_y-G_w^s)_{12}|^2   & =  \int_\Omega   \Big|    s \SSS \tfrac{r}{2} \EEE \left( (\LLL \tilde \xi_3' - \MMM \xi_{3,\Lambda}' \EEE ) (\theta-\theta_\Lambda) + (\tilde \theta - \theta_\Lambda) (\LLL \xi_3'-\MMM \xi_{3,\Lambda}' \EEE )\right) + x_3( \theta' - (\hat\theta^\Lambda_s)')  \Big|^2   \QQQ \, {\rm d}x \EEE \\
& \le  \int_{\QQQ I \EEE}  \tfrac{1}{12}  |  \theta' - (\hat\theta^\Lambda_s)'|^2  \QQQ \, {\rm d}x_1 \EEE    + (\tilde{C}s)^2 \AAA \Lambda^2. \EEE
\end{align*}
Then, by \FFF \eqref{quadraticforms}  \EEE and the definition of the metric in \eqref{def:metric}  we get 
\begin{align*}
\int_\Omega  Q_D^1\big(\sym(G_y-G_w^s)_{11}, &\sym(G_y-G_w^s)_{12}\big)   \, {\rm d}x =\int_I  Q_D^0\big(\tfrac{r}{2} \big( (\xi_3')^2 - ({(\hat\xi^\Lambda_{3,s})'})^2 \big) + \xi_1' - ({\hat \xi_{1,s}^\Lambda})' \big) \QQQ \, {\rm d}x_1 \EEE\notag\\
&\hspace{-1.3cm}+\tfrac{1}{12}\int_I  Q_D^1\big((\xi_3''-({\hat\xi_{3,s}^\Lambda})'') ), ( \theta' - (\hat\theta^\Lambda_s)')\big)  \LLL \, {\rm d}x_1 \EEE +\tfrac{1}{12}\int_I Q_D^0(\xi_2''-({\hat\xi_{2,s}^\Lambda})'')  \LLL \, {\rm d}x_1 \EEE  + (\tilde{C}s )^2 \AAA \Lambda^2 \EEE  \notag\\
 & \ \ \ \ \ \ \ \ \ \ \ \ \ \ \ = {\mathcal{D}}_0 \big((u,\theta), (\hat{u}^\Lambda_s, \hat{\theta}^\Lambda_s) \big)^2 + (\tilde{C}s )^2 \AAA \Lambda^2. \EEE
	\end{align*}
  This combined with \eqref{eq: firstD} concludes the proof of  \EEE \eqref{eq: three properties}(i).

	\emph{Step 6: Proof of \eqref{eq: three properties}{\rm (ii)}.}
\FFF	In this step, we will frequently use the elementary expansion
	\begin{align}
	Q(a)-Q(b) = Q(a-b) + 2 \C[a-b,b]\label{expansion}
	\end{align}
	for all $a,b \AAA \in \R^d$, where \AAA $Q\colon \R^d\to \R$ \EEE \EEE is \EEE a quadratic form with associated bilinear form $\C$.
	\EEE
 First, by \QQQ the first inequality in Lemma \ref{lemma: metric space-properties}(iv)  and \eqref{eq: slope-lsc-1}(ii) we get 
	\begin{align}\label{eq: seco1}
	\frac{2}{\eps_h^2}\int_\Omega  \Big( W(y^h) - W(w^h_s) \Big)  \LLL \, {\rm d}x \EEE  \ge  \int_\Omega  \Big(Q_W^3(G^h( y^h)) - Q_W^3(G^h(w^h_s))\Big) \LLL \, {\rm d}x \EEE - C_\Lambda (\eps_h/\delta_h)^{\alpha}s.
	\end{align} \EEE
	Recall the definition of $\C_W^3$ in  Subsection~\ref{sec:quadraticforms}.
\FFF	Then, the triangle inequality, the weak convergence $G^h(y^h) \rightharpoonup G_y$ in $L^2(\Omega;\R^{3 \times 3})$, see \eqref{eq: weakcovi}, with $s^{-1}(G_w^s-G_y)$ as a test function, and \eqref{eq: slope-lsc-1}(i) combined with Hölder's inequality yield
	\begin{align}\label{convergence1}
	\left\vert	\int_\Omega 2\C_W^3[G^h(y^h), G^h(w^h_s) - G^h(y^h)]\Big)  \LLL \, {\rm d}x - 	\int_\Omega 2\C_W^3[G_y, G_w^s - G_y]\Big)  \LLL \, {\rm d}x \right\vert \leq s \tilde{\rho}_\Lambda(h)
	\end{align}
	for some $\tilde{\rho}_\Lambda(h)$, satisfying $\tilde{\rho}_\Lambda(h) \to 0$ as $h \to 0$. 
	Similarly, \eqref{expansion} and \eqref{eq: slope-lsc-1}(i) provide the estimate
	\begin{align}\label{convergence2}
		\int_\Omega  Q_W^3\big(G^h(w^h_s)  -G^h( y^h)\big) - Q_W^3\big(G_w^s  -G_y\big) \,{\rm d}x 
	\leq s \tilde\rho_\Lambda(h).
	\end{align}
	Thus, in view of \eqref{expansion}, \eqref{convergence1}, and \eqref{convergence2} we get \EEE
	\begin{align}\label{eq: seco2}
&	\int_\Omega  \Big(Q_W^3(G^h( y^h)) - Q_W^3(G^h (w^h_s))\Big)  \LLL \, {\rm d}x \EEE\notag \\
	=&   \int_\Omega \QQQ - \EEE \Big(Q_W^3\big(G^h(w^h_s)  -G^h( y^h)\big) + 2\C_W^3[G^h(y^h), G^h(w^h_s) - G^h(y^h)]\Big)  \LLL \, {\rm d}x \EEE\notag   \\
	\ge&  \int_\Omega \QQQ - \EEE \Big(Q_W^3 (G_w^s  -G_y ) + 2\C_W^3[G_y,G_w^s  -G_y ]\Big)   \LLL \, {\rm d}x \EEE- \AAA 2 \EEE s \tilde\rho_\Lambda(h).
	\end{align}
%
	Using \eqref{estimaterelaxation}, \FFF \eqref{quadraticforms} \EEE and the fact that $Q_W^3(F) = Q_W^3({\rm sym}(F))$ yields
\begin{align}\label{eq: firstW}
	&  - \int_\Omega  \Big(Q_W^3 (G_w^s  -G_y ) + 2\C_W^3[G_y,G_w^s  -G_y ]\Big) \LLL \, {\rm d}x \EEE    \notag \\
	 \ge&   - \int_\Omega  \Big(Q_W^1 ( \sym(G_w^s  -G_y)_{11} ,  \sym(G_w^s  -G_y)_{12} ) \notag \\&\qquad+ 2\C_W^1[(\sym(G_y)_{11} ,  \sym(G_y)_{12}) , (\sym(G_w^s  -G_y)_{11} ,  \sym(G_w^s  -G_y)_{12} ) ]\Big) \LLL \, {\rm d}x \EEE    - \tilde C s\Lambda \notag \\
	 	 \FFF = \EEE &  \AAA \int_\Omega  \Big(  \EEE  Q_W^1\big((\sym G_y)_{11}, (\sym G_y)_{12}\big) - Q_W^1 \big((\sym G_w^s)_{11}, (\sym G_w^s)_{12} \big) \Big)  \LLL \, {\rm d}x \EEE    - \tilde{C}s\Lambda,
\end{align}		
\FFF where the last equality follows from \eqref{expansion}. \EEE
Moreover, by Lemma~\ref{lem:identificationofstrainlimit}(i), \eqref{eq:one-dimensionalstraindifference}, \eqref{eq:symdifference11}, and \eqref{eq:symdifference12} it holds that
	\begin{align*}
\SSS (\sym G_y)_{11} = \partial_1 u_1   + \tfrac{r}{2} (\xi_3'(x_1))^2, \quad \quad \EEE	(\sym G_w^s)_{11} =	\partial_1  \hat u^\Lambda_{1,s}   + \tfrac{r}{2}  {(\hat\xi^\Lambda_{3,s})'}^2 
	\end{align*}
	a.e.\ in $\Omega$
	and
	\begin{align*}
	(\sym G_w^s)_{12} =-x_3 (\hat\theta^\Lambda_s)' + \tilde g(x_1,x_2) +s   \AAA \tfrac{r}{2} \EEE\left( (\LLL \tilde \xi_3' - \MMM \xi_{3,\Lambda}' \EEE ) (\theta-\theta_\Lambda) + (\tilde \theta - \theta_\Lambda) (\LLL \xi_3'-\MMM \xi_{3,\Lambda}' \EEE )\right)  
	\end{align*}
	a.e.\ in $\Omega$, where we note that $\tilde g \in L^2(\MMM S \EEE )$ is the same function as in \AAA identity \EEE
	\begin{align*}
	(\sym G_y)_{12} = -x_3 \theta'(x_1) + \tilde g(x_1,x_2) \quad \quad {\rm a.e.\ in} \ \Omega.
	\end{align*}
\SSS By expansions similar to Step 5, in particular \FFF by \EEE exploiting the structure of the quadratic form in \FFF \eqref{quadraticforms}\QQQ, \EEE we \FFF derive the estimates \EEE
\begin{align}\label{someotherestimate}
\int_\Omega  \frac{1}{2}  Q_W^1\big((\sym G_y)_{11}, (\sym G_y)_{12}\big) \, {\rm d}x & = {\phi}_0(u, \theta) +  \frac{1}{2} C_W^* \int_\Omega |\tilde g|^2 \, {\rm d}x\qquad \FFF \text{and} \EEE \nonumber \\ 
\int_\Omega  \frac{1}{2}  Q_W^1 \big((\sym G_w^s)_{11}, (\sym G_w^s)_{12} \big)\, {\rm d}x  & \le  {\phi}_0(\hat{u}_{s}^\Lambda, \hat{\theta}_{s}^\Lambda) +  \frac{1}{2}  C_W^* \int_\Omega |\tilde g|^2 \, {\rm d}x + \tilde{C}s\Lambda,
\end{align}	
	where for the second estimate we again used \eqref{eq: eps approx}. \FFF Eventually, \eqref{eq: seco1}, \eqref{eq: seco2}, \eqref{eq: firstW}, and \eqref{someotherestimate} lead to  \eqref{eq: three properties}(ii).
	
	\EEE

	\emph{Step 7: Proof of \eqref{eq: three properties}{\rm (iii)}.} By convexity of $P$ and the definition  $w^h_s = y^h - y^h_\Lambda+ \tilde{y}^h_s$, see \eqref{eq: ulitmate w-def}, we find
	\begin{align}\label{eq: Pconvi}
\LLL \zeta_h /\eps_h^{-2} \EEE \int_\Omega \big( P(\nabla_h^2 y^h) - P(\nabla_h^2 w^h_s) \big)  \LLL \, {\rm d}x \EEE\ge \LLL \zeta_h /\eps_h^{-2} \EEE  \int_\Omega \partial_Z P(\nabla_h^2 w^h_s) : (\nabla_h^2 y^h_\Lambda- \nabla_h^2 \tilde{y}_s^h) \LLL \, {\rm d}x \EEE.
	\end{align}
By  H\"older's inequality and  \eqref{assumptions-P}(iii) we get 
	\begin{align*}
	\int_\Omega &|\partial_{Z} P(\nabla_h^2 w^h_s) : (\nabla_h^2  y^h_\Lambda- \nabla_h^2 \tilde{y}_s^h)| \LLL \, {\rm d}x \EEE \le \Vert  \partial_Z P(\nabla_h^2 w^h_s) \Vert_{L^{p/(p-1)}(\Omega)} \Vert  \nabla_h^2 \tilde{y}^h_s- \nabla_h^2 y_\Lambda^h \Vert_{L^{p}(\Omega)}\\
	& \le C\Big( \int_\Omega P(\nabla_h^2 w^h_s) \LLL \, {\rm d}x \EEE \Big)^{\frac{p-1}{p}}   \Vert  \nabla_h^2 \tilde{y}^h_s- \nabla_h^2 y_\Lambda^h \Vert_{L^{p}(\Omega)} .
	\end{align*}
	Using $\phi_h(w^h_s) \le    M'$ since $w^h_s \in \mathscr{S}^{M'}_h$   (see Lemma \ref{lemma: strong convergence}(a))   and   \eqref{eq: w1}(i) we then derive 
	\begin{align*}
	&\int_\Omega |\partial_{Z} P(\nabla_h^2 w^h_s) : (\nabla_h^2  y^h_\Lambda- \nabla_h^2 \tilde{y}_s^h)|  \LLL \, {\rm d}x \EEE \le \FFF C_\Lambda \EEE s(\eps_h/\delta_h)\Big( \int_\Omega P(\nabla_h^2 \FFF w_s^h)  \LLL \, {\rm d}x \EEE\Big)^{\frac{p-1}{p}} \hspace{-0.2cm} \\ 
	& \le C_\Lambda s(\eps_h/\delta_h)  \SSS (\eps_h^2 \zeta_h^{-1})^{\frac{p-1}{p}}  \FFF \leq \EEE    C_\Lambda s \big( \zeta_h /\eps_h^{-2}  \big)^{-1} \Big( \eps_h^p\delta_h^{-p} \zeta_h \eps_h^{-2} \Big)^{1/p},
	\end{align*}
	where $C_\Lambda$ depends also on $M'$. By \eqref{eq: Pconvi} \SSS and  \eqref{assumption:penalizationscale} \EEE   we finally get that \eqref{eq: three properties}(iii) holds. 
\end{proof}

\begin{rem}\label{rem:recoverysequence}
{\normalfont
	Step 6 of the proof of Theorem~\ref{theorem: lsc-slope} ensures the existence of a recovery sequence in Theorem~\ref{th: Gamma}(ii). Consider $(\tilde u, \tilde \theta) \in \mathscr{S}^{1D}$ and set $s=1$, \MMM  $y^h = y_\Lambda^h = \FFF (x_1,h x_2, \delta_h x_3)^T$, and $(u,\theta) = 0$ \EEE in the proof above, \MMM where it suffices that boundary conditions are satisfied for the recovery sequence $(w^h_1)_h$  but not for the other functions.  Then, \eqref{eq: three properties}(ii),(iii) are even simpler and we obtain
	\begin{align*}
		\limsup\limits_{h \to 0} \phi_h(w_s^h)\leq \phi_0(\tilde u, \tilde \theta).
	\end{align*}}
\end{rem}

\begin{rem}[Vanishing dissipation effect]\label{rem: VDE}
	{\normalfont  
		A potential extension for thin materials with general Poisson ratio is to consider a vanishing dissipation effect in the \QQQ $x_2$- and $x_3$-direction. \EEE A possible \SSS   modeling choice is the  \EEE dissipation potential 
		$R(F,\dot F) = \tfrac{1}{2} \vert A_h (F^{\top} \dot F + \dot F^{\top} F) \vert ^2$, where $A_h^2$ a positive definite, symmetric matrix satisfying $F: A_h^2 F = a_{11} F_{11}^2 + a_{12} F_{12}^2 + \QQQ o(1) \EEE$. \SSS Taking  \EEE  \eqref{intro:R} into account, \SSS an \EEE associated dissipation distance satisfying \eqref{eq: assumptions-D} is given by $D(F_1,F_2) := \vert A_h(F_1^{\top}F_1) - A_h(F_2^{\top}F_2) \vert $. 		From \AAA a \EEE mathematical point of view, this allows us to relax the restrictions for the elastic potential, \SSS more precisely, \EEE the conditions on $Q_W^3$ in \eqref{quadraticforms}. The crucial part is to ensure that the competitor $w_s^h$ in the proof Theorem~\ref{theorem: lsc-slope} still satisfies \eqref{eq: three properties}. 
		By construction of the quadratic forms in \eqref{eq:quadraticformsnotred}, there holds $Q_R^3(F) = \vert A_h \sym(F)\vert^2$ and thus $\lim_{h \to 0}  Q_R^3(F)  = Q_{R}^1(F_{11},F_{12} )$. In particular, it is simpler to verify \eqref{eq: three properties}(i). On the other hand, its harder to ensure \eqref{eq: three properties}(ii). For this purpose, one would need  to employ the (slightly more general) recovery sequence from the purely static setting in \cite{Freddi2013}. \SSS Note that we still need to require the compatibility condition   $\argmin_{z \in \R} Q_S^1(q_{11},z) = 0 $. We do not provide rigorous proofs for this setting, but refer to \cite[Subsection 2.3]{MFLMDimension2D1D} where such a model has  been discussed in a related \QQQ two-dimensional \EEE setting, and proofs have been provided (see \cite[Lemma~5.3, Remark 5.5(b)]{MFLMDimension2D1D}).
\EEE } 
\end{rem}

\EEE

\section{Proof of the main theorems}\label{sec: mianmain}
In this section, we prove the main results.

\begin{proof}[Proof of Proposition~\ref{maintheorem1}]
	(i) is guaranteed by the direct method of the Calculus of Variations. \QQQ Indeed, let $Y_{h,\tau_k}^{n-1}$, $n \geq1$ be given. Then, the coercivity of $\Phi_h(\tau_k, Y_{h,\tau_k}^{n-1},\cdot)$ with respect to the weak $W^{2,p}(\Omega;\R^3)$-topology is a consequence of \eqref{assumptions-P}(iii), Lemma~\ref{lem:rigidity}(v), \eqref{assumption:clampedboundary}, and Poincaré's inequality. Note that, if forces are included, one needs to employ Lemma~\ref{lem:compactnessfornonzeroforces}. As $p>3$, the weak lower semicontinuity follows by the compact embedding $W^{1,\infty}(\Omega)\subset\subset W^{2,p}(\Omega)$, the dominated convergence theorem, and the convexity of $P$, see \eqref{assumptions-P}(ii).  \EEE To see (ii), we note that the assumptions of Theorem~\ref{th:abstract convergence 2} (using $\phi_h$ in place of  $\phi_k$ and $\phi_0$) are satisfied for the fixed metric space $(\mathscr{S}^{3D}_{h,M},\mathcal{D}_h)$,  due to Lemma~\ref{th: metric space} and Lemma~\ref{lem:stronguppergradient3d}, where we use the weak $W^{2,p}(\Omega;\R^3)$-topology for the topology of convergence.  
\end{proof}\LLL Now we give the proof of Theorem~\ref{maintheorem3}, addressing the relation of the three-dimensional and the one-dimensional model.\EEE

\begin{proof}[Proof of Theorem~\ref{maintheorem3}]
\AAA First, \EEE	(i) corresponds to the construction of a recovery sequence in the static setting and is addressed in Theorem~\ref{th: Gamma}(ii).
	To prove (ii) and (iii), we need to check that the assumptions of Theorem~\ref{th:abstract convergence 2} and Theorem~\ref{thm: sandierserfaty} are satisfied.
The spaces $(\mathscr{S}^{3D}_{h,M},\mathcal{D}_{h})$ and $(\mathscr{S}^{1D},\mathcal{D}_{0})$ are complete metric spaces due to Lemma~\ref{th: metric space}(i) and
	Lemma~\ref{lem:complete1d}(i).
	  \EEE   Moreover, \LLL Proposition~\ref{lemma:compactness} \EEE yields \eqref{basic assumptions2}   and Theorems~\ref{th: Gamma}, \ref{th: lscD}, and \ref{theorem: lsc-slope} give \eqref{compatibility} and \eqref{eq: implication}. \EEE Furthermore, the slopes are strong upper gradients due to Lemma~\ref{lem:stronguppergradient3d}(i) and Lemma~\ref{Thm:representationlocalslope1}(iv). We note that the energies of the curves of maximal  \AAA slope \EEE are uniformly bounded depending only on the initial data, \ZZZ see~\eqref{maximalslope}. Concluding, we can apply Theorem~\ref{th:abstract convergence 2} and Theorem~\ref{thm: sandierserfaty}, respectively. This \EEE yields the existence of a curve of maximal slope $(u,\theta)$ for $\phi_0$ with respect to $\vert \partial \phi_0 \vert_{\mathcal{D}_0}$ and the convergence \AAA in \eqref{eq:ABC1} and \eqref{eq:ABC2} hold \EEE for $t\geq 0$, up \EEE to a subsequence.

	  	It remains to show \AAA strong convergence. Without restriction, we give the argument for time-continuous evolutions \EEE $u^h(t)$.  From now on, we drop the $t$ dependence, for the reader's convenience. First, we observe that Theorem~\ref{th:abstract convergence 2} and Theorem~\ref{thm: sandierserfaty} also guarantee convergence of energies, i.e., $ \phi_h(y^h) \to {\phi}_0(u,\theta)$. By Lemma~\ref{lemma: metric space-properties}(iii), we obtain  $\int_\Omega \tfrac{1}{2} Q_W^3(G^h(y^h)) \to \phi_0(u,\theta)$.  As $G \mapsto \int_\Omega \tfrac{1}{2}Q_W^3(G)$ is lower semicontinuous with respect to the weak $L^2(\Omega)$-topology, we see that
	$\int_\Omega \tfrac{1}{2} Q_W^3(G_y) \leq \phi_0(u,\theta)$. The reverse inequality is a consequence of the proof of Theorem~\ref{th: Gamma}(i), \SSS see \eqref{integrationx_2}--\eqref{integrationx_3}. \EEE This implies that $\int_\Omega \tfrac{1}{2} Q_W^3(G^h(y^h)) \to \int_\Omega \tfrac{1}{2} Q_W^3(G_y) $. Integrating the expansion
	\begin{align*}
	Q_W^3(G^h(y^h))-Q_W^3(G_y) - 2 \C_W^3[G^h(y^h)-G_y,G_y] = Q_W^3(G^h(y^h)-G_y),
	\end{align*}
 taking the limits, \AAA using \EEE the weak $L^2$-convergence of $G^h(y^h)$, and the positive definiteness of $Q_W^3$ on $\R^{3\times 3}_{\sym}$ \AAA we get that \EEE the convergence of $\big(\sym(G^h(y^h))\big)_h$ also holds strongly in $L^2(\Omega; \AAA \R^{3 \times 3} \EEE )$. In particular, as $\Vert R^h - \Id \Vert_{L^\infty(S)} \to 0$ by Lemma~\ref{lem:rigidity}\ref{lem:rigidity:dev:RotIDInfty}, \SSS Lemma \ref{lem:rigidity}\ref{lem:rigidity:energycontrol} \AAA yields \EEE that $\sym(R^h G^h(y^h)) \to \sym(G_y)$ in $L^2(\Omega;\R^{3 \times 3} )$. Moreover,  by Korn's inequality, \eqref{eq:ucharacterizationlimit}, \SSS and the structure of $u$ (see \eqref{def:Bernoulli-Navier}) \EEE we get \EEE  
	\begin{align*}
	\Vert u^h - u \Vert_{W^{1,2}(\Omega)} \leq &C \Vert \MMM \sym  (\nabla u^h)  -\partial_1 u_1 e_1 \otimes e_1 \EEE \Vert_{L^2(\Omega)} \\
	\leq & C  \left\Vert \sym \left( \tfrac{\nabla_h y^h - \Id}{\eps_h} \right)_{11} - \partial_1 u_1  \right\Vert_{L^2(\Omega)}+C h \left\Vert \sym \left( \tfrac{\nabla_h y^h - \Id}{\eps_h} \right)  \right\Vert_{L^2(\Omega)}
	\end{align*}
\AAA where we used the definition of $\nabla_h$. \EEE The right-hand side of the \MMM identity \EEE $
	\sym \left( \frac{\nabla_h y^h - \Id}{\eps_h}\right) = 		 \MMM \sym ( R^h G^h) \EEE  + \sym \left( \frac{R^h - \Id}{\eps_h}\right)$ converges   strongly to $\sym (G_y + \tfrac{r}{2} A_{u,\theta}^2 )$ in $L^2(\Omega;\R^{3 \times 3} )$, 	due to Lemma~\ref{lem:identificationofstrainlimit}(ii). Additionally, Lemma~\ref{lem:identificationofstrainlimit}(i) \AAA and \eqref{rep:Asquared} \EEE provide the characterization $(G_y + \tfrac{r}{2} A_{u,\theta}^2)_{11} = \partial_1 u_1$, which concludes the proof.
\end{proof}

%
%
%
\EEE
\appendix
\SSS 

\section{Some elementary lemmata}\label{sec:Appendix}

In this section we collect elementary lemmata on the energies and the  dissipation distances. \AAA We also provide a compactness result in the presence of forces. \EEE We start with the proof of Lemma~\ref{lemma: metric space-properties}. To this end, \EEE  we set for shorthand $H_Y := \frac{1}{2}\partial^2_{F_1^2} D^2(Y,Y) = \frac{1}{2}\partial^2_{F_2^2} D^2(Y,Y)$  for $Y \in \R^{3 \times 3}$  in a neighborhood of $SO(3)$. Given a deformation $y \in \mathscr{S}^{3D}_{h,M}$, we also introduce the mapping $H_{\nabla_h y}\colon \Omega \to \R^{3 \times 3 \times 3 \times 3}$   by $H_{\nabla_h y}(x) = H_{\nabla_h y(x)}$ for $x \in \Omega$. \BBB Note that this is well defined for $h$ sufficiently small by \eqref{eq:rigidity}\ref{lem:rigidity:dev:GradIDInfty} and \eqref{eq: assumptions-D}(iv). Viewing $H_Y$ as a bounded operator mapping from $\R^{3\times 3} \to \R^{3 \times 3}$ with norm  $\Vert H_Y \Vert_\infty := \sup_{0 \neq A \in \R^{3\times 3} } \vert H_Y A\vert / \vert A \vert$, we see that $Y \to H_Y$ is Lipschitz near $SO(3)$ satisfying 
\begin{align}\label{eq: Lip property}
\Vert H_{Y_1} - H_{Y_2}\Vert_\infty  \leq C \SSS | Y_1 - Y_2 | \EEE.
\end{align}
\begin{proof}[\hypertarget{proof:dissipationandenergy}{Proof of Lemma~\ref{lemma: metric space-properties}}]
	As a preparation, we observe that by the uniform bound on $\nabla_h y_0$, $\nabla_h y_1$ (see \eqref{eq:rigidity}\ref{lem:rigidity:dev:GradIDInfty}) and  a Taylor expansion \BBB at $(\nabla_h y_0,\nabla_h y_0)$ \EEE we obtain for all open subsets $U \subset \Omega$
	\begin{align}\label{eq: Taylor1}
	\Big|\int_U D^2(\nabla_h y_0, \nabla_h y_1) \QQQ \, {\rm d}x \EEE - \int_U H_{\nabla_h y_0}[\nabla_h (y_1 -  y_0),\nabla_h (y_1 -   y_0) ] \QQQ \, {\rm d}x \EEE \Big| \le C \Vert \nabla_h (y_1-   y_0) \Vert^3_{L^3(U)}.
	\end{align}
	We define $G(y_i) \BBB := \EEE \eps_hG^h(y_i) \BBB = R(y_i)^\top \nabla_h y_i - \Id\EEE$, $i=0,1$, for convenience. Using the separate frame indifference  \eqref{eq: assumptions-D}(v) we have 
	$$\int_\Omega D^2(\nabla_h y_0, \nabla_h y_1) \,{\rm d}x = \int_\Omega D^2\big(R(y_0)^\top\nabla_h y_0, R(y_1)^\top \nabla_h y_1\big) \,{\rm d}x.$$
	Thus, by \BBB $\eps_h^2 \mathcal{D}_h(y_0,y_1)^{2} =  \int_\Omega D^2(\nabla_h y_0, \nabla_h y_1) $ and  \EEE  again by Taylor expansion we also get
	\begin{align}\label{eq: Taylor2}
	\Big|\eps_h^2 \QQQ \mathcal{D}_h^2 \EEE (y_0,y_1) - \int_\Omega H_{R(y_0)^\top\nabla_h y_0} [G(y_1) - G(y_0), G(y_1) - G(y_0)] \QQQ \, {\rm d}x \EEE \Big|\le C \Vert   G(y_1) - G(y_0) \Vert^3_{L^3(\Omega)}. 
	\end{align} 
	
	We now show (i). By \SSS \eqref{eq: Lip property} \EEE and \eqref{eq:rigidity}\ref{lem:rigidity:dev:GradIDInfty}  we get $\Vert H_{\nabla_h y_0} -   \C^3_D   \Vert_\infty \le C(\eps_h/\delta_h)^\alpha$, where  $\C^3_D$ is the \QQQ fourth-order \EEE tensor associated to the quadratic form $Q^3_D $. Therefore, we obtain
	$$\Big|\int_U H_{\nabla_h y_0}[\nabla_h (y_1 -  y_0),\nabla_h (y_1 -   y_0) ] \QQQ \, {\rm d}x \EEE- \int_U Q^3_D(\nabla_h y_1 -  \nabla_h y_0) \QQQ \, {\rm d}x \EEE \Big| \le C(\eps_h/\delta_h)^\alpha \Vert \nabla_h y_1 - \nabla_h y_0 \Vert^2_{L^2(U)}$$
	\BBB for all open $U \subset \Omega$. \EEE By using \eqref{eq: Taylor1} and again \eqref{eq:rigidity}\ref{lem:rigidity:dev:GradIDInfty} we  get (i).
	
	To see (ii), we observe \BBB $\Vert H_{R(y_0)^\top\nabla_h y_0} - \C^3_D\Vert_\infty\le C\Vert \SSS R(y_0)^\top\nabla_h y_0 - \Id \EEE \Vert_\infty \le C(\eps_h/\delta_h)^{\alpha}$ by \SSS \eqref{eq: Lip property}, \QQQ \eqref{eq:rigidity}\ref{lem:rigidity:dev:RotIDInfty}, and  \eqref{eq:rigidity}\ref{lem:rigidity:dev:GradIDInfty}. \EEE  Thus,  we get  
	\begin{align}\label{eq: Taylor3}
	\Big|\int_\Omega  H_{R(y_0)^\top\nabla_h y_0}[G(y_1) - G(y_0),G(y_1) - G(y_0) ]\QQQ \, {\rm d}x \EEE & - \int_\Omega Q^3_D \big(G(y_1) - G(y_0) \big) \QQQ \, {\rm d}x \EEE\Big| \notag\\& \le C(\eps_h/\delta_h)^\alpha \Vert G(y_1) - G(y_0) \Vert_{L^2(\Omega)}^{2}.
	\end{align}
	In a similar fashion, \QQQ \eqref{eq:rigidity}\ref{lem:rigidity:dev:RotIDInfty} and  \eqref{eq:rigidity}\ref{lem:rigidity:dev:GradIDInfty} \EEE also imply $\Vert G(y_i) \Vert_{L^\infty(\Omega)} \le C(\eps_h/\delta_h)^\alpha$ for $i=0,1$ and thus 
	\begin{align*}
	\Vert   G(y_1) - G(y_0) \Vert^3_{L^3(\Omega)}  \le C (\eps_h/\delta_h)^\alpha \Vert G(y_1) - G(y_0) \Vert_{L^2(\Omega)}^{2}.
	\end{align*}
	This together with \eqref{eq: Taylor2} \QQQ and \EEE \eqref{eq: Taylor3} (divided by $\eps_h^2$), and $G^h(y_i) = \eps_h^{-1}G(y_i)$ for $i=0,1$ yields 
	$$
	\Big|\mathcal{D}_h(y_0,y_1)^2 -  \int_\Omega  Q^3_D \big(G^h(y_0) - G^h(y_1)\big)  \QQQ \, {\rm d}x \EEE\Big| \le C (\eps_h/\delta_h)^\alpha \Vert G^h(y_0) - G^h(y_1) \Vert^2_{L^2(\Omega)}.
	$$
	This  shows the first inequality of (ii). To see the second inequality, we use \eqref{eq:rigidity}\ref{lem:rigidity:energycontrol}.
	
	We now show (iii) and (iv).  We use the frame indifference of $W$ and  \cite[Lemma 4.1(iii)]{MFMKDimension} (with $F_i = R(y_i)^\top\nabla_h y_i = \Id + G(y_i)$ for $i=0,1$)   to obtain  
	\begin{align*}
	|\Delta(y_1) - \Delta(y_0)|  & \le C\eps_h^{-2} \sum\nolimits_{k=1}^3  \int_\Omega |R(y_0)^\top\nabla_h y_0-\Id|^{3-k}|R(y_1)^\top\nabla_h y_1 - R(y_0)^\top\nabla_h y_0|^k \QQQ \, {\rm d}x \EEE \\
	&  = C\eps_h^{-2}  \sum\nolimits_{k=1}^3  \int_\Omega|G(y_0)|^{3-k}|G(y_1) - G(y_0)|^k \QQQ \, {\rm d}x \EEE \\
	& \le C\eps_h^{-2} \int_\Omega (|G(y_1)| + |G(y_0)|)^2|G(y_1) - G(y_0)| \QQQ \, {\rm d}x \EEE,
	\end{align*}
	where  $\Delta(y_0)$ and $\Delta(y_1)$ \EEE are defined in the statement of the lemma. The fact that $\Vert G(y_i) \Vert_{L^\infty(\Omega)} \le C(\eps_h/\delta_h)^\alpha$ for $i=0,1$ and H\"older's inequality yield
	$$|\Delta(y_1) -  \Delta(y_0)| \le C \eps_h^{-2} (\eps_h/\delta_h)^\alpha \big(\Vert   G(y_0) \Vert_{L^2(\Omega)} + \Vert G(y_1)   \Vert_{L^2(\Omega)} \big) \, \Vert G(y_1) - G(y_0) \Vert_{L^2(\Omega)}. $$
	Using $G^h(y_i) = \eps_h^{-1}G(y_i)$ for $i=0,1$ and \eqref{eq:rigidity}(i) we obtain the first inequality of (iv). The second inequality follows again by \eqref{eq:rigidity}(i). Finally, to see (iii), we apply (iv)   for $y_0 = y$ and $y_1 = \id$, where we use $\Delta(y_1) = 0$. \EEE 
\end{proof}

\SSS

\begin{lemma}[Hessian of $D^2$]\label{lemma: ele}
For all $F \in \R^{3 \times 3}$ in a neighborhood of \AAA $SO(3)$ \EEE and all $G \in \R^{3 \times 3}$  it holds that    ${\rm Ker} (\partial^2_{F_1^2} D^2(F, F) ) = F^{-\top} \R_{\rm skew}^{3 \times 3}$ and  
\begin{align}\label{eq:quadraticlowerbound-new}
\partial^2_{F_1^2} D^2(F, F) [G,G]  \ge c|{\rm sym}(F^\top G)|^2
\end{align}
\AAA for some universal $c>0$. \EEE  In particular, the potential $R(F,\dot F)$ introduced in   \eqref{intro:R}  depends only on  $C:= F^{\top}F$ and $\dot C= \dot{F}^{\top}F+F^{\top}\dot F$,   it is quadratic in $\dot C$, and satisfies \eqref{eq:quadraticlowerbound}.
\end{lemma}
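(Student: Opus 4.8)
\textbf{Proof proposal for Lemma \ref{lemma: ele}.}

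The plan is to exploit separate frame indifference \eqref{eq: assumptions-D}(v) systematically, first to pin down the kernel, then to transfer the coercivity \eqref{eq: assumptions-D}(vi) from the identity to an arbitrary nearby $F$. For the kernel statement I would start from the defining relation: by \eqref{eq: assumptions-D}(v) the function $D^2(Q_1 F_1, F_2)$ is constant in $Q_1 \in SO(3)$, hence differentiating twice at $Q_1 = \Id$ in direction of a skew matrix $A$ shows that $F_1 \mapsto D^2(F_1,F_2)$ has vanishing second derivative along the curve $t\mapsto \exp(tA)F_1$. More precisely, writing the first-order condition $\partial_{F_1}D^2(F_1,F_2)[AF_1]=0$ for all skew $A$ and differentiating once more along the same family gives that $\partial^2_{F_1^2}D^2(F,F)[AF,\,\cdot\,]=0$ for all skew $A$, i.e.\ $\{AF : A \in \R^{3\times 3}_{\rm skew}\}\subseteq {\rm Ker}(\partial^2_{F_1^2}D^2(F,F))$. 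Since $\{AF : A\text{ skew}\} = F^{-\top}\R^{3\times 3}_{\rm skew}$ when $F\in GL_+(3)$ (because $AF = F^{-\top}(F^\top A F)$ and $F^\top A F$ runs over all skew matrices as $A$ does — use that $\R^{3\times3}_{\rm skew}\cong\Lambda^2\R^3$ is stable under the congruence $B\mapsto F^\top B F$), we obtain one inclusion. For the reverse inclusion I would combine a dimension count (the kernel contains at least a $3$-dimensional space) with the lower bound \eqref{eq:quadraticlowerbound-new}, which forces the kernel to be \emph{exactly} $\{G : {\rm sym}(F^\top G)=0\} = F^{-\top}\R^{3\times 3}_{\rm skew}$; so the kernel claim follows once \eqref{eq:quadraticlowerbound-new} is established.

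To prove \eqref{eq:quadraticlowerbound-new} I would again use \eqref{eq: assumptions-D}(v), this time to \emph{reduce} to the case $F=\Id$. Fix $F$ near $SO(3)$ and let $R=\Pi_{SO(3)}[F]$ be its polar rotation, so $F = RU$ with $U=\sqrt{F^\top F}$ symmetric positive definite and close to $\Id$. By \eqref{eq: assumptions-D}(v), $D^2(F_1,F_2) = D^2(R^\top F_1, F_2)$, and similarly on the second slot, so the Hessian transforms covariantly: $\partial^2_{F_1^2}D^2(F,F)[G,G] = \partial^2_{F_1^2}D^2(U,U)[R^\top G, R^\top G]$, and ${\rm sym}(F^\top G) = {\rm sym}(U R^\top G)$, so it suffices to treat symmetric positive definite $F=U$. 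For such $U$ I would interpolate: the map $t\mapsto \partial^2_{F_1^2}D^2((1-t)\Id + tU,(1-t)\Id+tU)$ is $C^1$ by \eqref{eq: assumptions-D}(iv), agrees at $t=0$ with the operator satisfying \eqref{eq: assumptions-D}(vi), and its dependence on $U$ near $\Id$ is Lipschitz (as in \eqref{eq: Lip property}); hence for $U$ in a small enough neighborhood the quadratic form $\partial^2_{F_1^2}D^2(U,U)[\cdot,\cdot]$ still satisfies a coercivity estimate of the form $\geq \tfrac{c}{2}|{\rm sym}(G)|^2 - o(1)|{\rm skew}(G)|^2$. The subtlety is that the error term in the skew direction could destroy positivity; this is resolved by noting that the \emph{exact} kernel identity from the first paragraph already guarantees $\partial^2_{F_1^2}D^2(U,U)[G,G] = \partial^2_{F_1^2}D^2(U,U)[G-G_0,G-G_0]$ where $G_0$ is the $\R^{3\times3}_{\rm skew}$-component in the decomposition $G = {\rm sym}(U^{-1}\cdot) $-part plus kernel-part, so one may always assume ${\rm skew}(U^\top G)=0$, i.e.\ $G$ lies in a complement of the kernel, and on that complement the form is uniformly positive definite by continuity of the smallest nonzero eigenvalue. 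Combining, $\partial^2_{F_1^2}D^2(U,U)[G,G]\geq c|{\rm sym}(U^\top G)|^2$, which transported back gives \eqref{eq:quadraticlowerbound-new}.

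For the final assertions about $R$, I would argue as follows. Frame indifference of $R$ in the form $R(F,\dot F) = R(QF, Q(\dot F + AF))$ for all $Q\in SO(3)$, $A$ skew — already recorded in the excerpt below \eqref{intro:R} via \cite[Lemma 2.1]{MOS} — means $R(F,\dot F)$ is invariant under $\dot F \mapsto \dot F + AF$ and under left multiplication $F\mapsto QF$, $\dot F\mapsto Q\dot F$. The first invariance together with the fact that $R$ is quadratic in $\dot F$ (since $\partial_{\dot F}R$ is assumed linear, equivalently since $R(F,\dot F) = \tfrac14\partial^2_{F_1^2}D^2(F,F)[\dot F,\dot F]$ by \eqref{intro:R}) forces $R(F,\dot F)$ to depend on $\dot F$ only through its class modulo $\{AF\}$, i.e.\ only through ${\rm sym}(F^\top\dot F) = \tfrac12\dot C$. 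The second invariance then shows the coefficient depends on $F$ only through $C=F^\top F$, since $F$ and $QF$ have the same right Cauchy–Green tensor and conversely two matrices with the same $C$ differ by a left rotation. Thus $R = \tilde R(C,\dot C)$ with $\tilde R$ quadratic in $\dot C$. Finally \eqref{eq:quadraticlowerbound}, i.e.\ $R(F,\dot F)\geq c|\dot C|^2$, is exactly $\tfrac14\cdot$\eqref{eq:quadraticlowerbound-new} rewritten: $\partial^2_{F_1^2}D^2(F,F)[\dot F,\dot F]\geq c|{\rm sym}(F^\top\dot F)|^2 = \tfrac{c}{4}|\dot C|^2$. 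The main obstacle I anticipate is making the perturbation argument in the second paragraph fully rigorous — in particular checking that the ``complement of the kernel'' can be chosen to vary continuously (smoothly) with $U$ so that the uniform lower bound on the smallest nonzero eigenvalue is legitimate; one clean way is to use the explicit projection $G\mapsto G - U^{-\top}{\rm skew}(U^\top G)$ onto $\{G:{\rm skew}(U^\top G)=0\}$, which is manifestly smooth in $U$, and then invoke compactness of the unit sphere in that (moving) subspace together with positivity of the form at $U=\Id$.
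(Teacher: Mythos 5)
Your proposal is essentially correct but takes a genuinely different route from the paper's. The paper's key device is a one-line algebraic trick: choose the $\eps$-dependent rotation $Q_2^\eps = Q\exp(\eps A)$ with $A = \tfrac12(F^{-\top}G^\top - GF^{-1})\in\R^{3\times3}_{\rm skew}$, insert it via separate frame indifference \eqref{eq: assumptions-D}(v), and Taylor-expand $D^2(Q_2^\eps(F+\eps G),U)$ around $(U,U)$. The computation collapses to the exact \emph{identity} $\partial^2_{F_1^2}D^2(F,F)[G,G]=\big(U^{-\top}\partial^2_{F_1^2}D^2(U,U)\,U^{-1}\big)[{\rm sym}(F^\top G),{\rm sym}(F^\top G)]$, from which the kernel characterization, the lower bound \eqref{eq:quadraticlowerbound-new}, and the fact that $R$ is a quadratic in $\dot C=2\,{\rm sym}(F^\top\dot F)$ with coefficients depending only on $U=\sqrt{C}$ all drop out simultaneously, with only one application of ``positive definiteness on $\R^{3\times3}_{\rm sym}$ by continuity from $\Id$''. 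You instead split the work: you get the inclusion $F^{-\top}\R^{3\times3}_{\rm skew}\subseteq{\rm Ker}$ by differentiating the left-rotation invariance twice (a valid and structurally illuminating argument, though note you must differentiate the first-order identity $\partial_{F_1}D^2(F_1,F_2)[AF_1]=0$ in an \emph{arbitrary} direction $H$, not ``along the same family'', and then invoke $\partial_{F_1}D^2(F,F)=0$ to kill the extra term $\partial_{F_1}D^2(F,F)[AH]$), then prove coercivity by a perturbation-from-$\Id$ argument on the moving subspace $\{G:{\rm skew}(U^\top G)=0\}$, and finally deduce the $C,\dot C$-dependence of $R$ by abstract invariance. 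All three pieces are sound, and it is not circular that you use the ``easy'' kernel inclusion inside the coercivity argument, since that inclusion was proved independently. The price is that your second paragraph genuinely needs the bookkeeping you flag at the end: one must quantify $|{\rm sym}(\Pi_U G)|\ge(1-o(1))|{\rm sym}(U^\top G)|$ and $|\Pi_U G|\le(1+o(1))|{\rm sym}(U^\top G)|$ as $U\to\Id$ (both elementary but not free) before the Lipschitz estimate \eqref{eq: Lip property} delivers a uniform lower bound. In short, your route trades the paper's single slick computation for a cleaner conceptual separation (kernel from symmetry, coercivity from perturbation, $C,\dot C$-dependence from invariance), at the cost of a fussier estimate in the middle step.
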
  
\begin{proof}
We choose $F\QQQ\in \R^{3\times 3}\EEE$ near \AAA $SO(3)$ \EEE and let $G \in \R^{3 \times 3}$. \QQQ Suppose \EEE $\eps >0$. By Polar decomposition, we can write $U = QF$ for $U = \sqrt{F^\top F}$. 
\QQQ Then, exploiting \EEE the frame indifference \eqref{eq: assumptions-D}(v) for  $Q_1 = Q$ and $Q^\eps_2 = Q \exp(\eps A)$, where $A = \frac{1}{2}(F^{-\top} G^\top - G F^{-1}) \in \R_{\rm skew}^{3\times 3}$, \QQQ yields \EEE
\begin{align}\label{eq:quadraticlowerbound-0}
\frac{1}{2\eps^2}D^2(F + \eps G, F) &= \frac{1}{2\eps^2}D^2(Q^\eps_2 (F + \eps G), Q_1 F )  = \frac{1}{2\eps^2}D^2(Q^\eps_2 (F + \eps G), U).
\end{align}
By using \QQQ a Taylor expansion, \EEE the definition of \QQQ  $A$, \EEE and $Q = UF^{-1}$ we calculate
\begin{align*}
\QQQ Q^\eps_2 \EEE (F + \eps G) & = Q (\Id + \eps A + \mathcal{O}(\eps^2)) (F + \eps G)  = U + \eps (QAF + QG) + \mathcal{O}(\eps^2) \\
&    = U  + \eps U^{-1}  U^2(F^{-1}AF + F^{-1}G) + \mathcal{O}(\eps^2) \\ 
& = U  + \eps U^{-1}  F^\top F   \frac{F^{-1}F^{-\top}G^\top F +   F^{-1}G }{2} +  \mathcal{O}(\eps^2) \\& = U  + \eps U^{-1}  {\rm sym}(F^\top G) +  \mathcal{O}(\eps^2).
\end{align*}
Plugging this into  \eqref{eq:quadraticlowerbound-0} and using a Taylor expansion we deduce 
\begin{align*}
\frac{1}{\eps^2}D^2(F + \eps G, F) 
& = \frac{1}{2}\partial^2_{F_1^2} D^2(U,U) [U^{-1}  {\rm sym}(F^\top G),U^{-1}  {\rm sym}(F^\top G)] + \mathcal{O}(\eps)  \notag \\
& =  \frac{1}{2}    (U^{-\top} \partial^2_{F_1^2} D^2(U,U) U^{-1})  [ {\rm sym}(F^\top G), {\rm sym}(F^\top G)]  + \mathcal{O}(\eps).
\end{align*}  
As $\partial^2_{F_1^2} D^2(\Id,\Id)$ is positive definite on $\R_{\rm sym}^{3\times 3}$ by \AAA \eqref{eq: assumptions-D}(vi), \EEE \QQQ it follows by a standard  continuity argument \EEE that also $U^{-\top} \partial^2_{F_1^2} D^2(U,U) U^{-1}$ is positive definite on $\R_{\rm sym}^{3\times 3}$ for all $F$ sufficiently close to \AAA $SO(3)$. \EEE By sending $\eps \to 0$ this shows \eqref{eq:quadraticlowerbound-new}. Using $G = \dot F$ in the above formula, we find that $R(F,\dot F)$ depends only on $U = \sqrt{C}$ and ${\rm sym}(F^\top G) = \frac{1}{2}\dot C$, and that it is quadratic in $\dot C$. Finally, \eqref{eq:quadraticlowerbound}  follows from  \eqref{eq:quadraticlowerbound-new}.
\end{proof} 
 \EEE

\begin{lemma}[Positivity of $\mathcal{D}_h$]\label{lemma: positivity}
Let $M>0$ and let $h$ sufficiently small.   Let $y_0, y_1 \in \mathscr{S}^{3D}_{h,M}$ with $\mathcal{D}_h(y_0,y_1) = 0$. Then $y_0 = y_1$.
\end{lemma}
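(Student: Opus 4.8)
The statement asserts that $\mathcal{D}_h$ is a genuine distance on $\mathscr{S}^{3D}_{h,M}$, i.e.\ that vanishing of $\mathcal{D}_h(y_0,y_1)$ forces $y_0=y_1$. The plan is to exploit that $\mathcal{D}_h^2(y_0,y_1)=\frac{1}{\eps_h^2}\int_\Omega D^2(\nabla_h y_0,\nabla_h y_1)\,{\rm d}x=0$ implies, by property \eqref{eq: assumptions-D}(i), that $(\nabla_h y_0)^\top\nabla_h y_0=(\nabla_h y_1)^\top\nabla_h y_1$ a.e.\ in $\Omega$; that is, the two scaled deformation gradients have the same right Cauchy--Green tensor pointwise. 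The goal is then to upgrade this pointwise ``same metric'' statement to $\nabla_h y_0=\nabla_h y_1$ a.e., and finally to $y_0=y_1$ via the clamped boundary conditions \eqref{assumption:clampedboundary}.

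First I would pass to the unrescaled picture: writing $z_i=y_i\circ p_h^{-1}$ on $\Omega_h$, the identity becomes $\nabla z_0^\top\nabla z_0=\nabla z_1^\top\nabla z_1$ a.e.\ on $\Omega_h$. By Lemma~\ref{lem:rigidity}\ref{lem:rigidity:dev:GradIDInfty} (valid for $h$ small) both $\nabla_h y_0$ and $\nabla_h y_1$, hence $\nabla z_0$ and $\nabla z_1$, lie in a fixed small neighborhood of $SO(3)$; in particular $\nabla z_0$ is pointwise invertible with $\det>0$. The key algebraic step is: if $A^\top A=B^\top B$ with $A,B$ both close to $SO(3)$ (so $\det A,\det B>0$), then $BA^{-1}\in SO(3)$, since $(BA^{-1})^\top(BA^{-1})=A^{-\top}B^\top BA^{-1}=A^{-\top}A^\top AA^{-1}=\Id$ and the determinant is positive. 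Thus there is a measurable map $Q\colon\Omega_h\to SO(3)$ with $\nabla z_1=Q\,\nabla z_0$ a.e. Since $\nabla z_0$ is close to $SO(3)$ and $\nabla z_1$ is close to $SO(3)$, $Q=\nabla z_1(\nabla z_0)^{-1}$ is in fact close to $\Id$ a.e., and moreover $Q$ inherits Sobolev regularity from $z_0,z_1\in W^{2,p}$ (with $p>3$): $\nabla z_0,\nabla z_1$ are $C^{0,\gamma}$ and $\nabla z_0$ is uniformly invertible, so $Q$ is continuous, indeed $W^{1,p}$.

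The main obstacle — and the heart of the argument — is to show that such a $Q$ must be \emph{constant}. This is precisely the classical rigidity for maps with the same metric: if $z_1=Q\,z_0$ is to hold after integration we need $\curl Q=0$ in a suitable sense. I would argue as in \cite[Lemma 4.4]{MFMKDimension}: from $\partial_j(\partial_k z_1)=\partial_k(\partial_j z_1)$ and $\partial_k z_1=Q\,\partial_k z_0$ one gets $(\partial_j Q)\partial_k z_0+Q\,\partial_j\partial_k z_0=(\partial_k Q)\partial_j z_0+Q\,\partial_k\partial_j z_0$, whence $(\partial_j Q)\partial_k z_0=(\partial_k Q)\partial_j z_0$ for all $j,k$; since $\nabla z_0$ is pointwise invertible this is a closed linear system forcing $\nabla Q=0$ a.e.\ (one contracts with $(\nabla z_0)^{-1}$ and uses the standard ``$\Gamma$-symbol'' cancellation, exactly the computation that shows a metric-preserving immersion into Euclidean space with vanishing curvature is rigid). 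Hence $Q\equiv Q^h$ constant on the connected set $\Omega_h$, so $\nabla z_1=Q^h\nabla z_0$ everywhere, i.e.\ $z_1=Q^h z_0+c$ for a constant vector $c$, and correspondingly $\nabla_h y_1=Q^h\nabla_h y_0$ with $y_1=Q^h y_0+\eps_h c'$ on $\Omega$ (absorbing the scaling into $c'$). Finally, since $Q^h$ is close to $\Id$ and both $y_0,y_1$ satisfy the \emph{same} clamped boundary datum on $\Gamma$ by \eqref{assumption:clampedboundary}, evaluating $y_1=Q^h y_0+\eps_h c'$ on $\Gamma$ — where the datum is an affine-plus-$\eps_h$-correction map whose affine part is the non-degenerate $(x_1,hx_2,\delta_h x_3)$ — forces $Q^h=\Id$ and $c'=0$ (the affine part cannot be mapped to itself by a nontrivial rotation-plus-translation because its image affinely spans $\R^3$; compare the argument via \cite[Lemma 3.3]{MaNePe} used in the proof of Lemma~\ref{lem:rigidity}). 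Therefore $y_0=y_1$, as claimed.
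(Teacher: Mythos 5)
Your proposal is correct, but it takes a genuinely different route from the paper. You begin by exploiting the pointwise structure: $\int_\Omega D^2(\nabla_h y_0,\nabla_h y_1)\,{\rm d}x=0$ with $D^2\ge 0$ forces $D(\nabla_h y_0,\nabla_h y_1)=0$ a.e., whence by \eqref{eq: assumptions-D}(i) the two Cauchy--Green tensors agree a.e.; then $Q:=\nabla z_1(\nabla z_0)^{-1}$ is $SO(3)$-valued and $W^{1,p}$, and the classical Liouville/Reshetnyak rigidity argument (your ``$S_{jkl}$ braid'' computation, using $W_j=Q^\top\partial_jQ$ skew and $W_jv_k=W_kv_j$ so that the trilinear form $S_{jkl}=W_jv_k\cdot v_l$ is simultaneously symmetric in $(j,k)$ and antisymmetric in $(k,l)$, hence zero) forces $Q$ constant, so $z_1=Qz_0+c$; the clamped boundary conditions \eqref{assumption:clampedboundary}, whose trace on $\Gamma$ affinely spans $\R^3$, then pin $Q=\Id$, $c=0$. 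The paper instead never passes to the pointwise Cauchy--Green identity: it Taylor-expands $D^2$ around $\nabla z_0$ to relate it to $R$, applies the generalized Korn inequality (Theorem~\ref{pompe}) together with the coercivity \eqref{eq:quadraticlowerbound} on small cubes of side $\delta_h$, and propagates $\nabla z_1=\nabla z_0$ iteratively from the clamped face into the interior via an absorption argument that uses the smallness of $\Vert\nabla z_1-\nabla z_0\Vert_{L^\infty}$. Your argument is cleaner and more elementary in the sense that it does not invoke Theorem~\ref{pompe} or the cube decomposition, only the algebraic definiteness of $D$ and classical rigidity; the paper's argument, by contrast, recycles the Korn machinery already developed for Lemma~\ref{th: metric space}(v) and is closer in spirit to the quantitative estimates used throughout. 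Both are valid; just note that your ``$\nabla Q=0$'' step, as sketched, is really Liouville's theorem for $W^{1,p}$ maps with $SO(3)$-valued gradient, and the regularity you have ($z_i\in W^{2,p}$ with $p>3$, $\nabla z_0$ uniformly invertible by Lemma~\ref{lem:rigidity}\ref{lem:rigidity:dev:GradIDInfty}) is indeed sufficient for it.
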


\begin{proof} \QQQ Instead of working on $\Omega$, we introduce the rescaled variables $z_i = y_i \circ p_h^{-1}$, $i =0,1$ \AAA with the projection \EEE $p_h\colon \Omega \to \Omega_h$.
Consider the family of pairwise disjoint cubes, defined by
	\begin{align*}
	Q_h(i,j):= (-\tfrac{l}{2} + (i-1)\delta_h, \AAA -  \tfrac{l}{2}+i \delta_h) \EEE \times (-\tfrac{h}{2} + (j-1) \delta_h, -\tfrac{h}{2} + j \delta_h) \times (-\tfrac{\delta_h}{2} ,\tfrac{\delta_h}{2})
	\end{align*}
	for $i =1,..., N_1^h:= \SSS \lfloor l/\delta_h\rfloor\EEE $ and $j = 1,..., N_2^h:= \lfloor h/\delta_h \rfloor$. Then we have $\Omega_h= \bigcup_{i = 1}^{N_1^h}\bigcup_{j = 1}^{N_2^h} \SSS Q_h(i,j) \EEE \cup \AAA \tilde Q_h \EEE$ for  a small set $\tilde Q_h$. Denote the family of cubes by $\SSS \mathcal{Q}_h\EEE$. We now first show that $z_0 = z_1$ on each $Q_h(1,j)$ for each $j = 1,..., N_2^h$. \AAA To this end, fix $j$ and denote $Q_h(1,j)$ by $Q$ for convenience. \EEE Assuming $\int_{\Omega_h} D^2(\nabla z_0, \nabla z_1) =0$, we get by a Taylor expansion and \eqref{intro:R}
	\begin{align}\label{eq: positivity1pompe}
	 \int_{Q} R(\nabla z_0,\nabla z_1 -  \nabla z_0)\LLL \,{\rm d}x \EEE \le C \Vert \nabla  z_1-  \nabla  z_0 \Vert_{L^\infty(Q)} \Vert \nabla  z_1-  \nabla  z_0 \Vert^2_{L^2(Q)}. 
	\end{align}
	Since $z_1 = z_0$ on $\partial I \times (-\frac{h}{2}, \frac{h}{2})\times (-\frac{\delta_h}{2}, \frac{\delta_h}{2})$, we get that $z_1 = z_0$ on at least one face of $\partial Q$. Then Theorem~\ref{pompe} and \eqref{eq:quadraticlowerbound} imply
	\begin{align}\label{eq: positivity2pompe}
	\Vert \nabla z_1 - \nabla z_0 \Vert^2_{L^2(Q)} \le C \Vert {\rm sym}\big({\nabla z_0}^{\top}(\nabla z_1 - \nabla z_0) \big)\Vert^2_{L^2(Q)} \leq C 	 \int_{Q} R(\nabla z_0,\nabla z_1 -  \nabla z_0)\LLL \,{\rm d}x \EEE  .
	\end{align}
	As $\Vert \nabla  z_1-  \nabla  z_0 \Vert_{L^\infty(Q)} \to 0$ for $h \to 0$ by Lemma~\ref{lem:rigidity}\ref{lem:rigidity:dev:GradIDInfty}, \eqref{eq: positivity1pompe} and \eqref{eq: positivity2pompe} show $\nabla z_1 = \nabla z_0$ a.e.\ on $Q$, for $h$ sufficiently small. Since  $z_1 = z_0$ on at least one face of $\partial Q$, this also gives $z_1 = z_0$ a.e.\ on $Q$, as desired. 
	We now proceed \BBB iteratively \EEE to show that $z_1 = z_0$ on each $Q \in \mathcal{Q}_h$: suppose that the property has already been shown for all $Q \in\bigcup_{i = 1}^{m}\bigcup_{j = 1}^{N_2^h} Q_h(i,j)$. Then $z_1 = z_0$ on each $Q \in \bigcup_{j = 1}^{N_2^h} Q_h(m+1,j)$ follows from the above arguments noting that $z_1 = z_0$ on at least one face of  $\partial Q$ since $z_1 = z_0$ on all cubes  $Q \in \bigcup_{i = 1}^{m}\bigcup_{j = 1}^{N_2^h} Q_h(i,j)$. 
	This shows $z_1 = z_0$ on $\Omega_h \setminus \tilde Q_h$. Our arrangement of cubes ensures that one corner of $\Omega_h$ is covered. Rearranging the cubes by starting in the remaining corners and repeating the arguments from above yield that $z_1 = z_0$ on whole $\Omega_h$. 
	\end{proof}
	
	\AAA Finally, for \EEE the reader's convenience, we briefly give the most important adaption if nonzero forces are considered.
	  
	\begin{lemma}[Compactness if $f_h^{3D}\neq 0$]\label{lem:compactnessfornonzeroforces}
		Suppose that $f_h^{3D} \neq 0$ satisfies \eqref{eq: forces} and let $(y^h)_h$ be a sequence with $y^h \in \mathscr{S}^{3D}_{h,M}$ for some $M>0$.
	\begin{itemize}
		\item[(i)] 	Then, there exists a constant $C=C(M)$ such that 
		$$I_h(y^h) := \tfrac{1}{\eps_h^2} \int_\Omega W(\nabla_h y^h(x)) \, \MMM  {\rm d}x+\tfrac{\zeta_h}{\eps_h^2} \int_\Omega P(\nabla^2_h y^h(x)) \, \MMM  {\rm d}x \EEE \leq C.$$ 
		\item[(ii)] There exists a constant $c = c(M) \in \R$ such that $\phi_h(y^h)\geq c > -\infty$.
%
	\end{itemize}	
	\end{lemma}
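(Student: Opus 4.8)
The plan is to bootstrap the a priori bound $\phi_h(y^h)\le M$ into a bound on $I_h(y^h)$, by controlling the force term quantitatively in terms of $I_h(y^h)$ itself and then absorbing. First I would rewrite the force contribution. Since $f_h^{3D}$ depends only on $x_1$, $\int_\omega x_3\, {\rm d}x_2\, {\rm d}x_3=0$, and $y_3^h=\delta_h x_3+(\eps_h/\delta_h)u_3^h$ by \eqref{def:udisplacement}, one has
\begin{align*}
\tfrac1{\eps_h^2}\int_\Omega f_h^{3D}(x_1)\,y_3^h(x)\, {\rm d}x=\int_\Omega \tfrac1{\eps_h\delta_h}f_h^{3D}(x_1)\,u_3^h(x)\, {\rm d}x,
\end{align*}
so that $\phi_h(y^h)=I_h(y^h)-\int_\Omega \tfrac1{\eps_h\delta_h}f_h^{3D}\,u_3^h\, {\rm d}x$. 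Since $(\tfrac1{\eps_h\delta_h}f_h^{3D})_h$ converges weakly in $L^2(I)$ by \eqref{eq: forces}, it is bounded there, say by $C_f$, and Cauchy--Schwarz gives $|\phi_h(y^h)-I_h(y^h)|\le C_f\|u_3^h\|_{L^2(\Omega)}$.

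Next I would estimate $\|u_3^h\|_{L^2(\Omega)}$ sublinearly in $\mathcal E_h:=I_h(y^h)$, which is finite for every fixed $h$ (because $\phi_h(y^h)\le M$ and the force term is finite). The Korn inequality adapted to the clamped conditions on $\Gamma$ (as in the proof of Proposition~\ref{lemma:compactness}, using \cite[Proposition~1]{hierarchy}) gives $\|u^h\|_{W^{1,2}(\Omega)}\le C\big(\|\sym(\nabla u^h)\|_{L^2(\Omega)}+1\big)$, with $C$ depending only on $\Omega$, $\Gamma$ and $\hat\xi_1,\hat\xi_2,\hat\xi_3$; moreover \eqref{eq:ucharacterizationlimit} shows, entry by entry, $|\sym(\nabla u^h)|\le \eps_h^{-1}|\sym(\nabla_h y^h-\Id)|$ pointwise, all rescaling factors being $\le1$ for $h$ small. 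Finally, rerunning Lemma~\ref{lem:rigidity} and its consequences \cite[Lemmata~3.3, 3.4]{Freddi2013} with $M$ replaced by $\mathcal E_h$, one checks that the $L^2$-type estimates there grow at most like $1+\sqrt{\mathcal E_h}$ in the energy level — in particular $\|\sym(\nabla_h y^h-\Id)\|_{L^2(\Omega)}\le C\eps_h(1+\sqrt{\mathcal E_h})$ — so that $\|u_3^h\|_{L^2(\Omega)}\le\|u^h\|_{W^{1,2}(\Omega)}\le C(1+\sqrt{\mathcal E_h})$, with $C$ and the smallness threshold for $h$ independent of $h$.

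With this in hand, (i) and (ii) follow by a routine absorption. Combining the two estimates with $\phi_h(y^h)\le M$ and Young's inequality,
\begin{align*}
\mathcal E_h=\phi_h(y^h)+\int_\Omega \tfrac1{\eps_h\delta_h}f_h^{3D}\,u_3^h\, {\rm d}x\le M+C_fC\big(1+\sqrt{\mathcal E_h}\big)\le M+C_fC+\tfrac12\mathcal E_h+\tfrac12C_f^2C^2,
\end{align*}
whence $I_h(y^h)=\mathcal E_h\le 2M+2C_fC+C_f^2C^2=:C(M)$, which is (i). For (ii), part (i) together with the bound on $u_3^h$ gives $\|u_3^h\|_{L^2(\Omega)}\le C\big(1+\sqrt{C(M)}\big)$, and since $W,P\ge0$ we obtain $\phi_h(y^h)=I_h(y^h)-\int_\Omega\tfrac1{\eps_h\delta_h}f_h^{3D}u_3^h\, {\rm d}x\ge -C_fC\big(1+\sqrt{C(M)}\big)=:c(M)>-\infty$.

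The main obstacle is the middle step: one must make the rigidity estimate quantitative and uniform in $h$ with respect to the a priori uncontrolled energy level $\mathcal E_h$, and in particular ensure that the Friesecke--James--M\"uller construction still produces an $SO(3)$-valued rotation field $R^h$. To guarantee that the relevant scale ratios are not spoiled, I would first extract a crude preliminary bound on $\mathcal E_h$ from the strain-gradient penalization (via \eqref{assumption:penalizationscale}, \eqref{assumption:energyscalingthickness1}, and a $W^{2,p}$-Poincar\'e inequality bounding $\|u_3^h\|_{L^\infty(\Omega)}$), which is enough to run the rigidity argument; everything else then proceeds exactly as in the case $f_h^{3D}\equiv0$.
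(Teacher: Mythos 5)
Your overall strategy -- rewrite the force term via $u_3^h$, bound $\|u_3^h\|_{L^2}$ sublinearly in $I_h(y^h)$, then absorb -- is exactly the paper's. Steps (1)--(2) (using $\int_\omega x_3\,{\rm d}x_2\,{\rm d}x_3=0$ and Cauchy--Schwarz to get $|\phi_h-I_h|\le C_f\|u_3^h\|_{L^2(\Omega)}$) coincide with the paper's \eqref{compactnessforce1}.

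The gap is in your middle step. You route the estimate of $\|u_3^h\|_{L^2}$ through Korn's inequality and the symmetric-part rigidity estimate, asserting that
\[
\big\|\sym\big(\nabla_h y^h-\Id\big)\big\|_{L^2(\Omega)}\le C\eps_h\big(1+\sqrt{\mathcal E_h}\big),\qquad\mathcal E_h:=I_h(y^h).
\]
This square-root scaling in the energy level is not correct. Writing $\sym(\nabla_h y^h-\Id)=\sym(\nabla_h y^h-R^h)+\sym(R^h-\Id)$, the first term does scale as $C\eps_h\sqrt{\mathcal E_h}$ by Lemma~\ref{lem:rigidity}(i). But since $R^h$ is $SO(3)$-valued, $\sym(R^h-\Id)=-\tfrac12(R^h-\Id)^\top(R^h-\Id)$ is \emph{quadratic} in $R^h-\Id$: via the Ladyzhenskaya interpolation in 2D, $\|\sym(R^h-\Id)\|_{L^2(S)}\le\tfrac12\|R^h-\Id\|^2_{L^4(S)}\le C\|R^h-\Id\|_{L^2(S)}\|R^h-\Id\|_{W^{1,2}(S)}$, and by Lemma~\ref{lem:rigidity}(ii),(iii), rerun at level $\mathcal E_h$, each factor is $\lesssim\eps_h/\delta_h\sqrt{\mathcal E_h}$. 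Using \eqref{assumption:energyscalingthickness1} one arrives at $\|\sym(R^h-\Id)\|_{L^2}\lesssim\eps_h\,\mathcal E_h$, \emph{linearly} in $\mathcal E_h$. Hence $\|\sym(\nabla u^h)\|_{L^2}\lesssim1+\mathcal E_h$, Korn gives $\|u_3^h\|_{L^2}\lesssim 1+\mathcal E_h$, and the absorption $\mathcal E_h\le M+C_fC(1+\mathcal E_h)$ fails unless $C_f$ is a priori small. The paper avoids this entirely by estimating $\|y_3^h-\delta_h x_3\|_{L^2}$ directly through a Poincar\'e inequality and Lemma~\ref{lem:rigidity}(iv), which only requires the full-matrix bound $\|\nabla_h y^h-\Id\|_{L^2}\le C(\eps_h/\delta_h)\sqrt{\mathcal E_h}$ -- genuinely linear in $\sqrt{\mathcal E_h}$ -- so no quadratic term enters.

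Your proposed fix for the circularity issue -- a ``crude preliminary bound on $\mathcal E_h$ from the strain-gradient penalization'' -- is itself circular: the term $\tfrac{\zeta_h}{\eps_h^2}\int_\Omega P(\nabla_h^2 y^h)$ is part of $I_h(y^h)$ and only controlled together with the force term via $\phi_h(y^h)\le M$, so it cannot furnish an independent preliminary bound. The correct way to frame the logic is that the rigidity estimates are quantitative in the (finite) value $I_h(y^h)$ of the elastic energy, and that the scaling factor in front of $\eps_h/\delta_h$ is explicitly $\sqrt{I_h(y^h)}$ with a universal prefactor, as in the paper's citation of \cite[Eq.~(68)]{Freddi2013}.
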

	In particular, \AAA uniform bounds on $\phi_h$ imply uniform bounds on $I_h$, \EEE and all proofs concerning compactness properties are applicable. 
%
	\begin{proof}[Proof of Lemma~\ref{lem:compactnessfornonzeroforces}]
		Let $M>0$ and \AAA let \EEE $(y^h)_h$ be a sequence with $y^h \in \mathscr{S}^{3D}_{h,M}$.
		Note that by Hölder's inequality  we have
		\begin{align}\label{compactnessforce1}
		\tfrac{1}{\eps_h^2} \int_{\Omega}  f^{3D}_h(x_1)  y_3^h(x) \, \MMM  {\rm d}x \leq \Vert \tfrac{1}{\eps_h\delta_h}  f^{3D}_h \Vert_{L^2(I)} \tfrac{\delta_h}{\eps_h} \Vert  (y_3^h-\delta_hx_3) \Vert_{L^2(\Omega)} .
		\end{align}
\AAA Thus, \EEE Lemma~\ref{lem:rigidity}(iv) yields
		\begin{align*}
		\Vert \nabla_h y^h - \Id\Vert_{L^2(\Omega)} \leq C \eps_h/\delta_h \sqrt{ I_h(y^h) },
		\end{align*}	
		\AAA where we also refer to \cite[Equation (68)]{Freddi2013} for the scaling of  $I_h(y^h)$ on the right-hand side. \EEE In particular, this estimate and Poincar\'e's inequality imply
		\begin{align}\label{compactnessforce3}
		&\Vert y_3^h - \delta_h x_3 \Vert_{L^2(\Omega)} \leq \Vert y_3^h - \delta_h x_3 - \eps_h/\delta_h \hat{\xi}_3 \Vert_{L^2(\Omega)} + C \eps_h/\delta_h \leq C \Vert \nabla_h y^h - \Id \Vert_{L^2(\Omega)}  + C \eps_h/\delta_h \notag\\
		&\qquad\leq C \eps_h/\delta_h \sqrt{I_h(y^h)} + C \eps_h/\delta_h ,
		\end{align}
		where we have used that $y_3^h = \delta_h x_3 + \eps_h/\delta_h \hat{\xi}_3$ on $\Gamma$, see \eqref{assumption:clampedboundary}. 
		Combining \eqref{compactnessforce1} and \eqref{compactnessforce3} yields
		\begin{align}\label{compactnessforce4}
		I_h(y^h) =& \phi_h(y^h) +\tfrac{1}{\eps_h^2} \int_{\Omega}  f^{3D}_h(x_1)  y_3^h(x) \, \MMM  {\rm d}x \notag\\
		\leq & M + \Vert \tfrac{1}{\eps_h\delta_h}f_h^{3D}\Vert_{L^2(I)}C  \sqrt{I_h(y^h)} + C \Vert \tfrac{1}{\eps_h\delta_h}f_h^{3D}\Vert_{L^2(I)}.
		\end{align}
\AAA Thus, \EEE (i) holds as $\Vert \tfrac{1}{\eps_h\delta_h}f_h^{3D}\Vert_{L^2(I)}$ is bounded, due to \eqref{eq: forces}. Similarly, (ii) follows by using the equality in \eqref{compactnessforce4} together with \eqref{compactnessforce1} and \eqref{compactnessforce3}.
	\end{proof}
\begin{rem}[Horizontal forces]\label{rem:horizontalforces}
{\normalfont
		Denoting by $(f^{3D}_{1,h},f^{3D}_{2,h})$ a vector of horizontal forces one \AAA would have \EEE to assume that $\Vert \tfrac{1}{\eps_h\delta_h}f_{i,h}^{3D}\Vert_{L^2(I)}$, $i=1,2$, are uniformly bounded in order to guarantee compactness, see \eqref{compactnessforce4} for the analogous computation. However, this implies that $f_{1,h}^{3D}/ \eps_h$ and $f_{2,h}^{3D}/ (\eps_h h)$ necessarily converge to $0$. In particular, these forces would not affect the $\Gamma$-limit, due to \eqref{def:udisplacement} and Proposition~\ref{lemma:compactness}.}
\end{rem}

\section*{Acknowledgements} 
This work was funded by  the DFG project FR 4083/5-1 and  by the Deutsche Forschungsgemeinschaft (DFG, German Research Foundation) under Germany's Excellence Strategy EXC 2044 -390685587, Mathematics M\"unster: Dynamics--Geometry--Structure.

 \typeout{References}


\begin{thebibliography}{10}
	%
	%
	%
	%
	%
	%
	%
	%
	%
	
	
	
	
	
	
	
	
	
	
	
	
	
	
	
	
	\bibitem{Abels1}
	\newblock {\sc H.~Abels, T.~Ameismeier}.
	\newblock {\em Large times existence for thin vibrating rods.}
	\newblock Preprint at \href{https://arxiv.org/abs/2109.11618}{arXiv:2109.11618}  
	
	\bibitem{Abels2}
	\newblock {\sc H.~Abels, T.~Ameismeier}.
	\newblock {\em Convergence of Thin Vibrating Rods to a Linear Beam Equation.}
	\newblock Preprint at \href{https://arxiv.org/abs/2110.010648}{arXiv:2110.01064}  
	
	
	
	
	
	\bibitem{AMM}
	\newblock {\sc  H.~Abels, M.~G.~Mora, S.~M\"uller}.
	\newblock {\em The time-dependent von K\'arm\'an plate equation  as a limit
		of 3d nonlinear elasticity}.
	\newblock Calc.\ Var.\ PDE
	\newblock {\bf 41} (2011),  241--259.
	
	
	
	\bibitem{Abels3}
	{\sc H.~Abels, M.~G.~Mora, S.~M\"uller}.
	\newblock{\em Large time existence for thin vibrating plates}.
	\newblock Comm.\ Partial Differential Equations
	\newblock {\bf 36} (2011), 2062--2102.
	
	
	
	
	
	\bibitem{ABP}
	{\sc E.~Acerbi, G.~Buttazzo, D.~Percivale}.
	\newblock {\em A variational definition for the strain energy of
		an elastic string}.
	\newblock J.\ Elasticity 
	\newblock {\bf 25} (1991), 137--148.
	
	
	
	\bibitem{AdamsFournier:05}
	{\sc R.~A.~Adams, J.~J.~F.~Fournier}.
	\newblock {\em Sobolev Spaces}. (2nd ed)
	\newblock Elsevier, Amsterdam, 2003.
	
	\bibitem{AGS} 
	{\sc L.~Ambrosio, N.~Gigli, and G.~Savar\'e}.
	\newblock {\em Gradient Flows in Metric Spaces and in the Space
		of Probability Measures}. 
	\newblock Lectures Math.\ ETH Z\"urich, Birkh\"auser, Basel, 2005.
	
	
	
	
	
	
	\bibitem{Antmann}
	{\sc S.~S.~Antman}.
	\newblock {\em  Physically unacceptable viscous stresses}.
	\newblock   Z.\ Angew.\ Math.\ Phys.\
	\newblock {\bf 49} (1998), 980--988.
	
	\bibitem{Antmann:04}
	{\sc S.~S.~Antman}.
	\newblock {\em Nonlinear Problems of Elasticity}.
	\newblock Springer, New York, 2004.
	
	\bibitem{RBMFMK}
	\newblock {\sc  R.~Badal, M.~Friedrich, M.~Kru{\v{z}}{\'i}k}.
	\newblock {\em Nonlinear and Linearized Models in Thermoviscoelasticity}.
	\newblock Preprint at \href{https://arxiv.org/abs/2203.02375}{ arXiv:2203.02375}  
	
	
	%
	%
	%
	%
	%
	%
	%
	%
	%
	%
	%
	%
	\bibitem{BK}
	{\sc P.~Bella, R.V.~Kohn}.
	\newblock{\em Coarsening of folds in hanging drapes}. 
	\newblock Comm.\ Pure Appl.\ Math.
	\newblock {\bf 70}  (2017), 978–-1021.
	%
	%
	
	
	
	\bibitem{ABJMV}
	{\sc A.~Bermudez, J.~M.~Via\~no}.
	\newblock {\em Une justification des équations de la thermoélasticité
		des poutres à section variable par des
		méthodes asymptotiques.}
	\newblock RAIRO. Analyse numérique
	\newblock   {\bf 18} (1984), 347--376.
	
	 
	
	\bibitem{capriz}
	{\sc G.~Capriz}.
	\newblock{\em Continua with latent microstructure}.
	\newblock Arch.\ Ration.\ Mech.\ Anal.\
	\newblock {\bf 90} (1985), 43--56.
	
	%
	%
	%
	%
	%
	%
	%
	
	
	
	
	\bibitem{ciarlet2}
	{\sc P.~G.~Ciarlet}.
	{\em Mathematical elasticity. Vol. II: Theory of plates, volume 27 of Studies in Mathematics and its Applications}. North-Holland Publishing Co., Amsterdam, 1997.  
	
	
	
	
	
	
	%
	
	
	\bibitem{DalMaso:93}
	{\sc G.~Dal~Maso}.
	\newblock {\em An introduction to $\Gamma$-convergence}.
	\newblock Birkh{\"a}user, Boston $\cdot$ Basel $\cdot$ Berlin 1993. 
	
	
	
	
	
	%
	
	
	\bibitem{MaNePe}
	{\sc G.~Dal~Maso, M.~Negri, D.~Percivale}.
	\newblock {\em Linearized Elasticity as $\Gamma$-Limit of Finite Elasticity}. 
	\newblock  Set-Valued Analysis 
	\newblock {\bf 10} (2002), 165--183.
	
	
	
	\bibitem{davoli2} 
	{\sc  E.~Davoli, M.~G.~Mora}.
	\newblock {\em Convergence of equilibria of thin elastic rods under physical growth conditions for the energy density}.
	\newblock Proc.\ R.\ Soc.\
	Edinb., Sect. A, Math.\ 
	\newblock {\bf 142} (2012),  501--524.
	
	
	
 
	\bibitem{demoulini} 
	{\sc S.~Demoulini}. 
	\newblock {\em  Weak solutions for a class of nonlinear systems of viscoelasticity}. 
	\newblock Arch.\ Ration.\ Mech.\
	Anal.\ 
	\newblock {\bf  155} (2000), 299--334.
	
	
	\bibitem{dunn}
	{\sc J.E. Dunn, J.~Serrin}.
	\newblock{\em  On the  thermomechanics of interstitial working}.
	\newblock Arch.\ Ration.\ Mech.\ Anal.\
	\newblock {\bf 88} (1985), 95--133.
	
	
	
%
	
	
	
	\bibitem{Freddi2012}
	{\sc  L.~Freddi, M.G.~Mora, R.~Paroni}.
	\newblock {\em Nonlinear thin-walled beams with a rectangular cross-section -- Part I}.
	\newblock Math.\ Models Methods Appl.\ Sci.\  
	\newblock {\bf 22} (2012), 1150016 (34 pp).
	
	\bibitem{Freddi2013}
	{\sc  L.~Freddi, M.G.~Mora, R.~Paroni}.
	\newblock {\em Nonlinear thin-walled beams with a rectangular cross-section -- Part II}.
	\newblock Math.\ Models Methods Appl.\ Sci.\  
	\newblock {\bf 23} (2013), 743--775.
	
	
	
	 
	
	
	
	\bibitem{Freddi2018}
	{\sc  L.~Freddi, P.~Hornung, M.G.~Mora, R.~Paroni}.
	\newblock {\em One-dimensional  von  K\'arm\'an  models  for  elastic  ribbons}.
	\newblock Meccanica
	\newblock {\bf 53} (2018), 659--670. 
	
	
	
	
	
	\bibitem{MFMK}
	{\sc M.~Friedrich, M.~Kru\v{z}\'{\i}k}.
	\newblock{\em On the passage from nonlinear to linearized viscoelasticity}.
	\newblock SIAM J.\ Math.\ Anal.
	\newblock {\bf 50} (2018), 4426--4456.
	
	
	\bibitem{MFMKDimension}
	\newblock {\sc  M.~Friedrich, M.~Kru{\v{z}}{\'i}k}.
	\newblock {\em Derivation of von K{\'a}rm{\'a}n Plate Theory in the Framework of Three-Dimensional Viscoelasticity}.
	\newblock Arch.\ Ration.\ Mech.\ Anal.\
	\newblock {\bf 238} (2020),  489--540.
	
 
 
\bibitem{MFMKJV}
{\sc M.~Friedrich, M.~Kru\v{z}\'{\i}k, J.~Valdman}.
\newblock{\em Numerical approximation of  von K\'{a}rm\'{a}n viscoelastic plates}.
\newblock Disc.\ Cont.\ Dynam.\ Syst.\ - S  {\bf 14} (2021), 299--319.


	
	\bibitem{MFLMDimension2D1D}
	\newblock {\sc  M.~Friedrich, L.~Machill}.
	\newblock {\em Derivation of a one-dimensional von K\'{a}rm\'{a}n theory for  viscoelastic ribbons}.
	\newblock Nonlinear Differential Equations and Applications NoDEA
	\newblock {\bf 29}, Article number: 11 (2022).
	
	
	
	
	
	\bibitem{FrieseckeJamesMueller:02}
	{\sc G.~Friesecke, R.~D.~James, S.~M{\"u}ller}.
	\newblock {\em A theorem on geometric rigidity and the derivation of nonlinear plate theory from three-dimensional elasticity}. 
	\newblock Comm.\ Pure Appl.\ Math.\ 
	\newblock {\bf 55} (2002), 1461--1506. 
	
	\bibitem{hierarchy}
	{\sc G.~Friesecke, R.~D.~James, S.~M{\"u}ller}.
	\newblock {\em A hierarchy of plate models derived from
		nonlinear elasticity by Gamma-Convergence}. 
	\newblock Arch.\ Ration.\ Mech.\ Anal.\
	\newblock {\bf 180} (2006), 183--236.
	
	
	
	
	%
	%
	
	 
	
	\bibitem{Kroemer}
	{\sc  S.~{Kr{\"o}mer}, T.~Roub{\'i}{\v{c}}ek}.
	\newblock {\em Quasistatic Viscoelasticity with Self-Contact at Large Strains}.
	\newblock J.\ Elasticity
	\newblock {\bf 142} (2020), 433-445.
	
	
	
	
	
	
	%
	%
	%
	
	
	\bibitem{lecumberry}
	{\sc M.~Lecumberry, S.~M{\"u}ller}.
	\newblock {\em Stability of slender bodies under compression
		and validity of the von K\'arm\'an theory}. 
	\newblock Arch.\ Ration.\ Mech.\ Anal.\
	\newblock {\bf 193} (2009), 255--310.
	
	
	%
	%
	%
	%
	%
	%
	%
	%
	%
	%
	%
	%
	%
	%
	
	\bibitem{Lewick}
	{\sc M.~Lewicka, P.B.~Mucha}.
	\newblock {\em A local existence result for system of viscoelasticity
		with physical viscosity}.
	\newblock  Evolution Equations \& Control Theory
	\newblock {\bf  2} (2013),  337--353.
	
	
	
	
	
	
	
	
	
	
	
	
	
	
	
	
	
	
	
	
	
	
	
	
	
	
	\bibitem{Mielke}
	{\sc A.~Mielke}.
	\newblock {\em On evolutionary $\Gamma$-convergence for gradient systems (ch. 3)}. 
	\newblock   In A.~Muntean, J.~Rademacher,  A.~Zagaris, editors. Macroscopic and Large Scale Phenomena: Coarse Graining, Mean Field Limits and Ergodicity. Lecture Notes in Applied Math.\ Mechanics Vol.\ 3,  187--249. Springer, 2016. Proc.\ of Summer School in Twente University, June 2012.
	
	
	\bibitem{MOS} 
	{\sc A.~Mielke, C.~Ortner, Y.~\c{S}eng\"ul}. 
	\newblock {\em An approach to nonlinear viscoelasticity via metric gradient flows}. 
	\newblock SIAM J.\ Math.\ Anal.\
	\newblock {\bf 46} (2014), 1317--1347. 
	
	
	\bibitem{MielkeRoubicek}
	{\sc A.~Mielke, T.~Roub\'{\i}\v{c}ek}.
	\newblock {\em Thermoviscoelasticity in Kelvin-Voigt rheology
		at large strains}.
	\newblock  Arch.\ Ration. Mech.\ Anal.\ 
	\newblock  \textbf{238}  (2020)  1--45.
  
	
	
	\bibitem{Mora}
	{\sc  M.~G.~Mora, S.~M\"uller}.
	\newblock {\em Derivation of the nonlinear bending-torsion theory for inextensible rods by $\Gamma$-convergence}.
	\newblock Calc.\ Var.\ PDE
	\newblock {\bf 18} (2003), 287--305. 
	
	
	\bibitem{Mora2}
	{\sc  M.~G.~Mora, S.~M\"uller}.
	\newblock {\em A nonlinear model for inextensible rods as a low energy $\Gamma$-limit of three-dimensional nonlinear elasticity}.
	\newblock Ann.\ Inst.\ H.\ Poincar\'e Anal.\ Non Lin\'eaire 
	\newblock {\bf 21} (2004), 271--293. 
	
	
	
	
	
	\bibitem{mora-scardia}
	{\sc  M.~G.~Mora, L.~Scardia}.
	\newblock {\em  Convergence of equilibria of thin elastic plates under physical growth conditions for the energy density}.
	\newblock J.\ Differential Equations
	\newblock {\bf 252} (2012),  35--55. 
	
	
	
	
	
	\bibitem{Neff}
	{\sc  P.~Neff}.
	\newblock {\em On Korn’s first inequality with non-constant coefficients}.
	\newblock Proc.\ R.\ Soc.\
	Edinb., Sect. A, Math.\ 
	\newblock {\bf 132} (2002), 221–243. 
	
	
	\bibitem{Ortner}
	{\sc C.~Ortner}.
	\newblock {\em  Two Variational Techniques for the Approximation of Curves of Maximal
		Slope}.
	\newblock  Technical report NA05/10,
	\newblock Oxford University Computing Laboratory, Oxford,
	UK, 2005.
	
	\EEE
	
	
	
	
	
	
	%
	%
	%
	%
	
 
	%
	%
	
	
	\bibitem{Pompe}
	{\sc W.~Pompe}. 
	\newblock {\em Korn’s first Inequality with variable coefficients and its generalization}. 
	\newblock Comment.\ Math.\ Univ.\ Carolinae
	\newblock {\bf 44} (2003),  57–70. 
	
	
	
	\bibitem{S1}
	{\sc E.~Sandier, S.~Serfaty}. 
	\newblock {\em Gamma-convergence of gradient flows with applications to Ginzburg-Landau}. 
	\newblock Comm.\ Pure Appl.\ Math.\ 
	\newblock {\bf 57} (2004),  1627--1672. 
	
	
	
	
	
	\bibitem{S2}
	{\sc S.~Serfaty}. 
	\newblock {\em Gamma-convergence of gradient flows on Hilbert and metric spaces and applications}. 
	\newblock Discrete Contin.\ Dyn.\ Syst.\ Ser.\ A 
	\newblock {\bf 31} (2011),  1427--1451. 
	
	
	
	
	\EEE
	
	
	 
	
	\bibitem{Toupin:62}
	{\sc R.~A.~Toupin}.
	\newblock {\em  Elastic materials with couple stresses}.
	\newblock Arch. Ration. Mech. Anal.\
	\newblock {\bf 11} (1962), 385--414.
	
	
	
	
	
	
	
	
	
	
	\bibitem{Toupin:64}
	{\sc R.~A.~Toupin}.
	\newblock {\em Theory of elasticity with couple stress}.
	\newblock Arch. Ration. Mech. Anal.\
	\newblock {\bf 17} (1964), 85--112.
	
	
	
	
	
	
 
	
	
	
	
	
	
\end{thebibliography}
\end{document}